\tikzset{vtx/.style={circle, fill, inner sep=1.5pt}}
\tikzset{vtxbig/.style={circle, draw, inner sep=7pt, green}}
\tikzset{openvtx/.style={circle, draw, inner sep=1.5pt}}
\crefname{theorem}{}{Theorem}
\crefname{section}{}{}
\crefname{corollary}{}{}
\crefname{definition}{}{Definiton}
\crefname{lemma}{}{Lemmas}
\crefname{claim}{}{}
\crefname{equation}{}{}
\crefname{proposition}{}{}
\crefname{figure}{}{}
\crefname{example}{}{}
\crefname{remark}{}{}
\newenvironment{enumerate*}
  {\begin{enumerate}[(I)]
    \setlength{\itemsep}{10pt}
    \setlength{\parskip}{0pt}}
  {\end{enumerate}}
\newtheorem{theorem}{Theorem}[section]
\newtheorem{proposition}[theorem]{Proposition}
\newtheorem{corollary}[theorem]{Corollary}
\newtheorem{conjecture}[theorem]{Conjecture}
\newtheorem{question}[theorem]{Question}
\newtheorem{lemma}[theorem]{Lemma}
\theoremstyle{definition}
\newtheorem{definition}[theorem]{Definition}
\newtheorem{example}[theorem]{Example}
\theoremstyle{remark}
\newtheorem{remark}[theorem]{Remark}
\newcommand{\dfn}[1]{\textcolor{blue}{\emph{#1}}}
\newcommand{\eps}{\varepsilon}
\newcommand{\Pb}{\mathbb{P}}
\newcommand{\Z}{\mathbb{Z}}
\renewcommand{\L}{\mathcal{L}}
\newcommand{\G}{\mathcal{G}}
\newcommand{\x}{\mathcal{X}}
\newcommand{\y}{\mathcal{Y}}
\newcommand{\z}{\mathcal{Z}}
\renewcommand{\r}{\mathcal{R}}
\newcommand{\s}{\mathcal{S}}
\newcommand{\alphasim}{{[\alpha]_\sim}}
\newcommand{\alphasimeq}{{[\alpha]_\simeq}}
\newcommand{\taui}{\tau^{-1}}
\newcommand{\sigmai}{\sigma^{-1}}
\newcommand{\rhoi}{\rho^{-1}}
\newcommand{\betat}{\beta^{(t)}}
\newcommand{\sigmat}{\sigma^{(t)}}
\newcommand{\varphit}{\varphi_q}
\renewcommand{\S}{\mathsf{Star}}
\renewcommand{\P}{\mathsf{Path}}
\newcommand{\C}{\mathsf{Cycle}}
\newcommand{\GR}{{G^{(\rho)}}}
\newcommand{\HR}{{H^{(\rho)}}}
\newcommand{\es}{\varnothing}
\newcommand{\ind}{\mathbf{1}}
\newcommand{\id}{{\rm Id}}
\renewcommand{\l}{\ell}
\renewcommand{\t}[1]{\widetilde{#1}}
\renewcommand{\o}[1]{\overline{#1}}
\newcommand{\tr}[2]{(#1\text{ }#2)}
\newcommand{\Sym}{\mathfrak{S}}
\newcommand{\substak}[2]{$\begin{array}{@{}c@{}}#1\\#2\end{array}$}
\newcommand{\Acyc}{\operatorname{Acyc}}
\newcommand{\sgn}{\operatorname{sgn}}
\DeclareMathOperator{\FS}{\mathsf{FS}}
\DeclareMathOperator{\FSm}{\mathsf{FS_m}}
\DeclareMathOperator{\FSmm}{\mathsf{FS_{m,m}}}
\begin{document}

\title[]{Connectivity of old and new models of friends-and-strangers graphs}

\author[]{Aleksa Milojevi\'c}
\address[]{Department of Mathematics, Princeton University, Princeton, NJ 08540, USA}
\email{aleksam@princeton.edu}

\maketitle

\begin{abstract}
In this paper, we investigate the connectivity of friends-and-strangers graphs, which were introduced by Defant and Kravitz in 2020. We begin by considering friends-and-strangers graphs arising from two random graphs and consider the threshold probability at which such graphs attain maximal connectivity. We slightly improve the lower bounds on the threshold probabilities, thus disproving two conjectures of Alon, Defant and Kravitz. We also improve the upper bound on the threshold probability in the case of random bipartite graphs, and obtain a tight bound up to a factor of $n^{o(1)}$. Further, we introduce a generalization of the notion of friends-and-strangers graphs in which vertices of the starting graphs are allowed to have multiplicities and obtain generalizations of previous results of Wilson and of Defant and Kravitz in this new setting.
\end{abstract}

\section{Introduction}\label{sec:intro}

\noindent
The main objects of study in this paper are the friends-and-strangers graphs, which were introduced by Defant and Kravitz in 2020 \cite{DK}.

\begin{definition}
Given two simple graphs $X$ and $Y$ on $n$ vertices, we define the \dfn{friends-and-strangers graph} $\FS(X, Y)$ associated to $X$ and $Y$ as follows. Vertices of $\FS(X, Y)$ are all bijections $\sigma:V(X)\to V(Y)$, and the edges are given by pairs of bijections $\sigma$ and $\tau$ which satisfy $\sigma=\tau\circ \tr{a}{b}$, for some $a, b\in V(X)$ with $ab\in E(X)$ and $\sigma(a)\sigma(b)\in E(Y)$. In this case, we say that $\sigma$ and $\tau$ differ by a \dfn{$(X, Y)$-friendly swap across the edge $\sigma(a)\sigma(b)$} (or, equivalently, across the edge $ab$).
\end{definition}
\noindent
The graph $\FS(X, Y)$ has a natural interpretation, from which the name \textit{friends-and-strangers} originated. Namely, if we think of vertices of $X$ as $n$ different people and vertices of $Y$ as $n$ chairs, the bijections $\sigma:V(X)\to V(Y)$ represent arrangements of people onto chairs, with exactly one person sitting on each chair. We say that a pair of people are friends if they are adjacent in $X$, and that they are strangers otherwise. Then, a $(X, Y)$-friendly swap in $\FS(X, Y)$ corresponds to choosing two friends sitting on adjacent chairs and swapping their positions. It is natural to ask whether any arrangement can be obtained from any other by a sequence of friendly swaps, which corresponds to the graph $\FS(X, Y)$ being connected.

The concept of friends-and-strangers graphs generalizes several previously considered problems, as noted in \cite{ADK} and \cite{DK}. For example, in the famous 15-puzzle numbers $1, \dots, 15$ are placed on a $4\times 4$ grid, leaving one cell empty. Any number adjacent to the empty cell is allowed to move to the empty cell, and the goal is to reach a predetermined configuration of numbers. In the context of friends-and-strangers graph, this puzzle corresponds to $\FS(\S_{16}, \mathsf{Grid}_{4\times 4})$, the numbers $1, \dots, 15$ corresponding to leaves of $\S_{16}$ and the empty cell corresponding to its center. Generalizing the 15-puzzle, in 1974 Wilson \cite{W} characterized all graphs $X$ for which $\FS(\S_n, X)$ is connected. Further, Stanley investigated the connected components of $\FS(\P_n, \P_n)$ in \cite{S}, while Reidys \cite{R} used the graph $\FS(\P_n, \o{X})$ in order to investigate the acyclic orientations of $X$.

Defant and Kravitz \cite{DK} recognized that all of these results can be phrased in a unified framework, thus defining the friends-and-strangers graphs. After proving some of the basic properties of these graphs, such as symmetry $\FS(X, Y)\cong \FS(Y, X)$ and the fact that $\FS(X, Y)$ is bipartite, they went on to describe the connected components of $\FS(X, \P_n)$ and $\FS(X, \C_n)$ in terms of acyclic orientations of the complement $\o{X}$. As a consequence of the structural description of $\FS(X, \C_n)$, they showed that $\FS(X, \C_n)$ is connected if and only if the complement of $X$ is a forest of trees of coprime sizes (Corollary 4.14 of \cite{DK}). Building on their work, Jeong \cite{J1} showed that, for such graphs $X$, the friends-and-strangers graph $\FS(X, Y)$ is still connected even if $\C_n$ is replaced by any other biconnected graph $Y$. In a follow-up paper, Jeong \cite{J2} investigates the diameter of the connected components of the graph $\FS(X, Y)$. Finally, Defant, Dong, Lee, and Wei \cite{DDLW} extended the connectivity results to new classes of graphs, such as spiders and dandelions. Although most of these results address structural questions, other aspects of friends-and-strangers graphs are also worth investigating.

In a follow-up paper Alon, Defant, and Kravitz \cite{ADK} initiated the study of extremal questions about friends-and-strangers graphs. One of the main questions addressed in \cite{ADK} asks what is the smallest integer $d_n$ such that for all graphs $X, Y$ whose minimum degrees satisfy $\delta(X), \delta(Y)\geq d_n$, the friends-and-strangers graph $\FS(X, Y)$ is connected. After Alon, Defant, and Kravitz \cite{ADK} initially showed $d_n\geq \frac{3}{5}n-1$, and conjectured $d_n=\frac{3}{5}n+O(1)$, Bangachev \cite{B} settled this conjecture by showing that it is sufficient to have $\delta(X), \delta(Y)> n/2$ and $2\min\{\delta(X), \delta(Y)\}+3\max\{\delta(X), \delta(Y)\}\geq 3n$ for $\FS(X, Y)$ to be connected.

Further, the authors of \cite{ADK} also introduced several probabilistic questions about the connectivity of $\FS(X, Y)$ and the first half of our paper is dedicated to answering them. More precisely, if $X, Y$ are random graphs, chosen according to the Erd\H{o}s-R\'enyi model $\G(n, p)$, Alon, Defant and Kravitz investigated the threshold probability $p_{\rm gen}$ at which $\FS(X, Y)$ becomes connected with high probability. In their paper \cite{ADK}, they showed the inequalities $\Omega(n^{-1/2})\leq p_{\rm gen}\leq n^{-1/2+o(1)}$ and conjectured $p_{\rm gen}=\Theta(n^{-1/2})$. In this paper, we disprove this conjecture by showing that $p_{\rm gen}\geq \Omega({\log^{1/2} n/n^{1/2}})$, using a slightly tweaked version of the original argument from \cite{ADK}. Asymmetric variant of this question has also been investigated by Wang and Chen \cite{WC}, who considered the connectivity of $\FS(X, Y)$ when $X\sim \G(n, p_1)$ and $Y\sim \G(n, p_2)$, for different probabilities $p_1, p_2$. 

Finally, Alon, Defant, and Kravitz studied the case when $X, Y\sim \G(K_{n, n}, p)$ are random bipartite graphs, generated as edge-subgraphs of the balanced bipartite graph $K_{n, n}$ in which every edge is included with probability $p$, randomly and independently. When $X, Y$ are bipartite graphs, a parity obstruction prevents $\FS(X, Y)$ from being connected and hence it is natural to ask for the threshold probability $p_{\rm bip}$ at which the graph $\FS(X, Y)$ has exactly two components with high probability. In \cite{ADK}, the authors showed that $\Omega(n^{-1/2})\leq p_{\rm bip}\leq n^{-0.3+o(1)}$ and conjectured $p_{\rm bip}=\Theta(n^{-1/2})$, as in the non-bipartite case. We tighten the upper bound by showing that $p_{\rm bip}\leq n^{-1/2+o(1)}$ using a modified version of the approach from the non-bipartite case, and we tighten the lower bound to $p_{\rm bip}\geq \Omega(\log^{1/2}n/n^{1/2})$. These results close the polynomial gap between the upper and the lower bound on the threshold probability in the bipartite case, bringing it down to a factor of $n^{o(1)}$.

In Section \cref{sec:prelim} we present general properties of the $\FS(X, Y)$, some of which reproduced from \cite{ADK}, \cite{DK}. Alongside these, in Subsection~\cref{subsec:obstructionstoconnectivity} we prove the following strengthening of Propositions 3.1 and 4.1 from \cite{ADK}.

\begin{theorem}\label{thm:lowerboundstrengthening}
There exists a constant $\eps>0$ with the following property. For a large positive integer $n$ and \[p<\eps \left(\frac{\log n}{n}\right)^{1/2},\] if we choose random graphs $X, Y\sim \G(n, p)$ independently, the resulting friends-and-strangers graph $\FS(X, Y)$ has an isolated vertex with high probability. The same statement holds if we choose $X, Y$ to be random bipartite graphs $X, Y\sim \G(K_{n, n}, p)$.
\end{theorem}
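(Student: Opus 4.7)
The plan is to apply the second moment method to $Z$, the number of isolated vertices of $\FS(X,Y)$. A permutation $\sigma \in S_n$ is isolated if and only if $\sigma(E(X)) \cap E(Y) = \es$; since for each pair $\{a,b\} \in \binom{[n]}{2}$ the indicators $\ind[\{a,b\} \in E(X)]$ and $\ind[\{\sigma(a),\sigma(b)\} \in E(Y)]$ are independent Bernoulli$(p)$ and the pairs $\{\sigma(a),\sigma(b)\}$ range over $\binom{[n]}{2}$ without repetition,
\[
\Pr[\sigma \text{ isolated}] = (1-p^2)^{\binom{n}{2}},
\]
so $\E[Z] = n!\,(1-p^2)^{\binom{n}{2}}$. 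Using Stirling, for $p = \eps\sqrt{\log n/n}$ one finds $\log \E[Z] = (1-\eps^2/2)\,n\log n - n + O(\log n)$, which is superpolynomial for any $\eps < \sqrt{2}$.

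For the second moment, relabeling the vertex set of $X$ reduces the computation to
\[
\E[Z^2] = n!\,\sum_{\pi \in S_n} \Pr[\id \text{ and } \pi \text{ both isolated}].
\]
The joint event factors across the orbits of $\pi$ acting on $\binom{[n]}{2}$: an orbit $O = \{e_0, \dots, e_{k-1}\}$ with $\pi(e_i) = e_{i+1\bmod k}$ involves only the $2k$ independent variables $X_{e_i}, Y_{e_i}$, and its contribution $q_k$ can be evaluated by a $2 \times 2$ transfer matrix, yielding $q_k = \lambda^k + \mu^k$ where $\lambda, \mu$ are the roots of $z^2 - (1-p^2)z + p^2(1-p)^2 = 0$. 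A fixed pair contributes $q_1 = 1-p^2$. After dividing by the normalization $(1-p^2)^{2\binom{n}{2}}$, each fixed pair contributes a factor $1/(1-p^2)$ to $R(\pi) := \Pr[\id,\pi\text{ both iso}]/\E[\sigma \text{ iso}]^2$, and each orbit of size $k \geq 2$ contributes a factor close to $1$.

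I would then stratify $\pi$ by the number of fixed pairs $F(\pi) = \binom{f(\pi)}{2} + t(\pi)$, where $f(\pi)$ and $t(\pi)$ count the fixed vertices and $2$-cycles of $\pi$. Using the exponential generating function for permutations by cycle type, the dominant \emph{fixed-pair} contribution to $\tfrac{1}{n!}\sum_\pi R(\pi)$ reduces to controlling a series of the form $\sum_{f\geq 0} e^{p^2\binom{f}{2}}/f!$, which is $1+o(1)$ once $\eps$ is taken sufficiently small. The main technical hurdle is handling the cumulative correction from orbits of size $\geq 2$: each such orbit contributes a factor $1 + O(|O|p^3)$ coming from the transfer-matrix expansion $\lambda = 1 - 2p^2 + O(p^3)$ and $\mu = O(p^2)$, and these must be multiplied across all non-fixed orbits and summed against the number of $\pi$'s with a given cycle structure. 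Once $\E[Z^2] \leq (1+o(1))\,\E[Z]^2$ is established, Chebyshev's inequality gives $\Pr[Z = 0] \to 0$.

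For the bipartite case $X, Y \sim \G(K_{n,n}, p)$, I would restrict to bipartition-preserving bijections $\sigma \in S_n \times S_n$ and repeat the analysis with $\binom{[n]}{2}$ replaced by $[n] \times [n]$: now $\Pr[\sigma\text{ iso}] = (1-p^2)^{n^2}$ and $\E[Z] = (n!)^2(1-p^2)^{n^2}$, which is superpolynomial for $\eps < \sqrt{2}$. The orbit analysis of $\pi \in S_n \times S_n$ acting on $[n] \times [n]$ proceeds identically, producing the same threshold $\eps\sqrt{\log n/n}$.
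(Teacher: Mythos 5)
Your first moment, the reduction of $\mathbb{E}[Z^2]$ to $n!\sum_\pi \Pr[\id,\pi \text{ both isolated}]$, and the transfer-matrix formula $q_k=\lambda^k+\mu^k$ are all correct. The fatal problem is the step you defer as ``the main technical hurdle'': the corrections from orbits of size $\geq 2$ do not multiply out to anything negligible, and no choice of $\eps$ can make them. Two constraints sharing a single variable, say $A=\{X_e Y_e=0\}$ and $B=\{X_e Y_{\pi(e)}=0\}$, have covariance exactly $\Pr[A\cap B]-\Pr[A]\Pr[B]=p^3(1-p)>0$; correspondingly $\lambda/(1-p^2)^2=1+2p^3+O(p^4)$, so an orbit of size $k\geq 2$ contributes a factor $(\lambda^k+\mu^k)/(1-p^2)^{2k}=\exp\bigl(\Theta(kp^3)\bigr)$ to $R(\pi)$, not $1+o(1)$. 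Since for any fixed-point-free, $2$-cycle-free $\pi$ (a constant fraction of $S_n$) the non-fixed orbit sizes sum to $\binom{n}{2}$, every such $\pi$ satisfies $R(\pi)\geq \exp\bigl(c\,p^3n^2\bigr)=\exp\bigl(c\,\eps^3 n^{1/2}(\log n)^{3/2}\bigr)$, uniformly in its cycle structure. Hence $\mathbb{E}[Z^2]/\mathbb{E}[Z]^2\geq c'\exp\bigl(c\,p^3n^2\bigr)\to\infty$ stretched-exponentially for every fixed $\eps>0$ --- indeed already at $p=n^{-1/2}$ without the logarithm --- so the inequality $\mathbb{E}[Z^2]\leq(1+o(1))\mathbb{E}[Z]^2$ you aim for is false. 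Chebyshev then gives nothing, and Paley--Zygmund only yields $\Pr[Z>0]\geq \exp\bigl(-c\,n^{1/2}(\log n)^{3/2}\bigr)\to 0$. The bipartite computation blows up in exactly the same way (the $n^2p^3$ term reappears verbatim). The problem is structural, not a matter of bookkeeping: the positive correlation per shared edge-variable is of order $p^3$, there are $\Theta(n^2)$ shared variables for essentially every $\pi$, and $n^2p^3\to\infty$ throughout the relevant range of $p$.

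This is why a plain second moment over arrangements cannot reach the stated threshold. The paper takes a different route entirely: it observes that an isolated vertex of $\FS(X,Y)$ is precisely a packing of $X$ and $Y$ into $K_n$, and then quotes the packing theorem for random graphs of Bollob\'as, Janson and Scott (their Theorem 1 with $k=2$, $p=q$), which supplies the $\bigl(\eps\log n/n\bigr)^{1/2}$ threshold directly; the bipartite case is handled by generating $X,Y\sim\G(K_{n,n},p)$ from $\G(2n,p)$ by deleting the edges inside each part, so that a packing of the two $\G(2n,p)$ graphs yields a packing of the bipartite graphs. To salvage your plan you would need a genuinely new ingredient beyond Chebyshev (e.g.\ some conditioning or boosting device that tolerates a divergent second-moment ratio), none of which is sketched in the proposal.
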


\noindent
As previously mentioned, Theorem~\cref{thm:lowerboundstrengthening} disproves Conjectures 7.1 and 7.2 from \cite{ADK}. Our argument is very similar to the argument given in \cite{ADK}, and the improvement comes from replacing the application of the general Sauer-Spencer theorem by a stronger result of Bollob\'as, Janson and Scott \cite{BJS} which applies only in the case of random graphs (which is all we need). Then, in Section \cref{sec:random}, we show the following strengthening of Theorem 1.2 from \cite{ADK}.

\begin{theorem}\label{thm:randombipartiteconnectivity}
Let $n$ be a positive integer, and let $X, Y$ be random bipartite graphs independently chosen from $\G(K_{n, n}, p)$. If 
\begin{equation}\label{eqn:pbound}
    p\geq \frac{\exp\left(10(\log n)^{4/5}\right)}{n^{1/2}},
\end{equation}
then $\FS(X, Y)$ has two connected components with high probability.
\end{theorem}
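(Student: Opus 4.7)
The plan is to separately establish the lower bound of at least two connected components and the matching upper bound. For the lower bound, I use the parity obstruction noted in the introduction. Writing $V(X) = A_X \sqcup B_X$ and $V(Y) = A_Y \sqcup B_Y$ for the two bipartitions, define
\[
    \pi(\sigma) := |A_X \cap \sigma^{-1}(A_Y)| \pmod{2}.
\]
Every $(X,Y)$-friendly swap exchanges the images of two vertices on opposite sides of $X$ whose images lie on opposite sides of $Y$, so a short case check shows that $\pi$ is flipped by every swap. Hence $\pi^{-1}(0)$ and $\pi^{-1}(1)$ lie in distinct components, and it remains to show that each of these two fibers is whp a single connected component.

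For the upper bound, I would adapt the routing strategy underlying the non-bipartite result in \cite{ADK}. Fix canonical configurations $\sigma^\star$ of parity $0$ and $\tau^\star$ of parity $1$, and aim to show that any $\sigma$ can be moved to whichever of $\sigma^\star, \tau^\star$ matches its parity by a sequence of friendly swaps. The workhorse would be a chip-moving lemma: process the vertices of $X$ in polylog-sized batches, and at each stage transport a selected vertex of $X$ to its designated image in $Y$ while leaving already-placed vertices fixed. Since parity is automatically preserved by any composition of swaps that returns previously fixed chips to their original locations, the bipartite setting does not demand a fundamentally new primitive; rather, the building blocks of the non-bipartite routing (such as $3$-cycle rotations implemented through triangles) should be replaced by their bipartite analogues, for example $3$-chip rotations implemented through $6$-cycles in $X$ and $Y$.

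The random-graph inputs I would establish whp at $p \geq \exp(10(\log n)^{4/5})/n^{1/2}$ are standard: minimum degree $\Omega(np)$, strong expansion of small vertex sets, and many common neighbours between any two vertices on the same side of the bipartition. These follow from first- and second-moment calculations in $\G(K_{n,n}, p)$. The most delicate step, and the source of the $\exp(10(\log n)^{4/5})$ factor in $p$, is carrying out chip-moving as the set of unplaced vertices shrinks. The shape of the bound suggests an iterative procedure with roughly $(\log n)^{4/5}$ phases, each losing a multiplicative $\exp(O(1))$ factor over the naive $n^{-1/2}$ threshold, mirroring the non-bipartite argument while respecting bipartiteness. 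The main obstacle will be the final phases, when only a small set of vertices remains unplaced and the relevant induced subgraphs of $X$ and $Y$ on that set may be nearly trivial; completing the routing there likely requires reserving a sufficient expander-like substructure throughout the earlier phases so that the last few chips can still be rotated into position without disturbing the completed arrangement.
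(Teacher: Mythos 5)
Your lower bound via the parity invariant is fine (it is Proposition 2.5 of \cite{ADK}), but the upper bound is a plan rather than a proof, and its central primitive does not work in this probability regime. At $p=n^{-1/2+o(1)}$ you cannot rotate a few \emph{specified} chips through a constant-size compatible structure: for two given positions $a,b$ with images $\sigma(a),\sigma(b)$, the expected number of $6$-cycles through $a,b$ in $X$ whose $\sigma$-images form a $6$-cycle through $\sigma(a),\sigma(b)$ in $Y$ is of order $n^{4}p^{12}=n^{-2+o(1)}$, so with high probability many pairs admit no such rotation; the same obstruction kills any bounded-size bipartite analogue of a ``triangle rotation.'' This is precisely why the actual argument (both in \cite{ADK} and here) does not route chips one by one: it reduces connectivity to $(X,Y)$-exchangeability of pairs via Proposition 2.8 of \cite{ADK}, constructs a carefully designed sparse gadget pair $(G,H)$ on about $(\log n)^{4/5}$ vertices in which $u,v$ are exchangeable from the identity (proved through repeated applications of Wilson's theorem), and then embeds $(G,H)$ into $(X,Y)$ compatibly with an \emph{arbitrary} arrangement $\sigma$ using the $(q_1,\dots,q_m)$-bipartite-embeddability lemma; the exponent $4/5$ and the factor $\exp(10(\log n)^{4/5})$ come from balancing the $2^{m}$ loss in that lemma against losses of the form $p^{O(m^{1/4})}$, not from an iteration over $(\log n)^{4/5}$ phases.

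Moreover, your assertion that ``the bipartite setting does not demand a fundamentally new primitive'' is incorrect: the bipartite case requires (i) a four-case analysis, governed by where $\sigma$ and $\sigma^{-1}$ majority-map the neighborhoods of the two vertices being exchanged, with four different gadget pairs $(G^{(\rho)},H^{(\rho)})$ whose neighborhoods of $u,v$ sit in prescribed sides of the bipartitions, and (ii) a balancing step showing that any arrangement can first be transformed, without touching $u_0,v_0$, into one with $|\sigma(A_X)\cap A_Y|$ of order $n$; this is proved via K\"onig's theorem and a matching argument in an auxiliary random bipartite graph with edge probability $p^{2}$, and it is needed before the admissible sets $V_1,\dots,V_m$ for the embedding can even be chosen. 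None of these ingredients appears in your sketch, and the steps you defer (``chip-moving lemma,'' handling the final phases by reserving an expander) are exactly the parts that fail or would have to be replaced by the gadget-embedding machinery, so the proposal has a genuine gap.
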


\begin{remark}
If $X$ and $Y$ are bipartite graphs, Proposition 2.5 of \cite{ADK} shows that $\FS(X, Y)$ has at least two connected components, due to a parity obstruction. Thus, Theorem~\cref{thm:randombipartiteconnectivity} shows that if $p$ satisfies \cref{eqn:pbound}, then the number of connected components of $\FS(X, Y)$ is the least possible. Furthermore, the parity obstruction implies that, if $\FS(X, Y)$ has exactly two connected components, they must have the same size. Thus, if $\FS(X, Y)$ contains an isolated vertex, in the regime of Theorem~\cref{thm:lowerboundstrengthening}, it must have more than two connected components.
\end{remark}

\noindent
Together with Proposition 4.1 of \cite{ADK}, our result determines that $p\sim n^{-1/2}$ is the correct exponent for the threshold probability in the case of $X, Y\sim \G(K_{n, n}, p)$, which agrees with the threshold probability for $\FS(X, Y)$ to be connected when $X, Y\sim \G(n, p)$. Although the proof of Theorem~\cref{thm:randombipartiteconnectivity} follows the general strategy used by Alon, Defant, and Kravitz in the non-bipartite setting, several new ideas are needed, including an additional result about matchings in random bipartite graphs.

\medskip
In Sections~\cref{sec:wilson} and \cref{sec:pathscycles}, we investigate a generalization of the friends-and-strangers graphs obtained by adding multiplicities onto vertices. To understand and motivate this generalization, it is useful to adopt an alternative view of friends-and-strangers graphs. We will think of bijections $\sigma:V(X)\to V(Y)$ as assignments of labels $\sigma(v)\in V(Y)$ to vertices $v\in V(X)$. We will first present a motivating example based on the previous work of Kornhauser, Miller, and Spirakis \cite{KMS} and then give a formal definition. 

Following the early work of Wilson on understanding the graph $\FS(X, \S_n)$, Kornhauser, Miller, and Spirakis \cite{KMS} considered the setting in which $k$ different pebbles are placed on the graph with $n$ vertices, and a pebble is allowed to move to any empty vertex adjacent to its current position. In this setting, the result of Wilson \cite{W} can be interpreted as the case $n=k+1$, when only one vertex is left empty. This scenario can be interpreted in the language of friends-and-strangers graphs by setting $X$ to be the underlying graph on which the pebbles move, and setting the label graph $Y$ to be the graph consisting of $k$ pebbles connected to one blank vertex, in which the blank label is allowed to appear $n-k$ times on $X$. When $n>k+1$, the problems of this type do not fit into the framework of friends-and-strangers graphs anymore, since the empty vertices are allowed to repeat. Motivated by this example, let us now formally introduce multiplicity friends-and-strangers graphs.

\begin{definition}
Let $X, Y$ be simple graphs, on $n$ and $m$ vertices, respectively, and let $m\leq n$. Fix a list of multiplicities $c=(c_1, \dots, c_m)\in \Z_{>0}^m$. We call simple graph $Y$ together with the list of multiplicities $c$ a \dfn{multiplicity graph}, with \dfn{total multiplicity} $\sum_{y\in V(Y)}c_y$. If the total multiplicity of $Y$ is $n$, we define a multiplicity friends-and-strangers graph $\FSm(X, Y)$ whose vertices are functions $\sigma:V(X)\to V(Y)$, called \dfn{arrangements}, satisfying $|\sigmai(\{y\})|=c_y$ for all $y\in V(Y)$. Further, arrangements $\sigma$ and $\tau$ are adjacent in $\FSm(X, Y)$ when $\sigma=\tau\circ \tr{a}{b}$, where $ab\in E(X)$ and $\sigma(a)\sigma(b)\in E(Y)$.
\end{definition}

\begin{remark}
If the multiplicities of all vertices of $Y$ are equal to $1$, we recover the standard friends-and-strangers graph $\FS(X, Y)$. Further, note that the definition of $\FSm(X, Y)$ is not symmetric in $X, Y$ anymore. 
\end{remark}

\noindent
Intuitively, the multiplicity $c_y$ of a label $y\in V(Y)$ can be understood as the number of times this label appears on $X$. Alternatively, in the interpretation of $\FS(X, Y)$ which involves friends sitting on chairs, the multiplicity $c_y$ can be understood as the number of people from $X$ sitting on a chair $y\in V(Y)$.

Multiplicity friends-and-strangers graphs were also considered in a more applied setting, by Brailovskaya, Gowri, Yu, and Winfree \cite{BGYW}. They proposed a computation model in which various types of molecules are placed on a surface modeled by a grid, and certain pairs of molecules are allowed to swap through a chemical reaction. It turns out that their model can be understood in the language of friends-and-strangers graphs, the molecules types corresponding to labels placed on the grid graph, with the edges between molecule types encoding which swaps are chemically allowed.

\medskip

In Section \cref{sec:wilson}, we focus on investigating connectivity of the graphs $\FS_m(X, \S_n)$ and $\FS_m(\S_n, X)$, generalizing previous work of Wilson. Before formally stating our results, let us present the precise statement of Wilson's theorem. 
\begin{theorem}[\cite{W}]
Let $X$ be a simple graph on $n\geq 3$ vertices. The graph $\FS(X, \S_n)$ is connected if and only if $X$ is a biconnected graph, which is not bipartite, not a cycle of length at least $4$ and not isomorphic to $\theta_0$, which a specific graph on $7$ vertices shown on Figure~\cref{fig:theta0}. Furthermore, if $X$ is bipartite and biconnected, but it is not a cycle of length at least $4$ or $\theta_0$, then the graph $\FS(X, \S_n)$ has exactly two connected components.
\end{theorem}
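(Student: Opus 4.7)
My plan is to view $\FS(X,\S_n)$ as the classical single-hole pebble puzzle on $X$. Writing $c$ for the center of $\S_n$ and $\ell_1,\dots,\ell_{n-1}$ for its leaves, every arrangement $\sigma$ designates one vertex $v=\sigma^{-1}(c)\in V(X)$ as the ``hole''; since the edges of $\S_n$ are precisely the pairs $c\ell_i$, an $(X,\S_n)$-friendly swap across $ab\in E(X)$ is allowed exactly when one of $a,b$ holds the hole, and performing it transports the hole along $ab$. Fixing the hole's starting vertex $v$, connectivity of $\FS(X,\S_n)$ becomes the statement that the group $G_v\le S_{n-1}$ generated by label permutations from closed hole-walks based at $v$ is all of $S_{n-1}$; more generally, the components of $\FS(X,\S_n)$ are in bijection with the cosets of this group.

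\textbf{Necessity.} Each excluded family produces an explicit obstruction, which I would verify in turn. If $X$ is disconnected or has a cut vertex, the labels on different sides cannot be freely interchanged, so $G_v\ne S_{n-1}$. If $X=\C_n$ with $n\ge 4$, a single traversal of the cycle realizes the same $(n-1)$-cycle on labels, and the group generated is cyclic of order $n-1$. If $X$ is bipartite, every cycle has even length $2k$ and contributes a $(2k-1)$-cycle of sign $+1$, so by composition $G_v\subseteq A_{n-1}$, already yielding at least two components. Finally, for $X\cong\theta_0$ one verifies by direct inspection of the short list of cycles that a hidden parity invariant is preserved by every closed hole-walk, so $G_v\subsetneq S_{n-1}$.

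\textbf{Sufficiency.} Now assume $X$ is biconnected, non-bipartite, not a cycle, and not $\theta_0$. The central tool is a \emph{commutator move}: given two cycles $C_1,C_2$ through $v$ that share only a common path, the closed walk $C_1 C_2 C_1^{-1}C_2^{-1}$ returns the hole to $v$ while performing a $3$-cycle on three specific pebbles. Using an open ear decomposition of $X$, I would argue inductively that $G_v$ contains $3$-cycles on arbitrarily chosen triples: the base case is a single ear attached to a cycle (where the hypothesis rules out $\C_n$ and $\theta_0$), and each subsequent ear provides new cycle-pairs to which the commutator move applies. Conjugating these $3$-cycles by elements already in $G_v$ (which transport the hole together with previously realizable permutations) eventually produces every $3$-cycle, so $G_v\supseteq A_{n-1}$. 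Because $X$ is non-bipartite, it contains an odd cycle whose traversal has odd sign, upgrading $G_v$ to $S_{n-1}$. When $X$ is bipartite (with all other hypotheses), the same scheme gives $G_v=A_{n-1}$ exactly, so $\FS(X,\S_n)$ splits into two equally-sized components indexed by $S_{n-1}/A_{n-1}$.

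\textbf{Main obstacle.} The hardest part is the induction on the ear decomposition: the commutator move requires a genuinely ``two-cycle'' overlap, and if an attached ear is very short or shares a long path with the existing subgraph, the commutator can degenerate and fail to yield a new $3$-cycle, forcing a case analysis on ear lengths and overlaps. The exceptional graph $\theta_0$ is precisely the configuration in which every available pair of cycles yields a degenerate commutator preserving a hidden parity, which is why it must be excluded from the hypothesis. I would therefore spend most of the effort isolating $\theta_0$ as the sole extra exception beyond $\C_n$ and checking that every other biconnected non-cycle graph admits at least one non-degenerate commutator configuration; the bipartite subcase then follows from the same analysis, tracking only the sign.
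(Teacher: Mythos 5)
This statement is Wilson's 1974 theorem; the paper does not prove it but quotes it from \cite{W} and uses it as a black box, so the only fair comparison is with Wilson's original argument, and your reformulation is indeed that argument's framework: the single-hole pebble puzzle, the puzzle group $G_v\le S_{n-1}$ of label permutations realized by closed hole-walks based at $v$, and (for connected $X$) the bijection between components of $\FS(X,\S_n)$ and cosets of $G_v$. Your necessity arguments for cut vertices and for $\C_n$ are fine, and the bipartite obstruction is correct (most cleanly seen from the parity of the length of a closed hole-walk, which forces $G_v\subseteq A_{n-1}$ and hence at least two components).

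However, there are two genuine gaps. First, the entire content of Wilson's theorem beyond these easy obstructions is precisely the step you defer: proving that for \emph{every} biconnected graph that is not a cycle and not $\theta_0$ the puzzle group contains $A_{n-1}$, and that $\theta_0$ is the \emph{only} further exception. Your commutator-plus-ear-decomposition plan is the right kind of idea, but as written it contains no argument for the degenerate-ear cases and no proof that $\theta_0$ is the unique graph where all commutator configurations fail; you acknowledge this is ``the main obstacle,'' so the proposal is an outline of Wilson's strategy rather than a proof. Second, your description of the $\theta_0$ obstruction is incorrect: the puzzle group of $\theta_0$ is not the kernel of any parity-type invariant. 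It is a sharply $3$-transitive subgroup of $S_6$ of order $120$ (the exotic copy of $S_5$, i.e.\ $\mathrm{PGL}_2(5)$), of index $6$, and it contains odd permutations; correspondingly $\FS(\theta_0,\S_7)$ has six connected components, not two. So a verification based on ``a hidden parity invariant preserved by every closed hole-walk'' cannot succeed, and the exceptional-case analysis would have to identify this group-theoretic structure instead.
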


\noindent
If a graph $X$ satisfies all conditions of Wilson's theorem for $\FS(X, \S_n)$ to be connected, we say $X$ is \dfn{Wilsonian}.

\begin{figure}[h]
\begin{tabular}{c}
\begin{tikzpicture}
\coordinate[vtx] (v0) at (0:1);
\coordinate[vtx] (v1) at (60:1);
\coordinate[vtx] (v2) at (120:1);
\coordinate[vtx] (v3) at (180:1);
\coordinate[vtx] (v4) at (240:1);
\coordinate[vtx] (v5) at (300:1);
\coordinate[vtx] (v6) at (0:0);

\draw (v0)--(v1);
\draw (v1)--(v2);
\draw (v2)--(v3);
\draw (v3)--(v4);
\draw (v4)--(v5);
\draw (v5)--(v0);

\draw (v0)--(v6);
\draw (v6)--(v3);
\end{tikzpicture}
\end{tabular}

\caption{Diagram of the graph $\theta_0$. \phantom{aaaaaaaaaa}}
\label{fig:theta0}
\end{figure}
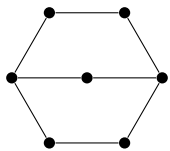

\noindent
Wilson's theorem can be generalized in two possible ways, depending if the multiplicities are assigned to the graph $X$ or to the star $\S_n$. Theorem~\cref{thm:WilsonmultiplicityX} characterizes the sets of multiplicities that can be assigned to vertices of $X$ such that $\FSm(\S_n, X)$ is still connected, while Theorem~\cref{thm:WilsonmultiplicityS} does the same in the case the multiplicities are assigned to the vertices of $\S_n$. Theorem \cref{thm:WilsonmultiplicityS} was originally derived by Kornhauser, Miller, and Spirakis in 1984 \cite{KMS}, using algebraic methods. Since some of the details of their proof are omitted, we present a rigorous, purely combinatorial proof of their results in the language of friends-and-strangers graphs. 

\begin{theorem}\label{thm:WilsonmultiplicityX}
Let $X$ be a connected multiplicity graph on the vertex set $[m]$ with multiplicities $c=(c_1, \dots, c_m)\in \Z_{>0}^m$. If $X$ is Wilsonian, the graph $\FSm(\S_n, X)$ is connected. If $X$ is biconnected but not Wilsonian, the graph $\FSm(\S_n, X)$ is connected if and only if there exists a vertex $v\in V(X)$ with multiplicity $c_v\geq 2$. Finally, if $X$ is not biconnected, the graph $\FSm(\S_n, X)$ is connected if and only if $c_v\geq 2$ for all cut vertices $v\in V(X)$.
\end{theorem}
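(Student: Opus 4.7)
The plan is to prove the three sufficiency directions by a blow-up reduction to Wilson's theorem, and the two necessity directions by Wilson's theorem (in the biconnected non-Wilsonian subcase with all multiplicities one) and by an explicit class-valued invariant (at a cut vertex of multiplicity one).

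For sufficiency I define the complete blow-up $X^+$ on the vertex set $\bigsqcup_{y \in V(X)} \{y_1, \ldots, y_{c_y}\}$, placing a clique on each blob $\{y_1, \ldots, y_{c_y}\}$ and joining all pairs $y_i z_j$ whenever $yz \in E(X)$. The forget-the-subscript map gives a surjective projection $\pi \colon \FS(\S_n, X^+) \to \FSm(\S_n, X)$, and every edge of $\FS(\S_n, X^+)$ either projects to a self-loop (intra-blob swap) or to an honest edge of $\FSm(\S_n, X)$ (inter-blob swap). It therefore suffices to verify that $X^+$ is Wilsonian in each sufficient regime. Biconnectedness of $X^+$ is inherited from $X$ when $X$ is biconnected, and when $X$ has cut vertices the hypothesis $c_v \geq 2$ at each cut vertex $v$ leaves at least one copy of $v$ after any single vertex removal, so the cut is healed. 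Non-bipartiteness is inherited whenever $X$ has an odd cycle; and when $X$ is bipartite, the hypotheses of the theorem force some $c_y \geq 2$, so two copies of $y$ together with a copy of any $X$-neighbour form a triangle. The graph $X^+$ is not a cycle of length at least four because any blob of size $\geq 2$ produces a vertex of degree $\geq 3$, and in the remaining subcase $X^+ = X$ is Wilsonian by hypothesis. Finally $X^+ \neq \theta_0$, since $\theta_0$ is triangle-free while the same construction yields a triangle. Wilson's theorem then gives connectivity of $\FS(\S_n, X^+)$, and $\pi$ transports this down to $\FSm(\S_n, X)$.

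For necessity, when $X$ is biconnected but not Wilsonian and all $c_y = 1$ the graph $\FSm(\S_n, X)$ coincides with $\FS(\S_n, X)$, to which Wilson's theorem applies directly. Otherwise, suppose $v$ is a cut vertex of $X$ with $c_v = 1$, and let $X_1, \ldots, X_k$ (with $k \geq 2$) be the components of $X \setminus \{v\}$. I define
\[ f_\sigma \colon \{\ell_1, \ldots, \ell_{n-1}\} \longrightarrow \{X_1, \ldots, X_k\} \]
by $f_\sigma(\ell) = $ the component containing $\sigma(\ell)$ when $\sigma(\ell) \neq v$, and $f_\sigma(\ell) = $ the component containing $\sigma(c)$ when $\sigma(\ell) = v$; the second clause is well defined since $c_v = 1$ forces $\sigma(c) \neq v$ whenever $v$ occupies a leaf. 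The key step is to show that $f_\sigma$ is preserved by every allowed swap: any swap whose two labels both differ from $v$ must traverse an edge of $X \setminus \{v\}$ and hence lies in a single $X_i$, so neither swapped position changes its component label; any swap involving $v$ is a center-leaf swap in which, by the definition of $f_\sigma$, the class of the centre is merely shuffled between the ``ordinary leaf'' entry and the ``leaf holding $v$'' entry, leaving $f_\sigma$ unchanged. Two arrangements with $v$ at the centre that differ only in whether $\ell_1$ carries an $X_1$- or an $X_2$-label then have distinct invariants, certifying that $\FSm(\S_n, X)$ is disconnected.

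The main obstacle is the patient case-by-case verification that $X^+$ is Wilsonian in each sufficient regime, with particular care required for small-$X$ degenerate cases (such as $X$ a triangle with all multiplicities equal to one, or $X$ on only two vertices) where the general degree or triangle arguments become vacuous and one must instead verify the conclusion directly. Setting up $f_\sigma$ in the necessity direction so that its bookkeeping convention is manifestly invariant under both types of swap is a secondary subtlety, handled by the specific choice of using $\sigma(c)$ to fill the entry of the leaf currently holding $v$.
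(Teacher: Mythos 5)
Correct, and essentially the paper's own route: your $X^+$ is precisely the paper's lift $X'$, your projection argument is the paper's quotient/lifting corollary, and your verification that the lift is Wilsonian in each sufficient regime (inherited biconnectedness, cut vertices healed by multiplicity at least two, triangles created by any blob of size at least two, and Wilson's theorem applied directly when all multiplicities equal one) is the same case analysis the paper carries out. The only genuine difference is that for the cut-vertex necessity you prove the obstruction directly via the swap-invariant $f_\sigma$ rather than invoking the paper's cut-vertex proposition (its multiplicity version of Defant--Kravitz's Proposition 2.6), which is a perfectly good self-contained substitute; one tiny caveat is that your claim that a blob of size at least two forces a vertex of degree at least three can fail for very small $X$ (a single edge with multiplicities $(2,1)$ blows up to $K_3$), but since the blob together with any neighbouring copy already yields a triangle, which rules out cycles of length at least four and $\theta_0$ in any case, nothing in your argument breaks.
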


\noindent
To state Theorem~\cref{thm:WilsonmultiplicityS}, we need to introduce the following notion. Given a simple graph $X$, a $k$-bridge in $X$ is a $k$-tuple of vertices $a_1, \dots, a_k\in V(X)$ for which the following two conditions hold: $a_{i+1}$ and $a_{i-1}$ are the only neighbors of $a_i$, for $i\in \{2, \dots, n-1\}$, and in $X|_{V(X)-\{a_2, \dots, a_{k-1}\}}$ the vertices $a_1$ and $a_k$ lie in different connected components, which both have size at least $2$.

\begin{figure}[h]
\begin{tabular}{c}
\begin{tikzpicture}
\coordinate[vtx] (a1) at (-2, 0);
\coordinate[vtx] (a2) at (-1, 0);

\coordinate[vtx] (ak-1) at (1, 0);
\coordinate[vtx] (ak) at (2, 0);

\draw (a1) node[below] {$a_1$};
\draw (a2) node[below] {$a_2$};
\draw (ak-1) node[below] {$a_{k-1}$};
\draw (ak) node[below] {$a_k$};
\draw (0,0) node {$\dots$};

\draw (a1) -- (a2);
\draw (a2) -- (-0.5, 0);
\draw (ak-1) -- (ak);
\draw (ak-1) -- (0.5, 0);
\draw (a1) -- (-3, 0.5);
\draw (a1) -- (-3, -0.5);
\draw (ak) -- (3, 0.5);
\draw (ak) -- (3, -0.5);

\draw (3, 0) ellipse (0.5cm and 0.7cm);
\draw (-3, 0) ellipse (0.5cm and 0.7cm);

\end{tikzpicture}
\end{tabular}

\caption{Diagram of a $k$-bridge consisting of vertices $a_1, \dots, a_k$. \phantom{aaaaaaaaaa}}
\label{fig:kbridge}
\end{figure}
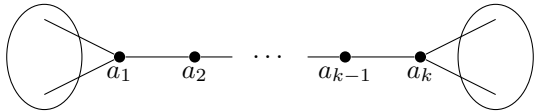

\begin{theorem}[\cite{KMS}] \label{thm:WilsonmultiplicityS}
Let $\S_m$ be a star on $m>2$ vertices and let $c\in \Z_{>0}^m$ be the list of multiplicities assigned to vertices of $\S_m$, with the center of $\S_m$ having multiplicity $k\geq 2$. Let $X$ be a connected simple graph on $n$ vertices, where $n$ is the total multiplicity of $\S_m$. If $X$ does not contain a $k$-bridge and if $X$ is not a cycle, the graph $\FSm(X, \S_m)$ is connected. If $X$ contains a $k$-bridge, $\FSm(X, \S_m)$ is disconnected. If $X$ is a cycle, the graph $\FSm(X, \S_m)$ is connected if and only if $m=3$ and one of the leaves of $\S_m$ has multiplicity $1$.
\end{theorem}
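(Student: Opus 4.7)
The plan is to reinterpret arrangements in $\FSm(X, \S_m)$ as placements of $k$ \emph{blanks} (copies of the center of $\S_m$) together with colored pebbles of $m-1$ types (the leaves, with multiplicities $c_2,\dots,c_m$) on the vertices of $X$, so that an $(X,\S_m)$-friendly swap is precisely a move of the classical pebble-motion problem: slide a colored pebble into an adjacent blank. Under this reformulation the theorem becomes the pebble-motion connectivity criterion of Kornhauser--Miller--Spirakis, and my task is to give a combinatorial proof in the $\FSm$ language.

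I would first dispose of the two obstructions. If $X$ is a cycle, any swap merely exchanges a blank with its colored neighbor, so reading the colored pebbles cyclically around $X$ (skipping the blanks) yields a cyclic word that is invariant under every move. This cyclic word admits only one equivalence class of arrangements precisely when $m=3$ and one of the two leaves has multiplicity $1$, which recovers the stated exception. If $X$ contains a $k$-bridge $a_1,\dots,a_k$, the invariant is slightly subtler: the ordered sequence of colored labels on the $k$ bridge vertices, read from $a_1$ to $a_k$, can only be modified at its two endpoints, because the degree-$2$ internal vertices of the bridge force any blank motion along the bridge to be a pure shift. Since the two components hanging off the bridge both have size at least $2$, one can exhibit two target arrangements that would force this ordered sequence to differ by a transposition of non-adjacent entries, which no sequence of moves can realize.

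For the positive direction, assume that $X$ is connected, is not a cycle, and has no $k$-bridge. The plan is to reduce to the original theorem of Wilson applied to a suitable biconnected block. First, locate a biconnected subgraph $B\subseteq X$ that is not merely a cycle; the combination of the no-cycle and no-$k$-bridge hypotheses guarantees existence of such a $B$. With a small case analysis one checks that $B$ can be taken to be Wilsonian, so Wilson's theorem realizes every permutation of the colored labels inside $B$ once one blank is brought into $B$ and the remaining $k-1$ blanks are parked on some fixed auxiliary vertices. The crucial ingredient is a \emph{routing lemma}: for any colored pebble $p$ at any vertex $u$ and any target vertex $v\in V(X)$, the $k$ blanks can be choreographed to move $p$ from $u$ to $v$ while returning every other colored pebble to its original position. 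This lemma is proved by induction on the distance from $u$ to $v$; the inductive step splits according to whether the edge being crossed lies inside a biconnected block (where the blanks rotate around a cycle in the block to swap $p$ past its neighbor) or along a cut path (where the no-$k$-bridge hypothesis provides either fewer than $k-2$ degree-$2$ vertices or a trivial component on one side, and the blanks can escort $p$ through). Granted the routing lemma, I place colored pebbles in their target positions one by one, always using $B$ as a staging area for the final reordering required by Wilson's theorem.

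The main obstacle will be the routing lemma, together with the careful bookkeeping needed when $X$ decomposes into several biconnected blocks joined at cut vertices. The no-$k$-bridge hypothesis must be exploited not only globally but also at each cut, to ensure that the $k$ blanks can be redistributed around $X$ without disturbing the pebbles already placed; in effect one maintains the invariant that any designated pebble can be advanced one step toward its target while the rest are returned to their previous positions. The exceptional case where $X$ is a cycle, $m=3$, and one leaf has multiplicity $1$ is handled separately by a direct rotation argument around the cycle, since the biconnected-block route is inapplicable.
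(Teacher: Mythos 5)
Your reduction for the positive direction contains a false step that the hypotheses do not repair: you claim that if $X$ is connected, not a cycle, and has no $k$-bridge, then $X$ contains a biconnected subgraph $B$ that is not a cycle (and can be taken Wilsonian). This fails for trees. A star $K_{1,r}$, or more generally a spider all of whose legs have at most $k$ vertices, is connected, is not a cycle, and contains no $k$-bridge (a $k$-bridge needs $k-2$ internal degree-$2$ vertices with both sides of size at least $2$, which such trees do not provide), yet its only blocks are single edges, so no biconnected non-cycle $B$ exists and Wilson's theorem has nothing to act on. These graphs are genuine instances of the theorem (the graph $\FSm(X,\S_m)$ is connected for them), so your strategy of using a Wilsonian block as a staging area cannot cover the general case; even when a nontrivial block exists it may be bipartite, and then Wilson's theorem only gives half of the arrangements, which your sketch does not address. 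The paper avoids blocks altogether: it reduces connectivity to showing that any two adjacent vertices $u,v$ of $X$ are $(X,\S_m)$-exchangeable, gathers the $k$ blank labels so that together with $u,v$ they span a connected subgraph $H$ on $k+2$ vertices, and then applies a small-graph lemma (any connected $(k+2)$-vertex graph that is not a path or cycle works), handling the path case via a nearby degree-$3$ vertex or by rotating labels around a cycle through $H$; your ``routing lemma'' would have to reprove essentially this content, and its inductive step is exactly where the difficulty lies.

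The $k$-bridge obstruction is also not established by what you wrote. The observation that the word of colored labels on the bridge can only change at its two endpoints is true but is not by itself an invariant separating components: labels constantly enter and leave the bridge, and the side components can host blanks and (a priori) reorder labels before feeding them back. The actual invariant must couple the bridge contents with the distribution of blanks across the cut; in the paper this is the quantity $x(\tau)$ (the number of blanks on the $A$-side) together with the statement that all copies of a chosen leaf label lie in $A$ or among the first $x(\tau)$ non-blank positions of the bridge, verified case by case for every friendly swap. Your assertion that ``no sequence of moves can realize a transposition of non-adjacent entries'' is precisely the claim that needs proof, and as stated it has no supporting argument. The cycle case in your proposal (cyclic order of non-blank labels as an invariant, counted to force $m=3$ with a multiplicity-$1$ leaf) does match the paper's argument.
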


\noindent
In Section \cref{sec:pathscycles}, we describe the structure of  $\FSm(\P_n, X)$ and $\FSm(\C_n, X)$ when $X$ is a multiplicity graph. In \cite{DK}, Defant and Kravitz characterized the connected components of the graphs $\FS(\P_n, Y), \FS(\C_n, Y)$ in terms of the acyclic orientations of the complement $\o{Y}$, using the notion of \textit{toric equivalence}, which was initially introduced by Develin, Macauley, and Reiner in \cite{DMR}. Following this approach, we describe the connected components of $\FSm(\P_n, X), \FSm(\C_n, X)$ in terms of the acyclic orientations of $\o{X'}$, where $X'$ denotes the blow-up of $X$ with respect to the multiplicity list $c$. We then use this description to characterize all graphs $X$ and multiplicity lists $c$ for which $\FSm(\C_n, X)$ is connected. Since formal definitions and exact statements of these results are somewhat technical, we postpone them until Section \cref{sec:pathscycles}.

In Section \cref{sec:furtherresearch}, we end by presenting another generalization of friends-and-strangers graphs, which allows both graphs $X$ and $Y$ to have multiplicities, along with indicating possible further research directions.

\section{Preliminaries}\label{sec:prelim}

\noindent
In this section, we present some of the basic tools we will use throughout the paper. We begin by presenting two results used to show friends-and-strangers graphs are connected, and then show two typical obstructions to connectivity of these graphs. Together with these obstructions, we discuss the proof of Theorem~\cref{thm:lowerboundstrengthening} which shows that certain random friends-and-strangers graphs have isolated vertices. Finally, we will conclude by introducing the \textit{lift} of a multiplicity graph, a notion that proves to be key in understanding this generalization of friends-and-strangers graphs. 

\subsection{Connectivity of $\FS(X, Y)$.} 

One of the fundamental notions Alon, Defant, and Kravitz use in \cite{ADK} to show their results about connectivity of friends-and-strangers graphs are exchangeable pairs. More precisely, for a bijection $\sigma:V(X)\to V(Y)$, we say that a pair of vertices $u, v\in V(Y)$ is \dfn{$(X, Y)$-exchangeable from $\sigma$} if there exists a sequence of $(X, Y)$-friendly swaps which transforms the arrangement $\sigma$ into $\tr{u}{v}\circ \sigma$. Note that this sequence might consist of more than one swap. Since $\FS(X, Y)$ is symmetric in $X$ and $Y$, one might define the notion of exchangeability for pairs of vertices $u', v'\in V(X)$ in the symmetric fashion. The following result, stated as Proposition 2.8 in \cite{ADK}, shows how exchangeability can be used to establish connectivity of $\FS(X, Y)$.

\begin{proposition}[\cite{ADK}]\label{prop:exchangeabilityimpliesconnectedness}
Let $X, Y, \t Y$ be $n$-vertex graphs and suppose $Y\subseteq \t Y$. Suppose that for every edge $uv$ of $\t Y$ and every bijection $\sigma:V(X)\to V(Y)$ satisfying $\sigmai(u)\sigmai(v)\in E(X)$, the vertices $u$ and $v$ are $(X, Y)$-exchangeable from $\sigma$. Then, the connected components of $\FS(X, Y)$ and the connected components of $\FS(X, \t Y)$ have the same vertex sets. In particular, the number of connected components of $\FS(X, \t Y)$ is equal to the number of connected components of $\FS(X, Y)$.
\end{proposition}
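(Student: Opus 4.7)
The plan is to show directly that $\FS(X,Y)$ and $\FS(X,\t{Y})$ induce the same partition into connected components on their common vertex set of all bijections $\sigma\colon V(X)\to V(Y)$, which also immediately yields the equality of the numbers of components. I would prove both inclusions of components separately.

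Since $Y\subseteq \t{Y}$, every $(X,Y)$-friendly swap is automatically an $(X,\t{Y})$-friendly swap, so $\FS(X,Y)$ is a spanning subgraph of $\FS(X,\t{Y})$. This gives one direction for free: each component of $\FS(X,Y)$ is contained in some component of $\FS(X,\t{Y})$. For the reverse direction, it suffices to show that for every edge $\{\sigma,\tau\}$ of $\FS(X,\t{Y})$, the bijections $\sigma$ and $\tau$ lie in a common component of $\FS(X,Y)$; concatenating the resulting walk-replacements along any walk in $\FS(X,\t{Y})$ then produces a walk in $\FS(X,Y)$ between the same endpoints.

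To handle such an edge, write $\tau = \sigma\circ \tr{a}{b}$ with $ab\in E(X)$ and $\sigma(a)\sigma(b)\in E(\t{Y})$, and set $u=\sigma(a)$ and $v=\sigma(b)$. Then $uv\in E(\t{Y})$ and $\sigmai(u)\sigmai(v)=ab\in E(X)$, so the hypothesis of the proposition produces a sequence of $(X,Y)$-friendly swaps taking $\sigma$ to $\tr{u}{v}\circ\sigma$. The only piece of bookkeeping is the identity $\tr{u}{v}\circ\sigma = \sigma\circ \tr{a}{b} = \tau$, which holds because for any $c\notin\{a,b\}$ one has $\sigma(c)\notin\{u,v\}$ so both compositions fix the value $\sigma(c)$, while at $a$ and $b$ both compositions swap the values $u$ and $v$. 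Thus the exchangeability sequence is precisely a walk in $\FS(X,Y)$ from $\sigma$ to $\tau$, as required.

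There is no substantive obstacle here: the hypothesis is tailored exactly so that each $(X,\t{Y})$-edge can be simulated by a walk in $\FS(X,Y)$. The only subtlety is the final identification, which relies on the observation that post-composition by $\tr{u}{v}$ on $V(Y)$ agrees with pre-composition by $\tr{a}{b}$ on $V(X)$ whenever $\sigma(a)=u$ and $\sigma(b)=v$; this is the standard reason the definitions of exchangeability and of edges in $\FS(X,Y)$ match up.
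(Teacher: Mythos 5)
Your proof is correct: the spanning-subgraph observation gives one inclusion of components, and for the other you apply the hypothesis to the edge $uv=\sigma(a)\sigma(b)$ of $\t Y$ and verify the needed identity $\tr{u}{v}\circ\sigma=\sigma\circ\tr{a}{b}=\tau$ carefully, so each edge of $\FS(X,\t Y)$ is simulated by a walk in $\FS(X,Y)$. The paper itself states this proposition with a citation to \cite{ADK} rather than proving it, and your argument is essentially the standard proof given there, so there is nothing further to flag.
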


\noindent
Note that Proposition~\cref{prop:exchangeabilityimpliesconnectedness} allows us to translate a global question about connectedness of $\FS(X, Y)$ to a local question about exchangeability of certain pairs of vertices, and therefore it will be very useful throughout the paper. Since the graph $\FS(X, K_n)$ is connected whenever $X$ is connected, the following special case of Proposition~\cref{prop:exchangeabilityimpliesconnectedness}, stated as Lemma 2.9 in \cite{ADK}, is particularly interesting. Setting $\t Y=K_n$ shows that, for a connected graph $X$, the graph $\FS(X, Y)$ is connected if every pair of adjacent vertices in $X$ is $(X, Y)$-exchangeable from any arrangement $\sigma$. 

In the setting of multiplicity graphs, the notion $(X, Y)$-exchangeability generalizes with almost the same definition. The only difference is that  exchangeability can only be defined for pairs of vertices of $X$. Formally, we say that a pair of vertices $u', v'\in V(X)$ is \dfn{$(X, Y)$-exchangeable from a bijection $\sigma$} if there exists a sequence of $(X, Y)$-friendly swaps transforming $\sigma$ into $\sigma\circ \tr{u'}{v'}$. It is not hard to see that the analogue of Proposition~\cref{prop:exchangeabilityimpliesconnectedness} holds even in the setting of multiplicity graphs.

The following useful result, first observed by Bangachev in \cite{B}, relates the notion of exchangeability from two different arrangements, $\sigma$ and $\sigma'$. 

\begin{proposition}[\cite{B}]\label{prop:exchangeabilityfromdifferentarrangements}
Suppose that $\sigma, \sigma'\in V(\FSm(X, Y))$ are in the same connected component of $\FSm(X, Y)$ and assume that the sequence of swaps transforming $\sigma$ into $\sigma'$ never involves vertices $u, v\in V(X)$. Then, if the pair of vertices $u, v$ is $(X, Y)$-exchangeable from $\sigma$, it is also $(X, Y)$-exchangeable from $\sigma'$.
\end{proposition}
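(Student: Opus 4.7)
The plan is to take the given sequence of friendly swaps from $\sigma$ to $\sigma'$ (which, by assumption, avoids $u$ and $v$) and run it ``in parallel'' starting from $\sigma \circ \tr{u}{v}$, thereby producing a valid sequence of friendly swaps from $\sigma \circ \tr{u}{v}$ to $\sigma' \circ \tr{u}{v}$. Chaining three paths in $\FSm(X,Y)$---from $\sigma'$ back to $\sigma$, from $\sigma$ to $\sigma \circ \tr{u}{v}$ (via the exchangeability hypothesis at $\sigma$), and from $\sigma \circ \tr{u}{v}$ to $\sigma' \circ \tr{u}{v}$ (the parallel sequence)---will then exhibit a path from $\sigma'$ to $\sigma' \circ \tr{u}{v}$, which is exactly $(X,Y)$-exchangeability of $u,v$ from $\sigma'$.

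For the construction, I would write the given path as $\sigma = \sigma_0, \sigma_1, \ldots, \sigma_k = \sigma'$ with $\sigma_{i+1} = \sigma_i \circ \tr{a_i}{b_i}$, where $a_i b_i \in E(X)$, $\sigma_i(a_i)\sigma_i(b_i) \in E(Y)$, and (crucially) $\{a_i, b_i\} \cap \{u, v\} = \es$ for every $i$. Define the parallel sequence by $\sigma_i' := \sigma_i \circ \tr{u}{v}$. Since the transpositions $\tr{a_i}{b_i}$ and $\tr{u}{v}$ in $\Sym(V(X))$ have disjoint supports, they commute, so $\sigma_{i+1}' = \sigma_i' \circ \tr{a_i}{b_i}$. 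The $X$-side condition $a_i b_i \in E(X)$ is unchanged, and because $\sigma_i'$ agrees with $\sigma_i$ on every vertex outside $\{u,v\}$---in particular at $a_i$ and $b_i$---the $Y$-side condition for $\sigma_i' \to \sigma_{i+1}'$ reduces to $\sigma_i(a_i)\sigma_i(b_i) \in E(Y)$, which holds. Hence every transition in the parallel sequence is a genuine friendly swap.

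There is no real obstacle beyond this commutation observation; the argument ultimately rests on the fact that disjoint transpositions on $V(X)$ commute and that the friendliness of a swap across $a_i b_i$ depends only on the labels at $a_i$ and $b_i$. This is precisely why the proof goes through unchanged in the multiplicity setting of $\FSm(X,Y)$, where $\sigma$ need not be a bijection: pre-composing with $\tr{u}{v}$ only rearranges the values of $\sigma$ at $u$ and $v$, leaving the friendliness of each swap in the original path intact in the parallel one.
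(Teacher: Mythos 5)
Your proposal is correct and follows essentially the same route as the paper's proof: run the swap sequence from $\sigma$ to $\sigma'$ in parallel starting from $\sigma\circ\tr{u}{v}$ (valid since the swaps avoid $u,v$), then concatenate the three paths to connect $\sigma'$ with $\sigma'\circ\tr{u}{v}$. Your explicit commutation argument just fills in what the paper dismisses as "simple to see."
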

\begin{proof}
Suppose that $\Sigma$ is the sequence of $(X, Y)$-friendly swaps which transform $\sigma$ into $\sigma'$, and let $\Sigma^{-1}$ be the reverse sequence which transforms $\sigma'$ into $\sigma$. Since none of the swaps in $\Sigma$ involve $u, v$, it is simple to see that $\Sigma$ also transforms $\sigma\circ \tr{u}{v}$ into $\sigma'\circ \tr{u}{v}$. Given that the pairs $(\sigma', \sigma)$, $(\sigma, \sigma\circ\tr u v)$ and $(\sigma\circ \tr{u}{v}, \sigma'\circ \tr{u}{v})$ are in the same connected component of $\FSm(X, Y)$, we conclude that the pair $u, v$ is $(X, Y)$-exchangeable from $\sigma'$.
\end{proof}

\subsection{Obstructions to connectivity of friends-and-strangers graphs.} \label{subsec:obstructionstoconnectivity} We will now present two typical obstructions to connectivity of $\FS(X, Y)$. The first one occurs when both $X$ and $Y$ have cut vertices, where a cut vertex is defined as vertex whose removal disconnects the graph. The obstruction presented in the following proposition is a direct generalization of Proposition 2.6 from \cite{DK} to the setting with multiplicities. Since the proof is completely analogous to the original, we omit it.

\begin{proposition}\label{prop:cutvertices}
Let $X$ be a connected graph on $m\geq 3$ vertices with a list of multiplicities $c\in \Z_{>0}^m$, which has total multiplicity $n$, and let $Y$ be a graph on $n$ vertices. Suppose $x_0\in V(X)$, $y_0\in 
V(Y)$ are cut vertices and that $c_{x_0}=1$. Also, let $X_1, \dots, X_r$ be the connected components of $X$ produced by removing $x_0$, and let $Y_1, \dots, Y_s$ be the connected components of $Y$ produced by removing $y_0$. Let $M$ be the set of $r\times s$ matrices with nonnegative integer entries in which the $i$-th row sums to $|X_i|$ and the $j$-th column sums to $|Y_j|$. Then, $\FSm(Y, X)$ has at least $|M|$ connected components.
\end{proposition}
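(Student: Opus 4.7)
The plan is to adapt the invariant argument of Defant and Kravitz from their Proposition 2.6 in \cite{DK}. Since $c_{x_0}=1$, the label $x_0$ occurs at exactly one vertex of $Y$ under every arrangement, so the multiplicity structure introduces no genuine new difficulty. I will construct a map $\Phi$ from the vertex set of $\FSm(Y, X)$ into $M$, show it is invariant under $(Y, X)$-friendly swaps, and show it is surjective; these three facts together yield the claimed lower bound on the number of connected components.

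For an arrangement $\sigma:V(Y)\to V(X)$, define the $r\times s$ matrix $M(\sigma)$ with entries
\[
m_{ij}(\sigma) \;=\; |\{y\in V(Y_j): \sigma(y)\in V(X_i)\}|.
\]
A direct count shows that the $i$-th row of $M(\sigma)$ sums to $|X_i|-\ind[\sigma(y_0)\in V(X_i)]$ and the $j$-th column sums to $|Y_j|-\ind[\sigmai(x_0)\in V(Y_j)]$. To correct the possibly-missing units, set $\Phi(\sigma)=M(\sigma)$ when $\sigma(y_0)=x_0$, and otherwise $\Phi(\sigma)=M(\sigma)+E_{i^*,j^*}$, where $E_{i^*,j^*}$ is the matrix with a $1$ in position $(i^*,j^*)$ and zeros elsewhere, $i^*$ is the unique index with $\sigma(y_0)\in V(X_{i^*})$, and $j^*$ is the unique index with $\sigmai(x_0)\in V(Y_{j^*})$. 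One checks that $\Phi(\sigma)\in M$ for every arrangement.

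Next I would verify that $\Phi$ is preserved under each $(Y, X)$-friendly swap. Because $y_0$ is a cut vertex of $Y$, every edge of $Y$ lies either inside a single $Y_j$ or has $y_0$ as one endpoint. A swap along an edge interior to some $Y_j$ simply reshuffles labels inside $Y_j$, so every $m_{ij}$ is preserved, and the pair $(i^*,j^*)$ stays in place (the location of $x_0$ may move inside $Y_j$, but its component index does not). For a swap along an edge $y y_0$ with $y\in V(Y_j)$, the friendliness condition $\sigma(y)\sigma(y_0)\in E(X)$ forces either both labels to lie in a common $X_i$, in which case all data defining $\Phi$ survive unchanged, or exactly one of the two labels to equal $x_0$. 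In the subcase $\sigma(y_0)=x_0$ with $\sigma(y)\in V(X_i)$, after the swap one reads off $i^*=i$, $j^*=j$, and $m_{ij}$ decreased by $1$, so the newly added $E_{i^*,j^*}$ exactly cancels the drop in $M(\sigma)$; the symmetric subcase is analogous. This case analysis is the only real obstacle in the proof, and it is routine once one observes that no edge of $Y$ crosses between distinct components of $Y\setminus y_0$.

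Finally, to see that $\Phi$ is surjective onto $M$, fix any $M^*=(m^*_{ij})\in M$, set $\sigma(y_0):=x_0$, and for each pair $(i,j)$ distribute any $m^*_{ij}$ labels drawn from $V(X_i)$ (respecting the multiplicities $c_x$) onto $m^*_{ij}$ vertices of $Y_j$; the row- and column-sum conditions defining $M$ guarantee that this produces a valid arrangement $\sigma$, and then $\Phi(\sigma)=M(\sigma)=M^*$ by construction. Combined with the invariance of $\Phi$ along friendly swaps, this shows that $\FSm(Y, X)$ has at least $|M|$ connected components, as required.
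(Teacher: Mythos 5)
Your proof is correct and is exactly the argument the paper has in mind: the paper omits the proof as "completely analogous" to Proposition 2.6 of \cite{DK}, and your invariant matrix $\Phi$ (with the $E_{i^*,j^*}$ correction tracking the unique vertex labeled $x_0$ and the label at $y_0$), its invariance under the two types of friendly swaps, and the surjectivity construction constitute precisely that adaptation to the multiplicity setting, with $|X_i|$ read, as it must be, as the total multiplicity $\sum_{v\in V(X_i)}c_v$.
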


\noindent
In case $Y$ has at least $3$ vertices, it is not hard to see that $|M|\geq 2$, and therefore Proposition~\cref{prop:cutvertices} implies that the graph $\FSm(Y, X)$ is disconnected.

Let us now present the second obstruction to connectivity which will be important in this paper. It was mentioned in the introduction that if $X, Y$ are bipartite graphs, the graph $\FS(X, Y)$ has at least two connected components, due to a parity obstruction. Moreover, if the graph $\FS(X, Y)$ has exactly two connected components, there is a simple way to test whether two arrangements are in the same connected component, which is described by the following proposition. This description will be particularly useful in Section \cref{sec:random}, where we consider random bipartite graphs $X$ and $Y$. This proposition was presented in \cite{ADK} as Proposition 2.5 and it is equivalent to Proposition 2.7 of \cite{DK}.

\begin{proposition}[\cite{ADK}, \cite{DK}] \label{prop:parityobstruction}
Let $X$ and $Y$ be bipartite graphs on $n$ vertices with vertex bipartitions $V(X)=A_X\bigsqcup B_X$ and $V(Y)=A_Y\bigsqcup B_Y$. If the bijections $\sigma$ and $\tau$ are in the same connected component of $\FS(X, Y)$, then $\sgn(\sigmai\circ\tau)$ has the same parity as $|\tau(A_X)\cap A_Y|-|\sigma(A_X)\cap A_Y|$.
\end{proposition}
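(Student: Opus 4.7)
The plan is to argue by induction on the length of a walk in $\FS(X,Y)$ from $\sigma$ to $\tau$, reducing the statement to a single-edge case. Since $\sgn$ is multiplicative, writing $\sigma^{-1}\circ \tau = (\sigma^{-1}\circ \mu)(\mu^{-1}\circ \tau)$ for any intermediate bijection $\mu$ on the walk, and noting that the quantity $|\rho(A_X)\cap A_Y| - |\sigma(A_X)\cap A_Y|$ is telescoping as $\rho$ moves along the walk, it suffices to prove the claim for a single $(X,Y)$-friendly swap: if $\tau = \sigma\circ (a\ b)$ with $ab\in E(X)$ and $\sigma(a)\sigma(b)\in E(Y)$, then $\sgn(\sigma^{-1}\circ\tau) = -1$, so I need to check that $|\tau(A_X)\cap A_Y| - |\sigma(A_X)\cap A_Y|$ is odd.

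For this single-swap analysis, the key observation is that the bipartiteness of both $X$ and $Y$ is used twice. Since $ab\in E(X)$ and $X$ is bipartite with parts $A_X,B_X$, exactly one of $a,b$ lies in $A_X$; without loss of generality $a\in A_X$ and $b\in B_X$. Likewise, $\sigma(a)\sigma(b)\in E(Y)$ and $Y$ is bipartite with parts $A_Y,B_Y$, so exactly one of $\sigma(a),\sigma(b)$ lies in $A_Y$. Now $\sigma$ and $\tau$ agree on every vertex of $X$ outside $\{a,b\}$, so their contributions to $|\,\cdot\,(A_X)\cap A_Y|$ cancel everywhere except possibly at $a,b$. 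Since $b\notin A_X$, only the vertex $a$ matters, and its image moves from $\sigma(a)$ to $\tau(a)=\sigma(b)$, which lies in the opposite part of $Y$ from $\sigma(a)$. Hence the indicator $\mathbf{1}[\sigma(a)\in A_Y]$ flips between $\sigma$ and $\tau$, giving $|\tau(A_X)\cap A_Y| - |\sigma(A_X)\cap A_Y| = \pm 1$, which is odd, as required.

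Assembling the induction, after a walk of $k$ friendly swaps $\sigma = \mu_0,\mu_1,\dots,\mu_k = \tau$ one obtains
\[
\sgn(\sigma^{-1}\circ \tau) = \prod_{i=1}^{k}\sgn(\mu_{i-1}^{-1}\circ \mu_i) = (-1)^k,
\]
while
\[
|\tau(A_X)\cap A_Y| - |\sigma(A_X)\cap A_Y| = \sum_{i=1}^k \bigl(|\mu_i(A_X)\cap A_Y| - |\mu_{i-1}(A_X)\cap A_Y|\bigr),
\]
a sum of $k$ odd numbers, which has the same parity as $k$. Comparing the two displays establishes the proposition. I do not expect any genuine obstacle: the entire argument is a careful parity bookkeeping whose only subtlety is using bipartiteness of $X$ to isolate the contribution of a single vertex of $A_X$, and bipartiteness of $Y$ to guarantee that the relevant image changes its $A_Y/B_Y$ status under each friendly swap.
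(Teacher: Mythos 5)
Your proof is correct: the reduction to a single $(X,Y)$-friendly swap, the observation that bipartiteness of $X$ isolates the contribution of one vertex of $A_X$ while bipartiteness of $Y$ forces its image to switch sides, and the telescoping along a walk together give exactly the claimed parity statement (including the degenerate case $\sigma=\tau$). The paper itself offers no proof, quoting the result from \cite{ADK} and \cite{DK}, and your argument is essentially the same single-swap parity bookkeeping used there.
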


\noindent
Here, the sign of the permutation $\pi$, denoted by $\sgn(\pi)$ denotes the parity of the number of inversions in the permutation $\pi$. Finally, we present the proof of Theorem~\cref{thm:lowerboundstrengthening} closely the approach from \cite{ADK}.

\begin{proof}[Proof of Theorem~\cref{thm:lowerboundstrengthening}.]

We begin by focusing on the non-bipartite case. Given two $n$-vertex graphs $X, Y$, an isolated vertex in the graph $\FS(X, Y)$ corresponds to a bijection $\sigma:V(X)\to V(Y)$ with the property that $\sigma(u)\sigma(v)\not\in E(Y)$ for all edges $uv\in E(X)$. In other words, an isolated vertex of $\FS(X, Y)$ corresponds to a packing of graphs $X, Y$. Theorem 1 from the paper of Bollob\'as, Janson and Scott \cite{BJS} gives a criterion for the existence of such packing, when $X$ and $Y$ are random graphs generated from $G(n, p)$. More precisely, plugging in $k=2$ and $p=q$ into this theorem gives the following statement: there exists a constant $\eps>0$ such that for $p^2\leq \eps \log n/n$ and independent random graphs $X, Y\sim G(n, p)$, there is a packing of $X, Y$ in $K_n$ with high probability. In other words, for $p<(\eps \log n/n)^{1/2}$ the friends-and-strangers graph $\FS(X, Y)$ contains an isolated vertex. 

The case of bipartite graphs is almost identical. Namely, random bipartite graphs on $2n$ vertices can be generated by taking $G(2n, p)$ and deleting all edges occurring among the first $n$ vertices and among the last $n$ vertices. As long as $p<(\eps \log 2n/2n)^{1/2}$, two independent graphs from $G(2n, p)$ can be packed with high probability, and hence the same holds after deleting the edges. This completes the proof of Theorem~\cref{thm:lowerboundstrengthening}.
\end{proof}

\begin{remark} \label{rmk:thresholdisolatedvertices}
Since the bounds of Theorem 1 in \cite{BJS} are tight, this proof actually determines exactly the threshold probability for the existence of isolated vertices in randomly generated friends-and-strangers graphs.
\end{remark}

\noindent
\subsection{Lift of a multiplicity graph.} Finally, the key tool for understanding the connectivity of multiplicity friends-and-strangers graphs is the lift of a multiplicity graph. 

\begin{definition}
Let $Y$ be a graph on $m$ vertices and let $c=(c_1, \dots, c_m)\in \Z_{>0}^m$ be its multiplicity list. We define the \dfn{lift} of $Y$ to be a simple graph $Y'$ on $\sum_{v\in V(Y)} c_v$ vertices which has a vertex partition of $Y'$, $V(Y')=\bigsqcup_{v\in V(Y)} S_v$, with the following properties:
\begin{itemize}
    \item $Y'|_{S_v}$ is a clique of size $c_v$ for all vertices $v\in V(Y)$, and
    \item vertices $u'\in S_u, v'\in S_v$ are adjacent in $Y'$ if and only if $u, v$ are adjacent in $Y$. 
\end{itemize}
\end{definition}

\noindent
In other words, the lift $Y'$ is a blow-up of the graph $Y$, in which the vertex $v$ is replaced with a clique of size $c_v$. Note that defining the lift of the graph also defines a natural projection map $\pi_Y$ from $Y'$ to $Y$ which maps all elements of $S_v$ to $v$.

To see why considering the lift of a graph is useful, let us describe an equivalence relation on the vertices of $\FS(X, Y')$, along with a way to describe the structure of $\FSm(X, Y)$ using $\FS(X, Y')$. Throughout this discussion, we assume a multiplicity graph $Y$ is given, with total capacity $n$, together with a simple graph $X$ on $n$ vertices.

For a finite set $A$, let $\Sym_A$ denote the group of all permutations $\rho:A\to A$, with the composition operation. Furthermore, for a multiplicity graph $Y$, let us define $\Sym_Y$ to be the group of permutations $\rho:V(Y')\to V(Y')$ which satisfy $\rho(S_v)=S_v$ for all cliques $S_v$. In other words, $\Sym_Y$ is the set of those permutations that permute the vertices of $Y'$ only within their cliques. Then, we define an equivalence relation $\equiv$ on the bijections from $V(X)$ to $V(Y')$ given by $\sigma_1\equiv \sigma_2$ if $\sigma_1=\rho\circ \sigma_2$ for some $\rho\in \Sym_Y$. Equivalently, one might say that $\sigma_1\equiv\sigma_2$ if $\pi_Y\circ \sigma_1=\pi_Y\circ \sigma_2$, as functions from $V(X)$ to $V(Y)$.

This equivalence relation induces a natural quotient operation on the graph $\FS(X, Y')$, which identifies all vertices equivalent under $\equiv$ and assigns an edge between distinct equivalence classes $[\sigma]_\equiv, [\tau]_\equiv$ precisely when there exist representatives $\sigma_0\in [\sigma]_\equiv, \tau_0\in [\tau]_\equiv$ which are adjacent in $\FS(X, Y')$. We denote the arising graph by $\FS(X, Y')/\!\equiv$

\begin{proposition}\label{prop:quotientisomorphism}
Let $X$ be a simple graph on $n$ vertices $Y$ be a multiplicity graph with total multiplicity $n$. Then, the quotient $\FS(X, Y')/\!\equiv$ is isomorphic to the graph $\FSm(X, Y)$.
\end{proposition}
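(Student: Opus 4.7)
The plan is to exhibit the natural map
\[
\Phi:\FS(X,Y')/\!\equiv\ \longrightarrow\ \FSm(X,Y),\qquad \Phi([\sigma]_\equiv)\ =\ \pi_Y\circ\sigma,
\]
and to verify that $\Phi$ is a well-defined bijection on vertex sets which both preserves and reflects adjacency. Well-definedness and injectivity are immediate from the equivalent characterization of $\equiv$ already noted in the text: $\sigma_1\equiv\sigma_2$ iff $\pi_Y\circ\sigma_1=\pi_Y\circ\sigma_2$. For surjectivity, given any arrangement $\sigma'\in V(\FSm(X,Y))$, the fibers $(\sigma')^{-1}(v)$ and the cliques $S_v$ both have size $c_v$, so I would pick arbitrary bijections $\psi_v:(\sigma')^{-1}(v)\to S_v$ for each $v\in V(Y)$ and assemble them into a single bijection $\sigma:V(X)\to V(Y')$ whose projection is $\sigma'$.

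Next I would check that $\Phi$ takes edges to edges. Suppose $[\sigma]_\equiv\ne[\tau]_\equiv$ are adjacent in the quotient, witnessed by representatives $\sigma_0,\tau_0$ with $\sigma_0=\tau_0\circ\tr{a}{b}$, $ab\in E(X)$, and $\sigma_0(a)\sigma_0(b)\in E(Y')$. Applying $\pi_Y$ gives $\pi_Y\circ\sigma_0=(\pi_Y\circ\tau_0)\circ\tr{a}{b}$. The key dichotomy is on whether $\sigma_0(a)$ and $\sigma_0(b)$ lie in the same clique $S_v$ of $Y'$: if they do, then $\pi_Y\sigma_0(a)=\pi_Y\sigma_0(b)$, so precomposing $\pi_Y\circ\sigma_0$ with $\tr{a}{b}$ changes nothing, forcing $\sigma_0\equiv\tau_0$ and contradicting distinctness; otherwise they lie in distinct cliques $S_u,S_v$, and by the definition of the lift the adjacency $\sigma_0(a)\sigma_0(b)\in E(Y')$ implies $uv\in E(Y)$, which is exactly the edge condition in $\FSm(X,Y)$ for $\Phi([\sigma]_\equiv)$ and $\Phi([\tau]_\equiv)$.

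Conversely, to see $\Phi$ reflects edges, suppose $\sigma'$ and $\tau'$ are adjacent in $\FSm(X,Y)$ with $\sigma'=\tau'\circ\tr{a}{b}$, $ab\in E(X)$, and $\sigma'(a)\sigma'(b)\in E(Y)$. Using surjectivity from the first step, lift $\sigma'$ to some bijection $\sigma:V(X)\to V(Y')$ with $\pi_Y\circ\sigma=\sigma'$, and set $\tau:=\sigma\circ\tr{a}{b}$. Then $\pi_Y\circ\tau=\tau'$, the vertices $\sigma(a)\in S_{\sigma'(a)}$ and $\sigma(b)\in S_{\sigma'(b)}$ lie in cliques indexed by adjacent vertices of $Y$, so $\sigma(a)\sigma(b)\in E(Y')$, and $[\sigma]_\equiv\ne[\tau]_\equiv$ because their projections differ. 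Hence $[\sigma]_\equiv$ and $[\tau]_\equiv$ are adjacent in $\FS(X,Y')/\!\equiv$, as required.

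There is no substantial obstacle here; the argument is essentially a change-of-variables check. The only subtle point to keep straight is the handling of intra-clique swaps: such edges of $\FS(X,Y')$ join $\equiv$-equivalent bijections and therefore collapse to loops under the quotient, which is consistent with the convention in the definition that quotient edges are only drawn between distinct equivalence classes. Once this is explicitly addressed in the preservation step, the isomorphism follows.
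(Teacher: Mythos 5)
Your proposal is correct and follows essentially the same route as the paper: the same map $[\sigma]_\equiv\mapsto \pi_Y\circ\sigma$, with adjacency preserved and reflected via the same observation that distinct classes force the swapped images into distinct cliques (and conversely that lifted images of adjacent vertices of $Y$ are adjacent in $Y'$). Your explicit verification of surjectivity by matching fibers to cliques is a minor elaboration of a point the paper leaves implicit, not a different argument.
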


\noindent
Before passing to the formal proof, let us illustrate this statement with an example.

\begin{example}
Suppose that $X=\P_3$, with vertices numbered 1, 2, 3, and $Y$ is a multiplicity graph on two adjacent vertices, with multiplicities $1$ and $2$. The lift $Y'$ is a triangle and the graphs $\FSm(X, Y)$ and $\FS(X, Y')$ are represented schematically in Figure~\cref{fig:ex1}.

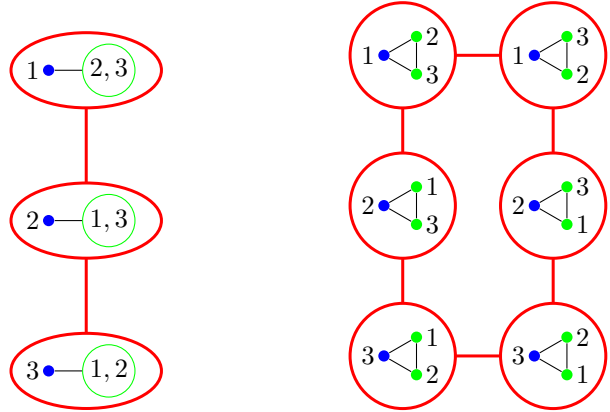
\begin{figure}[h]
\scalebox{1}{
\begin{tabular}{  c c  } 
\begin{tikzpicture}
\coordinate[vtx, blue] (u1) at (0, 2);
\coordinate[vtxbig] (v1) at (0.8, 2);
\draw (u1) node[left] {$1$};
\draw  (v1) node {$2,3$};
\draw (u1) -- (v1);
\draw[line width=0.4mm,red] (0.5,2) ellipse (1cm and 0.5cm);

\coordinate[vtx, blue] (u2) at (0, 0);
\coordinate[vtxbig] (v2) at (0.8, 0);
\draw (u2) node[left] {$2$};
\draw  (v2) node {$1,3$};
\draw (u2) -- (v2);
\draw[line width=0.4mm,red] (0.5, 0) ellipse (1cm and 0.5cm);

\coordinate[vtx, blue] (u3) at (0, -2);
\coordinate[vtxbig] (v3) at (0.8, -2);
\draw (u3) node[left] {$3$};
\draw (v3) node {$1,2$};
\draw (u3) -- (v3);
\draw[line width=0.4mm,red] (0.5, -2) ellipse (1cm and 0.5cm);

\draw[line width=0.4mm, red] (0.5, 1.5) -- (0.5, 0.5);
\draw[line width=0.4mm, red] (0.5, -1.5) -- (0.5, -0.5);

\end{tikzpicture}
&\hspace{2cm}
\begin{tikzpicture}

\coordinate[vtx, blue] (u1) at (0, 2);
\coordinate[vtx, green] (v1) at ($(u1) + (30:0.5)$);
\coordinate[vtx, green] (w1) at ($(u1) + (-30:0.5)$);
\draw (u1) node[left] {$1$};
\draw (v1) node[right] {$2$};
\draw (w1) node[right] {$3$};
\draw (u1) -- (v1) -- (w1) -- (u1);
\draw[line width=0.4mm,red] ($(u1)+(0:0.25)$) ellipse (0.7cm and 0.7cm);

\coordinate[vtx, blue] (u1) at (2, 2);
\coordinate[vtx, green] (v1) at ($(u1) + (30:0.5)$);
\coordinate[vtx, green] (w1) at ($(u1) + (-30:0.5)$);
\draw (u1) node[left] {$1$};
\draw  (v1) node[right] {$3$};
\draw  (w1) node[right] {$2$};
\draw (u1) -- (v1) -- (w1) -- (u1);
\draw[line width=0.4mm,red] ($(u1)+(0:0.25)$) ellipse (0.7cm and 0.7cm);

\coordinate[vtx, blue] (u1) at (0, 0);
\coordinate[vtx, green] (v1) at ($(u1) + (30:0.5)$);
\coordinate[vtx, green] (w1) at ($(u1) + (-30:0.5)$);
\draw (u1) node[left] {$2$};
\draw  (v1) node[right] {$1$};
\draw  (w1) node[right] {$3$};
\draw (u1) -- (v1) -- (w1) -- (u1);
\draw[line width=0.4mm,red] ($(u1)+(0:0.25)$) ellipse (0.7cm and 0.7cm);

\coordinate[vtx, blue] (u1) at (2, 0);
\coordinate[vtx, green] (v1) at ($(u1) + (30:0.5)$);
\coordinate[vtx, green] (w1) at ($(u1) + (-30:0.5)$);
\draw (u1) node[left] {$2$};
\draw  (v1) node[right] {$3$};
\draw  (w1) node[right] {$1$};
\draw (u1) -- (v1) -- (w1) -- (u1);
\draw[line width=0.4mm,red] ($(u1)+(0:0.25)$) ellipse (0.7cm and 0.7cm);

\coordinate[vtx, blue] (u1) at (0, -2);
\coordinate[vtx, green] (v1) at ($(u1) + (30:0.5)$);
\coordinate[vtx, green] (w1) at ($(u1) + (-30:0.5)$);
\draw (u1) node[left] {$3$};
\draw  (v1) node[right] {$1$};
\draw  (w1) node[right] {$2$};
\draw (u1) -- (v1) -- (w1) -- (u1);
\draw[line width=0.4mm,red] ($(u1)+(0:0.25)$) ellipse (0.7cm and 0.7cm);

\coordinate[vtx, blue] (u1) at (2, -2);
\coordinate[vtx, green] (v1) at ($(u1) + (30:0.5)$);
\coordinate[vtx, green] (w1) at ($(u1) + (-30:0.5)$);
\draw (u1) node[left] {$3$};
\draw (v1) node[right] {$2$};
\draw (w1) node[right] {$1$};
\draw (u1) -- (v1) -- (w1) -- (u1);
\draw[line width=0.4mm,red] ($(u1)+(0:0.25)$) ellipse (0.7cm and 0.7cm);

\draw[line width=0.4mm, red] (2.25, 1.3) -- (2.25, 0.7);
\draw[line width=0.4mm, red] (2.25, -1.3) -- (2.25, -0.7);
\draw[line width=0.4mm, red] (0.25, 1.3) -- (0.25, 0.7);
\draw[line width=0.4mm, red] (0.25, -1.3) -- (0.25, -0.7);
\draw[line width=0.4mm, red] (0.95, 2) -- (1.55, 2);
\draw[line width=0.4mm, red] (0.95, -2) -- (1.55, -2);

\end{tikzpicture}
\end{tabular}}
\caption{Schematic representation of graphs $\FSm(X, Y)$ on the left and $\FS(X, Y')$ on the right. }
\label{fig:ex1}
\end{figure}

The red contours represent vertices of the graphs $\FSm(X, Y)$ and $\FS(X, Y')$. Within each red vertex, we see the arrangement of labels $1, 2, 3$ from $V(X)$ onto the vertices of $Y$ or $Y'$ corresponding to it. Note that the vertices of $Y'$ are colored in green and blue in order to keep track which vertex of $Y$ they are coming from.

\noindent
In this example, $\Sym_Y$ consists of only two permutations, and the induced equivalence relation $\equiv$ has equivalence classes of size $2$. These equivalence classes are precisely the pairs of vertices of $\FS(X, Y')$ which are horizontally next to each other --- they differ precisely by exchanging the labels of the two green vertices.
\end{example}

\begin{proof}[Proof of Proposition~\cref{prop:quotientisomorphism}.]
Let us note that every equivalence class of bijections $[\sigma]_\equiv$ can be associated with a function $f_{[\sigma]_\equiv}=\pi_Y\circ \sigma$ from $V(X)$ to $V(Y)$, which is independent of the representative $\sigma\in [\sigma]_\equiv$. The isomorphism of $\FS(X, Y')/\! \equiv$ and $\FSm(X, Y)$ is given by the correspondence $[\sigma]_\equiv\mapsto f_{[\sigma]_\equiv}$. In the rest of the proof, we check that this correspondence preserves the edges of $\FS(X, Y')/\!\equiv$.

We begin by showing that if $[\sigma]_\equiv$ and $[\tau]_\equiv$ are distinct adjacent vertices of $\FS(X, Y')/\!\equiv$, then $f_{[\sigma]_\equiv}$ and  $f_{[\tau]_\equiv}$ are also adjacent in $\FSm(X, Y)$. By definition of the quotient graph $\FS(X, Y')$, we have representatives $\sigma_0\in [\sigma]_\equiv, \tau_0\in [\tau]_\equiv$ differing by a $(X, Y')$-friendly swap along $u_Xv_X\in E(X)$, i.e. $\sigma_0=\tau_0\circ \tr{u_X}{v_X}$. Since $\sigma_0$ and $\tau_0$ are in distinct equivalence classes, the vertices $\sigma_0(u_X), \sigma_0(v_X)\in V(Y')$ are adjacent, but not in the same clique in the clique decomposition of $Y'$. Therefore, the vertices $\pi_Y\circ \sigma_0(u_X), \pi_Y\circ \sigma_0(v_X)$ are distinct and adjacent in $Y$, meaning that swapping vertices $u_X, v_X$ is a $(X, Y)$-friendly swap for the function $f_{[\sigma]_\equiv}$. Since $f_{[\sigma]_\equiv}=f_{[\tau]_\equiv}\circ\tr{u_X}{v_X}$, the vertices $f_{[\sigma]_\equiv}$ and $f_{[\tau]_\equiv}$ are adjacent in $\FS(X, Y)$.

To show the converse statement, suppose $f_{[\sigma]_\equiv}$ and $f_{[\tau]_\equiv}$ are vertices of $\FS(X, Y)$ which differ by a $(X, Y)$-friendly swap of adjacent labels $u_X, v_X\in V(X)$ along the edge $u_Yv_Y\in E(Y)$, where $u_Y, v_Y$ are vertices of $Y$. For any representative $\sigma_0\in [\sigma]_\equiv$, we have $\sigma_0(u_X)\in S_{u_Y}$ and $\sigma_0(v_X)\in S_{v_Y}$, which means that $\sigma_0(u_X)\sigma_0(v_X)$ is an edge of $Y'$. Therefore, the swap $\sigma_0\mapsto \sigma_0\circ \tr{u_X}{v_X}$ is a $(X, Y')$-friendly swap. Noting that $\sigma_0\circ \tr{u_X}{v_X}\equiv \tau$, we conclude that there exists a $(X, Y')$-friendly swap mapping between representatives of $[\sigma]_\equiv$ and $[\tau]_\equiv$. This completes the proof of Proposition~\cref{prop:quotientisomorphism}.
\end{proof}

\noindent
Since the main focus of this work is connectivity of $\FSm(X, Y)$, the following simple corollary gives us an efficient criterion for determining the connectivity of $\FSm(X, Y)$ in terms of connectivity of $\FS(X, Y')$.

\begin{corollary}\label{cor:lifting}
Let $X$ be a simple graph on $n$ vertices and let $Y$ be a multiplicity graph of total multiplicity $n$. If the graph $\FS(X, Y')$ is connected, then so is $\FSm(X, Y)$.
\end{corollary}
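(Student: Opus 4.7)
The plan is to deduce the corollary directly from Proposition~\cref{prop:quotientisomorphism}, which already identifies $\FSm(X, Y)$ with the quotient graph $\FS(X, Y')/\!\equiv$. Thus the statement reduces to the (fairly generic) fact that a quotient of a connected graph, under an equivalence relation whose classes may collapse adjacent or non-adjacent vertices together, is again connected. I would organize the argument around projecting walks in $\FS(X, Y')$ down to walks in the quotient.

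Concretely, I would start by picking two arbitrary vertices $[\sigma]_\equiv, [\tau]_\equiv$ of $\FS(X, Y')/\!\equiv$ and choosing representatives $\sigma_0 \in [\sigma]_\equiv$ and $\tau_0 \in [\tau]_\equiv$. By the hypothesis that $\FS(X, Y')$ is connected, there exists a sequence of bijections $\sigma_0 = \pi_0, \pi_1, \dots, \pi_k = \tau_0$ in which each consecutive pair $\pi_i, \pi_{i+1}$ differs by an $(X, Y')$-friendly swap. I would then pass to the sequence of equivalence classes $[\pi_0]_\equiv, [\pi_1]_\equiv, \dots, [\pi_k]_\equiv$ in the quotient. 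For each step, either $[\pi_i]_\equiv = [\pi_{i+1}]_\equiv$ (which happens precisely when the swap of $\pi_i$ to $\pi_{i+1}$ is along an edge of $Y'$ lying inside some clique $S_v$, since the resulting permutation lies in $\Sym_Y$), or $[\pi_i]_\equiv$ and $[\pi_{i+1}]_\equiv$ are distinct and hence adjacent in $\FS(X, Y')/\!\equiv$ by the very definition of the quotient graph. Deleting the repeated classes from this sequence yields a walk in $\FS(X, Y')/\!\equiv$ from $[\sigma]_\equiv$ to $[\tau]_\equiv$, proving connectivity of the quotient.

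Finally, I would invoke Proposition~\cref{prop:quotientisomorphism} to identify $\FS(X, Y')/\!\equiv$ with $\FSm(X, Y)$, concluding that $\FSm(X, Y)$ is connected. There is no real obstacle here: the only minor point to be careful about is the case where consecutive vertices on the lifted path already represent the same equivalence class, but this is harmlessly absorbed by simply collapsing repeated entries in the projected sequence. The argument is short enough that it is essentially a one-paragraph proof once Proposition~\cref{prop:quotientisomorphism} is in hand.
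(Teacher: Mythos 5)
Your proposal is correct and matches the paper's proof, which simply observes that the quotient $\FS(X, Y')/\!\equiv$ of a connected graph is connected and then invokes Proposition~\cref{prop:quotientisomorphism}; you merely spell out the walk-projection argument that the paper calls immediate. No issues.
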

\begin{proof}
It is immediate to see that the quotient $\FS(X, Y')/\!\equiv$ is connected whenever $\FS(X, Y')$ is connected. Together with Proposition~\cref{prop:quotientisomorphism}, this implies connectedness of $\FSm(X, Y)$.
\end{proof}

\noindent
In fact, we can establish an even more precise description of the connected components of $\FSm(X, Y)$ just based on $\FS(X, Y')$, which will be very important in Section \cref{sec:pathscycles}.

\begin{corollary}\label{cor:quotientcomponents}
Let $X$ be a simple graph on $n$ vertices and let $Y$ be a multiplicity graph of total multiplicity $n$. Let $E_0=\{\sigma\tau|\sigma, \tau\in V(\FS(X, Y'))$ and $\sigma\equiv\tau\}$ and let $G$ be the graph obtained by adding the edges of $E_0$ to $\FS(X, Y')$. Then, the connected components of $\FSm(X, Y)$ have the same vertex sets as projections of connected components of $G$ under the map $\sigma\mapsto \pi_Y\circ \sigma$.
\end{corollary}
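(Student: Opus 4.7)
The plan is to reduce the statement to Proposition~\ref{prop:quotientisomorphism} by comparing the connectivity structure of $G$ with that of the quotient graph $\FS(X,Y')/\!\equiv$. Recall from Proposition~\ref{prop:quotientisomorphism} that $\FSm(X,Y)\cong \FS(X,Y')/\!\equiv$ via the map $[\sigma]_\equiv\mapsto \pi_Y\circ\sigma$, so two arrangements $f_1,f_2\in V(\FSm(X,Y))$ lie in the same connected component of $\FSm(X,Y)$ if and only if their preimage equivalence classes lie in the same component of $\FS(X,Y')/\!\equiv$. The fibers of the projection $\sigma\mapsto \pi_Y\circ\sigma$ on $V(G)=V(\FS(X,Y'))$ are exactly the equivalence classes of $\equiv$, so it remains to check that two vertices $\sigma,\tau\in V(G)$ lie in the same component of $G$ if and only if $[\sigma]_\equiv,[\tau]_\equiv$ lie in the same component of $\FS(X,Y')/\!\equiv$.

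I would prove this equivalence in two directions. For the ``only if'' direction, I traverse a path in $G$ from $\sigma$ to $\tau$. Each edge of $G$ is either an element of $E_0$, in which case both endpoints belong to the same equivalence class (hence project to the same vertex of $\FS(X,Y')/\!\equiv$), or a standard edge of $\FS(X,Y')$, in which case, by definition of the quotient, the endpoints project to equal or adjacent vertices of $\FS(X,Y')/\!\equiv$. Concatenating these observations yields a walk in $\FS(X,Y')/\!\equiv$ from $[\sigma]_\equiv$ to $[\tau]_\equiv$.

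For the ``if'' direction, given a path $[\sigma]_\equiv=[\rho_0]_\equiv, [\rho_1]_\equiv,\ldots,[\rho_k]_\equiv=[\tau]_\equiv$ in $\FS(X,Y')/\!\equiv$, I lift it inductively to a path in $G$. Starting from $\sigma$, at step $i$ I have a representative $\sigma_{i-1}\in [\rho_{i-1}]_\equiv$ reachable in $G$ from $\sigma$; the argument in the second half of the proof of Proposition~\ref{prop:quotientisomorphism} shows that any such representative admits an $(X,Y')$-friendly swap to some representative $\sigma_i\in [\rho_i]_\equiv$, giving an edge in $\FS(X,Y')\subseteq G$. After $k$ steps I reach a representative $\sigma_k\in [\tau]_\equiv$; since $\sigma_k\equiv\tau$, the edge $\sigma_k\tau$ (or a short $E_0$-path identifying them if they are equal) lies in $E_0\subseteq G$, completing the path from $\sigma$ to $\tau$ in $G$.

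Combining both directions, the connected components of $G$ are exactly the unions of equivalence classes corresponding to connected components of $\FS(X,Y')/\!\equiv\,\cong \FSm(X,Y)$, and projecting by $\sigma\mapsto\pi_Y\circ\sigma$ recovers the vertex sets of the components of $\FSm(X,Y)$, as claimed. I do not anticipate a genuine obstacle: the nontrivial content is already absorbed in Proposition~\ref{prop:quotientisomorphism}, and the role of $E_0$ is precisely to let any representative of a class be connected to any other within $G$, which cleanly handles the representative-dependence that arises when lifting $(X,Y)$-friendly swaps to $(X,Y')$-friendly swaps.
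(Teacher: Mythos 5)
Your proposal is correct and follows essentially the same route as the paper: reduce via Proposition~\cref{prop:quotientisomorphism} to the quotient $\FS(X,Y')/\!\equiv$, then match connected components of $G$ with those of the quotient, using that $E_0$ places each equivalence class inside a single component of $G$ and that quotient-adjacency lifts to adjacency of representatives (the second half of the proof of Proposition~\cref{prop:quotientisomorphism}). The paper phrases this as "the image of a component of $G$ has no outgoing edges and is connected" rather than projecting and lifting paths, but the content is the same.
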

\begin{proof}
By Proposition~\cref{prop:quotientisomorphism}, we may consider the graph $\FS(X, Y')/\!\equiv$ instead of $\FSm(X, Y)$, with the map $\sigma\mapsto [\sigma]_\equiv$ replacing $\sigma\mapsto \pi_Y\circ \sigma$. Let us denote the projection $\sigma\mapsto [\sigma]_\equiv$ by $\pi_\equiv$. 

Note that the partition of $V(G)$ into connected components is coarser than the partition into equivalence classes $[\sigma]_\equiv$, and therefore different connected components have disjoint images under $\pi_\equiv$. Since the map $\pi_\equiv:V(\FS(X, Y'))\to V(\FS(X, Y')/\!\equiv)$ is surjective, to complete the proof it suffices to show that, for an arbitrary connected component $C$ of the graph $G$, $\left(\FS(X, Y')/\!\equiv\right)|_{\pi_\equiv(C)}$ is a connected component of $\FS(X, Y')/\!\equiv$.

We begin by showing there are no outgoing edges from $\pi_\equiv(C)$. Suppose the contrary, and let $[\sigma]_\equiv[\tau]_\equiv$ be an edge of $\FS(X, Y')/\!\equiv$ with $[\sigma]_\equiv\in \pi_\equiv(C), [\tau]_\equiv\notin \pi_\equiv(C)$. By definition of $\FS(X, Y')/\!\equiv$, this means there exist representatives $\sigma_0\in [\sigma]_\equiv, \tau_0\in [\tau]_\equiv$ with $\sigma_0\tau_0\in E(\FS(X, Y'))$. Since the partition of $V(G)$ into connected components is coarser than the partition into equivalence classes, we must have $\sigma_0\in C$ and $\tau_0\notin C$, which is a contradiction to the fact that $C$ is a connected component of $G$.

On the other hand, by definition of $\FS(X, Y')/\!\equiv$, the map $\pi_\equiv$ preserves adjacencies between nonequivalent bijections, which means that $\left(\FS(X, Y')/\!\equiv\right)|_{\pi_\equiv(C)}$ is a connected subgraph of $\FS(X, Y')/\!\equiv$. This completes the proof of Corollary~\cref{cor:quotientcomponents}.
\end{proof}

\section{Random bipartite graphs}\label{sec:random}
\subsection{General proof strategy}\label{subsec:setup}

The main goal of this section is to prove Theorem~\cref{thm:randombipartiteconnectivity}, following the proof strategy presented in \cite{ADK}. We begin by introducing the terminology used in the proof and presenting a high-level overview of the proof strategy.

The first step of the proof is to reduce the question about connectivity of the graph $\FS(X, Y)$ to a local problem about exchangeability of certain pairs of vertices. More precisely, we begin by reducing Theorem~\cref{thm:randombipartiteconnectivity} to Proposition~\cref{prop:bipartiteexchangeability}, which mirrors Proposition 4.5 of \cite{ADK}.

\begin{proposition}\label{prop:bipartiteexchangeability}
Let $X, Y$ be independently chosen random graphs in $\G(K_{n, n}, p)$, where $p$ satisfies \cref{eqn:pbound}, as in Theorem \cref{thm:randombipartiteconnectivity}. Then, the following statement holds with high probability: for any arrangement $\sigma:V(X)\to V(Y)$ and any two vertices $u_0, v_0$ in different partite sets of $Y$ whose preimages $\sigmai(u_0), \sigmai(v_0)$ are in different partite sets of $X$, one has that $u_0$ and $v_0$ are $(\t X, Y)$-exchangeable from $\sigma$, where $\t X$ denotes the graph $X$ with edge $\sigmai(u_0)\sigmai(v_0)$ added.
\end{proposition}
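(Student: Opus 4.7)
The plan is to mirror the non-bipartite argument of Alon, Defant, and Kravitz, with new ingredients to accommodate the bipartite parity structure. Fix an arrangement $\sigma:V(X)\to V(Y)$ and a pair $(u_0, v_0)$ as in the statement; write $a = \sigmai(u_0)$, $b = \sigmai(v_0)$, so that $ab\in E(\t X)$, and without loss of generality $a\in A_X$, $u_0\in A_Y$, hence $b\in B_X$, $v_0\in B_Y$. In particular $\t X$ is still bipartite, and the desired transposition $(u_0\; v_0)$ is consistent with the parity test of \cref{prop:parityobstruction}, so the obstruction from bipartiteness does not rule out exchangeability.

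The strategy is to find a Hamilton cycle $H$ in $\t X$ through the edge $ab$ and to perform a sequence of cyclic rotations of labels along $H$ via $(\t X, Y)$-friendly swaps. A pure rotation along $H$ induces a full cyclic permutation of labels, which is not a transposition, so rotations alone do not suffice. Following the approach of Alon, Defant, and Kravitz, one combines rotations along $H$ with localized moves supported on a small attached structure -- a chord, short cycle, or handle -- chosen so that every label except $u_0$ and $v_0$ is returned to its starting vertex. In the bipartite setting, every friendly swap preserves bipartiteness, so in place of the three-cycle move from \cite{ADK} one uses a $4$-cycle in $\t X$ together with matching edges of $Y$, arranged so that the composition of the resulting swaps is exactly $(u_0\; v_0)$.

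Executing this scheme requires that $Y$ contains certain supporting structures conditional on $\sigma$: for each rotation step along $H$ one needs a near-perfect matching in $Y$ between labels currently sitting on consecutive vertices of $H$, and for the handle move a small prescribed set of edges of $Y$ must be present. Since $p\gtrsim n^{-1/2}$ is well above the Hamiltonicity threshold for $\G(K_{n,n}, p)$, standard arguments show that $\t X$ contains Hamilton cycles through $ab$ with overwhelming probability, and in fact contains an exponentially rich family of such cycles. This abundance gives the flexibility to choose $H$ so that the rotation-supporting matchings in $Y$ exist; the key new technical input, hinted at in the introduction, is a matching-existence result for $\G(K_{n, n}, p)$ that guarantees the required matchings with failure probability $\exp(-\omega(n\log n))$ uniformly in $\sigma$.

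The main obstacle is precisely this quantitative matching estimate. A naive union bound over the $(n!)^2$ arrangements $\sigma$ costs $\exp(O(n\log n))$, so the failure probability for a fixed $\sigma$ must beat this bound; the slack factor $\exp(10(\log n)^{4/5})$ above $n^{-1/2}$ in \cref{eqn:pbound} is exactly what is needed to push the matching-existence probability past the union-bound barrier, which explains the precise exponent in the hypothesis. Once this matching result is in place, combining it with the Hamiltonicity of $\t X$ and the deterministic rotation-plus-handle construction yields, for every $\sigma$ and every legal pair $(u_0, v_0)$, an explicit sequence of $(\t X, Y)$-friendly swaps realizing $(u_0\; v_0)$, completing the proof of \cref{prop:bipartiteexchangeability}.
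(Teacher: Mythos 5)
Your proposal diverges from the paper's argument in its core mechanism, and that mechanism has a genuine gap. Rotating labels around a Hamilton cycle of $\t X$ is not executable when $Y\sim \G(K_{n,n},p)$ with $p=n^{-1/2+o(1)}$: a one-step rotation along a cycle $x_1x_2\cdots x_{2n}$ performed by the swap sequence $(x_1x_2),(x_2x_3),\dots$ forces a single ``carried'' label to be $Y$-adjacent to essentially every label it passes, and no vertex of $Y$ has degree larger than about $pn\approx n^{1/2+o(1)}$, so the rotation stalls after a vanishing fraction of the cycle no matter how the Hamilton cycle is chosen (the obstruction concerns the labels, not the cycle). The ``near-perfect matching in $Y$ between labels on consecutive vertices of $H$'' does not repair this: disjoint friendly swaps compose to a product of disjoint transpositions, not a cyclic shift, and after each round the labels move, so you would need fresh $Y$-edges whose existence you never establish. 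Relatedly, your accounting of the hypothesis is off: the matching-type estimate that survives a union bound over all $\sigma$ (failure probability $e^{-\Omega(p^2n^2)}$, via K\"onig's theorem) appears in the paper only as \cref{prop:balancing}, whose role is to turn an unbalanced arrangement into a balanced one without touching $u_0,v_0$; the factor $\exp(10(\log n)^{4/5})$ in \cref{eqn:pbound} is dictated not by that union bound (for which $p\gg(\log n/n)^{1/2}$ would suffice) but by the embeddability condition of \cref{lemma:technical}, where the term $2^{m+1}$ and the gadget parameters force the particular exponent $4/5$.

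The paper's actual route avoids any global structure in $\t X$: it builds four fixed pairs of auxiliary bipartite graphs $(\GR,\HR)$ on only $m+2\approx(\log n)^{4/5}$ vertices, proves by hand (via repeated use of Wilson's theorem, since $\HR$ is essentially a star plus pendant structure) that $u,v$ are $(\GR,\HR)$-exchangeable from the identity (\cref{lemma:exchangeability12}, \cref{lemma:exchangeability34}), and then shows these small gadgets embed into $(X,Y)$ compatibly with any balanced $\sigma$ and with $\psi_H(u)=u_0,\psi_H(v)=v_0$ (\cref{lemma:embeddability}, \cref{prop:bipartiteembeddabilityimpliesembeddability}); exchangeability then transfers by \cref{prop:embedabilityimpliesexchangeability}, and the four majority-mapping cases (C1)--(C4), which your sketch does not address at all, are handled by symmetry. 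The point of the polylogarithmic gadget size is exactly what your Hamilton-cycle scheme lacks: only polylogarithmically many $Y$-edges are ever required, so the embedding probability can be boosted over the large admissible sets $V_1,\dots,V_m$ and still survive the union bound over arrangements. To make your approach rigorous you would need to replace the rotation-plus-handle step with a construction whose $Y$-edge requirements are subpolynomial, at which point you are back to the gadget argument.
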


\noindent
The following argument, reproduced from \cite{ADK}, shows how Proposition~\cref{prop:bipartiteexchangeability} implies Theorem~\cref{thm:randombipartiteconnectivity}. Intuitively, this argument should be thought of as the analogue of Proposition~\cref{prop:exchangeabilityimpliesconnectedness} in the bipartite setting.

\begin{proof}[Proof of Theorem~\cref{thm:randombipartiteconnectivity} from Proposition~\cref{prop:bipartiteexchangeability}.]

We use Proposition 2.8 from \cite{ADK}, which states the following. Let $X, Y$ and $\t Y$ be graphs on $[n]$, having $Y\subseteq \t Y$. Suppose that for every edge $uv\in E(\t Y)$ and every $\sigma:V(X)\to V(Y)$ with $\sigmai(u)\sigmai(v)\in E(X)$ the vertices $u, v$ are $(X, Y)$-exchangeable from $\sigma$. Then the number of connected components of $\FS(X, Y)$ is the same as the number of connected components of $\FS(X, \t Y)$.

Let us begin by applying Proposition~\cref{prop:bipartiteexchangeability} to conclude that, with high probability, any two vertices $u_0, v_0$ in different partite sets of $Y$ satisfying $\sigmai(u_0)\sigmai(v_0)\in E(X)$ are $(X, Y)$-exchangeable, as $\t X=X$. Hence, by setting $\t Y=K_{n, n}$ in Proposition 2.8 of \cite{ADK}, we conclude that $\FS(X, Y)$ has the same number of connected components as $\FS(X, K_{n, n})$. Since $\FS(X, K_{n, n})\cong \FS(K_{n, n}, X)$, it suffices to show that $\FS(K_{n, n}, X)$ has two connected components with high probability.

Swapping the roles of $X$ and $Y$ in Proposition~\cref{prop:bipartiteexchangeability}, we can also say that, with high probability, for any two vertices $u_0', v_0'$ in different partite sets of $X$, $u_0'$ and $v_0'$ are $(\t Y, X)$-exchangeable from any $\tau:V(Y)\to V(X)$, where $\t Y$ is the graph $Y$ with $\taui(u_0)\taui(v_0)$ added. Since $\t Y\subset K_{n, n}$, any two vertices $u_0', v_0'$ in different partite sets of $X$ are $(K_{n, n}, X)$-exchangeable.

Therefore, by Proposition 2.8 of \cite{ADK}, we conclude that $\FS(K_{n, n}, X)$ has the same number of connected components as $\FS(K_{n, n}, K_{n, n})$, which is two, as shown by Proposition 2.6 of \cite{ADK}. Hence, we conclude that the graph we started with, $\FS(X, Y)$, also has two connected components with high probability, completing the proof.
\end{proof}

\noindent
To prove Proposition~\cref{prop:bipartiteexchangeability}, we combine the methods used in \cite{ADK} to tackle the bipartite and non-bipartite cases. The approach for proving Proposition~\cref{prop:bipartiteexchangeability} relies on constructing sparse bipartite graphs $G, H$ on the vertex set $[m]\cup \{u, v\}$, in which the vertices $u$ and $v$ are $(G, H)$-exchangeable starting from the identity arrangement. Since the construction is involved and contains many details, we postpone the details until Subsections \cref{subsec:construction} and \cref{subsec:exchangeable}. Still, we will point out the properties of $G, H$ relevant for the proof.

After constructing $G$ and $H$, our next objective will be to show the following statement: With high probability, for every arrangement $\sigma$ and vertices $u_0, v_0$ from Proposition~\cref{prop:bipartiteexchangeability}, there exist embeddings $\psi_G:V(G)\to V(\t X)$ and $\psi_H:V(H)\to V(Y)$ respecting the arrangement $\sigma$ in the sense that $\sigma\circ \psi_G=\psi_H\circ \id$ and having $\psi_H(u)=u_0, \psi_H(v)=v_0$. In this context, an embedding is merely an injective map which preserves adjacencies. This objective will be accomplished in Subsections \cref{subsec:bipartiteembeddable} and \cref{subsec:constructingembeddings}. 

Intuitively, finding embeddings $\psi_G, \psi_H$ corresponds to finding subgraphs of $X$ and $Y$ which are isomorphic to $G$ and $H$ and are mapped to each other under $\sigma$. Proposition~\cref{prop:embedabilityimpliesexchangeability} shows how embeddings $\psi_G, \psi_H$ can be used to ensure that $u_0, v_0$ are $(X, Y)$-exchangeable.

\begin{proposition}\label{prop:embedabilityimpliesexchangeability}
Let $X, Y$ be bipartite graphs with the vertex set $[n]$, and let $\sigma:V(X)\to V(Y)$ be an arbitrary arrangement. Further, let $G, H$ be bipartite graphs with the vertex set $[m]\cup \{u, v\}$, such that $u$ and $v$ are $(G, H)$-exchangeable starting from the identity arrangement $\id:V(G)\to V(H)$. Suppose that there exist embeddings $\psi_G:V(G)\to V(\t X)$ and $\psi_H:V(H)\to V(Y)$ with $\sigma\circ \psi_G=\psi_H\circ \id$ and $\psi_H(u)=u_0, \psi_H(v)=v_0$. Then, $u_0$ and $v_0$ are $(\t X, Y)$-exchangeable from $\sigma$.
\end{proposition}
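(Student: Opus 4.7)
The plan is to lift the exchange sequence for $u,v$ in $\FS(G,H)$ directly to an exchange sequence for $u_0,v_0$ in $\FS(\t X,Y)$, using the embeddings $\psi_G,\psi_H$ as transport. The guiding picture is that $\psi_G$ identifies a copy of $G$ inside $\t X$ and $\psi_H$ identifies a copy of $H$ inside $Y$, while the hypothesis $\sigma\circ\psi_G=\psi_H$ says that $\sigma$ maps the former copy exactly onto the latter in a way compatible with the identity arrangement on $V(G)=V(H)$. Consequently, every local swap performed on $G,H$ can be executed ``through'' $\psi_G,\psi_H$ on $\t X$ and $Y$, while the vertices of $X$ outside $\psi_G(V(G))$ are never touched.

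Concretely, let $\id=\tau^{(0)},\tau^{(1)},\ldots,\tau^{(k)}=\tr{u}{v}$ be the sequence of bijections in $\FS(G,H)$ witnessing $(G,H)$-exchangeability of $u,v$ from $\id$, with $\tau^{(t)}=\tau^{(t-1)}\circ\tr{a_t}{b_t}$, $a_tb_t\in E(G)$, and $\tau^{(t-1)}(a_t)\tau^{(t-1)}(b_t)\in E(H)$. I would set $\sigma^{(0)}=\sigma$ and inductively define $\sigma^{(t)}=\sigma^{(t-1)}\circ\tr{\psi_G(a_t)}{\psi_G(b_t)}$, and then prove by induction on $t$ the two statements
\[
\sigma^{(t)}\circ\psi_G \;=\; \psi_H\circ\tau^{(t)} \qquad\text{and}\qquad \sigma^{(t)}(w)=\sigma(w) \text{ for all } w\notin\psi_G(V(G)).
\]
The inductive step reduces to the pointwise identity $\tr{\psi_G(a_t)}{\psi_G(b_t)}\circ\psi_G=\psi_G\circ\tr{a_t}{b_t}$, which follows from injectivity of $\psi_G$. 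Given this, the step $\sigma^{(t-1)}\to\sigma^{(t)}$ is automatically $(\t X,Y)$-friendly: the edge condition $\psi_G(a_t)\psi_G(b_t)\in E(\t X)$ holds because $\psi_G$ preserves adjacency, while $\sigma^{(t-1)}(\psi_G(a_t))\sigma^{(t-1)}(\psi_G(b_t))=\psi_H(\tau^{(t-1)}(a_t))\psi_H(\tau^{(t-1)}(b_t))\in E(Y)$ because $\psi_H$ preserves adjacency and the $t$-th swap is $(G,H)$-friendly.

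It then remains to verify $\sigma^{(k)}=\tr{u_0}{v_0}\circ\sigma$. For $w\in V(X)\setminus\psi_G(V(G))$ the second conclusion of the induction gives $\sigma^{(k)}(w)=\sigma(w)$, and injectivity of $\psi_H$ forces $\sigma(w)\notin\{u_0,v_0\}$, so $\tr{u_0}{v_0}\circ\sigma$ agrees with $\sigma$ at $w$. For $w=\psi_G(a)$ with $a\in V(G)$, the lifting identity at $t=k$ gives $\sigma^{(k)}(\psi_G(a))=\psi_H(\tr{u}{v}(a))$, and a three-case check on whether $a=u$, $a=v$, or $a\notin\{u,v\}$ matches this to $\tr{u_0}{v_0}(\psi_H(a))$ using only $\psi_H(u)=u_0$, $\psi_H(v)=v_0$, and injectivity of $\psi_H$. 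I do not expect any substantive obstacle in this proposition: its role is to package the ``transport through $\psi_G,\psi_H$'' argument as a clean black box, and the real work of the section --- constructing $G,H$ and producing the embeddings $\psi_G,\psi_H$ given $\sigma$ --- is deferred to the later subsections.
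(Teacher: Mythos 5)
Your proposal is correct and is essentially the paper's proof: the paper likewise transports the $(G,H)$-swap sequence through the embeddings, performing the swaps $\psi_H(a_1)\psi_H(b_1),\dots,\psi_H(a_k)\psi_H(b_k)$ (equivalently, across the edges $\psi_G(a_t)\psi_G(b_t)$ of $\t X$, as you do) and noting that the embedding properties make each swap $(\t X,Y)$-friendly and land at $\tr{u_0}{v_0}\circ\sigma$; your inductive lifting identity just spells this out. (One microscopic point: at the final verification it is bijectivity of $\sigma$, rather than injectivity of $\psi_H$, that guarantees $\sigma(w)\notin\{u_0,v_0\}$ for $w\notin\psi_G(V(G))$.)
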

\begin{proof}
Let the sequence of $(G, H)$-friendly swaps transforming $\id$ into $\tr{u}{v}$ be $a_1b_1, a_2b_2, \dots$, $a_kb_k$. Then, the sequence of swaps $\psi_H(a_1)\psi_H(b_1), \dots, \psi_H(a_k)\psi_H(b_k)$ takes $\sigma$ to $\tr{u_0}{v_0}\circ \sigma$. Properties of $\psi_G, \psi_H$ guarantee that all of these swaps are $(\t X, Y)$-friendly, and hence $(u_0, v_0)$ are $(\t X, Y)$-exchangeable.
\end{proof}

\noindent
Finally, let us describe the strategy we use to construct the embeddings $\psi_G:V(G)\to V(X)$ and $\psi_H:V(H)\to V(Y)$. The idea behind this step of the proof is to choose large disjoint sets $V_1, \dots, V_m\subseteq V(Y)$ and look for a $m$-tuple of vertices $(\psi_H(1), \dots, \psi_H(m))\in V_1\times \dots \times V_m$ which will, together with vertices $u_0$ and $v_0$, span a copy of $H$ in $Y$ and whose preimages under $\sigma$ span a copy of $G$ in $X$. To formalize this idea, we need to introduce the following terminology, which initially appeared in \cite{ADK}. 

Let $G'=G|_{[m]}$ and $H'=H|_{[m]}$ be graphs $G, H$ with vertices $u, v$ removed, and let $V(G')=A_{G'}\bigsqcup B_{G'}$, $V(H')=A_{H'}\bigsqcup B_{H'}$ be the respective bipartitions. Also, assume that $X$ and $Y$ have bipartitions $V(X)=A_X\bigsqcup B_X$ and $V(Y)=A_Y\bigsqcup B_Y$. For a fixed arrangement $\sigma:V(X)\to V(Y)$, we call disjoint sets $V_1, \dots, V_m\subseteq V(Y)$ \dfn{admissible} for $\sigma$ and $(G', H')$ if the following two conditions are satisfied: 
\begin{itemize}
    \item $V_i\subseteq A_Y$ for $i\in A_{H'}$ and $V_i\subseteq B_Y$ if $i\in B_{H'}$, and 
    \item $\sigma^{-1}(V_i)\subseteq A_X$ for $i\in A_{G'}$ and $\sigmai(V_i)\subseteq B_X$ for $i\in B_{G'}$.
\end{itemize}

\noindent
We say that the pair $(G', H')$ is \dfn{bipartite-embeddable in $(X, Y)$} with respect to admissible sets $V_1, \dots, V_m$ and an arrangement $\sigma$ if there exist embeddings $\psi_{G}:V(G')\to V(X), \psi_{H}:V(H')\to V(Y)$ such that $\psi_{H}(i)\in V_i$ and $\sigma\circ \psi_{G}=\psi_{H}\circ\id$. Moreover, if $q_1, \dots, q_m$ are fixed positive integers satisfying $q_1+\cdots+q_m\leq n$, we say that $(G', H')$ is \dfn{$(q_1, \dots, q_m)$-bipartite-embeddable in $(X, Y)$} if $(G', H')$ is bipartite-embeddable in $(X, Y)$ for any admissible choice of $V_1, \dots, V_m$ and $\sigma$ satisfying $|V_i|=q_i$ for all $i\in [m]$.

The concept of $(q_1, \dots, q_m)$-bipartite-embeddability is useful for the following reasons. On one hand, Lemma 4.2 of \cite{ADK} gives an simple condition on $(q_1, \dots , q_m)$ which ensures that the pair $(G', H')$ is $(q_1, \dots, q_m)$-bipartite-embeddable in $X, Y$ with high probability. Since each of $q_1, \dots, q_m$ must be somewhat large in order to satisfy this condition, we cannot directly infer the existence of embeddings $\psi_G, \psi_H$, since Proposition~\cref{prop:embedabilityimpliesexchangeability} requires that $\psi_H(u)=u_0$ and $\psi_H(v)=v_0$.

Hence, we need to use the fact $(G', H')$ is $(q_1, \dots, q_m)$-biparite-embeddable in the following way. The idea is to choose $V_i\subseteq N_Y(u_0)$ for all $i\in N_H(u)$ and $V_i\subseteq \sigma(N_X(u_0'))$ for all $i\in N_G(u)$, since this ensures $\psi_H(i)u_0\in E(Y), \psi_G(i)u_0'\in E(X)$ (we require similar conditions for $i\in N_G(v), N_H(v)$). However, since the sets $V_1, \dots, V_m$ must be admissible, we must simultaneously ensure $V_i\subseteq A_Y$ for all $i\in A_H$ and $V_i\subseteq \sigma(A_X)$ for $i\in A_G$. Hence, based on how the pairs of sets $\sigma(N_X(u_0')), A_Y$ and $N_Y(u_0), \sigma(A_X)$ intersect, we have have four cases. To describe them, we need the notion of \textit{majority-mapping}, introduced in \cite{ADK}.

For an arrangement $\sigma:V(X)\to V(Y)$ and vertices $u_0, v_0$ having preimages $u_0'=\sigmai(u_0)$ and $v_0'=\sigmai(v_0)$, we say that $\sigma$ \textit{majority-maps} $N_X(u_0')$ to $A_Y$ if $|\sigma(N_X(u_0'))\cap A_Y|\geq \frac{1}{2} |N_X(u_0')|$. This definition is naturally generalized if we replace $A_Y$ by $B_Y$ or if we replace $\sigma$ and $Y$ by $\sigmai$ and $X$. We adopt the convention of naming $A_X$ and $A_Y$ so that $u_0'\in A_X$ and $u_0\in A_Y$. Based on where $\sigma$ and $\sigmai$ majority-maps $N_X(u_0'), N_X(v_0'), N_Y(u_0), N_Y(v_0)$, we have four essentially different cases.\footnote{Although these four cases are reminiscent of the four cases in the proof of Proposition 4.5 of \cite{ADK}, there is a slight difference. Namely, \cite{ADK} seems to miss the subtle difference between cases (C3) and (C4) presented here, and accounts for both of them in the case (IV). Further, \cite{ADK} does not make use of the symmetry $\sigma\mapsto \sigmai$ which allows us to reduce their cases (II) and (III) to a single case (C2).}

\begin{enumerate}[start=1, label={(C\arabic*):}]
    \item $\sigma$ majority-maps $N(u_0')$ and $N(v_0')$ to $B_Y$, and $\sigmai$ majority-maps $N(u_0)$ and $N(v_0)$ to $A_X$.
    \item $\sigma$ majority-maps $N(u_0')$ and $N(v_0')$ to $B_Y$, and $\sigmai$ majority-maps $N(u_0)$ to $A_X$ and $N(v_0)$ to $B_X$.
    \item $\sigma$ majority-maps $N(u_0')$ to $B_Y$ and $N(v_0')$ to $A_Y$, and $\sigmai$ majority-maps $N(u_0)$ to $A_X$ and $N(v_0)$ to $B_X$.
    \item $\sigma$ majority-maps $N(u_0')$ to $B_Y$ and $N(v_0')$ to $A_Y$, and $\sigmai$ majority-maps $N(u_0)$ to $B_X$ and $N(v_0)$ to $A_X$.
\end{enumerate}
\noindent
As we will see, all other cases can be reduced to those four by applying symmetry. These cases seem to be merely a technical requirement of our argument and it might be possible to apply some additional symmetries, or slightly alter the argument, in order to further reduce the number of cases.

In anticipation of these four cases, we will construct four pairs of graphs $(\GR, \HR)$, for $\rho\in \{1, 2, 3, 4\}$, in which sets $N_G(u), N_H(u), N_G(v), N_H(v)$ are disjoint and satisfy the following properties, corresponding to the cases (C1), (C2), (C3), (C4).

\begin{enumerate}[label={(C\arabic*):}]
    \item $N_{G^{(1)}}(u), N_{G^{(1)}}(v)\subseteq B_{H^{(1)}}$, $N_{H^{(1)}}(u), N_{H^{(1)}}(v)\subseteq A_{G^{(1)}}$.
    \item $N_{G^{(2)}}(u), N_{G^{(2)}}(v)\subseteq B_{H^{(2)}}$, $N_{H^{(2)}}(u)\subseteq A_{G^{(2)}}, N_{H^{(2)}}(v)\subseteq B_{G^{(2)}}$.
    \item $N_{G^{(3)}}(u)\subseteq A_{H^{(3)}}, N_{G^{(3)}}(v)\subseteq B_{H^{(3)}}$, $N_{H^{(3)}}(u)\subseteq A_{G^{(3)}}, N_{H^{(3)}}(v)\subseteq B_{G^{(3)}}$.
    \item $N_{G^{(4)}}(u)\subseteq A_{H^{(4)}}, N_{G^{(4)}}(v)\subseteq B_{H^{(4)}}$, $N_{H^{(4)}}(u)\subseteq B_{G^{(4)}}, N_{H^{(4)}}(v)\subseteq A_{G^{(4)}}$.
\end{enumerate}

\noindent
Before concluding this subsection, let us present the plan for this section. In Subsection \cref{subsec:construction}, we provide all details for constructing $\GR, \HR$. Then, in Subsection \cref{subsec:exchangeable}, we show that $u, v$ are indeed $(\GR, \HR)$-exchangeable from the identity permutation. Further, Subsection \cref{subsec:bipartiteembeddable} uses the lemma of Alon, Defant, and Kravitz that, for an appropriate choice of $q_1, \dots q_m$, the graphs $(\GR, \HR)$ are, with high probability, $(q_1, \dots, q_m)$-bipartite-embeddable in $X, Y$. Then Subsection \cref{subsec:constructingembeddings} shows how to use $(q_1, \dots, q_m)$-bipartite-embeddability in order to construct embeddings $\psi_G, \psi_H$ satisfying the conditions of Proposition~\cref{prop:embedabilityimpliesexchangeability}. Finally, in Subsection \cref{subsec:finalproof} we complete the proof by putting together all ingredients.

\subsection{Construction of $\GR$ and $\HR$}\label{subsec:construction}

In this subsection, we describe the details of the construction of graphs $\GR, \HR$. We set the number of vertices of $\GR, \HR$ to be $m^{(\rho)}+2$, where $m^{(1)}=m^{(2)}$ is the smallest integer larger than $(\log n)^{4/5}$ divisible by $8$ and $m^{(3)}=m^{(4)}=m^{(1)}+3$.

Throughout this subsection, we will simplify the notation $\GR, \HR, m^{(\rho)}$ and only write $G, H, m$, where the index $\rho$ will be understood. As the constructions in the four cases will be essentially the same, we present them together and note where the differences arise.

Let $\l=\lfloor \frac{1}{4}m^{1/4}\rfloor+\epsilon$, where $\epsilon\in \{0, \dots, 7\}$ is chosen so that $8|\l$ . In what follows, we will only deal with asymptotics as $m, n\to \infty$, and hence we will omit the floor symbols and $\epsilon$.

As previously noted, the vertex set of $G, H$ will be $[m]\cup \{u, v\}$. In the cases (C1) and (C2), we denote the elements of $[m]$ by
\[x_1, \dots, x_\l, y_1, \dots, y_\l , z_{1, 1}, \dots, z_{\l, k}, t_1, \dots, t_s, s_1, s_2, s_3, r_1, r_2, r_3, w,\]
where $k=2\l $ and $s=m-7-\l(k+2)$. In the cases (C3) and (C4), we denote the elements of $[m]$ by
\[x_1, \dots, x_\l, y_1, \dots, y_\l  , z_{1, 1}, \dots, z_{\l, k}, \t z_{1, 1}, \dots, \t z_{3, k}, t_1, \dots, t_s, s_1, s_2, s_3, r_1, r_2, r_3, w,\]
where $s=m-7-\l (k+2)-3k$.

Introducing such notation for elements of $[m]$ primarily serves to distinguish the neighbors of $u$ and $v$ from other vertices of the graph. For example, we will have $N_G(u)=\s=\{s_1, s_2, s_3\}, N_G(v)=\r=\{r_1, r_2, r_3\}$, $N_H(u)=\x=\{x_1, \dots, x_\l\}, N_H(v)=\y=\{y_1, \dots, y_\l  \}$. For the sake of completeness, we also introduce notation $\z=\{z_{1, 1}, \dots, z_{\l, k}\}$ in the cases (C1) and (C2), and $\z=\{z_{1, 1}, \dots, z_{\l, k}, \t z_{1, 1},$ $\dots, \t z_{3, k}\}$ in the cases (C3) and (C4). We will also assume that $G, H$ have bipartitions $V(G)=A_G\bigsqcup B_G$ and $V(H)=A_H\bigsqcup B_H$, with $u\in A_G, A_H$ and $v\in B_G, B_H$. 

We begin by describing the construction of $H$, since is much simpler than that of $G$. In the cases (C1) and (C2), we let $H$ have a bipartiton $A_H, B_H$, with 
\[A_H=\{u, y_1, \dots, y_\l  , w\}, \hspace{1cm} B_H=\{v, z_{1, 1}, \dots, z_{\l, k}, x_1, \dots, x_\l, t_1, \dots, t_s, s_1, s_2, s_3, r_1, r_2, r_3\}\] 
and the edges $\{ux_i:i\in [\l]\}, \{vy_i: i\in [\l]\}$, $\{y_i, z_{i, j}:i\in [\l], j\in [k]\}$, $\{wa:a\in B_H\}$. In the cases (C3) and (C4), we let $H$ have a bipartiton $A_H, B_H$, with \[A_H=\{u, y_1, \dots, y_\l  , w, s_1, s_2, s_3\}, \hspace{1cm} B_H=\{v, z_{1, 1},  \dots, \t z_{3, k}, x_1, \dots, x_\l, t_1, \dots, t_s, r_1, r_2, r_3\}\] and the edges $\{ux_i:i\in [\l]\}, \{vy_i: i\in [\l]\}$, $\{y_i, z_{i, j}:i\in [\l], j\in [k]\}$, $\{wa:a\in B_H\}$, $\{s_i, \t z_{i, j}: i\in [3], j\in [k]\}$. Figure~\cref{fig:graphH} illustrates this.

\begin{figure}[h]
\begin{tabular}{  c c  } 
\begin{tikzpicture}
\coordinate[vtx] (w) at (2, 0);
\draw (w) node[right] {$w$};

\coordinate[vtx] (x1) at (0, 3);
\draw (x1) node[above left] {$x_1$};
\coordinate[vtx] (xl) at (0, 2.2);
\draw (xl) node[above left] {$x_\l$};
\draw (0, 2.7) node {$\vdots$};

\draw (w) -- (x1);
\draw (w) -- (xl);

\coordinate[vtx] (u) at (-2, 2.55);
\draw (u) node[above] {$u$};

\draw (u) -- (x1);
\draw (u) -- (xl);

\coordinate[vtx] (z11) at (0, 1.7);
\draw (z11) node[above left] {$z_{1, 1}$};
\coordinate[vtx] (z1k) at (0, 0.9);
\draw (z1k) node[above left] {$z_{1, k}$};
\draw (0, 1.4) node {$\vdots$};
\draw (w) -- (z11);
\draw (w) -- (z1k);

\coordinate[vtx] (y1) at (-2, 1.3);
\draw (y1) node[above] {$y_1$};
\draw (y1) -- (z11);
\draw (y1) -- (z1k);

\draw (0, 0.6) node {$\vdots$};

\coordinate[vtx] (zl1) at (0, 0.0);
\draw (zl1) node[above left] {$z_{\l, 1}$};
\coordinate[vtx] (zlk) at (0, -0.8);
\draw (zlk) node[above left] {$z_{\l, k}$};
\draw (0, -0.3) node {$\vdots$};

\draw (w) -- (zl1);
\draw (w) -- (zlk);

\coordinate[vtx] (yl) at (-2, -0.3);
\draw (yl) node[above] {$y_\l  $};
\draw (yl) -- (zl1);
\draw (yl) -- (zlk);

\coordinate[vtx] (v) at (-4, 0.5);
\draw (v) node[above] {$v$};
\draw (-2, 0.7) node {$\vdots$};
\draw (y1) -- (v);
\draw (yl) -- (v);

\coordinate[vtx] (t1) at (0, -1.3);
\draw (t1) node[left] {$t_1$};
\coordinate[vtx] (ts) at (0, -2.1);
\draw (ts) node[left] {$t_s$};
\draw (0, -1.6) node {$\vdots$};

\draw (w) -- (t1);
\draw (w) -- (ts);

\coordinate[vtx] (s1) at (0, -2.6);
\draw (s1) node[left] {$s_1$};
\coordinate[vtx] (r3) at (0, -3.4);
\draw (r3) node[left] {$r_3$};
\draw (0, -2.9) node {$\vdots$};
\draw (w) -- (s1);
\draw (w) -- (r3);

\end{tikzpicture}
& 
\begin{tikzpicture}
\coordinate[vtx] (w) at (2, -0.5);
\draw (w) node[right] {$w$};

\coordinate[vtx] (x1) at (0, 3);
\draw (x1) node[above left] {$x_1$};
\coordinate[vtx] (xl) at (0, 2.2);
\draw (xl) node[above left] {$x_\l$};
\draw (0, 2.7) node {$\vdots$};

\draw (w) -- (x1);
\draw (w) -- (xl);

\coordinate[vtx] (u) at (-2, 2.55);
\draw (u) node[above] {$u$};

\draw (u) -- (x1);
\draw (u) -- (xl);

\coordinate[vtx] (z11) at (0, 1.7);
\draw (z11) node[above left] {$z_{1, 1}$};
\coordinate[vtx] (z1k) at (0, 0.9);
\draw (z1k) node[above left] {$z_{1, k}$};
\draw (0, 1.4) node {$\vdots$};
\draw (w) -- (z11);
\draw (w) -- (z1k);

\coordinate[vtx] (y1) at (-2, 1.3);
\draw (y1) node[above] {$y_1$};
\draw (y1) -- (z11);
\draw (y1) -- (z1k);

\draw (0, 0.6) node {$\vdots$};

\coordinate[vtx] (zl1) at (0, 0.0);
\draw (zl1) node[above left] {$z_{\l, 1}$};
\coordinate[vtx] (zlk) at (0, -0.8);
\draw (zlk) node[above left] {$z_{\l, k}$};
\draw (0, -0.3) node {$\vdots$};

\draw (w) -- (zl1);
\draw (w) -- (zlk);

\coordinate[vtx] (yl) at (-2, -0.3);
\draw (yl) node[above] {$y_\l  $};
\draw (yl) -- (zl1);
\draw (yl) -- (zlk);

\coordinate[vtx] (v) at (-4, 0.5);
\draw (v) node[above] {$v$};
\draw (-2, 0.7) node {$\vdots$};
\draw (y1) -- (v);
\draw (yl) -- (v);

\coordinate[vtx] (t1) at (0, -1.3);
\draw (t1) node[left] {$t_1$};
\coordinate[vtx] (ts) at (0, -2.1);
\draw (ts) node[left] {$t_s$};
\draw (0, -1.6) node {$\vdots$};

\draw (w) -- (t1);
\draw (w) -- (ts);

\coordinate[vtx] (s1) at (0, -2.6);
\draw (s1) node[left] {$r_1$};
\coordinate[vtx] (r3) at (0, -3.4);
\draw (r3) node[left] {$r_3$};
\draw (0, -2.9) node {$\vdots$};
\draw (w) -- (s1);
\draw (w) -- (r3);

\coordinate[vtx] (tz11) at (0, -3.9);
\draw (tz11) node[below left] {$\t z_{1, 1}$};
\coordinate[vtx] (tz1k) at (0, -4.7);
\draw (tz1k) node[below left] {$\t z_{1, 2}$};
\draw (0, -4.2) node {$\vdots$};

\draw (w) -- (tz11);
\draw (w) -- (tz1k);

\coordinate[vtx] (s1) at (-2, -4.3);
\draw (s1) node[above] {$s_1$};
\draw (s1) -- (tz11);
\draw (s1) -- (tz1k);

\draw (-2, -4.7) node {$\vdots$};
\draw (0, -5) node {$\vdots$};
\end{tikzpicture}
\end{tabular}
\caption{Diagrams of graphs $H$, for $\rho=1, 2$ on the left and for $\rho=3, 4$ on the right.}
\label{fig:graphH}
\end{figure}
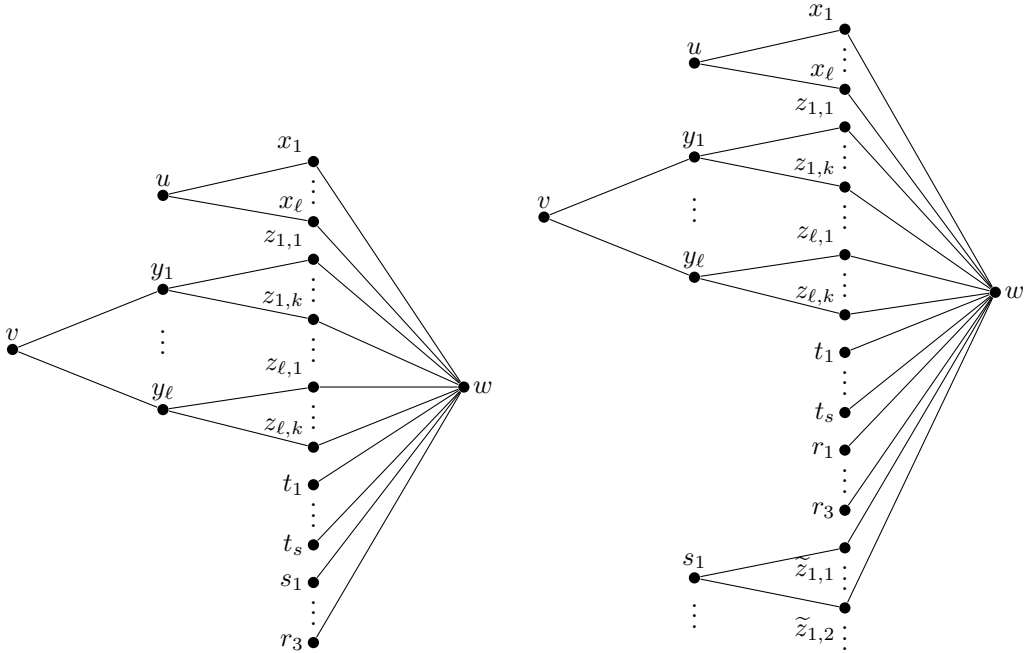

\noindent
Now, we describe the construction of $G$. Set $g$ to be the smallest integer greater than $m^{3/4}/4$ that is divisible by $4$. In the cases (C1) and (C2), let $G$ consist of a big cycle containing all vertices but $u, v$, $y_1, \dots, y_\l  $. In the cases (C3) and (C4), on top of these, the big cycle does not contain $\t z_{1, 1}, \t z_{2, 1}, \t z_{3, 1}$. As the big cycle is of even length, it is bipartite. We also introduce a notational convention that, if $a, b$ are vertices of the big cycle of $G$, $[a, b]$ denotes clockwise interval from $a$ to $b$, including all vertices between $a$ and $b$ as well as the endpoints. Further, we denote the clockwise distance between $a$ and $b$ by $d(a, b)$. 

As previously noted, the vertices $u$, $v$ are adjacent to $s_1, s_2, s_3$ and $r_1, r_2, r_3$, respectively. We arrange $s_1, s_2, s_3$ and $r_1, r_2, r_3$ so that $d(s_i, r_i)=\l'=\l-1$, $d(s_2, s_1)=d(s_3, s_2)=m/4$. Since $2|d(s_i, s_j)$ and $2\not | d(s_i, r_j)$, we conclude that $s_1, s_2, s_3$ lie in the same bipartite set, and that $r_1, r_2, r_3$ lie in the other partite set of $G$. Let the partite set of $G$ containing $s_1, s_2, s_3$ be $B_G$, and let the other partite set be $A_G$. Since $u\in A_G, v\in B_G$, even after adding edges $us_i$ and $vr_j$, the graph $G$ is still bipartite.

Let $S_0$ be the set of vertices of the big cycle that are in one of the intervals $[s_i, r_i]$, or whose neighbors on the big cycle are either $s_i$ or $r_i$. For each vertex of $a\in S_0$, we will add two edges to vertices of the interval $[r_3, s_2]$. We add these edges so that the graph remains bipartite and so that all of their endpoints and $s_2, r_3$ have pairwise distance at least $m/(10\l)\geq g$. The reason for adding these edges is to ensure that $G$ remain Wilsonian even upon removing a set of vertices from the intervals $[s_i, r_i]$.

We also place the vertices $x_1, \dots, x_\l$ to the interval $[r_3, s_2]$, making sure that their distance from each of the edge endpoints discussed in the previous paragraph is at least $m/(20\l )$. Additionally, in the cases (C1), (C2) and (C3) we ensure $x_i\in A_G$ for $i\in [\l]$ and in the case (C4) we ensure $x_i\in B_G$. Finally, we ensure that the distance between all $x_i$ and $s_2$ or $r_3$ is at least $g$. Since $d(r_3, s_2)=m/4-\l'$, the interval $[r_3, s_2]$ is long enough for this to be done.

Similarly, we place $z_{1, 1}, \dots, z_{\l, k}, w$ (and also $\t z_{i, j}$, $i\in[3], j\in [2, k]$ in the cases $3$ and $4$) onto $[r_2, s_1]$, ensuring that the distance between any two of them is at least $m/(10\l k)$ and distance between any of them to $s_1$ is at least $g$.

The vertices $y_i$ will be adjacent to exactly two vertices on the big cycle --- we denote them $t_{2i-1}$ and $t_{2i}$. Let us set $d(t_{2i-1}, t_{2i})=g$ and $d(t_{2i}, t_{2i+1})=k'=k-2$. Since both $g$ and $k$ are even, this means all $t_i$ for $i\in \{1, \dots, 2\l \}$ are in the same partite set of $G$, meaning that all vertices $y_i$ are in the other partite set of $G$. Further, we set $d(s_1, t_1)=k-1$ in the cases (C2), (C3), while $d(s_1, t_1)=k-2$ in the cases (C1), (C4). These choices ensure that $\y\subseteq A_G$ in the cases (C1), (C4) and that $\y\subseteq B_G$ in the cases (C2), (C3) for $i\in [\l]$.

Finally, we ensure that $s_1$ is adjacent to a vertex $q$ whose distance from is $t_{2\l}$ is $k-2$ in the cases (C1), (C4) and $k-1$ in the cases (C2), (C3). In either case, we have $q\in A_G$, and hence $G$ remains bipartite after adding the edge $s_1q$.

In the cases (C3) and (C4), the graph $G$ has additional vertices $\t z_{1, 1}, \t z_{2, 1}, \t z_{3, 1},\t z_{1, 2}$ outside the big cycle. Let $\t t_1, \dots, \t t_6$ be vertices of the big cycle satisfying $d(\t t_{2i-1}, s_i)=k'=k-2$, $d(\t t_{2i}, \t t_{2i-1})=g$, for $i=1, 2, 3$. Then, we add edges $\t t_{1}\t z_{1, 1}, \t z_{1, 1}\t t_{2}, \t t_3 \t z_{2, 1}, \t z_{2, 1} \t t_4, \t t_5 \t z_{3, 1}, \t z_{3, 1}\t t_6$ to the graph $G$. It is simple to check that all of these edges run between different partite sets of $G$, which completes the description of the graphs $G$ in all the cases. The constructions are illustrated in Figure~\cref{fig:graphG}.

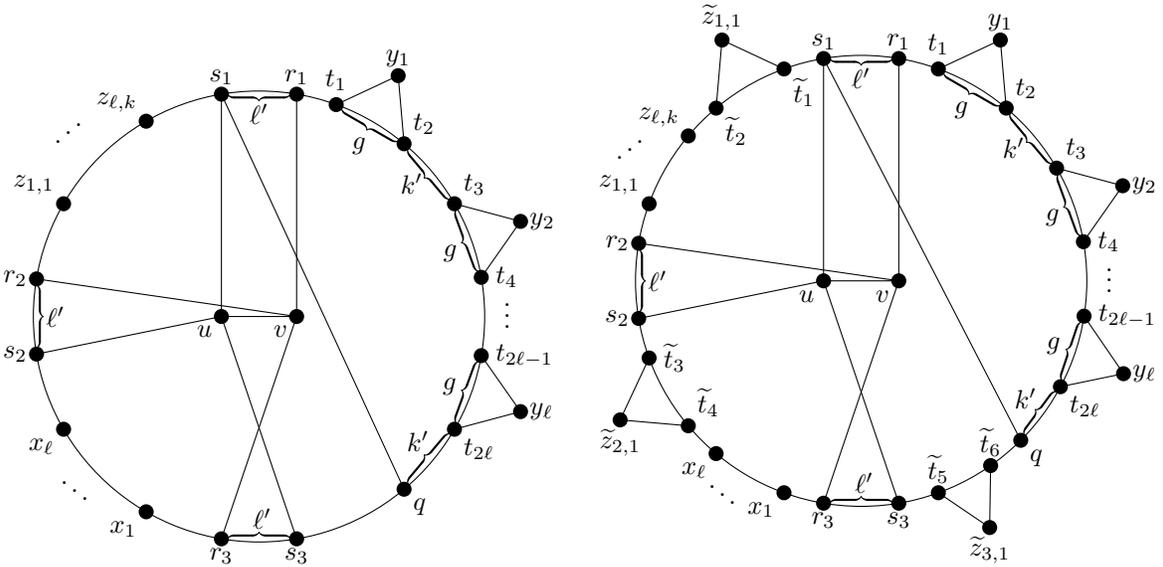
\begin{figure}[h!]
\begin{tabular}{c c}
\begin{tikzpicture}
\draw (0,0) circle (3);
\coordinate[vtx] (u) at (-0.5, 0);
\draw (u) node[below left] {$u$};
\fill (u) circle (1mm);

\coordinate[vtx] (v) at (0.5, 0);
\draw (v) node[below left] {$v$};
\fill (v) circle (1mm);

\coordinate[vtx] (s1) at (-0.5, 2.958);
\draw (s1) node[above] {$s_1$};
\fill (s1) circle (1mm);
\coordinate[vtx] (s2) at (-2.958, -0.5);
\draw (s2) node[left] {$s_2$};
\fill (s2) circle (1mm);
\coordinate[vtx] (s3) at (0.5, -2.958);
\draw (s3) node[below] {$s_3$};
\fill (s3) circle (1mm);

\coordinate[vtx] (r1) at (0.5, 2.958);
\draw (r1) node[above] {$r_1$};
\fill (r1) circle (1mm);
\coordinate[vtx] (r2) at (-2.958, 0.5);
\draw (r2) node[left] {$r_2$};
\fill (r2) circle (1mm);
\coordinate[vtx] (r3) at (-0.5, -2.958);
\draw (r3) node[below] {$r_3$};
\fill (r3) circle (1mm);

\draw (v)  -- (u);
\draw (u) -- (s1);
\draw (u) -- (s2);
\draw (u) -- (s3);
\draw (v) -- (r1);
\draw (v) -- (r2);
\draw (v) -- (r3);

\coordinate[vtx] (t1) at (70:3);
\draw (t1) node[above=2pt] {$t_1$};
\fill (t1) circle (1mm);
\coordinate[vtx] (t2) at (50:3);
\draw (t2) node[above right] {$t_2$};
\fill (t2) circle (1mm);
\coordinate[vtx] (y1) at (60:3.7);
\draw (y1) node[above] {$y_1$};
\fill (y1) circle (1mm);

\draw (y1) -- (t1);
\draw (y1) -- (t2);

\coordinate[vtx] (t3) at (30:3);
\draw (t3) node[above right] {$t_3$};
\fill (t3) circle (1mm);
\coordinate[vtx] (t4) at (10:3);
\draw (t4) node[right=2pt] {$t_4$};
\fill (t4) circle (1mm);
\coordinate[vtx] (y2) at (20:3.7);
\draw (y2) node[right] {$y_2$};
\fill (y2) circle (1mm);

\draw (y2) -- (t3);
\draw (y2) -- (t4);

\draw (2:3.3) node {$\vdots$};

\coordinate[vtx] (t5) at (-10:3);
\draw (t5) node[right=2pt] {$t_{2\l -1}$};
\fill (t5) circle (1mm);
\coordinate[vtx] (t6) at (-30:3);
\draw (t6) node[below right] {$t_{2\l}$};
\fill (t6) circle (1mm);
\coordinate[vtx] (y3) at (-20:3.7);
\draw (y3) node[right] {$y_\l  $};
\fill (y3) circle (1mm);

\draw (y3) -- (t5);
\draw (y3) -- (t6);

\coordinate[vtx] (q) at (-50:3);
\draw (q) node[below right] {$q$};
\fill (q) circle (1mm);
\draw (s1) -- (q);

\coordinate[vtx] (z1) at (150:3);
\draw (z1) node[above left] {$z_{1, 1}$};
\fill (z1) circle (1mm);
\coordinate[vtx] (z2) at (120:3);
\draw (z2) node[above left] {$z_{\l, k}$};
\fill (z2) circle (1mm);
\draw (135:3.6) node {$\iddots$};

\coordinate[vtx] (x1) at (240:3);
\draw (x1) node[below left] {$x_1$};
\fill (x1) circle (1mm);
\coordinate[vtx] (x2) at (210:3);
\draw (x2) node[below left] {$x_\l$};
\fill (x2) circle (1mm);
\draw (222:3.3) node {$\ddots$};

\draw [decorate,
    decoration = {calligraphic brace, mirror}, thick] (t1) --  (t2) node[pos=0.6, below left, black]{$g$};

\draw [decorate,
    decoration = {calligraphic brace, mirror}, thick] (t3) --  (t4) node[pos=0.7, left=2pt, black]{$g$};

\draw [decorate,
    decoration = {calligraphic brace, mirror}, thick] (t5) --  (t6) node[pos=0.4, left=1.5pt, black]{$g$};

\draw [decorate,
    decoration = {calligraphic brace, mirror}, thick] (t2) --  (t3) node[pos=0.7, left=2pt, black]{$k'$};

\draw [decorate,
    decoration = {calligraphic brace, mirror}, thick] (s1) --  (r1) node[pos=0.5, below, black]{$\l'$};

\draw [decorate,
    decoration = {calligraphic brace, mirror}, thick] (s2) --  (r2) node[pos=0.5, right, black]{$\l'$};

\draw [decorate,
    decoration = {calligraphic brace, mirror}, thick] (s3) --  (r3) node[pos=0.45, above, black]{$\l'$};

\draw [decorate,
    decoration = {calligraphic brace, mirror}, thick] (t6) --  (q) node[pos=0.15, left=2pt, black]{$k'$};

\end{tikzpicture}
&
\begin{tikzpicture}
\draw (0,0) circle (3);
\coordinate[vtx] (u) at (-0.5, 0);
\draw (u) node[below left] {$u$};
\fill (u) circle (1mm);

\coordinate[vtx] (v) at (0.5, 0);
\draw (v) node[below left] {$v$};
\fill (v) circle (1mm);

\coordinate[vtx] (s1) at (-0.5, 2.958);
\draw (s1) node[above] {$s_1$};
\fill (s1) circle (1mm);
\coordinate[vtx] (s2) at (-2.958, -0.5);
\draw (s2) node[left] {$s_2$};
\fill (s2) circle (1mm);
\coordinate[vtx] (s3) at (0.5, -2.958);
\draw (s3) node[below] {$s_3$};
\fill (s3) circle (1mm);

\coordinate[vtx] (r1) at (0.5, 2.958);
\draw (r1) node[above] {$r_1$};
\fill (r1) circle (1mm);
\coordinate[vtx] (r2) at (-2.958, 0.5);
\draw (r2) node[left] {$r_2$};
\fill (r2) circle (1mm);
\coordinate[vtx] (r3) at (-0.5, -2.958);
\draw (r3) node[below] {$r_3$};
\fill (r3) circle (1mm);

\draw (v)  -- (u);
\draw (u) -- (s1);
\draw (u) -- (s2);
\draw (u) -- (s3);
\draw (v) -- (r1);
\draw (v) -- (r2);
\draw (v) -- (r3);

\coordinate[vtx] (t1) at (70:3);
\draw (t1) node[above=2pt] {$t_1$};
\fill (t1) circle (1mm);
\coordinate[vtx] (t2) at (50:3);
\draw (t2) node[above right] {$t_2$};
\fill (t2) circle (1mm);
\coordinate[vtx] (y1) at (60:3.7);
\draw (y1) node[above] {$y_1$};
\fill (y1) circle (1mm);

\draw (y1) -- (t1);
\draw (y1) -- (t2);

\coordinate[vtx] (t3) at (30:3);
\draw (t3) node[above right] {$t_3$};
\fill (t3) circle (1mm);
\coordinate[vtx] (t4) at (10:3);
\draw (t4) node[right=2pt] {$t_4$};
\fill (t4) circle (1mm);
\coordinate[vtx] (y2) at (20:3.7);
\draw (y2) node[right] {$y_2$};
\fill (y2) circle (1mm);

\draw (y2) -- (t3);
\draw (y2) -- (t4);

\draw (2:3.3) node {$\vdots$};

\coordinate[vtx] (t5) at (-9:3);
\draw (t5) node[right=2pt] {$t_{2\l -1}$};
\fill (t5) circle (1mm);
\coordinate[vtx] (t6) at (-28:3);
\draw (t6) node[below right] {$t_{2\l}$};
\fill (t6) circle (1mm);
\coordinate[vtx] (y3) at (-19.5:3.7);
\draw (y3) node[right] {$y_\l  $};
\fill (y3) circle (1mm);

\draw (y3) -- (t5);
\draw (y3) -- (t6);

\coordinate[vtx] (q) at (-45:3);
\draw (q) node[below right] {$q$};
\fill (q) circle (1mm);
\draw (s1) -- (q);

\coordinate[vtx] (t1t) at (110:3);
\draw (t1t) node[below right] {$\t{t}_{1}$};
\fill (t1t) circle (1mm);
\coordinate[vtx] (t2t) at (130:3);
\draw (t2t) node[below right] {$\t{t}_{2}$};
\fill (t2t) circle (1mm);
\coordinate[vtx] (zt11) at (120:3.7);
\draw (zt11) node[above] {$\t{z}_{1, 1}$};
\fill (zt11) circle (1mm);
\draw (zt11) -- (t1t);
\draw (zt11) -- (t2t);

\coordinate[vtx] (z1) at (160:3);
\draw (z1) node[above left] {$z_{1, 1}$};
\fill (z1) circle (1mm);
\coordinate[vtx] (z2) at (140:3);
\draw (z2) node[above left] {$z_{\l, k}$};
\fill (z2) circle (1mm);
\draw (149:3.6) node {$\iddots$};

\coordinate[vtx] (x1) at (250:3);
\draw (x1) node[below left] {$x_1$};
\fill (x1) circle (1mm);
\coordinate[vtx] (x2) at (230:3);
\draw (x2) node[below left] {$x_\l$};
\fill (x2) circle (1mm);
\draw (236:3.3) node {$\ddots$};

\coordinate[vtx] (t3t) at (200:3);
\draw (t3t) node[right=2pt] {$\t{t}_{3}$};
\fill (t3t) circle (1mm);
\coordinate[vtx] (t4t) at (220:3);
\draw (t4t) node[above right] {$\t{t}_{4}$};
\fill (t4t) circle (1mm);
\coordinate[vtx] (zt21) at (210:3.7);
\draw (zt21) node[below] {$\t{z}_{2, 1}$};
\fill (zt21) circle (1mm);
\draw (zt21) -- (t3t);
\draw (zt21) -- (t4t);

\coordinate[vtx] (t5t) at (290:3);
\draw (t5t) node[above] {$\t{t}_{5}$};
\fill (t5t) circle (1mm);
\coordinate[vtx] (t6t) at (305:3);
\draw (t6t) node[above] {$\t{t}_{6}$};
\fill (t6t) circle (1mm);
\coordinate[vtx] (zt31) at (297.5:3.7);
\draw (zt31) node[below] {$\t{z}_{3, 1}$};
\fill (zt31) circle (1mm);
\draw (zt31) -- (t5t);
\draw (zt31) -- (t6t);

\draw [decorate,
    decoration = {calligraphic brace, mirror}, thick] (t1) --  (t2) node[pos=0.6, below left, black]{$g$};

\draw [decorate,
    decoration = {calligraphic brace, mirror}, thick] (t3) --  (t4) node[pos=0.7, left=2pt, black]{$g$};

\draw [decorate,
    decoration = {calligraphic brace, mirror}, thick] (t5) --  (t6) node[pos=0.4, left=1.5pt, black]{$g$};

\draw [decorate,
    decoration = {calligraphic brace, mirror}, thick] (t2) --  (t3) node[pos=0.7, left=2pt, black]{$k'$};

\draw [decorate,
    decoration = {calligraphic brace, mirror}, thick] (s1) --  (r1) node[pos=0.5, below, black]{$\l'$};

\draw [decorate,
    decoration = {calligraphic brace, mirror}, thick] (s2) --  (r2) node[pos=0.5, right, black]{$\l'$};

\draw [decorate,
    decoration = {calligraphic brace, mirror}, thick] (s3) --  (r3) node[pos=0.45, above, black]{$\l'$};

\draw [decorate,
    decoration = {calligraphic brace, mirror}, thick] (t6) --  (q) node[pos=0.15, left=2pt, black]{$k'$};

\end{tikzpicture}
\end{tabular}
\caption{Construction of graphs $G^{(1)}$ and $G^{(2)}$ on the left and $G^{(3)}$ and $G^{(4)}$ on the right}
\label{fig:graphG}
\end{figure}

\noindent
Since the above construction describes the graphs $G$ fully, we will now enumerate several important properties of these graphs which will be used in the subsequent proofs. These properties should also help in understanding why the graphs $G, H$ have been constructed in this way. 

\begin{enumerate}[label={(P\arabic*):}]
    \item The graph $G|_{[m]}$ has only one cycle shorter than $g$ $-$ this cycle contains the vertices $[s_1, t_1], y_1, [t_2, t_3]$, $y_2, \dots, y_\l  , [t_{2\l}, q], s_1$ and has length $k\cdot (l+1)-2\geq lk$.
    \item The smallest distance, in the graph $G$ between any two elements of the set $\x\cup\y\cup \z\cup \s\cup \r$ is at least $\l'=\l-1$.
    \item Let $P=\{y_1, \dots, y_\l  , u, v\}\cup \bigcup_{i=1}^3 [s_i, r_i]$ in the cases (C1) and (C2) and $P=\{y_1, \dots, y_\l,$ $u, v\}\cup \bigcup_{i=1}^3 [s_i, r_i]\cup $ $\{\t z_{1, 1}, \t z_{2, 1}, \t z_{3, 1}\}$ in the cases (C3) and (C4). If $S_1$ is a subset of $P$, we note that $G|_{V(G)\backslash S_1}$ is Wilsonian. In other words, removing a vertices of $P$ from $G$ does not change the fact $G$ is Wilsonian.
    \item The number of edges of $G$ is $|E(G)|\leq|V(G)|+ 4\l+15\leq |V(G)|+5\l $. Moreover, for any $J\subseteq V(G)$, the subgraph $G|_J$ has at most $|J|+5\l $ edges.
\end{enumerate}

\subsection{Showing that $u$ and $v$ are $(\GR, \HR)$-exchangeable from the identity permutation}\label{subsec:exchangeable}

Now, we will prove the most important property of graphs $(\GR, \HR)$ --- that the vertices $u$ and $v$ are $(\GR, \HR)$-exchangeable starting from the identity permutation. 
\begin{lemma}\label{lemma:exchangeability12}
For $\rho\in \{1, 2\}$, the vertices $u$ and $v$ are $(\GR, \HR)$-exchangeable from the identity permutation.
\end{lemma}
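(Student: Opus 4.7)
The plan is to construct an explicit sequence of $(G, H)$-friendly swaps that takes the identity arrangement $\id\colon V(G)\to V(H)$ to $\id\circ(u\,v)$, where I write $G = G^{(\rho)}$ and $H = H^{(\rho)}$. Three features of the construction will drive the argument. First, by property~(P3) with $S_1 = \{u,v\}\subseteq P$, the graph $G|_{[m]}$ is Wilsonian, so Wilson's theorem gives that $\FS(G|_{[m]},\S_m)$ is connected. Second, $w\in V(H)$ is adjacent in $H$ to every vertex of $B_H$ and to no vertex of $A_H\setminus\{w\}$; consequently the label $w$ serves as a \emph{mobile blank}: whenever $w$ sits on a vertex $a\in V(G)$ whose $G$-neighbour carries a label in $B_H$, the swap across that edge is friendly and moves label $w$ one step along $G$. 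Third, the branches $\{y_i, z_{i,1}, \ldots, z_{i,k}\}$, together with the $H$-edges $vy_i$ and $y_iz_{i,j}$, furnish the only mechanism for moving the labels $v$ and $y_i$ through $G$.

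The plan proceeds in four stages. \emph{Stage 1 (Preparation).} Write $v_0 = s_1, v_1, \ldots, v_{\ell-1}=r_1$ for the vertices of the arc $[s_1, r_1]$ on the big cycle. Using the mobile blank $w$ together with short excursions through the $y_1$-branch, I rearrange the labels on $G|_{[m]}$ so that at the end of Stage~1
\[
\sigma(v_{i-1}) = x_i \ \ (1\le i\le \ell-1),\qquad \sigma(r_1) = y_1,\qquad \sigma(u) = u,\qquad \sigma(v) = v,
\]
with the remaining labels of $B_H$ permuted among the other vertices of $G|_{[m]}$ and label $w$ returned to vertex $w$. The $x_i$'s are placed by a Wilson-style rearrangement (they all lie in $B_H$); the label $y_1$ is walked to $r_1$ by using $w$ to deliver $z_{1,j}$-labels onto the $G$-neighbours of $y_1$'s current position and then swapping $y_1$ forward via the $H$-edges $y_1z_{1,j}$.

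\emph{Stage 2 (Slide $u$ along $[s_1, r_1]$).} I swap across $us_1$ (admissible since $\sigma(u)\sigma(s_1)=ux_1\in E(H)$), placing label $u$ on $s_1$. For $i = 1, 2, \ldots, \ell-2$, I swap across the cycle edge $v_{i-1}v_i$: at step $i$ the labels involved are $u$ and $x_{i+1}$, with $ux_{i+1}\in E(H)$. After Stage~2, label $u$ rests on $v_{\ell-2}$. \emph{Stage 3 (Core exchange).} I swap across $vr_1$ (admissible since $vy_1\in E(H)$), placing label $v$ on $r_1$ and label $y_1$ on $v$. Labels $u$ and $v$ now sit on adjacent cycle vertices $v_{\ell-2}$ and $v_{\ell-1}$, but $uv\notin E(H)$ forbids a direct swap. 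Instead, I use the $y_1$-branch as a pivot: since $y_1$ currently rests on $v$, one can shuttle $z_{1,j}$-labels via $w$ to temporarily park label $v$ off the cycle, slide label $u$ across the last edge of the arc via an $H$-friendly intermediate label, and then release $v$ back onto the cycle on the opposite side. \emph{Stage 4 (Unwinding).} I reverse every auxiliary movement of Stages~1--3 that is not part of the $u\leftrightarrow v$ exchange; each reversed move uses the same $G$-edge and $H$-adjacency as its forward counterpart and is therefore friendly. This restores every $x_i, y_1, z_{1,j}$ and $w$ to its original vertex, while leaving $u$ and $v$ on vertices $v$ and $u$ respectively---i.e., the arrangement $\id\circ(u\,v)$.

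The main obstacle is Stage~3: exchanging the labels $u\in A_H$ and $v\in B_H$ across an $H$-gap using only the restricted adjacencies $ux_i$, $vy_i$, $y_iz_{i,j}$, and $wa$ for $a\in B_H$. The parameters of the construction---$k = 2\ell$ (so the branch has enough $z_{1,j}$-slots to absorb the detour), $d(s_i, r_i) = \ell-1$ (matching the number of $x_i$'s available for the slide in Stage~2), and the even parities of $g$ and $k'$ (keeping the bipartite bookkeeping consistent with the target permutation)---are calibrated precisely to make this exchange executable while leaving all other labels in a state that Stage~4 can repair. The cases $\rho = 1$ and $\rho = 2$ differ only in whether $\y\subseteq A_G$ or $\y\subseteq B_G$, a parity shift already absorbed by the choice $d(s_1, t_1)\in \{k-1, k-2\}$; the same argument then works verbatim in both cases.
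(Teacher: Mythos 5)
Your Stage~3 is the crux of the lemma, and it is precisely the step you never actually carry out. With label $u$ on $v_{\ell-2}$ and label $v$ on $r_1$, label $u$ can only move by swapping with one of the labels $x_1,\dots,x_\ell$ (its only $H$-neighbours), and label $v$ only with a $y_i$ or $w$; the sole off-cycle vertex of $G$ adjacent to this end of the arc is the vertex $v$ itself, and parking label $v$ there simply undoes your swap across $vr_1$, after which label $u$ still cannot advance past the label $y_1$ now sitting on $r_1$ (since $uy_1\notin E(H)$). ``Shuttle $z_{1,j}$'s, park, slide, release'' is not a sequence of friendly swaps but a hope that one exists, and it is far from clear that the two labels can trade places along a single bridge at all. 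The paper's proof avoids the crossing problem entirely by sending the two labels along \emph{different} bridges: label $v$ is walked from vertex $v$ to $s_1$ through the chain $y_\ell,\dots,y_1$ laid out on $[s_1,r_1]$ and then onto vertex $u$ via the edge $us_1$, while label $u$ travels along $[s_3,r_3]$ (via the $x_i$'s) and enters vertex $v$ through $r_3$; the three bridges and the adjacencies $u\sim s_i$, $v\sim r_i$ exist in the construction exactly for this purpose. Your Stage~4 also fails as stated: once labels $u$ and $v$ have been exchanged, ``reversing every auxiliary movement'' is no longer a sequence of friendly swaps, because the label that would have to retrace $u$'s path along the arc is now $v$, and $vx_i\notin E(H)$. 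Returning the auxiliary labels to their home vertices requires genuinely new transport sequences (in the paper, further Wilson-type rearrangements plus explicit moves for $y_\ell$ and the $z_{\ell,j}$'s), not a formal reversal.

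A second, independent gap is the parity bookkeeping. Your opening claim that $\FS(G|_{[m]},\S_m)$ is connected is false: $G|_{[m]}$ is bipartite by construction, so by the parity obstruction this friends-and-strangers graph has (exactly) two connected components, and every ``Wilson-style rearrangement'' of the labels in $B_H\cup\{w\}\setminus\{v\}$ is achievable only up to an extra transposition of two designated vertices in the same partite class. The paper tracks this ambiguity explicitly throughout (the vertices $a_1,a_2$ near $q$) and eliminates it only at the very end by a sign argument showing the outcome $\operatorname{(u\,v)}\circ\operatorname{(a_1\,a_2)}$ is impossible from the identity. Without this bookkeeping, even if your Stages~1--4 were executed correctly, the procedure might terminate at $\tr{u}{v}\circ\tr{a_1}{a_2}$ rather than at $\tr{u}{v}$, which does not prove exchangeability.
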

\begin{proof}
Again, for simplicity of notation, we omit the index $(\rho)$ in this proof. The general strategy of the proof of this lemma hinges on repeated applications of Wilson's theorem, exploiting the fact that $H|_{\{w\}\cup B_H-\{v\}}$ is a star centered at $w$.

Let us fix two vertices $a_1$ and $a_2$ in the same partite set of $G$, and let $\tau:V(G)\to V(H)$ be an arbitrary arrangement. If $\tau':V(G)\to V(H)$ is any arrangement differing from $\tau$ only on elements of $\tau^{-1}(\{w\}\cup B_H-\{v\})$, and if the graph $G|_{\tau^{-1}(\{w\}\cup B_H-\{v\})}$ is Wilsonian and contains $a_1, a_2$, then $\tau$ is in the same connected component as either $\tau'$ or $\tau'\circ \tr{a_1}{a_2}$. To see why this is true, note that the arrangements $\tau'$ and $\tau'\circ \tr{a_1}{a_2}$ are not in the same connected component, due to Proposition \cref{prop:parityobstruction}. Since Wilson's theorem implies the graph $\FS(G|_{\tau^{-1}(\{w\}\cup B_H-\{v\})}, H_{\{w\}\cup B_H-\{v\}})$ has exactly two connected components, one of $\tau'$ and $\tau'\circ \tr{a_1}{a_2}$ must be in the same connected component as $\tau$.

We will the above apply simple fact with $a_1, a_2$ being vertices satisfying $d(a_1, q)=4$ and $d(a_2, q)=2$. The fact that $G|_{\tau^{-1}(\{w\}\cup B_H-\{v\})}$ is Wilsonian will follow from $\taui(A_H-\{w\}\cup \{v\})\subseteq P$ and property (P3) of the graph $G$. Hence, whenever we can ensure that the vertices of $\taui(A_{H}-\{w\}\cup \{v\})$ all lie in $P$, we can obtain any arrangement of the remaining vertices, up to possibly transposing $a_1$ and $a_2$ (we will omit this clause in the future, but it will be understood whenever we apply Wilson's theorem).

We will think of arrangements $\tau:V(G)\to V(H)$ as placing the labels corresponding to vertices of $H$ onto the vertices of $G$ so that the label $p\in V(H)$ is placed onto the vertex $\taui(p)\in V(G)$. Moreover, when we refer to the $(G, H)$-friendly swap $ab$, it will be understood the swap involves labels $a, b\in V(H)$. Finally, the reader is encouraged to draw out the following steps which transform $\id$ into $\tr{u}{v}$.

Let $\tau_1$ be an arrangement in which $\taui_1(z_{1, 1}), \dots, \taui_1(z_{1, k-1})$ appear consecutively in clockwise order on $[s_1, t_1]$. By Wilson's theorem, there exists a sequence of $(G, H)$-friendly swaps transforming $\id$ to $\tau_1$.

Let $\tau_2$ be an arrangement obtained from $\tau_1$ by performing swaps $y_1z_{1, k-1}, y_1z_{1, k-2}, \dots, y_1z_{1, 1}$. Note that $\tau_2$ satisfies $\taui_2(y_1)=s_1$, and that it can be obtained from $\id$ by a sequence of $(G, H)$-friendly swaps. In essence, we can think of this sequence as moving the label $y_1\in V(H)$ from the vertex $y_1\in V(G)$ to the vertex $s_1\in V(G)$.

Let $\tau_3$ be the arrangement which differs from $\tau_2$ only on values $\taui_2(\{z_{2, 1}, \dots, z_{2, k-1}\})\cup \{y_1\} \cup [t_2, t_3]$, and in which the vertices $\taui_3(z_{2, 2}), \dots, \taui_3(z_{2, k-1})$ appear consecutively in clockwise order on $[t_2, t_3]$ and which satisfies $\taui_3(z_{2, 1})=y_1$. By Wilson's theorem, there exists a sequence of $(G, H)$-friendly swaps transforming $\tau_2$ to $\tau_3$.

Let $\tau_4$ be an arrangement obtained from $\tau_3$ by performing swaps $y_2z_{2, k-1}, y_2z_{2, k-2}, \dots, y_2z_{2, 1}$. Note that $\tau_4$ satisfies $\taui_4(y_2)=y_1$. Intuitively, $\tau_4$ is obtained from $\tau_2$ by performing a sequence of swaps which moves the label $y_2\in V(H)$ from the vertex $y_2\in V(G)$ to the vertex $y_1\in V(G)$.

Repeatedly applying sequences of swaps analogous to those taking $y_1$ to $s_1$ or $y_2$ to $y_1$, one can arrive at an arrangement $\tau_5$ in which vertices $\taui_5(y_1), \dots, \taui_5(y_\l  )$ appear consecutively in clockwise order on $[s_1, r_1]$.

Let $\tau_6$ be the arrangement obtained from $\tau_5$ by performing the swaps $vy_\l  , \dots, vy_1$ which satisfies $\taui_6(v)=s_1$ and $\taui_6(y_\l  )=v$.

Let $\tau_7$ be the arrangement which differs from $\tau_6$ only on values $\taui_6(\{x_{1}, \dots, x_{l}\}) \cup [s_3, r_3]$, and in which the vertices $\taui_6(x_1), \dots, \taui_6(x_{\l})$ appear consecutively in clockwise order on $[s_3, r_3]$. By Wilson's theorem, there exists a sequence of $(G, H)$-friendly swaps transforming $\tau_6$ to $\tau_7$.

Let $\tau_8$ be the arrangement obtained from $\tau_7$ by performing the swaps $ux_1, \dots, ux_{\l-1}$ which satisfies $\taui_8(x_1)=u$ and $\taui_8(u)$ is adjacent to the vertex $r_3\in V(G)$.

Let $\tau_9=\tr{z_{\l, 1}}{x_1}\circ \tau_8$. By Wilson's theorem, there exists a sequence of $(G, H)$-friendly swaps transforming $\tau_8$ to $\tau_9$. The arrangement $\tau_9$ satisfies $\taui_9(z_{\l, 1})=u$.

Let $\tau_{10}$ be the arrangement arising from $\tau_9$ after performing swaps $y_{\l}z_{\l, 1}, y_\l  v$. In $\tau_{10}$, the vertices $\taui_{10}(y_\l  ), \taui_{10}(y_1),$ $\taui_{10}(y_2), \dots, \taui_{10}(y_{\l-1})$ appear consecutively in clockwise order on $[s_1, r_1]$. Also, $\taui_{10}(v)=u$.

Let $\tau_{11}$ be the arrangement which differs from $\tau_{10}$ only on values $\taui_{10}(\{x_{1}, x_{2}\}) \cup \{r_3, v\}$, and which satisfies $\taui_{11}(x_1)=r_3, \taui_{11}(x_2)=v$. By Wilson's theorem, there exists a sequence of $(G, H)$-friendly swaps transforming $\tau_{10}$ to $\tau_{11}$.

Let $\tau_{12}$ be the arrangement obtained from $\tau_7$ by performing the swap $ux_1$, satisfying $\tau_{12}(u)=v$ and $\tau_{12}(v)=u$. It remains to show that all other labels can be returned to their original positions. By applying the reverse of the process which brought the labels $y_1, \dots, y_{\l-1}$ onto the vertices $[s_1, r_1]$, it is simple to see that there exists an arrangement $\tau_{13}$ which satisfies $\tau_{13}|_{\{u, v\}}=\tau_{12}|_{\{u, v\}}$ and $\tau_{13}(y_i)=y_i$ for $i\in [l-1]$.

Let $\tau_{14}$ be the arrangement which differs from $\tau_{13}$ only on values $\taui_{13}(\{z_{\l, 1}, \dots, z_{\l, k-1}\}) \cup [t_{2\l}, q]\cup \{y_\l \}$, and in which vertices $\taui_{14}(z_{\l, k-1}), \dots, \taui_{14}(z_{\l, 2})$ appear consecutively in clockwise order on $[t_{2\l}, q]$ and $\taui_{14}(z_{\l, 1})=y_\l $. By Wilson's theorem, there exists a sequence of $(G, H)$-friendly swaps transforming $\tau_{13}$ to $\tau_{14}$.

Let $\tau_{15}$ be the arrangement obtained from $\tau_{14}$ by performing the swaps $y_\l z_{\l, k-1}, \dots, y_\l z_{\l, 1}$ which satisfies $\taui_{15}(y_\l )=y_\l $. 

Finally, note that $\taui_{15}$ fixes all elements of $A_H-\{w\}\cup \{v\}$, except $u$ and $v$ which are swapped. Hence, using Wilson's theorem, we conclude that there exists a sequence of $(G, H)$-friendly swaps transforming $\tau_{14}$ to $\tr{u}{v}$. However, note that we actually obtain $\tau_{15}=\tr{u}{v}$ only up to possibly transposing $a_1$ and $a_2$, even though we did not explicitly state this after every application of Wilson's theorem in the proof.

Hence, we either have $\tau_{15}=\tr{u}{v}$ or $\tau_{15}=\tr{u}{v}\circ \tr{a_1}{a_2}$. However, note that it is not possible that we obtained the latter arrangement, starting from the identity permutation, due to parity constraints. Therefore, we conclude $\tau_{15}=\tr{u}{v}$ and the proof is complete. 
\end{proof}

\begin{lemma}\label{lemma:exchangeability34}
For $\rho\in \{3, 4\}$, the vertices $u$ and $v$ are $(\GR, \HR)$-exchangeable from the identity permutation.
\end{lemma}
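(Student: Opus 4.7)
The plan is to mirror the proof of Lemma \ref{lemma:exchangeability12} as closely as possible, handling the two new structural features of cases (C3) and (C4): the vertices $s_1, s_2, s_3$ of $\HR$ now lie in $A_\HR$ rather than $B_\HR$, and the additional vertices $\t z_{i, j}$ together with the edges $s_i \t z_{i, j}$ in $\HR$ and the pendants $\t t_{2i-1}\, \t z_{i, 1}\, \t t_{2i}$ in $\GR$ supply the extra scaffolding needed to compensate. The key observation is that $\HR|_{\{w\} \cup B_\HR - \{v\}}$ is still a star centered at $w$, so Wilson's theorem continues to apply to the Wilsonian subgraph of $\GR$ exactly as before; the only new issue is that the labels $s_1, s_2, s_3$ are no longer in the orbit of that star and therefore must be moved by explicit friendly swaps, in analogy with the way the labels $y_i$ were moved in the previous proof.

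I propose to proceed in three phases. In the preparatory phase, I will move each label $s_i$ from the vertex $s_i \in V(\GR)$ to the auxiliary vertex $\t z_{i, 1} \in V(\GR)$. Concretely, for each $i \in \{1, 2, 3\}$ I will apply Wilson's theorem to arrange the labels $\t z_{i, 2}, \dots, \t z_{i, k-1}$ consecutively along a big-cycle segment ending at $\t t_{2i-1}$, and then perform the explicit swaps $s_i \t z_{i, k-1}, s_i \t z_{i, k-2}, \dots, s_i \t z_{i, 1}$ which migrate the label $s_i$ along the small cycle through $\t t_{2i-1}, \t z_{i, 1}, \t t_{2i}$. At this point the vertices $s_1, s_2, s_3 \in V(\GR)$ carry labels from $B_\HR - \{v\}$, and the Wilsonian subgraph argument via property (P3) becomes available, so in the main phase I will run essentially verbatim the full sequence from the proof of Lemma \ref{lemma:exchangeability12}: migrate $y_1, \dots, y_\l$ into $[s_1, r_1]$, swap $u$ with $v$ using the labels $x_1, \dots, x_\l$, and then restore the $y_i$ labels. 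In the closing phase, I will reverse the preparatory phase to return the labels $s_1, s_2, s_3$ to their original positions.

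The main obstacle, as in the previous lemma, will be the parity bookkeeping. Every application of Wilson's theorem fixes the resulting arrangement only up to composition with a fixed transposition $\tr{a_1}{a_2}$ across the two components of the relevant bipartite friends-and-strangers graph, and cases (C3) and (C4) differ from each other (and from cases (C1), (C2)) in the bipartite placement of $\y, \x$ within $\{A_\GR, B_\GR\}$, as well as in the placement of $\s$ within $A_\HR$. I will need to verify that the cumulative parity of the prescribed sequence of swaps is compatible with $\tr{u}{v}$ rather than $\tr{u}{v} \circ \tr{a_1}{a_2}$, invoking Proposition \ref{prop:parityobstruction} as in the closing step of the proof of Lemma \ref{lemma:exchangeability12}. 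Because the preparatory and closing phases are inverses of one another and therefore contribute an even number of additional swaps, the parity analysis will reduce to essentially the same verification already carried out in the previous lemma, with the minor modification that in case (C4) the roles of $A_\GR$ and $B_\GR$ in the construction of $\x$ are swapped, which is absorbed by the symmetric choice of distance $d(s_1, t_1)$ in the construction of $\GR$.
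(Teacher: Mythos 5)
Your proposal is correct and takes essentially the same route as the paper: both proofs first park the labels $s_1, s_2, s_3$ on the auxiliary vertices $\t z_{1,1}, \t z_{2,1}, \t z_{3,1}$ by staging the labels $\t z_{i,j}$ with Wilson's theorem and then performing explicit swaps through the small cycles at $\t t_{2i-1}, \t z_{i,1}, \t t_{2i}$, then carry out the $u$--$v$ exchange by the case-(C1)/(C2) machinery, and finally undo the preparatory sequence, which never involves $u$ or $v$. The only difference is one of packaging: instead of re-running the swap sequence and parity check of Lemma~\cref{lemma:exchangeability12} verbatim, the paper notes that after this label swap the pair $(\GR-\{\t z_{1,1},\t z_{2,1},\t z_{3,1}\},\ \HR-\{s_1,s_2,s_3\})$ is isomorphic to $(G^{(5-\rho)}, H^{(5-\rho)})$ at the identity arrangement and invokes Lemma~\cref{lemma:exchangeability12} together with Proposition~\cref{prop:embedabilityimpliesexchangeability} as black boxes, which spares the re-verification you describe.
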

\begin{proof}
In the cases (C3) and (C4), the identity permutation satisfies $A_H-\{w\}\cup \{v\}\subseteq P$, and hence there exists a sequence of $(\GR, \HR)$-friendly swaps taking $\id$ to $\tau_0$ where $\tau_0$ has the property that $\taui_0(\t z_{1, 1}), \dots, \taui_0(\t z_{1, k-2})$ are consecutively arranged in counterclockwise order on the big cycle, $\taui_0(\t z_{1, 1})$ being the neighbor of $s_1$ and $\taui_0(\t z_{1, k-2})=\t t_1$. Moreover, assume $\taui_0(\t z_{1, k-1})=\t z_{1, 1}$. Using Wilson's theorem as described above, we conclude that $\tau_0$ can be obtained from $\id$ by a sequence of $(\GR, \HR)$-friendly swaps. If we consider the sequence of swaps $s_1\t z_{1, 1}, s_1\t z_{1, 2}, \dots, s_1\t z_{1, k-1}$ applied on $\tau_0$, we obtain an arrangement $\tau_1$ having $\tau_1(s_1)=\t z_{1, 1}$ and $\tau_1(\t z_{1, 1})=s_1$. Finally, using one more application of Wilson's theorem, there exists a sequence of $(\GR, \HR)$-friendly swaps which transforms $\tau_1$ into an arrangement $\tau_2=\tr{s_1}{\t z_{1, 1}}$. Applying the same argument to $s_2, s_3$, we obtain an arrangement $\tau_3$ which transposes elements $s_i$ and $\t z_{i, 1}$, and leaves all other elements fixed. Since the elements $u$ and $v$ were not involved in any of the swaps transforming $\id$ to $\tau_3$, it is not hard to see that the same sequence of swaps transforms $\tr{u}{v}$ to $\tau_3\circ \tr{u}{v}$.

The important thing to note now is that eliminating $s_1, s_2, s_3$ from $\HR$ and $\t z_{1, 1}, \t z_{2, 1}, \t z_{3, 1}$ from $\GR$ essentially reduces to the case $\rho\in \{1, 2\}$. Let us now explain why more formally. 

Note that $\GR-\{\t z_{1, 1}, \t z_{2, 1}, \t z_{3, 1}\}$ for $\rho\in \{3, 4\}$ is isomorphic, as an unlabeled graph, to $G^{(5-\rho)}$, and that $\HR-\{s_1, s_2, s_3\}$ for $\rho\in \{3, 4\}$ is isomorphic, again in the unlabeled sense, to $H^{(5-\rho)}$. In other words, there exist embeddings $\varphi_{G}:G^{(5-\rho)}\to \GR$ and $\varphi_{H}:H^{(5-\rho)}\to \HR$ which map between the arrangement $\id$ on $(G^{(5-\rho)}, H^{(5-\rho)})$ and arrangement $\tau_3$ on $(\GR, \HR)$. Alternatively, we may say that $\tau_3\circ \varphi_{G}=\varphi_{H}\circ \id$ and $\varphi_H(u)=u, \varphi_H(v)=v$. Since Lemma~\cref{lemma:exchangeability12} shows that $u$ and $v$ are $(G^{(5-\rho)}, H^{(5-\rho)})$-exchangeable from $\id$, using Proposition~\cref{prop:embedabilityimpliesexchangeability} translates to $u$ and $v$ being $(\GR, \HR)$-exchangeable from $\tau_3$. Combining the sequences obtaining transformations $\id\to \tau_3$, $\tau_3\to \tau_3\circ \tr u v$ and $\tau_3\circ \tr u v \to \tr u v$, we conclude that $\id$ and $\tr u v$ are connected in the friends-and-strangers graph $\FS(\GR, \HR)$, which implies $u, v$ are $(\GR, \HR)$-exchangeable from $\id$.
\end{proof}

\subsection{Showing that $\GR$, $\HR$ are $(q_1, \dots, q_m)$-bipartite-embeddable in $X, Y$}\label{subsec:bipartiteembeddable}

In this section, we will prove that, with high probability, the graphs $\GR|_{[m]}, \HR|_{[m]}$ are $(q_1, \dots, q_m)$-bipartite-embeddable in $(X, Y)$ with high probability. The main technical tool which allows us to show this is Lemma 4.2 from \cite{ADK}, which states the following.

\begin{lemma}[\cite{ADK}]\label{lemma:technical}
Let $G'$ and $H'$ be bipartite graphs on the vertex set $[m]$ with bipartitions $V(G')=A_{G'}\bigsqcup B_{G'}$, $V(H')=A_{H'}\bigsqcup B_{H'}$. Let $n, q_1, \dots, q_m$ be positive integers such that $Q:=q_1+\cdots+q_m\leq 2n$. For every set $J\subseteq [m]$, let $\beta(J)=|E(G'|_J)|+|E(H'|_J)|$. Choose $p\in [0, 1]$, and let $X, Y$ be independently chosen random bipartite graphs in $\G(K_{n, n}, p)$. Suppose that for every $J\subseteq [m]$ with $\beta(J)\geq 1$ we have 
\begin{equation}\label{eqn:embcondition}
    p^{\beta(J)}\prod_{j\in J}q_j\geq 3\cdot 2^{m+1}Q\log (2n).
\end{equation}
Then, the probability that the pair $(G', H')$ is $(q_1, \dots, q_m)$-bipartite-embeddable in $(X, Y)$ is at least $1-(2n)^{-Q}$.
\end{lemma}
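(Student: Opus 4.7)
The plan is to apply an appropriate form of Janson's inequality to a random variable counting valid embeddings, for fixed admissible data, and then union-bound over admissible configurations. First I would fix an arrangement $\sigma$ and admissible sets $V_1,\dots,V_m$ with $|V_i|=q_i$, and define $N=N(\sigma,V_1,\dots,V_m)$ to be the number of $m$-tuples $(v_1,\dots,v_m)\in V_1\times\cdots\times V_m$ of distinct vertices such that $v_iv_j\in E(Y)$ for every $ij\in E(H')$ and $\sigma^{-1}(v_i)\sigma^{-1}(v_j)\in E(X)$ for every $ij\in E(G')$. Admissibility guarantees every required edge lies between the correct partite sets of $K_{n,n}$, and by the independence of the edges of $X$ and $Y$ each fixed tuple is valid with probability exactly $p^{\beta([m])}$, giving
\[
\mu \;:=\; \E[N] \;=\; (1-o(1))\,p^{\beta([m])}\prod_{j=1}^m q_j \;\geq\; (1-o(1))\cdot 3\cdot 2^{m+1}Q\log(2n),
\]
where the lower bound uses the hypothesis with $J=[m]$ (the case $\beta([m])=0$ being trivial, since then every tuple embeds).

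Next I would bound $\Delta := \sum_{\mathbf{v}\sim\mathbf{v}'}\Pr[A_{\mathbf{v}}\cap A_{\mathbf{v}'}]$, where $A_\mathbf{v}$ is the event that the tuple $\mathbf{v}$ embeds and the sum runs over ordered pairs of distinct tuples imposing a shared required edge. Because the $V_i$ are pairwise disjoint, two tuples can only agree in coordinate $i$ by choosing the same element of $V_i$, so their agreement set $J$ contains a shared required edge iff $\beta(J)\geq 1$. For such $J$ we have $\Pr[A_{\mathbf{v}}\cap A_{\mathbf{v}'}]=p^{2\beta([m])-\beta(J)}$, and the number of ordered pairs with agreement set $J$ is at most $\prod_j q_j\cdot\prod_{j\notin J}q_j$; hence
\[
\Delta \;\leq\; \mu^2\sum_{J:\,\beta(J)\geq 1}\frac{1}{p^{\beta(J)}\prod_{j\in J}q_j} \;\leq\; \frac{2^m\,\mu^2}{3\cdot 2^{m+1}Q\log(2n)} \;=\; \frac{\mu^2}{6Q\log(2n)},
\]
where the hypothesis, stated uniformly for every $J$ with $\beta(J)\geq 1$, is used in the second step. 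In the regime $\Delta\geq\mu$ (which is the relevant one here) Janson's inequality gives $\Pr[N=0]\leq \exp(-\mu^2/(2\Delta))\leq \exp(-3Q\log(2n))=(2n)^{-3Q}$.

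Finally, I would union-bound over all admissible configurations $(V_1,\dots,V_m,\sigma)$. The number of such configurations is bounded by the number of bijections $\sigma\colon V(X)\to V(Y)$ times $\prod_j\binom{2n}{q_j}$; the exponential decay $(2n)^{-3Q}$ per configuration is strong enough to absorb this count and leave the claimed $(2n)^{-Q}$, in the regime where $Q$ is comparable to $n$ (which is the situation in the intended application, where the $q_i$ are nearly balanced and sum to $\sim n$).

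The main technical obstacle, I expect, will be the second-moment bookkeeping: carefully grouping ordered pairs of tuples by their agreement set $J$, correctly accounting for the injectivity constraint on tuples of distinct vertices, and invoking the correct extended form of Janson's inequality in the regime $\Delta\geq\mu$ that actually arises. The factor of $2^{m+1}$ built into the hypothesis is exactly what is needed so that the $2^m$ possible subsets $J$ in the bound on $\Delta$ can be summed with room to spare, and so that the $(2n)^{-3Q}$ tail has a cushion large enough to survive the union bound over admissible $\sigma$ and $V_i$; a secondary difficulty is tracking how many bijections $\sigma$ even admit admissible sets of the prescribed sizes, since $\sigma$ must respect the bipartitions in a way compatible with placing $q_i$ vertices in each quadrant.
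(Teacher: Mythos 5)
This lemma is not proved in the paper at all: it is quoted verbatim from Alon--Defant--Kravitz \cite{ADK} (their Lemma 4.2), so there is no in-paper proof to compare against; your Janson-plus-union-bound skeleton is the natural one for such a statement. However, as written your argument has a genuine gap in the union-bound step. You bound the number of configurations by ``the number of bijections $\sigma\colon V(X)\to V(Y)$ times $\prod_j\binom{2n}{q_j}$'' and claim $(2n)^{-3Q}$ absorbs this when $Q$ is comparable to $n$. It does not: in the intended application $Q\approx n/2$, so $(2n)^{3Q}\approx(2n)^{3n/2}$ while the number of bijections is $(2n)!\approx (2n)^{2n}e^{-2n}$, and $(2n)!\,(2n)^{-3Q}\to\infty$. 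Moreover the lemma is asserted for arbitrary $q_1,\dots,q_m$ with $Q\le 2n$ (in particular small $Q$), where a union bound over all bijections is hopeless. The missing idea is that whether $(G',H')$ is bipartite-embeddable with respect to $(V_1,\dots,V_m,\sigma)$ depends on $\sigma$ only through the restriction of $\sigma^{-1}$ to $V_1\cup\dots\cup V_m$: the relevant random edges are exactly the pairs $v_iv_j$ in $Y$ and $\sigma^{-1}(v_i)\sigma^{-1}(v_j)$ in $X$ for $v_i\in V_i$. Hence one needs to union only over the choice of the disjoint sets $V_i$ and of the injection $\sigma^{-1}|_{V_1\cup\dots\cup V_m}$, which gives at most $\prod_j\binom{2n}{q_j}\cdot(2n)^{Q}\le (2n)^{2Q}$ configurations, and then $(2n)^{2Q}\cdot(2n)^{-3Q}=(2n)^{-Q}$ as required, for every $Q$.

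Two smaller points. First, since the $V_i$ are pairwise disjoint, every tuple in $V_1\times\cdots\times V_m$ automatically has distinct entries, so $\E[N]=p^{\beta([m])}\prod_j q_j$ exactly; no $(1-o(1))$ is needed. Second, you should not restrict to the regime $\Delta\ge\mu$: if $\Delta<\mu$ the ordinary Janson bound $\exp(-\mu+\Delta/2)\le\exp(-\mu/2)$ already gives at most $(2n)^{-3Q}$ because $\mu\ge 3\cdot 2^{m+1}Q\log(2n)$ by the hypothesis with $J=[m]$ (and the case $\beta([m])=0$ is trivial, as you note), so both regimes close; but the case split should be stated rather than waved away.
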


\noindent
In Lemma~\cref{lemma:embeddability}, we check that condition \cref{eqn:embcondition} indeed holds for $\GR|_{[m]}, \HR|_{[m]}$ under the appropriate choice of values $q_1, \dots, q_m$. Let $\Gamma=N_H(u)\cup N_H(v)\cup N_G(u)\cup N_G(v)=\x\cup \y\cup \s\cup \r$. 

\begin{lemma}\label{lemma:embeddability}
Let $X, Y$ be random bipartite graphs, independently chosen from $\G(K_{n, n}, p)$, where $p$ satisfies \cref{eqn:pbound}. Moreover, suppose that $q_i=\frac{n}{2m}$ for $i\in [m]-\Gamma$ and $q_i=\frac{pn}{3\l }$ for $i\in \Gamma$. Then, for any $\rho\in \{1, 2, 3, 4\}$, the pair of graphs $(\GR|_{[m]}, \HR|_{[m]})$ is $(q_1, \dots, q_m)$-bipartite-embeddable in $(X, Y)$ with high probability.
\end{lemma}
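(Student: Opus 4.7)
The plan is to apply Lemma~\ref{lemma:technical} (Lemma 4.2 of \cite{ADK}), which reduces the claim to verifying two conditions: the total $Q := \sum_{i \in [m]} q_i \leq 2n$, and the inequality \cref{eqn:embcondition} for every $J \subseteq [m]$ with $\beta(J) \geq 1$. The first is immediate, since $|\Gamma| \leq 2\ell + 6$ and $p \leq 1$ give $Q \leq (m-|\Gamma|) \cdot \frac{n}{2m} + |\Gamma| \cdot \frac{pn}{3\ell} \leq \frac{n}{2} + n \leq 2n$.

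The main work is to verify \cref{eqn:embcondition}, and the first step is to establish a sharp upper bound on $\beta(J) = |E(\GR|_J)| + |E(\HR|_J)|$ using the sparsity of $\GR$ and $\HR$. Property (P4) yields $|E(\GR|_J)| \leq |J| + 5\ell$. For $|E(\HR|_J)|$, the key observation is that every edge of $\HR$ is incident to one of $w$, $y_1, \dots, y_\ell$, or (in cases $\rho \in \{3,4\}$) $s_1, s_2, s_3$, and that no two vertices of $\Gamma$ are adjacent in either $\GR$ or $\HR$. Writing $j_1 = |J \setminus \Gamma|$, $j_2 = |J \cap \Gamma|$, a careful count using these observations yields
\[\beta(J) \leq 2|J| + 5\ell + k \cdot \bigl(|J \cap \y| + |J \cap \s|\bigr) \leq 2|J| + O(\ell^2),\]
and the cruder form $\beta(J) \leq 2|J| + O(\ell^2)$ will be the bound fed into \cref{eqn:embcondition}.

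With $\beta(J)$ bounded, the verification reduces to an arithmetic check. Setting $T := 3 \cdot 2^{m+1} Q \log(2n)$, one has $\log T \leq \log n + O((\log n)^{4/5})$ because $m = O((\log n)^{4/5})$ and $Q \leq 2n$. Taking logarithms, the LHS of \cref{eqn:embcondition} equals
\[\log(\mathrm{LHS}) = j_1 \log \tfrac{n}{2m} + j_2 \log \tfrac{pn}{3\ell} + \beta(J) \log p,\]
and using $\log p \geq -\tfrac{1}{2}\log n + 10(\log n)^{4/5}$ one shows this exceeds $\log T$. Intuitively, every edge contributing to $\beta(J)$ has at least one endpoint in $J \setminus \Gamma$, whose factor $q_j = n/(2m)$ more than compensates the $p$-cost of the edge, while the exponential slack $\exp(10(\log n)^{4/5})$ in $p$ absorbs the $O(\ell^2)$ overhead arising from the stars centered at $y_i$ and $s_i$. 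The main obstacle, and the reason for the delicate calibration of all parameters ($p$, $m$, $\ell$, $k$, and the $q_j$), is the case where $J$ contains many $y_i$'s together with their associated $z_{i,j}$ neighbors outside $\Gamma$; here one has to combine the sparsity bound on $\beta(J)$ with the fact that each $z_{i,j}$ in $J$ contributes a factor of $n/(2m)$ of order $n^{1-o(1)}$, which balances the cost of the $y$-$z$ edges in $\HR|_J$.
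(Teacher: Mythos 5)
Your reduction to Lemma~\cref{lemma:technical} is exactly the paper's first step, but the way you propose to verify \cref{eqn:embcondition} has a genuine gap: feeding the crude bound $\beta(J)\leq 2|J|+O(\l^2)$ (or even the finer $2|J|+5\l+k(|J\cap \y|+|J\cap\s|)$) into the inequality fails for small sets $J$. Since each unit of $\beta(J)$ costs a factor $p\approx n^{-1/2+o(1)}$, overestimating $\beta(J)$ by an additive $\Theta(\l)$ or $\Theta(\l^2)$ term costs $\exp(\Theta(\l\log n))=\exp(\Theta((\log n)^{6/5}))$ or $\exp(\Theta((\log n)^{7/5}))$, whereas the slack you invoke, $\exp(10(\log n)^{4/5})$ per vertex, only supplies $\exp(O(|J|(\log n)^{4/5}))$ in total; these balance only when $|J|\gtrsim m^{3/4}\approx g$. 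Concretely, take $J=\{y_1,z_{1,1}\}$: the true value is $\beta(J)=1$ and \cref{eqn:embcondition} holds comfortably, but your bound gives $\beta(J)\leq 4+7\l$, and $p^{4+7\l}q_{y_1}q_{z_{1,1}}=\exp\left(-\Omega(\l\log n)+O(\log n)\right)$ is far smaller than $3\cdot 2^{m+1}Q\log(2n)$, so your verification simply does not go through for such $J$. In effect your argument covers only the regime $|J|\geq g$ (the paper's Case 1); all sets with $1\leq\beta(J)$ and $|J|<g$ are left unhandled, and these are precisely the delicate ones. Relatedly, your compensation heuristic ("every edge has an endpoint in $J\setminus\Gamma$ whose factor $n/(2m)$ pays for it") breaks down at $w$, which alone can be an endpoint of $|J\cap B_{H^{(\rho)}}|$ edges of $H^{(\rho)}|_J$, so a single $q$-factor cannot pay for all of them.

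What is actually needed for $|J|<g$ — and what the paper does — is a bound on $\beta(J)$ that is tight up to an absolute constant, obtained from the structural properties (P1)–(P2) rather than from global sparsity: since $\GR|_{[m]}$ has a unique cycle of length less than $g$, one has $|E(\GR|_J)|\leq |J|-c(J)+1$ (and $G^{(\rho)}|_J$ is a forest when $|J|<\l$), and because the special vertices are pairwise at distance at least $\l-1$, each connected component of $\GR|_J$ contains at most one of them, which bounds the number of $\Gamma$-type vertices in $J$ by $c(J)$. The paper also isolates the star at $w$ via $|E(\HR|_J)|\leq \ind_{w\in J}|J\cap B_{H'}|+|J\cap\z|$ and then works with the combined quantity $\beta(J)+\gamma(J)-2|J|\leq \alpha(J)+\gamma'(J)-|J|-\ind_{w\in J}$, which in the small-$J$ cases is at most a constant (e.g.\ $\leq -2$, or $\leq \gamma'(J)-c(J)+1$), reducing the check to $p^{-2}\left(\frac{p^2 n}{2m}\right)^{2}\gg 3\cdot 2^{m+1}n\log (2n)$. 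Without an analysis of this kind for $|J|<g$ (the paper's Cases 2--5), the lemma is not proved; you should replace the single crude bound by a case split on $|J|$ and on how $J$ meets the special vertex sets, using (P1), (P2) and the role of $w$ explicitly.
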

\begin{proof}
This proof is inspired by the proof of Lemma 3.4 in \cite{ADK}, with the main difference coming only from the complexity of our construction. To simplify notation, let $G'=\GR|_{[m]}, H'=\HR|_{[m]}$, and $Q=\sum_{i=1}^m q_i$. Using Lemma~\cref{lemma:technical}, the statement about embeddability of $(G', H')$ reduces to checking that 
\[p^{\beta(J)}\prod_{j\in J}q_j\geq 3\cdot 2^{m+1}Q\log (2n),\]
for all $J$ with $\beta(J)\geq 1$. Letting $\gamma(J)=|\Gamma\cap J|$ and using $Q\leq n$, the above inequality reduces to 
\begin{align}\label{eqn:mainineq}
    p^{\beta(J)}\left(\frac{pn}{3\l }\right)^{\gamma(J)}\left(\frac{n}{2m}\right)^{|J|-\gamma(J)}\geq 3\cdot 2^{m+1}n\log 2n.
\end{align}
Since $2m>3\l $, one can bound the left hand side of equation \cref{eqn:mainineq} in the following way:
\[p^{\beta(J)}\left(\frac{pn}{3\l }\right)^{\gamma(J)}\left(\frac{n}{2m}\right)^{|J|-\gamma(J)}\geq p^{\beta(J)+\gamma(J)-2|J|}\left(\frac{p^2n}{2m}\right)^{|J|}\left(\frac{2m}{3\l }\right)^{\gamma(J)}\geq p^{\beta(J)+\gamma(J)-2|J|}\left(\frac{p^2n}{2m}\right)^{|J|}\]

\noindent
We will now bound $\beta(J)$ and $\gamma(J)$ by analyzing the structure of the graphs $G'$ and $H'$. Recall that $H'$ has a bipartition $V(H')=A_{H'}\bigsqcup B_{H'}$, where $A_{H'}=A_H-\{u\}$ and $B_{H'}=B_H-\{v\}$. With this is mind, we can express the number of edges of $H'|_J$ as the sum of degrees of vertices in one of the partite classes, $|E(H'|_J)|=\sum_{a\in J\cap A_{H'}}|N_{H'}(a)\cap J|$. Expanding out this expression gives: 
\begin{align}\label{eqn:boundHJ}
    |E(H'|_J)|=\ind_{w\in J}|N_{H'}(w)\cap J| +\sum_{a\in A_{H'}-\{w\}} |N_{H'}(a)\cap J|\leq \ind_{w\in J}|J\cap B_{H'}|+|J\cap \z|,
\end{align}
where $\ind_{w\in J}$ denotes the indicator variable which equals $1$ when $w\in J$ and $0$ otherwise.

Let $\Gamma'=(\x\cup \z\cup \s\cup \r)\cap B_{H'}$, and define $\gamma'(J)=|J\cap \Gamma'|$ and $\alpha(J)=|E(G'|_J)|$. Then, the bound \cref{eqn:boundHJ} applied to give:
\begin{align}\label{eqn:boundbetagamma}
    \beta(J)+\gamma(J)-2|J|&= (|E(G'|_J)|+|E(H'|_{J})|)+(\gamma'(J)+|J\cap (A_{H'}-\{w\})|-|J\cap \z|)-2|J|\nonumber \\
    &\leq \alpha(J)+\ind_{w\in J} |J\cap B_{H'}|+\gamma'(J)+|J\cap (A_{H'}-\{w\})|-2|J|\nonumber\\
    &\leq \alpha(J)+\gamma'(J)-|J|-\ind_{w\in J}.
\end{align}
\noindent
We now consider five cases and apply the above bounds in resolving them.

\medskip
\noindent
\textbf{Case 1:} Suppose that $|J|\geq g$. Since $|E(G')|\leq |V(G')|+5\l $, by the property (P4) of the graphs $G'$ and since $G'$ is a connected graph, we also have $\alpha(J)=|E(G'|_J)|\leq |J|+5\l $. Therefore, one can apply the bound \cref{eqn:boundbetagamma} and show \cref{eqn:mainineq} in the following way:
\[p^{\beta(J)+\gamma(J)-2|J|}\left(\frac{p^2n}{2m}\right)^{|J|}
\geq p^{\alpha(J)+\gamma'(J)-|J|}\left(\frac{p^2n}{2m}\right)^{|J|}
\geq p^{5\l +|\Gamma'|}\left(\frac{\exp\left(20(\log n)^{4/5}\right)}{2m}\right)^{g}\]
\[\geq n^{-\left(6\l+k(\l+3)+6\right)/2}\exp\left(5(\log n)^{4/5}m^{3/4}-g\log (2m)\right)\]
\[\geq \exp\left(5 (\log n)^{7/5}-\frac{3}{2}\l^2 \log n\right)
\geq \exp\left(3(\log n)^{7/5}\right)\gg 3\cdot 2^{m+1} n\log (2n).\]

\medskip
\noindent
\textbf{Case 2:} Suppose that $|J|<g$ and $\gamma'(J)> c(J)$, where $c(J)$ is the number of connected components of $J$. Since there exists exactly one cycle of length less than $g$ in $G$, we must have $\alpha(J)\leq |J|-c(J)+1$. On the other hand, since the distance between any two elements of $\Gamma'$ is at least $\l'=\l-1$, we have $|J|\geq (\gamma'(J)-c(J))\l'$. Therefore
\[p^{\beta(J)+\gamma(J)-2|J|}\left(\frac{p^2n}{2m}\right)^{|J|}\geq
p^{\alpha(J)+\gamma'(J)-|J|}\left(\frac{p^2n}{2m}\right)^{|J|}\geq 
p^{\gamma'(J)-c(J)+1}\left(\frac{\exp\left(20\log n^{4/5}\right)}{2m}\right)^{(\gamma'(J)-c(J))\l'}\]
\[\geq n^{-1/2}\left(n^{-1/2}\exp(4 \log n)\right)^{\gamma'(J)-c(J)}
\geq n^{3}\gg 3\cdot 2^{m+1}n\log(2n) .\]

\medskip
\noindent
\textbf{Case 3:} Suppose that $\gamma'(J)\leq c(J)$ and $\l\leq |J|<g$. As discussed in the previous case, we have $\alpha(J)\leq |J|-c(J)+1$, which implies 
\[
p^{\beta(J)+\gamma(J)-2|J|}\left(\frac{p^2n}{2m}\right)^{|J|}\geq p^{\alpha(J)+\gamma'(J)-|J|}\left(\frac{p^2n}{2m}\right)^{|J|}\geq
p^{\gamma'(J)-c(J)+1}\left(\frac{\exp\left(20(\log n)^{4/5}\right)}{2m}\right)^{\l}\]
\[\geq n^{-1/2}\exp\left(5\log n-(\log n)^{1/5}\log (2m)\right)\geq n^4\gg
3\cdot 2^{m+1}n\log (2n).\]

\medskip
\noindent
\textbf{Case 4:} Suppose that $\gamma'(J)\leq c(J)$, $|J|<\l$ and $w\in J$. Since $|J|$ is smaller than the girth of $G'$, $G'|_J$ must be a forest and therefore $\alpha(J)= |J|-c(J)$. Also, as $\beta(J)\geq 1$, $J$ must induce at least one edge in $G'$ or $H'$, and hence $|J|\geq 2$. Finally, since elements of $ (\Gamma'\cup \{w\})$ are at mutual distance at least $\l'$, we conclude that every connected component of $G'|_J$ contains at most one element of $\Gamma'\cup \{w\}$, implying $\gamma'(J)+1=|J\cap (\Gamma'\cup \{w\})|\leq c(J)$. Combining these facts, one has:
\[
p^{\beta(J)+\gamma(J)-2|J|}\left(\frac{p^2n}{2m}\right)^{|J|}\geq
p^{\alpha(J)+\gamma'(J)-|J|-1}\left(\frac{p^2n}{2m}\right)^{|J|}\geq
p^{\gamma'(J)-c(J)-1}\left(\frac{p^2n}{2m}\right)^{|J|}
\]
\[\geq p^{-2}\left(\frac{p^2n}{2m}\right)^2=
\frac{p^2n^2}{(2m)^2}\geq \frac{\exp\left(20(\log n)^{4/5}\right) n}{4m^2}\gg 3\cdot 2^{m+1} n\log 2n.
\]

\medskip
\noindent
\textbf{Case 5:} Suppose that $\gamma'(J)\leq c(J)$, $|J|<\l$, and $w\not \in J$. As in the previous case, we infer that $J$ is a forest, and that $\alpha(J)=|J|-c(J)$. If $J$ induces no edges on $H'$ we have \[\beta(J)+\gamma(J)-2|J|=\alpha(J)+\gamma(J)-2|J|=\gamma(J)-c(J)-|J|.\]
Note that every two elements of $\Gamma$ are at distance at least $\l'=\l-1$, which gives $\gamma(J)\leq c(J)$, and hence
\[
p^{\beta(J)+\gamma(J)-2|J|}\left(\frac{p^2n}{2m}\right)^{|J|}\geq p^{-2}\left(\frac{p^2n}{2m}\right)^2=\frac{p^2n^2}{(2m)^2}\]
\[\geq \frac{\exp\left(20(\log n)^{4/5}\right) n}{4m^2}\gg 3\cdot 2^{m+1} n\log 2n.\]
\noindent
In the case that $J$ induces an edge in $H'$ one can use the fact that $w\not\in J$ to improve the bound \cref{eqn:boundHJ}
\begin{align*}
    \beta(J)+\gamma(J)-2|J|&= \alpha(J)+\gamma(J)+|J\cap \z|-2|J|\\
    &=\gamma'(J)-c(J)+|J\cap A_{H'}|-|J|=\gamma'(J)-c(J)-|J\cap B_{H'}|.
\end{align*}
Since $|J|< \l$ and the distance between any two vertices of $(A_{H'}-\{w\})\cup \z$ is at least $\l'=\l-1$, we conclude that $c(J)\geq |J\cap(A_{H'}\cup \z)|$. Noting that $J$ intersects both $A_{H'}$ and $B_{H'}$, we see that $c(J)\geq \gamma'(J)+1$ and $|J\cap B_{H'}|\geq 1$. Hence, $\gamma'(J)-c(J)-|J\cap B_{H'}|\leq -2$ and so
\[
p^{\beta(J)+\gamma(J)-2|J|}\left(\frac{p^2n}{2m}\right)^{|J|}\geq p^{-2}\left(\frac{p^2n}{2m}\right)^2=\frac{p^2n^2}{(2m)^2}\]
\[\geq \frac{\exp\left(20(\log n)^{4/5}\right) n}{4m^2}\gg 3\cdot 2^{m+1} n\log 2n,\]
which completes the proof of Lemma~\cref{lemma:embeddability}.
\end{proof}

\subsection{Constructing $\psi_G$ and $\psi_H$}\label{subsec:constructingembeddings}

Before showing how to construct $\psi_G, \psi_H$ and completing the proof, we have one additional step. Namely, in the construction of $\psi_G, \psi_H$, we will need that the arrangement $\sigma$ sends approximately the same number of vertices from $A_X$ to $A_Y$ and $B_Y$. More precisely, we say that an arrangement $\sigma:V(X)\to V(Y)$ is \dfn{balanced} if $2n/3\geq |\sigma(A_X)\cap A_Y|\geq n/3$. Otherwise, $\sigma$ is \dfn{unbalanced}. Although it may seem at first that this definition is not symmetric in $A_X$ and $B_X$, the symmetry comes from the fact that $|\sigma(A_X)\cap A_Y|=n-|\sigma(B_X)\cap A_Y|=|\sigma(B_X)\cap B_Y|=n-|\sigma(A_X)\cap B_Y|$. Together with Proposition~\cref{prop:exchangeabilityfromdifferentarrangements}, the following result essentially shows that we may, without loss of generality, assume that we are exchanging $u_0$ and $v_0$ starting from a balanced arrangement. Although Proposition~\cref{prop:balancing} is stated as a result about exchangeability in various arrangements, the underlying argument is really about large matchings in random bipartite graphs. 

\begin{proposition}\label{prop:balancing}
Let $X, Y$ be random bipartite graphs, independently chosen form $\G(K_{n, n}, p)$. With high probability, the following statement holds: for any unbalanced arrangement $\sigma:V(X)\to V(Y)$ and any vertices $u_0, v_0\in V(Y)$, there exists a sequence of $(X, Y)$-friendly swaps, not involving the vertices $u_0$ and $v_0$, which transforms the arrangement $\sigma$ into a balanced arrangement $\sigma':V(X)\to V(Y)$.
\end{proposition}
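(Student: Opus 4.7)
The plan is to identify a monovariant on arrangements and combine it with a union bound to greedily move $\sigma$ toward balance. Define $f(\sigma) = |\sigma(A_X) \cap A_Y|$, so that $\sigma$ is balanced iff $f(\sigma) \in [n/3, 2n/3]$. Any $(X, Y)$-friendly swap changes $f$ by exactly $\pm 1$: a swap across $ab \in E(X)$ with $a \in A_X, b \in B_X$ is \emph{positive} (increases $f$) precisely when $\sigma(a) \in B_Y$ and $\sigma(b) \in A_Y$, and \emph{negative} otherwise. I will show that with high probability, for every $\sigma$ with $f(\sigma) < n/3$ and every $u_0, v_0 \in V(Y)$, a positive friendly swap from $\sigma$ not involving $u_0, v_0$ exists; the case $f(\sigma) > 2n/3$ is entirely symmetric. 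Applying such swaps greedily at most $\lceil n/3 \rceil$ times then brings $\sigma$ into the balanced range, proving the Proposition (note that the preimages of $u_0, v_0$ are preserved throughout, so the argument can indeed iterate).

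Set
\[
A' = A_X \cap \sigma^{-1}(B_Y \setminus \{u_0, v_0\}), \qquad B' = B_X \cap \sigma^{-1}(A_Y \setminus \{u_0, v_0\}),
\]
and note $|A'|, |B'| \geq 2n/3 - 2$ whenever $f(\sigma) < n/3$. A positive, $\{u_0, v_0\}$-avoiding swap from $\sigma$ is precisely a pair $(a, b) \in A' \times B'$ with $ab \in E(X)$ and $\sigma(a)\sigma(b) \in E(Y)$. Crucially, the existence of such a pair depends on $\sigma$ only through the quadruple $(A', B', \alpha, \beta)$ where $\alpha = \sigma|_{A'}$ and $\beta = \sigma|_{B'}$, and the number of such quadruples is at most $4^n \cdot n^{2n} = \exp(O(n \log n))$.

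For any fixed quadruple, the events $\{ab \in E(X)\} \cap \{\alpha(a)\beta(b) \in E(Y)\}$ indexed by $(a, b) \in A' \times B'$ are mutually independent: bipartiteness of $X$ and the injectivity of $\alpha, \beta$ together with $\alpha(A') \subseteq B_Y$, $\beta(B') \subseteq A_Y$ ensure that distinct pairs yield distinct edges in both $X$ and $Y$, and $X, Y$ themselves are independent. Hence the probability that no good pair exists is at most $(1 - p^2)^{|A'||B'|} \leq \exp(-c\, p^2 n^2)$ for an absolute constant $c > 0$. The hypothesis $p \geq \exp(10(\log n)^{4/5})/n^{1/2}$ yields $p^2 n^2 \geq n \exp(20(\log n)^{4/5})$, which dwarfs $n \log n$. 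Union-bounding over quadruples and over the $O(n^2)$ choices of $(u_0, v_0)$ yields the claim with high probability.

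The main obstacle is structural rather than computational: one must recognize that the ``good pair'' event is determined by local data on $A' \cup B'$ rather than by $\sigma$ itself, so that the count of configurations, $\exp(O(n \log n))$, can be absorbed by the failure probability, $\exp(-n^{1+o(1)})$. Beyond this observation, the argument reduces to a routine Chernoff-type tail estimate and a union bound.
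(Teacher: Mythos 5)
Your proof is correct, and it reaches the same quantitative conclusion as the paper by a noticeably different route. The paper fixes an unbalanced $\sigma$, forms the auxiliary bipartite graph $Z$ on $C_Y\sqcup D_Y$ whose edges are exactly the available rebalancing swaps (so $Z\sim \G(K_{n',n'},p^2)$), and then invokes K\"onig's theorem to show $Z$ has a matching of size greater than $n/2$ with probability $1-e^{-\Omega(p^2n^2)}$; performing roughly $n/3$ of these \emph{disjoint} swaps (avoiding $u_0,v_0$) rebalances $\sigma$ in one shot, and a union bound over all $n!$ arrangements finishes the argument. You instead use the monovariant $f(\sigma)=|\sigma(A_X)\cap A_Y|$ and only ever need a \emph{single} positive swap at each step, iterating the ``for all unbalanced $\sigma$'' guarantee; your union bound is over the quadruples $(A',B',\sigma|_{A'},\sigma|_{B'})$, of which there are $\exp(O(n\log n))$, against a failure probability $e^{-\Omega(p^2n^2)}$ per quadruple, which is the same tail-versus-entropy computation as in the paper (both need $p\gg(\log n/n)^{1/2}$, so your explicit use of the hypothesis \cref{eqn:pbound} is consistent with how the proposition is applied). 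Your independence claim is sound: distinct pairs $(a,b)\in A'\times B'$ use distinct edge indicators in $X$ and, by injectivity of $\sigma$, distinct edge indicators in $Y$. What your approach buys is the elimination of the matching/K\"onig machinery, at the mild cost of having to note (as you do) that the swaps can be iterated because the high-probability event is uniform over all unbalanced arrangements; what the paper's approach buys is that all swaps are read off from the initial arrangement as a disjoint family, making the sequential performance and the avoidance of $u_0,v_0$ completely mechanical.
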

\begin{proof}
The first step of the proof will be to show that for a fixed permutation $\sigma:V(X)\to V(Y)$, the required sequence exists with probability at least $1-e^{-\Omega(p^2n^2)}$. After showing this, a simple application of the union bound will suffice to complete the proof.

Having fixed an unbalanced arrangement $\sigma:V(X)\to V(Y)$, we will consider the case when $|\sigma(A_X)\cap A_Y|> 2n/3$. Note that the other case, when $|\sigma(A_X)\cap A_Y|< n/3$, can be reduced to the first one by exchanging labels $A_Y$ and $B_Y$. The main idea of the proof will be to show that, with high probability, there exist many pairs of vertices $a\in A_Y, b\in B_Y$ with $\sigma(a)\in A_X, \sigma(b)\in B_X$ which can be swapped using a $(X, Y)$-friendly swap, since performing roughly $n/3$ such swaps will transform $\sigma$ into a balanced arrangement $\sigma'$. We will then show that choosing many such disjoint pairs $(a, b)$ corresponds to finding a large matching in a certain random bipartite graph, which will in turn be accomplished using K\"onig's theorem.

Before passing to the proof, let us introduce some additional notation. Define $C_Y=\sigma(A_X)\cap A_Y$ and $D_Y=\sigma(B_X)\cap B_Y$, and let $C_X, D_X$ be the preimages of these sets under $\sigma$, $C_X=\sigmai(C_Y), D_X=\sigmai(D_Y)$. Also, let us denote the size of each of the sets $C_X, D_X, C_Y, D_Y$ by $n'$, where $n'\geq 2n/3$. 

For a fixed permutation $\sigma:V(X)\to V(Y)$ and random $X, Y\sim \G(n, p)$, we construct a bipartite graph $Z$ with the bipartition $V(Z)=C_Y\bigsqcup D_Y$ such that $cd\in E(Z)$ if and only if $cd\in E(Y)$ and $\sigmai(c)\sigmai(d)\in E(X)$. Since $\sigma$ is fixed, the events $cd\in E(Z)$ are mutually independent for all $c\in C_Y, d\in D_Y$, and they occur with probability $p^2$. Hence, $Z$ has a distribution of a random bipartite graph $\G(K_{n', n'}, p^2)$.

Suppose that $Z$ has a matching $M=\{c_1d_1, \dots, c_kd_k\}$ of size $k>n/2$. Assuming $u_0, v_0\not\in\{c_1, d_1, \dots, c_{n/3}, d_{n/3}\}$, we see that performing the $(X, Y)$-friendly swaps $c_1d_1, \dots, c_{n/3}d_{n/3}$ gives a sequence of swaps which transforms $\sigma$ into a balanced arrangement $\sigma'$. Hence, it suffices to show that the size of the maximum matching of $Z$, denoted by $\nu(Z)$, is at least $n/2$ with probability at least $1-e^{-\Omega(p^2n^2)}$.

By K\"onig's theorem, the size of the largest matching in $Z$ is equal to the size of the smallest vertex cover of $Z$, which is denoted by $\tau(Z)$. Therefore, we will begin by bounding the probability that a fixed set $V$, of size $n/2$, is a vertex cover of $Z$. Note that a set $V$ covers at most $|V|n'$ edges of the graph $K_{n', n'}$. Hence, the probability that $V$ is a vertex cover of $Z$ is equal to the probability that none of the remaining $n'(n'-|V|)$ edges exist in $E(Z)$. In other words, we have;
\begin{equation*}
    \Pb[V\text{ is a vertex cover of } Z]\leq (1-p^2)^{n'(n'-|V|)}\leq e^{-\Omega( p^2 n^2)}.
\end{equation*}
Hence, the probability that there exists a vertex cover of $Z$ of size $\leq n/2$ is at most 
\begin{equation*}
    \Pb[\nu(Z)\leq n/2]\leq \binom{n}{n/2}e^{-\Omega(p^2 n^2)}\leq 2^ne^{-\Omega(p^2n^2)}\leq e^{-\Omega( p^2n^2)}.
\end{equation*}

\noindent
The above computation shows that the probability that the required sequence exists for a fixed choice of $\sigma$ is at least $1-e^{-\Omega(p^2n^2)}$. Hence, a union bound over all choices of $\sigma$ shows that the statement of the proposition fails with probability $\leq n! e^{-\Omega(p^2n^2)}=o(1)$, as $n\to \infty$. This completes the proof of the Proposition~\cref{prop:balancing}.
\end{proof}

\noindent
The last prerequisite we need for the proof is the way to construct $\psi_G$ and $\psi_H$ when $\GR|_{[m]}, \HR|_{[m]}$ are $(q_1, \dots, q_m)$-bipartite-embeddable in $X, Y$.

\begin{proposition} \label{prop:bipartiteembeddabilityimpliesembeddability}
Let $X, Y$ be random bipartite graphs, independently chosen from $\G(K_{n, n}, p)$. With high probability, the following statement holds: For any two vertices $u_0, v_0\in V(Y)$ satisfying $u_0v_0\in E(Y), \sigmai(u_0)\in A_X, \sigmai(v_0)\in A_Y$ and every balanced $\sigma:V(X)\to V(Y)$ satisfying the assumptions of the case (C$\rho$), for $\rho\in \{1, 2, 3, 4\}$, there exist embeddings $\psi_G:V(\GR)\to V(\t X)$ and $\psi_H:V(\HR)\to V(Y)$ with $\sigma\circ \psi_G=\psi_H\circ \id$ and $\psi_H(u)=u_0, \psi_H(v)=v_0$.
\end{proposition}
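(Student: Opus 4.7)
\medskip

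\noindent\textbf{Proof plan.} The idea is to leverage the $(q_1,\dots,q_m)$-bipartite-embeddability of $(\GR|_{[m]},\HR|_{[m]})$ established in Lemma~\cref{lemma:embeddability}, and to convert that ``free'' embedding into one that attaches correctly to $u_0$ and $v_0$. This conversion is forced by choosing the admissible sets $V_i$ for $i\in \Gamma=\x\cup \y\cup \s\cup \r$ to lie inside $N_Y(u_0)$, $N_Y(v_0)$, $\sigma(N_X(u_0'))$ or $\sigma(N_X(v_0'))$, depending on which of $u,v$ the vertex $i$ is adjacent to in $G$ or $H$. A Chernoff bound (applied to all vertices via a union bound) shows that, with high probability, every vertex of $X$ and $Y$ has degree at least $np/2$ and at most $2np$; we work on this event together with the event of Lemma~\cref{lemma:embeddability}.

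For each case (C$\rho$), partition $V(Y)\setminus\{u_0,v_0\}$ into the four quadrants $A_Y\cap\sigma(A_X)$, $A_Y\cap\sigma(B_X)$, $B_Y\cap\sigma(A_X)$, $B_Y\cap\sigma(B_X)$, each of which has size at least $n/3-O(1)$ by the balancedness of $\sigma$. For $i\notin\Gamma$ pick $V_i$ of size $q_i=n/(2m)$ inside the unique quadrant forced by the partite classes of $i$ in $G^{(\rho)}$ and $H^{(\rho)}$, disjointly. For $i\in\Gamma$, additionally intersect the relevant quadrant with the appropriate neighborhood; under the majority-mapping hypothesis of case (C$\rho$) this intersection has size at least $\tfrac{1}{2}\min(|N_Y(u_0)|,|N_Y(v_0)|,|N_X(u_0')|,|N_X(v_0')|)\geq np/4$, which comfortably accommodates $|\Gamma|$ disjoint subsets of size $q_i=pn/(3\l)$ chosen greedily (their total size is $O(pn)$, and $\l\to\infty$). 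Verifying that the partite classes of $\x,\y,\s,\r$ in $G^{(\rho)},H^{(\rho)}$ are matched by the four majority-mapping configurations is a direct check using Subsection~\cref{subsec:construction}: this is precisely why the four constructions $(\GR,\HR)$ differ.

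Apply Lemma~\cref{lemma:embeddability} to obtain embeddings $\psi_{G'}:V(\GR|_{[m]})\to V(X)$ and $\psi_{H'}:V(\HR|_{[m]})\to V(Y)$ with $\psi_{H'}(i)\in V_i$ and $\sigma\circ\psi_{G'}=\psi_{H'}\circ\id$. Extend by setting $\psi_H(u)=u_0$, $\psi_H(v)=v_0$, $\psi_G(u)=u_0'$, $\psi_G(v)=v_0'$; injectivity is preserved because the $V_i$ were chosen to avoid $\{u_0,v_0\}$, and $\sigma\circ\psi_G=\psi_H\circ\id$ still holds since $\sigma(u_0')=u_0$ and $\sigma(v_0')=v_0$. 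The only new edges to check are those incident to $u$ or $v$. For $x_i\in N_H(u)$, one has $\psi_H(x_i)\in V_{x_i}\subseteq N_Y(u_0)$, so $\psi_H(u)\psi_H(x_i)\in E(Y)$; for $s_i\in N_G(u)$, one has $\psi_G(s_i)=\sigma^{-1}(\psi_H(s_i))\in N_X(u_0')$, so $\psi_G(u)\psi_G(s_i)\in E(X)\subseteq E(\t X)$; the analogous statements for $v$ hold by the same argument. The edge $uv$ maps to $u_0v_0\in E(Y)$ by hypothesis and to $u_0'v_0'\in E(\t X)$ by the very definition of $\t X$.

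The main obstacle is the case analysis: for each of (C1)--(C4) one must verify that the constrained intersection (quadrant $\cap$ neighborhood) has size $\gtrsim np$ and admits the required disjoint family $\{V_i\}_{i\in\Gamma}$. This reduces to bookkeeping using the majority-mapping inequalities and the bipartitions prescribed in Subsection~\cref{subsec:construction}; no further randomness is needed beyond the degree concentration and the event of Lemma~\cref{lemma:embeddability}.
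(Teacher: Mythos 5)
Your proposal follows essentially the same route as the paper's proof: choose the admissible sets $V_i$ for $i\in\Gamma$ inside the appropriate neighborhood intersected with the correct quadrant (using the majority-mapping hypotheses of case (C$\rho$) together with balancedness for the remaining $V_i$), invoke Lemma~\cref{lemma:embeddability}, and extend by $\psi_H(u)=u_0$, $\psi_H(v)=v_0$, $\psi_G(u)=u_0'$, $\psi_G(v)=v_0'$, checking only the edges incident to $u$ and $v$ (with $uv$ handled by $u_0v_0\in E(Y)$ and the definition of $\t X$). The only slip is quantitative: with your degree bound $np/2$, majority-mapping gives intersections of size only $np/4$, while the sets $\{V_i\}_{i\in\x}$ alone must be $\l$ disjoint sets of size $pn/(3\l)$, of total size $pn/3>np/4$; since $np\gg\log n$, Chernoff actually gives degrees $(1-o(1))np$ (the paper works with $2pn/3$), which restores the count and makes your argument go through as written.
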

\begin{proof}
The main idea of the proof is to carefully choose the sets $V_1, \dots, V_m$ such that the adjacency of the corresponding vertices of $X, Y$ with $u_0, v_0, u_0', v_0'$ is guaranteed by the choice of $V_1, \dots, V_m$. For the sake of concreteness, we will describe in detail how these sets are chosen only in the case $\rho=1$, as the choices are made in a completely analogous way in all other cases. 

From the condition $\sigma\circ \psi_G=\psi_H\circ \id$ and $\psi_H(u)=u_0, \psi_H(v)=v_0$, we must have $\psi_G(u)=\sigmai(u_0)=u_0'$ and $\psi_G(v)=v_0'$.

Since $p\gg \frac{\log n}{n}$, we have that, with high probability, all vertices of $X, Y$ have degrees at least $2pn/3$, which we assume to be the case. For $i\in N_{G^{(1)}}(u)$, we choose $V_i$ of size $\frac{pn}{3\l }$ to lie in $\sigma(N_X(u_0'))\cap B_Y$, and for $i\in N_{G^{(1)}}(v)$, we choose $V_i$ of size $\frac{pn}{3\l }$ to lie in $\sigma(N_X(v_0'))\cap B_Y$. By assumption $\sigma$ majority-maps $N_X(u_0')$ and $N_X(v_0')$ to $B_Y$, so the sizes of the intersections $\sigma(N_X(u_0'))\cap B_Y$ and $\sigma(N_X(v_0'))\cap B_Y$ are at least $\frac{pn}{3}$. Hence, one can indeed choose $l$ disjoint sets $V_i$ of size $\frac{pn}{3\l }$ satisfying the required properties.

Similarly, for $i\in N_{H^{(1)}}(u)$ we choose a set $V_i\subset N_Y(u_0)\cap \sigma(A_X)$ of size $\frac{pn}{3\l }$ and for $i\in N_{H^{(1)}}(v)$ we choose $V_i\subset N_Y(v)\cap \sigma(A_X)$, again of size $\frac{pn}{3\l }$. For vertices $i\in [m]$ which are not neighbors of $u, v$ in either ${G^{(1)}}$ or ${H^{(1)}}$, say for $i\in A_G\cap A_H-\Gamma$, we choose a set $V_i\subset \sigma(A_X)\cap A_Y$, of size $\frac{n}{4m}$. Since $\sigma$ is balanced, we have $|\sigma(A_X)\cap A_Y|\geq n/3$ and hence the sets $V_i$ for $i\in A_G\cap A_H-\Gamma$ can be chosen to be disjoint from all previously chosen sets. The sets corresponding to vertices $i\in A_G\cap B_H, B_G\cap A_H, B_G\cap B_H$ are chosen analogously. 

From our choice of $V_1, \dots, V_m$ it is clear that this list is admissible, and that $|V_i|=q_i$, where $q_i$ is defined in the statement of Lemma~\cref{lemma:embeddability}. Hence, by Lemma~\cref{lemma:embeddability}, we conclude there are embeddings $\psi_G:[m]\to V(X), \psi_H:[m]\to V(Y)$. By the choice of $V_i$, it is clear that $\psi_H(i)$ is adjacent to $u_0$ whenever $iu\in E(H)$, etc. Furthermore, since $\sigmai(u_0)\sigmai(v_0)\in E(\t X)$, we conclude that setting $\psi_H(u)=u_0, \psi_H(v)=v_0, \psi_G(u)=\sigmai(u_0), \psi_G(v)=\sigmai(v_0)$ extends $\psi_G, \psi_H$ to the embeddings $\psi_G:V(\GR)\to V(\t X)$ and $\psi_H:V(\HR)\to V(Y)$, thus completing the proof. \end{proof}

\subsection{Completing the proof}\label{subsec:finalproof}

Having presented all prerequisites in the previous sections, we now put everything together and present the whole proof of Proposition~\cref{prop:bipartiteexchangeability}, which implies Theorem~\cref{thm:randombipartiteconnectivity}.

\begin{proof}[Proof of Proposition~\cref{prop:bipartiteexchangeability}.]

Since, with high probability, any arrangement $\sigma$ can be transformed into a balanced arrangement without ever swapping $u_0, v_0$, Proposition~\cref{prop:exchangeabilityfromdifferentarrangements} shows that it suffices to check the statement of Proposition~\cref{prop:bipartiteexchangeability} only for balanced arrangements $\sigma$. If $\sigma$ satisfies the assumptions of the case (C$\rho$), for some $\rho\in \{1, 2, 3, 4\}$, combining Proposition~\cref{prop:bipartiteembeddabilityimpliesembeddability} and Lemma~\cref{lemma:embeddability}, one concludes that, with high probability, there exist embeddings $\psi_G:V(\GR)\to V(X), \psi_H:V(\HR)\to V(Y)$ with the properties required by Proposition~\cref{prop:embedabilityimpliesexchangeability}. Since Lemmas~\cref{lemma:exchangeability12} and \cref{lemma:exchangeability34} show that the vertices $u, v\in V(\HR)$ are $(\GR, \HR)$-exchangeable, Proposition~\cref{prop:embedabilityimpliesexchangeability} applies to show that $u_0$ and $v_0$ are indeed $(\t X, Y)$-exchangeble, which completes the discussion when $\sigma$ satisfies the conditions of one of the cases (C$\rho$).

If $\sigma$ does not satisfy any of these cases, a symmetry argument reduces to the previously discussed case. There are three essential symmetries we use. Firstly, swapping the names of $u_0$ and $v_0$ does not affect the fact that these vertices are $(X, Y)$-exchangeable from $\sigma$. This symmetry has the effect of changing $A_X$ and $A_Y$ into $B_X$ and $B_Y$, and vice versa. Further, noting that $u_0, v_0$ are exchangeable from $\sigma$ if and only if they are exchangeable from $\tr{u_0}{v_0} \circ \sigma$ allows us to change $A_Y$ into $B_Y$ and vice versa without altering $A_X$ and $B_X$. Finally, noting that $u_0, v_0$ are $(X, Y)$-exchangeable from $\sigma$ if and only if $\sigmai(u_0), \sigmai(v_0)$ are $(Y, X)$-exchangeable from $\sigmai$ allows us to swap labels $A_X$ and $B_X$ into $A_Y$ and $B_Y$, and vice versa. We apply these symmetries as follows.

If $\sigma$ majority-maps $N_X(u_0'), N_X(v_0')$ into the same partite set of $Y$ and $\sigmai$ majority-maps $N_Y(u_0), N_Y(v_0)$ into the same partite set of $X$, we first apply the first symmetry, if needed, in order to have both $N_Y(u_0)$ and $N_Y(v_0)$ majority-mapped into $A_X$. Then, we apply the second symmetry, if needed, to have both $A_X$ and $B_X$ majority-mapped into $B_Y$, without affecting the labels of $A_X$ and $B_X$. This reduces $\sigma$ to the case (C1).

If $\sigma$ majority-maps $N_X(u_0'), N_X(v_0')$ to different partite sets of $Y$, and $\sigmai$ majority-maps $N_Y(u_0), N_Y(v_0)$ to different partite sets of $X$, we perform a second symmetry, if needed, in order to ensure that $N_X(u_0')$ is majority-mapped into $B_Y$ and $N_X(v_0')$ is majority-mapped into $A_Y$. Depending on where $N_Y(u_0)$ and $N_Y(v_0)$ are mapped, we either reduce to the case (C3) or case (C4).

Finally, if one pair of sets $N_X(u_0'), N_X(v_0')$ and $N_Y(u_0), N_Y(v_0)$ is majority-mapped into the same partite set while the other pair is majority-mapped into different sets, we apply the third symmetry to ensure that $N_Y(u_0)$ and $N_Y(v_0)$ are mapped to different sets. Then, by applying a combination of the first two symmetries, we can reduce to the case (C2).

The above discussion shows that all cases can be reduced to the four distinguished cases, and hence it suffices to consider only these cases. Since the discussion for these was already presented, the proof is complete.
\end{proof}

\section{Generalizations of Wilson's theorem}\label{sec:wilson}

\noindent
The goal of this section is to characterize multiplicity lists $c\in \Z_{>0}^m$ for which the graphs $\FSm(X, \S_m)$ and $\FSm(\S_n, X)$ are connected, by showing Theorems~\cref{thm:WilsonmultiplicityX} and \cref{thm:WilsonmultiplicityS}. Let us begin by addressing the case in which $X$ is a multiplicity graph.

\begin{proof}[Proof of Theorem~\cref{thm:WilsonmultiplicityX}.]

Suppose first $X$ is Wilsonian. It is simple to check that the lift $X'$ of a Wilsonian graph $X$ with any multiplicity list $c\in \Z_{>0}^m$ is still Wilsonian, and hence Wilson's theorem implies that the graph $\FS(\S_n, X')$ is connected. Using Corollary~\cref{cor:lifting}, this shows that $\FSm(\S_n, X)$ is also connected.
In the case $X$ is biconnected but not Wilsonian and at least one vertex $v\in V(X)$ has multiplicity $c_v\geq 2$, we will again show that the lift of $X$ is Wilsonian. If $v$ is not an isolated vertex, replacing it with a clique of size at least $2$ induces a triangle in the lift graph $X'$. Since $X'$ is biconnected, and neither cycles of length at least $4$ nor the graph $\theta_0$ contain triangles, we conclude that $X'$ must be Wilsonian. As in the previous case, this suffices to show $\FSm(\S_n, X)$ is connected. On the other hand, if all multiplicities $c_v$ are $1$, the graph $\FSm(\S_n, X)$ is isomorphic to the classical version $\FS(\S_n, X)$, which has no multiplicities. Wilson's theorem then shows that $\FS(\S_n, X)$ is not connected, which completes this case.

Finally, we address the case when $X$ is not biconnected. Suppose that all cut vertices of $X$ have multiplicities at least $2$. Again, the goal is to show $X'$ is Wilsonian. Since the lift $X'$ contains a triangle, it cannot be either a cycle of length at least $4$ or $\theta_0$. Similarly, since all cut vertices of $X$ have multiplicities at least $2$, the graph $X'$ has no cut vertices and hence it is biconnected. Therefore, $X'$ is Wilsonian and $\FSm(\S_n, X)$ is biconnected. On the other hand, when $X$ has a cut vertex of multiplicity one, Proposition~\cref{prop:cutvertices} applies because the center of $\S_n$ is a cut vertex. Therefore, we conclude that $\FSm(\S_n, X)$ is disconnected, which completes the proof of Theorem~\cref{thm:WilsonmultiplicityX}.
\end{proof}

\noindent
Now, we focus on the case when $\S_n$ is the multiplicity graph, and prove Theorem~\cref{thm:WilsonmultiplicityS}. The proof will be split into three propositions --- Proposition~\cref{prop:containskbridge} will consider the case when $X$ contains a $k$-bridge, Proposition~\cref{prop:nokbridge} will consider the case when $X$ does not contain a $k$-bridge, and Proposition~\cref{prop:cycle} will consider the case when $X$ is a cycle.

Throughout the whole discussion, we will consider vertices $\sigma\in V(\FSm(X, \S_m))$ as assignments of labels, represented by vertices of $\S_m$, to the vertices of $X$, with the condition that the label $i\in V(\S_m)$ appears $c_i$ times. Moreover, following Wilson's notation from \cite{W}, we denote the center of the star by $\es$ and we denote the multiplicity of $\es$ by $k=c_\es$.

\begin{proposition}\label{prop:containskbridge}
Let $X$ be a simple graph containing a $k$-bridge and let $\S_m$ be the star graph, along with the multiplicity list $c\in \Z_{>0}^m$, in which the center has multiplicity $c_\es=k$. Then, the friends-and-strangers graph $\FSm(X, \S_m)$ is not connected.
\end{proposition}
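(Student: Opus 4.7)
The plan is to exhibit two arrangements $\sigma_0,\sigma_1\in V(\FSm(X,\S_m))$ that lie in distinct connected components, separated by a deque-type invariant attached to the $k$-bridge. Let $C_1,C_2$ be the two components of $X|_{V(X)\setminus\{a_2,\ldots,a_{k-1}\}}$, containing $a_1$ and $a_k$ respectively, and write $L=C_1\setminus\{a_1\}$ and $R=C_2\setminus\{a_k\}$, both nonempty by the $k$-bridge hypothesis $|C_1|,|C_2|\geq 2$. Since Proposition~\cref{prop:containskbridge} is invoked in the setting of Theorem~\cref{thm:WilsonmultiplicityS} with $m>2$, the star $\S_m$ has at least two distinct leaf labels; pick $\alpha\neq\beta$ and choose $v\in L$, $w\in R$. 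Define $\sigma_0$ to be any arrangement placing the $k$ copies of the center label $\es$ on the bridge $\{a_1,\ldots,a_k\}$, with $\sigma_0(v)=\alpha$, $\sigma_0(w)=\beta$, and the remaining labels distributed over $L\cup R$ consistently with the multiplicity list; let $\sigma_1$ agree with $\sigma_0$ except $\sigma_1(v)=\beta$ and $\sigma_1(w)=\alpha$.

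The central structural observation is that every friendly swap in $\FSm(X,\S_m)$ exchanges a copy of $\es$ with a leaf label, since leaves of $\S_m$ are pairwise non-adjacent. Combined with the fact that each interior bridge vertex $a_2,\ldots,a_{k-1}$ has only its two bridge neighbors in $X$, this implies that the sub-sequence of leaf labels currently occupying bridge positions (read from $a_1$ to $a_k$) behaves like a deque: the only modifications to it come from pushes or pops at the endpoints $a_1$ and $a_k$ (swaps across an $L$--$a_1$ or an $a_k$--$R$ edge), while every internal bridge swap merely shifts a leaf label between adjacent blank positions without altering the left-to-right order of leaf labels on the bridge. Once two leaf labels lie simultaneously on the bridge, their relative order is therefore frozen until one of them exits at an endpoint.

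Using this deque picture, I would prove the following conservation law: for any arrangement $\sigma$ reachable from $\sigma_0$ in which all $k$ copies of $\es$ again occupy $\{a_1,\ldots,a_k\}$, the multiset of leaf labels appearing on vertices of $L$ in $\sigma$ equals the corresponding multiset in $\sigma_0$. Indeed, when the deque is empty at both ends of a swap sequence, every push into it must be matched by a subsequent pop, and the non-crossing discipline of the deque forbids net transfer of labels across the bridge: a leaf label travelling from $L$ to $R$ must at some moment be the rightmost token on the bridge in order to exit at $a_k$, which precludes any simultaneously traveling $R$-originating label, and an inductive accounting on the number of transits rules out sequential pairs of opposing $L$-to-$R$ and $R$-to-$L$ traversals as well. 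Since $\sigma_0|_L$ and $\sigma_1|_L$ differ as multisets---the former contains a copy of $\alpha$ at $v$ while the latter contains a copy of $\beta$, with $\alpha\neq\beta$---the conservation law forces $\sigma_0$ and $\sigma_1$ into distinct connected components, proving disconnectedness.

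The main obstacle is formalizing the conservation law at arbitrary intermediate arrangements, where blanks need not sit on the bridge and may diffuse into $L$ or $R$ between successive all-blanks-on-bridge snapshots. My plan for handling this is to extend the deque/multiset bookkeeping to every arrangement by tagging each leaf label with its side of origin (the last endpoint $a_1$ or $a_k$ through which it entered or exited the bridge, if any), verify that this tagging is updated consistently by each of the four types of friendly swap (internal to $L$, internal to $R$, internal to the bridge, or across the $L$--$a_1$ or $a_k$--$R$ boundary), and thereby reduce the obstruction to a clean combinatorial statement about operations on a double-ended queue which rules out the required interchange of $\alpha$ and $\beta$.
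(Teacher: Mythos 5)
Your setup is sound: with $m>2$ you may pick two distinct leaf labels, the arrangements $\sigma_0,\sigma_1$ are both legitimate vertices, the observation that every $(X,\S_m)$-friendly swap moves a copy of $\es$, the description of the bridge dynamics (pushes/pops at $a_1,a_k$, order-preserving shifts inside), and the final logical step (apply the conservation law with $\sigma=\sigma_1$) are all correct. The genuine gap is that the conservation law itself --- which is the entire content of the proposition --- is not proved, and the mechanism you offer for it does not work as stated. A deque has no ``non-crossing discipline'': pushing a label at $a_1$ and later popping it at $a_k$ is a perfectly legal sequence of deque operations, so nothing in the deque picture by itself forbids a leaf label from transiting the bridge, and the clean combinatorial statement about double-ended queues that you hope to reduce to is simply false for an unconstrained deque. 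What actually blocks a transit is the count of blanks: the hypothesis $c_\es=k$, equal to the length of the bridge, forces a quantitative relation between a label's position among the non-blank bridge vertices and the number of blanks currently sitting on the $A$-side. Concretely, the paper tracks the invariant \cref{eqn:invariant}: copies of the distinguished label may occupy only the first $x(\tau)$ non-blank bridge positions, where $x(\tau)$ is the number of blanks in $A$; exiting at $a_k$ would require simultaneously being the last non-blank position ($r\leq x(\tau)$) and having a blank available in $B$ ($r>x(\tau)$), a contradiction. Your sketch uses $c_\es=k$ only to place the blanks in $\sigma_0$, never in the argument for the conservation law, and phrases like ``precludes any simultaneously traveling $R$-originating label'' and ``an inductive accounting on the number of transits'' are assertions, not arguments.

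Your ``tagging by side of origin'' plan likewise only reformulates the statement; to make it work you would have to augment the deque bookkeeping with exactly this blank-counting data (how many blanks are in $A$, on the bridge, and in $B$) and verify its evolution under each of the four swap types --- at which point you are reconstructing the swap-by-swap case analysis that the paper performs for \cref{eqn:invariant}. The strategy is salvageable (your conservation law is true, and it does follow from the stronger positional invariant), but as written the central lemma is missing and the heuristic offered in its place is insufficient.
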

\begin{proof}
Since $X$ contains a $k$-bridge $a_1, \dots, a_k$, its subgraph $X|_{V(X)-\{a_2, \dots, a_{k-1}\}}$ splits into connected components $A_0$ and $B_0$, where $a_1\in A_0$ and $a_k\in B_0$. We also set $A=A_0-\{a_1\}, B=B_0-\{a_k\}$. As $k$ is fixed throughout the proof, we call this specific $k$-bridge just the bridge. Without loss of generality, we may assume $|A|\geq |B|$.

Since the star $\S_m$ has at least two leaves, there exists a leaf whose multiplicity is at most $|A|$; let us denote this leaf by $l$. Consider the assignment $\sigma:V(X)\to V(\S_m)$, $\sigma\in V(\FSm(X, \S_m))$, which satisfies $\sigma^{-1}(l)\subset A$ and $\sigma^{-1}(\es)=\{a_1, \dots, a_k\}$, and let $\mathcal{C}$ denote the connected component containing $\sigma$ in $\FSm(X, \S_m)$. We will show that any vertex $\tau\in \mathcal{C}$ satisfies $\taui(l)\subset A\cup \{a_1, \dots, a_k\}$. Showing this would immediately imply that $\FSm(X, \S_m)$ is disconnected, since there are assignments $\tau'\in V(\FSm(X, \S_m))$ violating this property.

In fact, we show a stronger statement. For an assignment of labels $\tau:V(X)\to V(\S_m)$, we let $x(\tau)=|\taui(\es)\cap A|$ and we let $1\leq i_1 <i_2 \dots <  i_{r}\leq k$ be the complete set of indices for which $\tau(a_{i_j})\neq \es$. Letting $I(\tau)=\{i_1, \dots, i_{x(\tau)}\}$, we will show that for any $\tau\in C$ we have 
\begin{equation}\label{eqn:invariant}
    \taui(l)\subseteq A\cup \{a_i:i\in I(\tau)\}.
\end{equation}
Intuitively, $x(\tau)$ is the number of blank symbols assigned to the vertices of $A$, and we claim that all vertices labeled $l$ under $\tau$ appear either in $A$ or among the first $x(\tau)$ non-blank labels on the bridge. Note that there are indeed at least $x(\tau)$ non-blank labels on the bridge, i.e., $r\geq x(\tau)$, because there are at most $k-x(\tau)$ blank labels on the bridge.

It suffices to check that, if $\tau_1$ and $\tau_2$ differ by a $(X, \S_m)$-friendly swap, then $\tau_1$ satisfies the property \cref{eqn:invariant} if and only if $\tau_2$ also does. Let $I(\tau_1)=\{i_1, \dots, i_{x(\tau_1)}\}$ and $I(\tau_2)=\{j_1, \dots, j_{x(\tau_2)}\}$. Let $uv\in E(X)$ be the edge on which the $(X, \S_m)$-friendly swap is performed. We consider several cases, based on the position of the edge $uv$. 
\medskip

\noindent
\textbf{Case 1:} Suppose that $uv=a_\nu a_{\nu+1}$ for some $\nu\in \{1, \dots, n-1\}$. Since $u, v\notin A$, we have $x(\tau_1)=x(\tau_2)$. As $\tau_1$ and $\tau_2$ are symmetric and one of the swapped labels is $\es$, we may assume that $\tau_1(a_\nu)=\es=\tau_2(a_{\nu+1})$. Since $\tau_1(a_{\nu+1})\neq \es$, we must have $\nu+1=i_{t}$ for some $t\in \{1, \dots r\}$. Furthermore, since $\tau_2(a_i)\neq \es$ and $\tau_1, \tau_2$ agree on $V(X)-\{a_\nu, a_{\nu+1}\}$, we must also have $\nu=j_t$, and $i_\lambda=j_\lambda$ for $\lambda\neq t$. Since $\tau_1=\tau_2\circ \tr{a_{i_t}}{a_{i_t+1}}=\tau_2\circ (a_{i_t}, a_{j_t})$, it is not hard to see that $\taui_1(l)\subseteq A\cup \{a_{i_1}, \dots, a_{i_{x(\tau_1)}}\}$ is equivalent to $\taui_2(l)\subseteq A\cup \{a_{j_1}, \dots a_{j_{x(\tau_2)}}\}$, which is what we aimed to show.

\noindent
\textbf{Case 2:} Suppose that $u=a_1$, $v\in A$. If we assume, by symmetry, that $\tau_1(a_1)=\es$, we see that $\taui_2(\es)\cap A=(\taui_1(\es)\cap A)\cup \{v\}$, implying $x(\tau_2)=x(\tau_1)+1$. Since $\tau_2(a_1)\neq \es$, we also have $j_1=1$ and $j_{t}=i_{t+1}$ for $t\in \{1, \dots, x(\tau_1)\}$. Since the transposition $\tr{a_1}{v}$ fixes the set $A\cup \{a_1, a_{i_1}, \dots, a_{i_{x(\tau_1)}}\}$ and we have $\taui_1(a_1)\neq l$, we have the equivalence $\taui_1(l)\subseteq A\cup \{a_{i_1}, \dots, a_{i_{x(\tau_1)}}\}\Longleftrightarrow \taui_2(l)\subseteq  A\cup \{a_1, a_{i_1}, \dots, a_{i_{x(\tau_2)}}\}$.

\noindent
\textbf{Case 3:} Suppose that $u, v\in A$ or $u, v\in B$. Then, $x(\tau_1)=x(\tau_2)$ and $\{i_1, \dots, i_{x(\tau_1)}\}=\{j_1, \dots, j_{x(\tau_2)}\}$. Since the transposition $\tr{u}{v}$ fixes the set $A\cup \{a_{i_1}, \dots, a_{i_{x(\tau_1)}}\}$, the equivalence is clear.

\noindent
\textbf{Case 4:} Suppose that $u=a_k, v\in B$. By symmetry, we assume that $\tau_1(a_k)=\es$, and consequently $i_{x(\tau_1)}\leq k-1$, i.e., the last non-blank label is at position at most $k-1$. Since neither of $u, v$ is in $A$, the number of blank labels in $A$ remains constant and so $x(\tau_2)=x(\tau_1)$. Furthermore, as none of the first $x(\tau_1)$ non-blank labels on the bridge is involved in the swap, we have $\{i_1, \dots, i_{x(\tau_1)}\}=\{j_1, \dots, j_{x(\tau_2)}\}$. Therefore, we also see that the last non-blank label in $\tau_2$ is at position at most $j_{x(\tau_2)}\leq k-1$. This means that the transposition $\tr{a_k}{v}$ involves no elements of $A\cup \{a_{i_1}, \dots, a_{i_{x(\tau_1)}}\}$, which concludes this case.

\medskip \noindent
Having completed all the cases, we conclude that $\taui(l)\subseteq A\cup \{a_{i_1}, \dots, a_{i_{x(\tau)}}\}$ for all $\tau\in \mathcal{C}$, which shows that the graph $\FSm(X, \S_m)$ is not connected when $X$ contains a $k$-bridge. 
\end{proof}
\begin{remark}
The proof of Proposition~\cref{prop:containskbridge} is reminiscent of Theorem 6.1 from \cite{DK}, although the technical details of the proof are slightly more general in our proof, due to the multiplicities. Proposition~\cref{prop:containskbridge} can be generalized even further, with almost the same proof, to give the following statement: suppose $X$ is a simple graph containing a $k$-bridge and $Y$ is a multiplicity graph containing vertices $v_1, \dots, v_t$ whose removal disconnects $Y$. If $c_{v_1}+\cdots+c_{v_t}\leq k$, then $\FSm(X, Y)$ is disconnected.
\end{remark}

\noindent
Now, we focus on showing that the graph $\FSm(X, \S_m)$ is connected whenever $X$ has no $k$-bridges, which is shown in Proposition~\cref{prop:nokbridge}. Before presenting the proof of this proposition, we present two simple results which will be used throughout the proof.

\begin{lemma}\label{lemma:Xk+2}
Suppose that $X$ is a connected simple graph on $k+2$ vertices, not isomorphic to a path or a cycle. Then, the graph $\FSm(X, \S_m)$ is connected.
\end{lemma}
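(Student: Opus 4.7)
The plan is to treat $\FSm(X,\S_m)$ as a two-pebble motion problem on $X$ and to combine a free-motion reduction with Wilson's rotation trick around a vertex of degree at least three. First I would pin down the setup: since $|V(X)|=k+2$, the center of $\S_m$ has multiplicity $c_\es=k$, and every leaf multiplicity is a positive integer with $m>2$, the only possibility is $m=3$ with both leaves of $\S_3$ of multiplicity one. Denoting the two leaves by $a$ and $b$ and the center by $\es$ (as in the proof of Proposition~\cref{prop:containskbridge}), an arrangement places one $a$-pebble, one $b$-pebble, and $k\ge 2$ empty labels on $V(X)$, and an $(X,\S_3)$-friendly swap is precisely an interchange of a pebble with an adjacent empty cell. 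Since $X$ is connected yet is neither a path nor a cycle, some $v_0\in V(X)$ satisfies $\deg_X(v_0)\ge 3$; I would fix three distinct neighbors $u_1,u_2,u_3$ of $v_0$ and let $\sigma_0$ be the arrangement with $a$ at $u_1$, $b$ at $u_2$ and every other vertex empty.

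The proof then splits into two parts. In Step~1 I would show that every arrangement is connected via friendly swaps to $\sigma_0$ or to its label-swap $\sigma_0'=\tr{a}{b}\circ\sigma_0$. This is the standard two-pebble free motion: because $X$ is connected and $k\ge 2$ empty cells are always available, one transports $a$ along a path in $X$ to $u_1$, temporarily evacuating $b$ onto an empty neighbor whenever $b$ lies on the chosen path, and then routes $b$ to $u_2$ in the same manner (briefly displacing $a$ from $u_1$ and returning it if needed). The resulting arrangement places the pebbles $\{a,b\}$ on the set $\{u_1,u_2\}$ with every other vertex empty, so it equals $\sigma_0$ or $\sigma_0'$. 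In Step~2 I would connect $\sigma_0$ to $\sigma_0'$ via the explicit six-swap rotation around $v_0$: successively execute the swaps along the edges $u_1v_0,\ v_0u_3,\ u_2v_0,\ v_0u_1,\ u_3v_0,\ v_0u_2$. Direct bookkeeping of the labels on $\{u_1,u_2,v_0,u_3\}$ verifies that each swap is friendly at the moment it is performed and that the final configuration is $\sigma_0'$.

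Step~2 is clean and explicit; the main obstacle is making Step~1 rigorous. One has to verify that a temporary detour is always available for the blocking pebble: the key observation is that only one pebble other than the moving one is present on $X$, so the blocking pebble has at most one non-empty neighbor, and as soon as its degree in $X$ is at least $2$ it possesses an empty neighbor to evacuate to. In the remaining case where the blocker sits at a degree-one vertex, one re-routes the moving pebble along a different path in $X$, which exists because $X$ is connected on $\ge 4$ vertices and, not being a path, remains connected upon removal of a leaf. Once Step~1 is verified, combining it with Step~2 shows that every arrangement lies in the same connected component as $\sigma_0$, and hence $\FSm(X,\S_m)$ is connected.
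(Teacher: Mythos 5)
Your proposal is correct in outline, but it takes a genuinely different route from the paper. The paper does not canonicalize the configuration: it invokes the multiplicity analogue of Proposition~\cref{prop:exchangeabilityimpliesconnectedness} to reduce connectivity of $\FSm(X,\S_m)$ to showing that any two adjacent vertices of $X$ carrying the two non-blank labels are $(X,\S_m)$-exchangeable from every arrangement, and then exhibits a single explicit swap sequence that walks these labels to a vertex of degree at least $3$ and rotates them around it. Your argument instead proves connectivity directly: first route both pebbles onto two fixed neighbours $u_1,u_2$ of a degree-$\geq 3$ vertex $v_0$ (Step 1), then exchange them by the six-swap rotation around $v_0$ (Step 2, which is correct, with every swap friendly, and is essentially the same rotation trick the paper uses). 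Your identification of the setup ($m=3$, two unit-multiplicity leaves, so friendly swaps are exactly pebble-to-adjacent-empty moves) is also consistent with how the lemma is applied later. What the paper's reduction buys is precisely that it dispenses with your Step 1: all global motion is absorbed by the exchangeability proposition, and only a local exchange sequence must be produced. What your route buys is an elementary, self-contained argument that does not rely on that proposition, at the price of having to prove a routing lemma.

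The one step that fails as written is your treatment of the blocker at a degree-one vertex in Step 1. A degree-one vertex lying on the chosen path can only be an endpoint of that path, hence the destination itself; so in this case the blocking pebble occupies the target vertex, and re-routing the mover cannot avoid it (the remark that $X$ stays connected after deleting a leaf is true of every connected graph and does not address this situation). The repair is easy: if $b$ sits on $u_1$ and $\deg(u_1)=1$ (so its unique neighbour is $v_0$), either route $a$ to the other target $u_2$ instead --- no simple path to $u_2$ can pass through the degree-one vertex $u_1$, so $b$ never blocks --- or first clear $v_0$ and push $b$ out through $v_0$, which is possible because $\deg(v_0)\geq 3$ guarantees an empty neighbour. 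Relatedly, ``evacuating'' the blocker may simply push it one step further along the path, so the routing procedure needs an explicit termination argument, and the displacement-and-return of $a$ from $u_1$ while $b$ is routed to $u_2$ should be spelled out (here again the blocker has at most one occupied neighbour, so a vertex of degree at least $2$ always admits a side-step). With these repairs Step 1 becomes routine and your proof goes through.
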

\begin{proof}
By Proposition~\cref{prop:exchangeabilityimpliesconnectedness}, it suffices to show that any two adjacent vertices $u, v\in V(X)$ are $(X, \S_m)$-exchangeable from any assignment $\sigma\in V(\FSm(X, \S_m))$. This is clear if any of these vertices is labeled by $\es$, or if $\sigma(u)=\sigma(v)$. Hence, let us assume that $\es\neq \sigma(u)\neq \sigma(v)\neq \es$. Note that all vertices except $u$ and $v$ are labeled by $\es$ under $\sigma$. Since $X$ is not a path or a cycle, it contains a vertex of degree at least $3$ --- let $x_1$ be this vertex. By symmetry of $u$ and $v$, we may assume that there is a path from $v$ to $x_1$ with vertices $v, u, x_t, \dots, x_2, x_1$. Suppose also that $x_1$ has neighbors $y$ and $z$, which are different from $x_2$. Then, the sequence of swaps $ux_t, \dots, x_2x_1,$ $x_1y, vu, ux_t, \dots, x_2x_1, x_1z,$ $x_1y, x_1x_2, \dots, x_tu,$ $uv,$ $zx_1, x_1x_2, \dots, x_tu$ exchanges $u$ and $v$.
\end{proof}

\begin{proposition}\label{prop:nokbridge}
Suppose $X$ is a simple graph that does not contain a $k$-bridge and is not isomorphic to a cycle. Then, the graph $\FSm(X, \S_m)$ is connected. 
\end{proposition}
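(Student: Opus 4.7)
The plan is to reduce the connectivity of $\FSm(X, \S_m)$ to a local exchangeability condition at each edge of $X$, and then verify that condition by using the no-$k$-bridge hypothesis to find a flexible $(k+2)$-vertex subgraph around every edge into which all blank tokens can be routed. Concretely, by the multiplicity analogue of Proposition~\cref{prop:exchangeabilityimpliesconnectedness} applied with $\tilde X = K_n$, it suffices to show that for every pair of vertices $u, v \in V(X)$ and every arrangement $\sigma$ with $\sigma(u)\sigma(v)\in E(\S_m)$, the pair $u,v$ is $(X,\S_m)$-exchangeable from $\sigma$. The only substantive case is when one of $\sigma(u),\sigma(v)$ equals $\es$ and the other is a leaf label, and for pairs $u,v$ already adjacent in $X$ the swap is a single friendly move; the work lies in the general case.

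I would then proceed by strong induction on $n=|V(X)|$. The base case $n=k+2$ is exactly Lemma~\cref{lemma:Xk+2}: since $X$ is connected, not a cycle by hypothesis, and not a path on $k+2$ vertices (such a path would contain a $k$-bridge with $a_1,\dots,a_k$ being its middle $k$ vertices), the lemma gives exchangeability immediately. For the inductive step, given $u,v$ and $\sigma$, my plan is to locate a connected subgraph $X^*\subseteq X$ on $k+2$ vertices containing both $u$ and $v$, chosen so that $X^*$ is neither a path nor a cycle, and then to produce a sequence of $(X,\S_m)$-friendly swaps that routes all $k$ blank tokens of $\sigma$ into $V(X^*)\setminus\{u,v\}$, obtaining an intermediate arrangement $\tau$. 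At that point Lemma~\cref{lemma:Xk+2}, applied to $X^*$ and the star $\S_m$, supplies a sequence of swaps entirely inside $X^*$ that interchanges the labels at $u$ and $v$; afterwards, reversing the routing restores every other vertex to the label it had under $\sigma$, witnessing the desired $(X,\S_m)$-exchangeability of $u$ and $v$.

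The two non-trivial substeps are \emph{existence} of such an $X^*$ and \emph{feasibility} of the routing. For existence, I would argue that if every connected $(k+2)$-vertex subgraph of $X$ containing a fixed edge $uv$ were either a path or a cycle, then the neighborhood structure around $uv$ would consist of a long chain of degree-$2$ vertices branching only at the far ends; since $X$ is not itself a cycle and $|V(X)|>k+2$, such a chain would contain $k$ consecutive interior degree-$2$ vertices whose removal separates two components of size at least $2$, producing a $k$-bridge and contradicting the hypothesis. For the routing, a similar dichotomy applies: if at some stage a blank token cannot be pushed closer to $X^*$, then the obstruction is a narrow degree-$2$ corridor whose removal would disconnect $X$ into two pieces of size at least $2$ (one containing the blank, one containing $X^*$), again yielding a $k$-bridge.

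The main obstacle I expect to encounter is the careful bookkeeping of the routing step when $X$ is structurally close to a cycle, for instance a cycle with a single chord, since then both $X^*$ and the paths used for routing are tightly constrained and the order in which blanks are moved matters. Here the non-cycle hypothesis will need to be combined with the no-$k$-bridge condition in a slightly delicate way, and it may be cleanest to separately treat the subcase in which $X$ has a vertex of degree at least $3$ (where branching gives ample room to maneuver) and the subcase in which $X$ has a leaf (where one first detaches the leaf, applies induction to the smaller graph, and then reintroduces the leaf's label by a final short sequence of swaps).
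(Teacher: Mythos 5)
There is a genuine gap, and it sits exactly where the paper has to work hardest. Your central structural claim --- that if every connected $(k+2)$-vertex subgraph of $X$ containing $u$ and $v$ is a path or a cycle, then $X$ contains a $k$-bridge --- is false. Take $X$ to be a long cycle with a single chord, and let $u,v$ lie on the cycle far from both endpoints of the chord, with $k$ small. Then every connected $(k+2)$-vertex subgraph containing $u$ and $v$ is a subpath of the cycle, yet removing any $k-2$ consecutive degree-$2$ vertices never disconnects $X$ (one can go around the other arc and through the chord), so there is no $k$-bridge and $X$ is not a cycle. So the dichotomy ``good $X^*$ nearby or $k$-bridge'' cannot carry the induction. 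The paper's proof confronts precisely this situation: it routes the blanks so that together with $u,v$ they span a connected set $H$, and when $H$ is a path of degree-$2$ vertices it uses the no-$k$-bridge hypothesis only to conclude that $X-\{a_3,\dots,a_k\}$ is connected, hence $H$ sits on a cycle $C$ of $X$; since $X$ is not a cycle, $C$ contains a vertex of degree at least $3$ \emph{somewhere} (possibly far from $u,v$), and the labels are cyclically rotated around $C$ to bring $\sigma(u),\sigma(v)$ to that vertex, exchanged there, and rotated back. This relocation-by-rotation step is the key idea your proposal is missing, and no purely local choice of $X^*$ can substitute for it.

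A second problem is the reduction itself. Applying the multiplicity analogue of Proposition~\cref{prop:exchangeabilityimpliesconnectedness} with $\tilde X=K_n$ forces you to exchange a blank at $u$ with a leaf label at $v$ for \emph{arbitrary} pairs $u,v$, which may be at distance greater than $k+1$; then no connected $(k+2)$-vertex subgraph contains both, so your plan cannot even start, and decomposing such an exchange into nearby ones requires exchanging two non-blank labels across an edge --- exactly the case you dismiss as ``a single friendly move,'' which it is not. The paper instead reduces to showing that \emph{adjacent} $u,v\in V(X)$ carrying two distinct non-blank labels are exchangeable, which keeps the problem local. Relatedly, if a blank sits at $u$, any routing that moves it off $u$ violates the hypothesis of Proposition~\cref{prop:exchangeabilityfromdifferentarrangements} (the connecting sequence must avoid $u$ and $v$), so ``reverse the routing'' does not return you to $\sigma\circ\tr{u}{v}$ without further argument. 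Your base case via Lemma~\cref{lemma:Xk+2} is fine, but the inductive framework built on it does not go through as proposed.
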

\begin{proof} 
Our main tool will be the notion of exchangeability, which was defined in Section \cref{sec:prelim}. Namely, Proposition~\cref{prop:exchangeabilityimpliesconnectedness} reduces showing that $\FSm(X, \S_m)$ is connected to showing that any two adjacent vertices $u, v\in V(X)$ are $(X, \S_m)$-exchangeable from any starting arrangement $\sigma\in V(\FSm(X, \S_m))$, which is done by explicitly constructing a sequence of $(X, \S_m)$-friendly swaps which transforms $\sigma$ into $\sigma\circ \tr{u}{v}$. If either $\sigma(u)=\es$ or $\sigma(v)=\es$, it is easy to see that the swap along the edge $uv$ itself is a $(X, \S_m)$-friendly swap, thus accomplishing the goal. Therefore, may we assume that $\sigma(u), \sigma(v)\neq \es$ in the rest of the proof. 

Following the notation from \cite{W}, for a path $p=a_1a_2\cdots a_l$ and an assignment $\tau$ with $\taui(a_1)=\es$, we denote the sequence of swaps along the edges $a_1a_2, a_2a_3,\dots, a_{l-1}a_l$ by $\sigma_p$. Note that $\tau$ and $\tau\circ \sigma_p$ are in the same connected component of $\FS(X, \S_m)$.

The first step of the proof will be \textit{move} every blank symbol from its original position to obtain an arrangement in which the set of vertices labeled by a blank symbol, together with $u$ and $v$, forms a connected subgraph of $X$. More formally, we will show there exists a sequence of $(X, \S_m)$-friendly swaps, not including vertices $u$ and $v$, which transforms $\sigma$ into an assignment $\sigma'$ for which $X|_{\sigma'^{-1}(\es)\cup \{u, v\}}$ is connected. After ensuring this subgraph is not a path or a cycle, we will be in the position to apply Lemma~\cref{lemma:Xk+2}, which will show that $u$ and $v$ are exchangeable.

We will achieve our first goal inductively. Suppose that after several swaps, the vertices $u$, $v$, and $i$ other vertices with blank labels form a connected subgraph $C\subset X$. If $i<k$, pick a vertex $w\in V(X)$ labeled by $\es$ and let $p$ be the shortest path from $w$ to a vertex in the neighborhood of $C$. Applying the sequence of swaps $\sigma_p$ transforms the existing arrangement into an arrangement in which the vertices $u$, $v$, and $i+1$ other vertices with blank labels form a connected subgraph of $X$. If we apply this procedure as long as $i<k$, we arrive at the arrangement $\sigma'$ with the property that $X|_{\sigma'^{-1}(\es)\cup \{u, v\}}$ is connected.

Hence, $X|_{\sigma'^{-1}(\es)\cup \{u, v\}}$ is a connected subgraph of $X$ on $k+2$ vertices, and we denote it by $H$. Since vertices $u, v$ were not included in any of the swaps transforming $\sigma$ into $\sigma_k$, by Proposition~\cref{prop:exchangeabilityfromdifferentarrangements}, it suffices to show that the pair $u, v$ is $(X, \S_m)$-exchangeable from $\sigma'$.

In the case $H$ is not isomorphic to a path, Lemma~\cref{lemma:Xk+2} applies to subgraphs of $X$ and $\S_m$ spanned by the vertices $\{u, v\}\cup \sigma'^{-1}(\es)$ and $\{\sigma(u), \sigma(v), \es\}$, and it show that the vertices $u$ and $v$ are exchangeable with respect to these subgraphs. However, this also means $u, v$ are $(X, \S_m)$-exchangeable from $\sigma'$. Since transforming $\sigma$ into $\sigma'$ did not involve swaps including $u$ or $v$, Proposition~\cref{prop:exchangeabilityfromdifferentarrangements} shows $u, v$ are $(X, \S_m)$-exchangeable from $\sigma$.

In the case $H$ is isomorphic to a path or a cycle, we denote its vertices by $a_1, \dots, a_{k+2}$, having $u=a_i$ and $v=a_{i+1}$. We now consider two cases, based on whether any vertex of $H$ has degree $3$ in $X$.

\medskip
\noindent
\textbf{Case 1:} Suppose that a vertex of $a_j\in H$ has degree at least $3$ in $X$. Then, we either have $a_j\not \in \{u, v\}$ or $a_j\in \{u, v\}$. In the case $a_j\not\in \{u, v\}$, we may assume that $j<i$, by reversing the indices of $a_1, \dots, a_k$. Then, the sequence of swaps $ya_j, a_ja_{j-1}, \dots, a_2a_1$ transforms $\sigma_k$ into $\sigma_k'$, in which $X|_{{\sigma_k'}^{-1}(\es)\cup \{u\}}$ is not isomorphic to a path, thus reducing this case to Lemma~\cref{lemma:Xk+2}.

Suppose now that $a_j\in \{u, v\}$. By potentially relabeling $u$ and $v$ or reversing indices of $a_1, \dots, a_k$, we may assume that $a_j=a_i=u$ and $a_{i+1}=v$. Since $i=j\leq k-2$, we have $i+2\leq k$ and we may consider the following sequence of swaps
\begin{equation}\label{eqn:sequence1}
    a_{i+2}v, vu, a_iy, a_{i-1}u, a_{i-1}a_{i-2}, \dots, a_2a_1.
\end{equation}
This sequence of swaps transforms $\sigma_k$ into $\sigma_{k}'$ in which $X|_{{\sigma_k'}^{-1}(\es)\cup \{u\}}$ is not isomorphic to a path, thus reducing this case to Lemma~\cref{lemma:Xk+2}. After exchanging $u, v$ using Lemma~\cref{lemma:Xk+2}, it suffices to reverse the sequence \cref{eqn:sequence1} to show $u, v$ are $(X, \S_m)$-exchangeable from $\sigma'$.

\medskip
\noindent
\textbf{Case 2:} Suppose that $H$ does not contain a vertex of degree $3$ in $X$. Since $X$ is connected, $H$ is not a cycle, and because $X$ does not contain a $k$-bridge, the graph $X-{\{a_3, \dots, a_k\}}$ is connected. Hence, $H$ must be an interval on the cycle $C$, whose vertices we denote by $a_1, \dots, a_{k+2}, a_{k+3}, \dots, a_l$. Since $X$ is not a cycle, there exists a vertex $a_t\in C$ of degree at least $3$ in $X$. If we pick any vertex $a_j$ of $H$ labeled by $\es$, performing swaps along edges $a_ja_{j+1}, \dots, a_{l-1}a_l, a_la_1, \dots, a_{j-2}a_{j-1}$ has the effect of shifting all labels of $C$ one position backwards. By performing this sequence of swaps a sufficient number of times, we may assume that the labels $\sigma(u)$ and $\sigma(v)$ are moved to vertices $a_{t}$ and $a_{t+1}$. Now, a vertex labeled by $\sigma(u)$ has degree at least $3$, and hence Case $1$ implies that $a_t$ and $a_{t+1}$ are $(X, \S_m)$-exchangeable from the resulting arrangement. Reversing the counterclockwise shifts previously performed shows that $\sigma'\circ \tr{u}{v}$ can be obtained from $\sigma'$ through a sequence of $(X, 
\S_m)$-friendly swaps, which completes the proof.
\end{proof}

\noindent
Finally, to complete the discussion of $\FSm(X, \S_m)$, we address the case when $X$ is a cycle.

\begin{proposition}\label{prop:cycle}
Let $\S_m$ be a multiplicity graph with the multiplicity list $c\in \Z_{>0}^m$ with $c_{\es}=k$ and total multiplicity $n$. Then, $\FSm(\C_n, \S_m)$ is connected if and only if $m=3$ and one of the leaves of $\S_m$ has multiplicity $1$.
\end{proposition}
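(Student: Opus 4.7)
My plan is to split the equivalence into three subcases: (i) $m\geq 4$; (ii) $m=3$ with $c_1,c_2\geq 2$; and (iii) $m=3$ with $\min(c_1,c_2)=1$. Cases (i) and (ii) will yield disconnectedness via an explicit cyclic-word invariant, while case (iii) will yield connectedness by passing to the lift and invoking Corollary 4.14 of \cite{DK}. The core invariant is the following: for each $\sigma\in V(\FSm(\C_n,\S_m))$, read the labels of $\sigma$ cyclically around $\C_n$ and delete every occurrence of the center $\es$, producing a cyclic word $\omega(\sigma)$ on the multiset of non-blank labels, well-defined up to cyclic rotation. Since $\S_m$ has edges only between $\es$ and leaves, every $(\C_n,\S_m)$-friendly swap exchanges a blank with an adjacent non-blank; such a step merely relocates a blank by one step along the cycle and preserves the cyclic order of the non-blank labels, so $\omega$ is constant on components of $\FSm(\C_n,\S_m)$.

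In case (i) at least three distinct leaf labels $L_1,L_2,L_3$ appear. Placing the blanks at positions $1,\dots,k$ and filling the leaves in block order gives $\sigma_1$ with $\omega(\sigma_1)=L_1^{c_1}L_2^{c_2}L_3^{c_3}\cdots L_{m-1}^{c_{m-1}}$, while swapping the $L_2$- and $L_3$-blocks yields $\sigma_2$ with $\omega(\sigma_2)=L_1^{c_1}L_3^{c_3}L_2^{c_2}L_4^{c_4}\cdots L_{m-1}^{c_{m-1}}$. Aligning the unique maximal cyclic $L_1$-run via a rotation forces the immediately following label to agree, but in $\omega(\sigma_1)$ it is $L_2$ and in $\omega(\sigma_2)$ it is $L_3$, so the cyclic words are inequivalent. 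In case (ii), with $c_1,c_2\geq 2$, I will take $\omega_1=L_1^{c_1}L_2^{c_2}$ and $\omega_2=L_1^{c_1-1}L_2L_1L_2^{c_2-1}$; the number of maximal cyclic $L_1$-runs, a rotation invariant, equals $1$ for $\omega_1$ and $2$ for $\omega_2$. In either case $\FSm(\C_n,\S_m)$ has at least two components.

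For case (iii), I assume $m=3$ and, without loss of generality, $c_1=1$, so the multiplicity list is $(k,1,c_2)$ with $c_2=n-k-1$. I invoke Corollary~\cref{cor:lifting} to reduce the claim to showing connectedness of $\FS(\C_n,\S_m')$, where $\S_m'$ is the lift of $\S_m$. By construction, $\S_m'$ has vertex partition $S_\es\sqcup\{L_1\}\sqcup S_2$ with $|S_\es|=k$ and $|S_2|=c_2$; $S_\es$ and $S_2$ are cliques, the bipartite pairs $(S_\es,\{L_1\})$ and $(S_\es,S_2)$ are complete, while there are no edges between $\{L_1\}$ and $S_2$. Therefore $\overline{\S_m'}$ is the disjoint union of the star $K_{1,c_2}$ with centre $L_1$ and leaf set $S_2$ (a tree on $n-k$ vertices) together with $k$ isolated vertices. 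The tree sizes $n-k,1,\dots,1$ are pairwise coprime since every integer is coprime to $1$, so Corollary 4.14 of \cite{DK} yields that $\FS(\S_m',\C_n)\cong\FS(\C_n,\S_m')$ is connected; Corollary~\cref{cor:lifting} then transfers this connectedness to $\FSm(\C_n,\S_m)$.

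The main obstacle I expect is the bookkeeping in case (ii) at small parameter values such as $c_1=c_2=2$, where one must make sure the proposed pair of cyclic words are genuinely inequivalent under rotation. Tracking the number of maximal cyclic $L_1$-runs provides a uniform and elementary rotation invariant that resolves these checks without case-by-case arithmetic, so once $\omega$ is set up the remaining verifications should be routine.
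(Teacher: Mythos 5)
Your argument is correct, and it is worth comparing the two halves separately. For the disconnectedness direction you use exactly the paper's key observation --- that a $(\C_n,\S_m)$-friendly swap always exchanges the blank label $\es$ with an adjacent non-blank label and therefore preserves the cyclic order of the non-blank labels --- but where the paper simply counts all cyclic orders (via the multinomial-type formula $\tfrac{(c_1+\cdots+c_{m-1}-1)!}{c_1!\cdots c_{m-1}!}$) and notes the count exceeds $1$ unless $m=3$ with a leaf of multiplicity $1$, you instead exhibit explicit pairs of rotation-inequivalent cyclic words in the cases $m\geq 4$ and $m=3$ with $c_1,c_2\geq 2$; the run-counting invariant you use for the latter case is sound (both maximal $L_1$-runs in $L_1^{c_1-1}L_2L_1L_2^{c_2-1}$ are genuine because $c_1-1,c_2-1\geq 1$), so this is just a more hands-on version of the same idea. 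For the connectedness direction your route is genuinely different: the paper argues directly that two arrangements with the same cyclic order of non-blank labels can be joined by friendly swaps (stated there without detail), whereas you pass to the lift $\S_m'$, observe that $\o{\S_m'}$ is the star $K_{1,c_2}$ together with $k$ isolated vertices --- a forest whose tree sizes $n-k,1,\dots,1$ have greatest common divisor $1$ --- and invoke Corollary 4.14 of \cite{DK} plus Corollary~\cref{cor:lifting}. Your version buys rigor (it replaces the paper's ``it is not hard to see'' step with a citation to an established theorem and to machinery already developed in Section~\cref{sec:prelim}), at the cost of being less self-contained and less transparent about the elementary token-sliding picture; both are valid. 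One small remark: like the paper, you implicitly assume $m\geq 3$ (your three cases cover exactly $m\geq 3$), which is consistent with the standing hypothesis $m>2$ of Theorem~\cref{thm:WilsonmultiplicityS}, so this is not a gap relative to the paper's own treatment.
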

\begin{proof}
The key observation in this proof is that the cyclic order of non-blank labels remains unchanged after a $(\C_n, \S_m)$-friendly swap. Hence, if $\FS(\C_n, \S_m)$ is connected, there must be only one possible cyclic order of non-blank labels. Suppose that the multiplicities of the leaves of $\S_m$ are $c_1, \dots, c_{m-1}$. The number of cyclic orders of elements $1, \dots, m-1$ where element $i$ appears with multiplicity $c_i$ is $\frac{(c_1+\dots +c_{m-1}-1)!}{c_1!\cdots c_m!}$, which equals $1$ if and only if $m=3$ and either $c_1=1$ or $c_2=1$. On the other hand, if the cyclic order of non-blank labels matches in two different assignments, it is not hard to see that one can be transformed into the other using a sequence of $(\C_n, \S_m)$-friendly swaps. 
\end{proof}

\noindent
The combination of Propositions~\cref{prop:containskbridge}, \cref{prop:nokbridge} and \cref{prop:cycle} immediately yields the proof of Theorem~\cref{thm:WilsonmultiplicityS}, completing this section.

\section{Paths and cycles}\label{sec:pathscycles}

\subsection{Background and terminology.} 

In this section, we investigate the structure of the friends-and-strangers graphs arising when one of the graphs is a path or a cycle, while the other graph is a multiplicity graph. This extends the results obtained by Defant and Kravitz in \cite{DK}, who characterized the connected components of $\FS(\P_n, Y)$, $\FS(\C_n, Y)$ for a simple graph $Y$ in terms of the acyclic orientations of its complement $\o{Y}$. Before presenting the generalizations, we introduce relevant terminology and make a brief overview of the results obtained by Defant and Kravitz.

For a simple graph $G$, an \dfn{orientation} of the graph corresponds to choosing a direction for each of its edges, and an orientation is \dfn{acyclic} if the arising directed graph has no cycles. We denote the set of acyclic orientations of $G$ by $\Acyc(G)$. Further, if we think of $\P_n$ and $\C_n$ as graphs on the vertex set $[n]$, we see that every bijection $\sigma:[n]\to V(G)$ induces an acyclic orientation $\alpha_G(\sigma)$ in which an edge $uv\in E(G)$ is directed from $u\to v$ if and only if $\sigmai(u)<\sigmai(v)$. Finally, for a given acyclic orientation $\alpha\in \Acyc(G)$, we define the set of its \dfn{linear extensions} $\L(\alpha)$ to be the set of all bijections $\sigma:[n]\to V(G)$ which induce the orientation $\alpha$ on $G$, i.e., $\L(\alpha)=\{\sigma:[n]\to V(G)|\alpha_G(\sigma)=\alpha\}$. Even more generally, for a set of acyclic orientations $A\subseteq \Acyc(G)$, we may define the set of linear extensions of $A$ to be $\L(A)=\bigcup_{\alpha\in A}\L(\alpha)$. In \cite{DK}, Defant and Kravitz showed that arrangements $\sigma, \tau:V(\P_n)\to V(Y)$ are in the same connected component of $\FS(\P_n, Y)$ if and only if they induce the same acyclic orientation of the complement of $Y$, i.e., $\alpha_{\o{Y}}(\sigma)= \alpha_{\o{Y}}(\tau)$. 

More work is needed to describe the connected components of $\C_n$, since we need to introduce the notions of toric and double flip equivalence. For a given acyclic $\alpha\in \Acyc(G)$, a vertex $u\in V(G)$ is a \dfn{source} of $\alpha$ if all edges $uv\in E(G)$ are oriented $u\to v$, and a vertex $u\in V(G)$ is a \dfn{sink} of $\alpha$ if all edges $uv\in E(G)$ are oriented $v\to u$. Given an orientation $\alpha\in \Acyc(G)$ and a vertex $u$ which is either a source or a sink of $\alpha$, reversing all edges incident to $u$ gives a new acyclic orientation of $G$. We refer to this operation as a \dfn{flip}, and which is called \dfn{positive} if $u$ is a source of $\alpha$, and \dfn{negative} otherwise. This allows to define an equivalence relation called \dfn{toric equivalence} by declaring that $\alpha, \alpha'$ are equivalent if one can be obtained from the other by a sequence of flips. If $\alpha, 
\alpha'$ are torically equivalent, we write $\alpha\sim\alpha'$. Also, we write $\Acyc(G)/\sim$ for the set of toric equivalence classes of acyclic orientations, and we call elements of $\Acyc(G)/\sim$ \dfn{toric acyclic orientations}. 

If $u, v$ are non-adjacent vertices of $G$ such that $u$ is a source and $v$ is a sink of $\alpha$, respectively, performing two simultaneous flips on $\alpha$, at $u$ and $v$, produces a new acyclic orientation. We call this operation a \dfn{double flip}, and, as before, we define the equivalence relation $\approx$, called \dfn{double flip equivalence}, by declaring that two acyclic orientations are double flip equivalent if one can be obtained from the other by a sequence of double flips. It is worth noting that double flip equivalence implies toric equivalence of two orientations, whereas the reverse is not always true. We denote the set of double flip equivalence classes by $\Acyc(G)/\approx$ and we denote the equivalence class of an orientation $\alpha\in \Acyc(G)$ by $[\alpha]_\approx$.

In this language, one can describe the connected components of $\FS(\C_n, Y)$ in several ways. Theorem 4.1 of \cite{DK} states that the arrangements $\sigma, \tau:V(\C_n)\to V(Y)$ are in the same connected component of $\FS(\C_n, Y)$ if and only if their induced acyclic orientations of $\o{Y}$ are double flip equivalent, i.e., $\alpha_{\o{Y}}(\sigma)\approx\alpha_{\o{Y}}(\tau)$. Investigating the relation between double flip equivalence and toric equivalence further, Defant and Kravitz showed that every toric equivalence class $\alphasim$ can be viewed as a union of double flip equivalence classes forming an orbit under an action of a certain abelian group. This allows them to characterize all graphs $Y$ for which $\FS(\C_n, Y)$ is connected, and the answer turns out to be the set of all graphs $Y$ whose complement is a forest of trees of coprime sizes. 

In the process of showing these results, Defant and Kravitz also showed the following useful fact, which we will also make use of. Namely, Corollary 4.2 of \cite{DK} shows that if $\alpha\approx \beta$ and $\vec{\alpha}, \vec{\beta}$ arise from $\alpha, \beta$, respectively, by flipping one source into a sink, then $\vec{\alpha}\approx \vec{\beta}$. The following statement is a direct corollary of their result.

\begin{corollary}\label{cor:doubleflips}
Let $\alpha, \beta\in \Acyc(G)$ be acyclic orientations, and assume that $\beta$ is obtained from $\alpha$ through a sequence of $a_{+}$ positive flips and $a_{-}$ negative flips. Then, any sequence of $a_{+}-a_{-}$ positive flips transforms $\alpha$ into $\beta'$, which is double flip equivalent to $\beta$.
\end{corollary}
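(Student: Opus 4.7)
The plan is to deduce the corollary by iterating the cited Corollary 4.2 of \cite{DK} and then running a strong induction on the length of the mixed flip sequence. As a first step I would prove the following iterated form: if $\alpha \approx \beta$ and $\alpha^{(k)}, \beta^{(k)}$ are obtained from $\alpha, \beta$ by any $k$ positive flips each (with the source chosen independently at every step), then $\alpha^{(k)} \approx \beta^{(k)}$; this follows by induction on $k$, applying Corollary 4.2 of \cite{DK} once per step. Specializing to $\alpha = \beta$ shows that any two sequences of $k$ positive flips starting from a fixed orientation produce $\approx$-equivalent outputs. The dual statement for negative flips follows at once by reversing every edge of $G$, which interchanges sources with sinks and positive with negative flips while preserving $\approx$.

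The main claim is then proved by strong induction on $n = a_+ + a_-$. The base case $n = 0$ is trivial, and the case $a_- = 0$ is exactly the iterated DK 4.2 applied with $\alpha \approx \alpha$ and $k = a_+$. Otherwise, I strip the last flip of the given sequence, obtaining an intermediate orientation $\gamma$. If this last flip is a positive flip at a vertex $w$, then $\gamma$ is reached from $\alpha$ by a length-$(n-1)$ sequence with net $a_+ - a_- - 1$; given any target sequence of $a_+ - a_-$ positive flips from $\alpha$, I split it as a prefix of length $a_+ - a_- - 1$ followed by one last positive flip, use the inductive hypothesis to conclude the prefix output is $\approx \gamma$, and invoke Corollary 4.2 of \cite{DK} one more time after appending the final positive flip. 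If the last flip is a negative flip at a vertex $v$, then $\gamma$ is reached from $\alpha$ by a length-$(n-1)$ sequence with net $a_+ - a_- + 1$; I extend any target sequence by one additional positive flip at an arbitrary source of its output, apply the inductive hypothesis to the extended sequence to conclude the extended output is $\approx \gamma$, and then invoke the dual (negative-flip) form of Corollary 4.2 of \cite{DK}, using that $\gamma$ itself arises from $\beta$ by a positive flip at $v$ (i.e., equivalently by reversing the final negative flip), to transport $\approx \gamma$ back to $\approx \beta$.

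The main subtlety is the bookkeeping of the signed quantity $k = a_+ - a_-$ under the induction, because the prefix can end up with net $-1$ precisely when $k = 0$ and the last flip is positive. I would handle this boundary case symmetrically: the empty target sequence outputs $\alpha$ itself, and to show $\alpha \approx \beta$ I would apply any single negative flip at a sink of $\alpha$ (such a sink exists in every acyclic orientation) to obtain some $\gamma^*$, use the dual iterated form of Step 1 to conclude $\gamma^* \approx \gamma$, and then close the diagram by applying one positive flip at $w$ in $\gamma$ (which yields $\beta$) and one positive flip at the reversed sink in $\gamma^*$ (which yields $\alpha$), finishing via one final application of Corollary 4.2 of \cite{DK}. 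This sign-of-net argument is the only delicate point; once it is in place, the induction goes through uniformly.
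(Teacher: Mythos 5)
Your overall strategy---iterating Corollary 4.2 of \cite{DK}, obtaining its negative-flip dual by reversing all edges of the orientations, and then running a strong induction on $a_++a_-$ by stripping the last flip---is sound, and in fact the paper offers no argument at all here (it asserts the statement as a direct consequence of Corollary 4.2 of \cite{DK}), so your write-up is more detailed than the source. The cases where the stripped pair $(\alpha,\gamma)$ still has nonnegative net are handled correctly: the prefix (respectively, extended) target sequence falls under the inductive hypothesis, and one further application of Corollary 4.2 of \cite{DK} (respectively, of its dual) closes the inductive step.

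However, your treatment of the boundary case $a_+-a_-=0$ with a positive final flip has a genuine flaw as written. There $\gamma$ is reached from $\alpha$ by a \emph{mixed} sequence of $a_+-1$ positive and $a_-$ negative flips with net $-1$, while $\gamma^*$ is reached from $\alpha$ by a single negative flip. The ``dual iterated form of Step 1'' only compares two orientations obtained from $\approx$-equivalent starting points by equal numbers of \emph{negative} flips; it says nothing about a mixed sequence, so it does not yield $\gamma^*\approx\gamma$. What you actually need at this point is the negative-net analogue of the corollary itself, at total length $n-1$, which your stated induction (formulated only for nonnegative net and positive-flip target sequences) does not cover. The repair is easy: either strengthen the induction statement to the two-sided version (for net $k$ of either sign, any sequence of $|k|$ flips of the sign of $k$ lands $\approx\beta$), or note that reversing every edge turns the pair $(\alpha,\gamma)$ into a positive-net instance of length $n-1$ to which your inductive hypothesis applies, and then reverse back. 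With that substitution the rest of your boundary argument (flip the reversed sink back in $\gamma^*$, flip $w$ in $\gamma$, and apply Corollary 4.2 of \cite{DK} once more) goes through, but as cited the tool you invoke does not justify the step.
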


In the rest of this section, we present a generalization of results from \cite{DK} to the multiplicity setting. First, we discuss the graphs $\FSm(\P_n, X)$, and show that their connected components correspond to equivalence classes of $\Acyc(\o{X'})$ under a certain equivalence relation. Then, we characterize the connected components of $\FSm(\C_n, X)$ and give a full characterization of graphs $X$ for which $\FSm(\C_n, X)$ is connected.

\subsection{Connectivity of $\FSm(\P_n, X)$}
We begin by describing the connected components of $\FSm(\P_n, X)$ for a multiplicity graph $X$, which is analogous to Theorem 3.1 of \cite{DK}. The main technical tool we use is the notion of permutation equivalence and the bulk of work lies in investigating how this equivalence relation interacts with toric and double flip equivalence on the lift of $X$.

More precisely, the notion of permutation equivalence $\equiv$, introduced in Section~\cref{sec:prelim}, can be extended to acyclic orientations in the following way. If $X'$ is the lift of $X$, having the clique decomposition $V(X')=\bigsqcup_{v\in V(X)} S_v$, we can define the action of $\Sym_X$ on $\Acyc(X')$ by setting $\rho(\alpha_{\o{X'}}(\sigma))=\alpha_{\o{X'}}(\rho\circ \sigma)$ for every $\sigma:[n]\to V(X')$. Recall that $\Sym_X=\prod_{v\in V(X)} \Sym_{S_v}$ denotes the set of permutation which fix the cliques $S_v$ of $X'$.

We say that acyclic orientations $\alpha, \beta\in \Acyc(\o{X'})$ are \dfn{permutation equivalent}, written as $\alpha\equiv\beta$, if they are in the same orbit of the action of $\Sym_X$ on $\Acyc(X')$. In combinatorial terms, we have $\alpha\equiv\beta$ precisely when there is a permutation $\rho\in \Sym_X$ which transforms $\alpha$ into $\beta$, in the sense that $u\to_{\alpha} v$ is equivalent to $\rho(u)\to_{\beta} \rho(v)$.  A simple consequence of this definition is that permutation equivalent arrangements $\sigma, \tau:V(\C_n)\to V(X')$ induce permutation equivalent orientations $\alpha_{\o{X'}}(\sigma)\equiv \alpha_{\o{X'}}(\tau)$.

\begin{theorem}\label{thm:multiplicitypaths}
Let $X$ be a multiplicity graph of total multiplicity $n$ and let $X'$ be its lift. The arrangements $[\sigma]_\equiv, [\tau]_\equiv\in V(\FSm(\P_n, X))$ are in the same connected component if and only if $\sigma, \tau$ induce permutation equivalent orientations on $\o{X'}$, i.e. $\alpha_{\o{X'}}(\sigma)\equiv\alpha_{\o{X'}}(\tau)$. 
\end{theorem}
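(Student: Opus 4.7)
The plan is to reduce this to the simple-graph case via the lift $X'$, combining Theorem~3.1 of \cite{DK} with Corollary~\cref{cor:quotientcomponents}. Recall that Theorem~3.1 of \cite{DK} states that, for simple graphs, two bijections $\sigma, \tau \colon V(\P_n) \to V(X')$ lie in the same connected component of $\FS(\P_n, X')$ if and only if $\alpha_{\o{X'}}(\sigma) = \alpha_{\o{X'}}(\tau)$. Let $G$ denote the graph obtained from $\FS(\P_n, X')$ by adjoining an edge between every pair of distinct $\equiv$-equivalent bijections, as in Corollary~\cref{cor:quotientcomponents}. That corollary identifies the components of $\FSm(\P_n, X)$ with the $\pi_\equiv$-projections of the components of $G$, so the theorem reduces to showing that $\sigma$ and $\tau$ lie in the same component of $G$ if and only if $\alpha_{\o{X'}}(\sigma) \equiv \alpha_{\o{X'}}(\tau)$.

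For the forward direction, I would take any walk in $G$ between $\sigma$ and $\tau$ and track the induced orientation along each step. Every $\FS(\P_n, X')$-edge preserves $\alpha_{\o{X'}}$ by Theorem~3.1 of \cite{DK}, while every $\equiv$-edge $\sigma_0 \leftrightarrow \rho \circ \sigma_0$ (with $\rho \in \Sym_X$) replaces $\alpha_{\o{X'}}(\sigma_0)$ by $\alpha_{\o{X'}}(\rho \circ \sigma_0) = \rho(\alpha_{\o{X'}}(\sigma_0))$, which is permutation equivalent to it by the very definition of the $\Sym_X$-action on $\Acyc(\o{X'})$. Transitivity of $\equiv$ then yields $\alpha_{\o{X'}}(\sigma) \equiv \alpha_{\o{X'}}(\tau)$.

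For the reverse direction, assume $\alpha_{\o{X'}}(\sigma) \equiv \alpha_{\o{X'}}(\tau)$ and pick $\rho \in \Sym_X$ with $\rho(\alpha_{\o{X'}}(\sigma)) = \alpha_{\o{X'}}(\tau)$, i.e.\ $\alpha_{\o{X'}}(\rho \circ \sigma) = \alpha_{\o{X'}}(\tau)$. By Theorem~3.1 of \cite{DK}, $\rho \circ \sigma$ and $\tau$ then lie in the same component of $\FS(\P_n, X')$, and hence of $G$; meanwhile $\sigma$ and $\rho \circ \sigma$ are joined in $G$ by a (possibly trivial) $\equiv$-edge, so $\sigma$ and $\tau$ share a component of $G$, as required. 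The only bookkeeping issue is to confirm that the $\Sym_X$-action on $\Acyc(\o{X'})$ is well-defined and compatible with the action on arrangements, but both follow at once from the defining formula $\rho(\alpha_{\o{X'}}(\sigma)) = \alpha_{\o{X'}}(\rho \circ \sigma)$; there is no genuine obstacle beyond this, and the proof is essentially a clean composition of Corollary~\cref{cor:quotientcomponents} with the known result for paths and simple label graphs from \cite{DK}.
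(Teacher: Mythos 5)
Your proposal is correct and follows essentially the same route as the paper: both reduce to Theorem~3.1 of \cite{DK} via Corollary~\cref{cor:quotientcomponents}, track the induced orientation along the two kinds of edges of $G$ for the forward direction, and for the converse use a permutation $\rho\in\Sym_X$ together with an $E_0$-edge from $\sigma$ to $\rho\circ\sigma$ and the \cite{DK} result to connect $\rho\circ\sigma$ to $\tau$. No gaps to report.
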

\begin{proof}

In this proof, we use Corollary~\cref{cor:quotientcomponents} to reduce Theorem~\cref{thm:multiplicitypaths} to the Theorem 3.1 of \cite{DK}. 

From Corollary~\cref{cor:quotientcomponents}, we know that $[\sigma]_\equiv, [\tau]_\equiv$ are in the same connected component of $\FSm(\P_n, X)$ if and only if $\sigma, \tau$ are in the same connected component of $G$, where $G$ is the graph $\FS(\P_n, X')$ with the added set of edges between pairs of permutation equivalent arrangements. 

Suppose now that $\sigma, \tau$ are indeed in the same connected component of $G$. To show $\alpha_{\o{X'}}(\sigma)\equiv\alpha_{\o{X'}}(\tau)$, it suffices to check this in case $\sigma$ and $\tau$ are adjacent. In case $\sigma, \tau$ differ by a $(\P_n, X')$-friendly swap, it follows from Theorem 3.1 of \cite{DK} that $\alpha_{\o{X'}}(\sigma)=\alpha_{\o{X'}}(\tau)$. In case $\sigma$ and $\tau$ are permutation equivalent, it follows from our previous observation that so their induced orientations must be permutation equivalent as well. 

To show the other direction, assume $\alpha_{\o{X'}}(\tau)\equiv\alpha_{\o{X'}}(\sigma)$. Then there exists a permutation $\rho\in \Sym_X$ such that $\alpha_{\o{X'}}(\tau)=\alpha_{\o{X'}}({\rho\circ \sigma})$. Theorem 3.1 of \cite{DK} ensures that $\tau$ and $\rho\circ \sigma$ are connected in $\FS(\P_n, X')$, while $\sigma\equiv \rho\circ\sigma$, implying $\{\sigma, \rho\circ \sigma\}\in E_0$. Combining these, we see $\sigma$ is connected to $\tau$ in $G$, completing the proof.
\end{proof}


\subsection{Connectivity of $\FSm(\C_n, X)$.} We now focus on describing the connected components of $\FSm(\C_n, X)$. The proofs in this section add two new ingredients to the work of Defant and Kravitz. The first new idea is to combine pairs of equivalence relations  $\sim, \equiv$ and $\approx, \equiv$ into their finest common coarsening, while the second one is to define the period of an acyclic orientation $\alpha\in \Acyc(\o{X'})$.

More precisely, the first step is to define an equivalence relation $\simeq$ on $\Acyc(\o{X'})$ in which $\alpha\simeq\beta$ if and only if there exists a sequence of orientations $\alpha=\alpha_0, \alpha_1, \dots, \alpha_n=\beta$ in which $\alpha_i$ and $\alpha_{i+1}$ are either permutation equivalent or torically equivalent. Note that permuting moves and flips commute, in the sense that flipping a vertex $u$ in $\alpha$ and obtaining an orientation $\beta\in \Acyc(\o{X'})$ and then taking $\beta$ to $\rho(\beta)$ corresponds to flipping a vertex $\rho(u)$ in $\rho(\alpha)$, which also results in the orientation $\rho(\beta)$. Hence, another way to define $\simeq$ is to declare $\alpha\simeq \beta$ whenever there exists an orientation $\gamma\in \Acyc(\o{X'})$ satisfying $\alpha\equiv\gamma\sim \beta$.

Following the same strategy, one can also define an equivalence relation $\approxeq$ as the finest common coarsening of $\approx$ and $\equiv$. Since $\simeq$ and $\approxeq$ are both coarser than $\equiv$, they extend to equivalence relations on $\Acyc(\o{X'})/\!\equiv$. Using these notions, we can generalize Theorem 4.1 of \cite{DK} in the following way.

\begin{theorem}\label{thm:multiplicitycyclesdoubleflip}
Let $X$ be a multiplicity graph and let $X'$ be its lift.  The arrangements $[\sigma]_\equiv, [\tau]_\equiv\in V(\FSm(\C_n, X))$ are in the same connected component if and only if $\alpha_{\o{X'}}(\sigma)\approxeq\alpha_{\o{X'}}(\tau)$.
\end{theorem}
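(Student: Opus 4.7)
The plan is to mirror the proof of Theorem~\cref{thm:multiplicitypaths}, replacing the role of Theorem 3.1 of \cite{DK} (equality of orientations for paths) by Theorem 4.1 of \cite{DK} (double flip equivalence for cycles). By Corollary~\cref{cor:quotientcomponents}, the classes $[\sigma]_\equiv$ and $[\tau]_\equiv$ lie in the same component of $\FSm(\C_n, X)$ if and only if $\sigma$ and $\tau$ are connected in the graph $G$ obtained from $\FS(\C_n, X')$ by adjoining the edges $E_0$ between permutation equivalent arrangements. Every $G$-edge is thus either a $(\C_n, X')$-friendly swap or a permutation equivalence, and the induced orientations on the two endpoints are, respectively, related by a $\approx$-step or an $\equiv$-step on $\Acyc(\o{X'})$. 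Accordingly, one expects connectedness in $G$ to translate exactly into the finest common coarsening $\approxeq$.

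For the forward direction I would walk along a $G$-path $\sigma=\sigma_0,\sigma_1,\dots,\sigma_k=\tau$ and read off the resulting chain of $\approx$ and $\equiv$ relations on $\alpha_{\o{X'}}(\sigma_i)$: the friendly-swap edges give $\approx$-steps by Theorem 4.1 of \cite{DK}, and the $E_0$-edges $\sigma_{i+1}=\rho\circ\sigma_i$ give $\equiv$-steps via the compatibility $\rho(\alpha_{\o{X'}}(\sigma_i))=\alpha_{\o{X'}}(\rho\circ\sigma_i)$ recorded in the excerpt. Concatenating yields $\alpha_{\o{X'}}(\sigma)\approxeq\alpha_{\o{X'}}(\tau)$. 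For the reverse direction I would lift a chain of orientations $\gamma_0,\dots,\gamma_k$ witnessing $\approxeq$-equivalence to a chain of arrangements one step at a time: when $\gamma_i\approx\gamma_{i+1}$, Theorem 4.1 of \cite{DK} guarantees that any chosen arrangement inducing $\gamma_i$ is connected in $\FS(\C_n, X')$ to any chosen arrangement inducing $\gamma_{i+1}$; when $\gamma_{i+1}=\rho(\gamma_i)$, the arrangement $\rho\circ\sigma_i$ induces $\gamma_{i+1}$, so a single $E_0$-edge followed by another application of Theorem 4.1 of \cite{DK} reaches any preselected arrangement inducing $\gamma_{i+1}$.

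There is no serious obstacle provided Corollary~\cref{cor:quotientcomponents} is in place; the argument is essentially bookkeeping once the two generating moves on $G$ are matched with the two generating moves for $\approxeq$. The only subtlety worth verifying explicitly is the compatibility $\rho(\alpha_{\o{X'}}(\sigma))=\alpha_{\o{X'}}(\rho\circ\sigma)$ for $\rho\in\Sym_X$, which is what allows permutation equivalences on arrangements and on orientations to be translated back and forth without ambiguity during the lifting step.
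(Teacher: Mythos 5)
Your proposal is correct and follows essentially the same route as the paper: reduce via Corollary~\cref{cor:quotientcomponents} to connectivity in the auxiliary graph $G$ with the extra edges $E_0$, then match friendly-swap edges with $\approx$-steps via Theorem 4.1 of \cite{DK} and $E_0$-edges with $\equiv$-steps via the action $\rho(\alpha_{\o{X'}}(\sigma))=\alpha_{\o{X'}}(\rho\circ\sigma)$. Your step-by-step lifting of the $\approxeq$-chain in the reverse direction is just a slightly more explicit version of the paper's argument, which collapses the chain into one permutation followed by double flips.
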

\begin{proof}
The structure of the proof closely follows the proof of the Theorem~\cref{thm:multiplicitypaths}. Recall that Corollary~\cref{cor:quotientcomponents} guarantees that the connected components of $\FS(X, \C_n)$ correspond to projections under $\sigma\mapsto [\sigma]_\equiv$ of connected components in $G=\big(V(\FS(X', \C_n)), E(\FS(X', \C_n))\cup E_0\big)$, where $E_0=\{\sigma\tau:\sigma, \tau\in V(\FS(X', \C_n)), \sigma\equiv\tau\}$. Hence, to show Theorem~\cref{thm:multiplicitycyclesdoubleflip}, it suffices to check that $\sigma, \tau\in V(G)$ are connected in $G$ if and only if their induced orientations $\alpha_{\o{X'}}(\sigma)$ and $\alpha_{\o{X'}}(\tau)$ satisfy $\alpha_{\o{X'}}(\sigma)\approxeq\alpha_{\o{X'}}(\tau)$.

To show the forward direction, assume that $\sigma$ and $\tau$ are in the same connected component. Furthermore, if we induct on the distance between $\sigma$ and $\tau$, without loss of generality we may assume $\sigma \tau \in E(G)=E(\FS(X', \C_n))\cup E_0$. If $\sigma\tau\in E_0$, we have $\sigma\equiv\tau$, and hence $\alpha_{\o{X'}}(\sigma)\equiv\alpha_{\o{X'}}(\tau)$. If $\sigma\tau\in E(\FS(X', Y'))$, we have $\alpha_{\o{X'}}(\sigma)\approx\alpha_{\o{X'}}(\tau)$, from Theorem 4.1 of \cite{DK}. In either case, we clearly have $\alpha_{\o{X'}}(\sigma) \approxeq \alpha_{\o{X'}}(\tau)$.

On the other hand, if $\alpha_{\o{X'}}(\sigma) \approxeq \alpha_{\o{X'}}(\tau)$, then there exists a permutation $\rho\in \Sym_X$ and a sequence of double flip moves transforming $\alpha_{\o{X'}}(\sigma)$ into $\alpha_{\o{X'}}(\tau)$. As double flips correspond to edges of $\FS(X', \C_n)$, and permuting with $\rho$ corresponds to an edge in $E_0$, we conclude that $\sigma$ and $\tau$ are connected in $G$, completing the proof.
\end{proof}

\noindent
One of the key insights Defant and Kravitz used to understand the structure of $\FS(\C_n, Y)$ was the automorphism $\varphi_n:[n]\to [n]$ of the graph $\C_n$ given by $\varphi_n(k)=k+1$ for $k<n$ and $\varphi_n(n)=1$. Intuitively, one can think of $\varphi_n$ as a cyclically shifting $\C_n$ one place forward. Since $\varphi_n$ is an automorphism of the cycle $\C_n$, it also induces an automorphism $\varphi_n^*$ of the friends-and-strangers graph $\FS(\C_n, Y)$ (Proposition 2.3 of \cite{DK}).  It is not hard to see that these ideas extend to the multiplicity setting and that $\varphi_n^*$ can be viewed both as an automorphism of $\FS(\C_n, X')$ and of $\FSm(\C_n, X)$, which will be used extensively henceforth. For simplicity of notation, we will suppress the index $n$ when it is clear what the underlying cycle is. Using the automorphism $\varphi^*$, we define the second main notion of this section, the period of an orientation.

\begin{definition}
Let $\sigma:[n]\to V(X')$ be a arrangement corresponding to a vertex of $\FS(\C_n, X')$. The \textit{period} of $\sigma$ is the smallest positive integer $\pi_\sigma$ for which $\sigma\circ \varphi^{\pi_\sigma}\equiv\sigma$. The \textit{period} of an acyclic orientation $\alpha\in \Acyc(\o{X'})$ is equal to the smallest period $\pi_\sigma$ of any linear extension $\sigma\in \L(\alpha)$.
\end{definition}

\begin{example}
Consider a multiplicity graph $X$ with $V(X)=\{u_1, u_2, u_3\}$ and $E(X)=\{u_1u_2, u_2u_3\}$. Moreover, assume that $u_1$ and $u_2$ have capacity $2$ and $u_3$ has capacity $4$. Then, the arrangement $\sigma:[8]\to V(X)$ sending $1, 5\mapsto u_1, 3, 7\mapsto u_2$ and $2, 4, 6, 8\mapsto u_3$ has period $\pi_\sigma=4$. On the other hand, the arrangement $\tau:[8]\to V(X)$ given by $2, 5\mapsto u_1, 3, 7\mapsto u_2, 1, 4, 6, 8\mapsto u_3$ has period $8$, even though it induces the same orientation $\alpha$ on $\o{X'}$ as $\sigma$. Moreover, the period of the induced orientation $\alpha$ is $4$ in this case. 
\end{example}

\begin{proposition}\label{prop:structureperiodic}
Let $\sigma:[n]\to V(X')$  be an arrangement of period $\pi_\sigma$, and let $V(X')=\bigsqcup_{v\in V(X)} S_v$ be the clique decomposition of $V(X')$. Consider the partition of $[n]$ into congruence classes modulo $\pi_\sigma$, given by $[n]=\bigsqcup_{j\in [\pi_\sigma]}\{k\in [n]:k\equiv_{\pi_\sigma} j\}=\bigsqcup_{j\in [\pi_\sigma]} P_j$. For every $j\in [\pi_\sigma]$, $\sigma(P_j)$ is fully contained in one of the cliques and each clique $S_v$ contains the images of exactly $c_v n/\pi_{\sigma}$ congruence classes $P_j$.
\end{proposition}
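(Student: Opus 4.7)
The plan is to unfold the definition of the period into a single functional identity, then read off both conclusions from the fact that the twist that appears is a clique–preserving permutation.

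First I would write out what $\sigma\circ\varphi^{\pi_\sigma}\equiv\sigma$ literally means: by the definition of $\equiv$ there exists $\rho\in\Sym_X$ such that $\sigma\circ\varphi^{\pi_\sigma}=\rho\circ\sigma$. Evaluating at any $k\in[n]$, this gives $\sigma(k+\pi_\sigma)=\rho(\sigma(k))$, where the index is read modulo $n$. Since $\rho$ preserves each clique $S_v$ of the lift, the labels $\sigma(k)$ and $\sigma(k+\pi_\sigma)$ always lie in the same clique. Iterating, the whole $\varphi^{\pi_\sigma}$–orbit of $k$, namely $k,k+\pi_\sigma,k+2\pi_\sigma,\dots$, has $\sigma$–image inside a single clique; once we know this orbit coincides with the congruence class $P_j$ (with $j\equiv k\pmod{\pi_\sigma}$), the first assertion is immediate.

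Next I would check the divisibility $\pi_\sigma\mid n$, which is implicitly needed even to speak of congruence classes of equal size. Writing $n=q\pi_\sigma+r$ with $0\le r<\pi_\sigma$ and iterating the identity above $q$ times gives $\sigma\circ\varphi^{q\pi_\sigma}=\rho^q\circ\sigma$. Combining with $\sigma\circ\varphi^n=\sigma$ produces $\sigma\circ\varphi^r=\rho^{-q}\circ\sigma$, so $\sigma\circ\varphi^r\equiv\sigma$. Minimality of $\pi_\sigma$ then forces $r=0$. Hence every class $P_j$ has exactly $n/\pi_\sigma$ elements and the orbit description in the previous paragraph does coincide with $P_j$.

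Finally the count is a bijectivity calculation: since $\sigma$ is a bijection, $|\sigma^{-1}(S_v)|=|S_v|=c_v$, and by the first part $\sigma^{-1}(S_v)$ is a disjoint union of congruence classes each of size $n/\pi_\sigma$, so it consists of exactly $c_v/(n/\pi_\sigma)=c_v\pi_\sigma/n$ classes (which in particular forces $n\mid c_v\pi_\sigma$ for every $v$, matching the example $n=8$, $\pi_\sigma=4$, $c_{u_3}=4$ giving $2$ classes in $S_{u_3}$). I do not foresee any real obstacle in executing the plan; the only point that needs a brief but genuine argument is the divisibility $\pi_\sigma\mid n$, since without it the statement about equal-size classes is not even well posed.
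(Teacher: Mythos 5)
Your proposal is correct and follows essentially the same route as the paper: unfold the definition of the period into $\sigma\circ\varphi^{\pi_\sigma}=\rho\circ\sigma$ for some $\rho\in\Sym_X$, iterate the clique-preserving twist to conclude that each $\sigma(P_j)$ lies in a single clique, and then obtain the count by comparing cardinalities. Your explicit verification that $\pi_\sigma\mid n$ (via minimality of the period) is a detail the paper leaves implicit but which the counting statement genuinely needs, and your count of $c_v\pi_\sigma/n$ classes per clique is the correct reading of the formula in the statement, as your check against the paper's own example confirms.
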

\begin{proof}
We begin by showing that, if $\sigma(j)\in S_v$, then $\sigma(j+\pi_\sigma)\in S_v$ as well. From the definition of $\pi_\sigma$, we have $\sigma\circ \varphi^{\pi_\sigma}=\rho\circ \sigma$, for some $\rho\in \Sym_X$. Substituting $j$ in this equality gives $\sigma(j+\pi_\sigma)=\rho(\sigma(j))$. However, as $\rho$ only permutes the elements within the cliques, and $\sigma(j)\in S_v$, we infer $\sigma(j+\pi_\sigma)\in S_v$. 
Repeated application of the above argument now gives that $j\in S_v$ implies $\sigma(P_j)\subseteq S_v$, which completes the proof of the first statement. The second statement follows by comparing cardinalities of sets $\sigma(P_j)$ and $S_v$.
\end{proof}

\noindent
The following proposition shows that the period of torically or permutation equivalent orientations is the same. Hence, we may talk about the period of a equivalence class $[\alpha]_\simeq\in \Acyc(\o{X'})/\!\simeq$.

\begin{proposition}
Let $X$ be a connected graph and let $\alpha, \beta\in \Acyc(\o{X'})$ be acyclic orientation satisfying $\alpha\simeq \beta$. Then $\pi_{\alpha}=\pi_\beta$.
\end{proposition}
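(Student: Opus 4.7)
The plan is to establish invariance of $\pi$ under each of the two generating relations $\equiv$ and $\sim$, since $\simeq$ is their finest common coarsening. The $\equiv$-step is immediate: if $\beta=\rho(\alpha)$ with $\rho\in\Sym_X$, then $\sigma\mapsto\rho\circ\sigma$ is a bijection $\L(\alpha)\to\L(\beta)$, and since $\rho$ permutes the vertices of $X'$ only within each clique $S_v$, the projection $\pi_X\circ\sigma$ is preserved. By Proposition~\cref{prop:structureperiodic}, $\pi_{\rho\circ\sigma}=\pi_\sigma$ for every $\sigma$, so the minima over linear extensions agree and $\pi_\alpha=\pi_\beta$.

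The substantive case is a single source flip $\beta=\Flip_v(\alpha)$ (sinks being symmetric). The key correspondence is: whenever $\sigma\in\L(\alpha)$ satisfies $\sigma(1)=v$, the composition $\tau:=\sigma\circ\varphi$ lies in $\L(\beta)$, because the relative order of every non-$v$ pair is unchanged while $v$ moves from position $1$ to position $n$, reversing precisely the edges incident to $v$, as $\Flip_v$ prescribes; moreover $\pi_X\circ\tau$ is a cyclic shift of $\pi_X\circ\sigma$, so $\pi_\tau=\pi_\sigma$ by Proposition~\cref{prop:structureperiodic}. When a period-minimizing $\sigma\in\L(\alpha)$ happens to satisfy $\sigma(1)=v$, this immediately yields $\pi_\beta\leq\pi_\alpha$. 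In general, set $k=\sigma^{-1}(v)$ and $\sigma':=\sigma\circ\varphi^{k-1}$; then $\sigma'(1)=v$, $\pi_{\sigma'}=\pi_\sigma$, and $\sigma'\in\L(\alpha')$ where $\alpha'$ is obtained from $\alpha$ by the successive source flips at $\sigma(1),\ldots,\sigma(k-1)$, none of which equals $v$. Applying the correspondence to $(\sigma',\alpha')$ produces $\tau'\in\L(\beta')$ of period $\pi_\sigma$, where $\beta':=\Flip_v(\alpha')$. Because the flips connecting $\alpha$ to $\alpha'$ avoid $v$, they commute with $\Flip_v$, and hence $\beta'\sim\beta$ via the same sequence of flips.

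The principal remaining obstacle is transferring the bound from $\pi_{\beta'}$ back to $\pi_\beta$. I would close this via a simultaneous induction: outer on the total number of flips realizing the $\sim$-equivalence between the two orientations, and inner on the position $k$ of $v$ in the minimizing $\sigma\in\L(\alpha)$, leveraging the commutativity of non-$v$ flips with $\Flip_v$ to strictly shorten the relevant flip-distance at each reduction step and terminating at the base case $k=1$, handled directly by the correspondence. The reverse inequality $\pi_\alpha\leq\pi_\beta$ then follows by the symmetric argument applied to the sink $v$ of $\beta$, completing the proof. The hard part is the bookkeeping in this double induction, since the minimizer of $\pi_\alpha$ need not place $v$ anywhere convenient and the shifts alter the orientation; the commutativity observation for flips disjoint from $v$ is what makes the reduction close.
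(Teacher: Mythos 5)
Your handling of the $\equiv$ case and your key correspondence (if $v$ is a source of $\alpha$ and $\sigma\in\L(\alpha)$ has $\sigma(1)=v$, then $\sigma\circ\varphi\in\L(\Flip_v(\alpha))$ with the same period) match the paper's argument. The gap is in how you deal with a minimizer $\sigma$ that does \emph{not} place $v$ first. Rotating to $\sigma'=\sigma\circ\varphi^{k-1}$ changes the orientation to $\alpha'$, so your correspondence only yields $\pi_{\beta'}\leq\pi_\alpha$ for $\beta'=\Flip_v(\alpha')$, and you still need $\pi_\beta=\pi_{\beta'}$ where $\beta'$ differs from $\beta$ by the $k-1$ commuted source flips. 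That is again an instance of the very statement being proved, and your proposed double induction has no well-founded decreasing measure: the outer parameter (flip distance) goes from $1$ (for $\alpha,\beta$) up to $k-1$ (for $\beta,\beta'$), and in the alternative peel-off-one-flip formulation the new instance ($\beta$ versus $\Flip_w(\beta)$ with $w=\sigma(1)$) still has flip distance $1$ but the position of $v$, or $w$, in a period-minimizing extension of the new orientation is uncontrolled, so neither parameter strictly decreases. As written, the reduction is circular rather than inductive.

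The paper closes exactly this hole with Lemma~\cref{lemma:periodicsources}: for a source $u$ of $\alpha$ one can choose a linear extension of $\alpha$ \emph{itself} that both attains the minimal period $\pi_\alpha$ and has $u$ in the first position (dually, a sink in the last position). Its proof is not a rotation argument: starting from any minimizer $\tau$, one uses Proposition~\cref{prop:structureperiodic} to see that $u$ lies among the first $\pi_\alpha$ values of $\tau$, rearranges only that initial block so that $u$ comes first, and then propagates the rearrangement periodically through the congruence classes mod $\pi_\alpha$, checking that the result is still a linear extension of $\alpha$ of period $\pi_\alpha$. With that lemma the orientation never changes, your base case $k=1$ is the only case needed, and no induction on flip sequences is required. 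Either prove a statement of this type, or supply a genuinely decreasing measure for your double induction; without one of these the $\sim$ half of the proposition is not established.
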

\begin{proof}
It suffices to check that $\pi_\alpha=\pi_\beta$ whenever $\alpha\equiv\beta$ and $\alpha\sim \beta$. Let us begin by addressing the case when $\alpha\equiv\beta$. We will show that $\pi_{\alpha}\geq \pi_{\beta}$. By symmetry of $\alpha$ and $\beta$, this will also imply $\pi_\beta\geq \pi_\alpha$, which suffices to conclude $\pi_\alpha=\pi_\beta$.

As $\alpha\equiv\beta$, we have a permutation $\rho\in \Sym_X$ transforming $\alpha$ into $\beta$. Let $\sigma$ be a linear extension of $\alpha$ with period $\pi_\alpha$. Then, $\rho\circ\sigma$ is a linear extension of $\beta$, with period at most $\pi_\alpha$, because $(\rho\circ \sigma)\circ \varphi^{\pi_\alpha}=\rho\circ (\sigma\circ \varphi^{\pi_\alpha})=\rho\circ \sigma$. Hence, $\pi_\beta\leq \pi_\alpha$, which completes the discussion in the case $\alpha\equiv\beta$.

If $\alpha\sim\beta$, the proof is more involved and needs the following auxiliary lemma.

\begin{lemma}\label{lemma:periodicsources}
Let $\alpha\in \Acyc(\o{X'})$, and let $u$ be a source of $\alpha$. Then, there exists a linear extension $\sigma\in \L(\alpha)$ with $\pi_\sigma=\pi_\alpha$ and $\sigma(1)=u$. Similarly, if $v$ is a sink of $\alpha$, there exists a linear extension $\sigma\in \L(\alpha)$ with $\pi_\sigma=\pi_\alpha$ and $\sigma(n)=v$.
\end{lemma}
\begin{proof}

As the two statements given in the lemma are completely analogous, we discuss only the case when $u$ is a source. Let $\tau:[n]\to V(X')$ be a linear extension of $\alpha$ with $\pi_\tau=\pi_\alpha$, which does not necessarily satisfy $\tau(1)=u$.

Consider the set of vertices of $X'$ that are labeled by a number not larger than $\pi_\alpha$, which is given by $S=\{\tau(i):i\leq \pi_\alpha\}$. If we recall the decomposition $[n]=\bigsqcup_{j\in [\pi_\sigma]}P_j$ from Proposition~\cref{prop:structureperiodic}, we set $S$ to be the set of images of minimal elements of progressions $P_j$. 

We note that $u\in S$. To see why, let $S_v$ be the clique of $X'$ which contains $u$. As the vertex $u$ is a source, it is the minimum of $\sigma^{-1}$ on the clique $S_v$. By Proposition~\cref{prop:structureperiodic}, we know that the minimum of $\sigma^{-1}$ on each clique is at most $\pi_\sigma$, as the preimage of every clique contains at least one congruence class modulo $\pi_\sigma$, which implies $u\in S$.

The restriction of $\alpha$ onto $S$ still induces an acyclic orientation, of which $\tau|_{[\pi_\alpha]}$ is still a linear extension. As $u$ is a source in $\alpha|_S$, we infer there exists a linear extension $\sigma:[\pi_\alpha]\to S$ of $\alpha|_S$ which has $\sigma(1)=u$. 

Then, we extend $\sigma$ to the $\t{\sigma}:[n]\to V(X')$ by setting $\t{\sigma}(k\pi_\alpha+j)=\tau\big(k\pi_\alpha+\tau^{-1}\circ\sigma(j)\big)$, where $j$ ranges through $[\pi_\alpha]$ and $k\in \{0, \dots, n/\pi_\alpha-1\}$. It is not hard to check that $\t{\sigma}$ indeed agrees with $\sigma$ on $[\pi_\alpha]$, as for $k=0$ we have $\t{\sigma}(j)=\tau\circ\tau^{-1}\circ\sigma(j)=\sigma(j)$. Hence, we have $\t{\sigma}(1)=\sigma(1)=u$. It remains to check that $\t\sigma$ is a proper linear extension of $\alpha$ and that $\t{\sigma}$ has period $\pi_\alpha$.

Let us first check that $\t\sigma\in \L(\alpha)$. Suppose $u_1u_2$ is an edge of $\o{X'}$, directed from $u_1$ to $u_2$ under $\alpha$, and let $u_1=\tau(k_1\pi_\alpha+l_1), u_2=\tau(k_2\pi_\alpha+l_2)$. As $\tau\in \L(\alpha)$, we have
\begin{equation}\label{eqn:eqn1}
    k_1\pi_\alpha+l_1< k_2\pi+l_2.
\end{equation} The goal is now to show $\t\sigma^{-1}(u_1)<\t\sigma^{-1}(u_2)$. From the definition of $\t\sigma$, we know that $u_1=\t\sigma(k_1\pi_\alpha+j_1), u_2=\t\sigma(k_2\pi_\alpha+j_2)$, where $j_1=\sigma^{-1}\circ \tau(l_1), j_2=\sigma^{-1}\circ \tau(l_2)$. To show that $\t\sigma\in \L(\alpha)$, we need to verify that 
\begin{equation}\label{eqn:eqn2}
    k_1\pi_\alpha+j_1< k_2\pi_\alpha+j_2. 
\end{equation}
From \cref{eqn:eqn1}, we see that $k_1\leq k_2$. If $k_1<k_2$, the inequality \cref{eqn:eqn2} follows immediately, as $j_1, j_2\in \{1, \dots, \pi_\alpha\}$. On the other hand, if $k_1=k_2$, we have $l_1\leq l_2$. Hence, there is an edge in $\alpha|_S$ from $\tau(l_1)$ to $\tau(l_2)$. As $\sigma\in \L(\alpha|_S)$, we must also have $\sigma^{-1}(\tau(l_1))<\sigma^{-1}(\tau(l_2))$, implying $j_1<j_2$. Thus, $\t\sigma$ is a linear extension of $\alpha$.

Now, we show that the period of $\t\sigma$ is $\pi_\alpha$. We have \begin{align*}
\t\sigma \circ \varphi^{\pi_\alpha}(k\pi_\alpha+j)=&\t\sigma((k+1)\pi_\alpha+j\mod n)\\
=&\tau((k+1)\pi_\alpha+\tau^{-1}\circ \sigma(j)\mod n)\\
=&\tau\circ\varphi^{\pi_\alpha} (k\pi_\alpha+\tau^{-1}\circ \sigma(j)).
\end{align*}
As the period of $\tau$ is $\pi_\alpha$, we infer that $\tau\circ\varphi^{\pi_\alpha} (k\pi_\alpha+\tau^{-1}\circ \sigma(j))$ is in the same clique as $\tau(k\pi_\alpha+\tau^{-1}\circ \sigma(j))=\t\sigma(k\pi_\alpha+j)$, meaning that $\t\sigma \circ \varphi^{\pi_\alpha}(k\pi_\alpha+j)$ and $\t\sigma(k\pi_\alpha+j)$ are in the same clique. This shows that the period of $\t\sigma$ is at most $\pi_\alpha$, but since $\t\sigma$ is a linear extension of $\alpha$, the period of $\t\sigma$ is exactly $\pi_\alpha$.
\end{proof}

\noindent
Now, we are ready to show that the period of an orientation is preserved under toric equivalence. If $\alpha\sim \beta$, the orientation $\beta$ can be obtained from $\alpha$ by applying a sequence of flip moves. By induction, it suffices to address the case when $\alpha$ and $\beta$ differ by a single flip on the vertex $u$, where we may assume that $u$ is a sink in $\alpha$ and a source in $\beta$. Using Lemma~\cref{lemma:periodicsources}, we find a linear extension $\sigma\in \L(\alpha)$ for which $\pi_\sigma=\pi_\alpha$ and $\sigma(n)=u$. As $\alpha$ and $\beta$ differ by a flip of $u$, we see that $\sigma\circ \varphi$ is a linear extension of $\beta$.

Note that $\sigma\circ \varphi$ has period as most $\pi_\alpha$, because $\sigma\circ\varphi\circ\varphi^{\pi_\alpha}=\sigma\circ\varphi^{\pi_\alpha}\circ\varphi\equiv\sigma\circ\varphi$. Hence, $\pi_\beta\leq\pi_\alpha$, which suffices to complete the proof by applying symmetry.
\end{proof}

\noindent
In Theorem 4.7 of \cite{DK}, Defant and Kravitz obtain the following characterization of the connected components of $\FS(\C_n, Y)$. 

\begin{proposition}[\cite{DK}]\label{prop:simplecycles}
Let $Y$ be a simple graph on the vertex set $[n]$. Let $n_1, \dots, n_r$ denote the sizes of the connected components of $\o{Y}$, and let $\nu=\gcd(n_1, \dots, n_r)$. For each toric acyclic orientation $\alphasim\in \Acyc(\o{Y})/\sim$, choose a linear extension $\sigma_{[\alpha]_\sim}$, and let $J_{[\alpha]_\sim}$ be the connected component of $\FS(\C_n, Y)$ containing $[\sigma_{[\alpha]_\sim}]$. Then
\[J_{[\alpha]_\sim}, \dots, (\varphi^*)^{\nu-1}(J_{[\alpha]_\sim}),\]
are pairwise distinct, isomorphic connected components of $\FS(\C_n, Y)$. Moreover,
\[\FS(\C_n, Y)=\bigoplus_{\alphasim\in \Acyc(\o{Y})/\sim} \bigoplus_{k=0}^{\nu-1}(\varphi^*)^k(J_{[\alpha]_\sim}).\]
\end{proposition}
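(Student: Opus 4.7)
The plan is to combine Theorem~4.1 of \cite{DK}, which identifies the connected components of $\FS(\C_n,Y)$ with the double flip classes $[\alpha]_\approx\in\Acyc(\o{Y})/\!\approx$, with an analysis of how these classes sit inside the coarser toric equivalence classes, and then to track how the automorphism $\varphi^*$ of $\FS(\C_n,Y)$ cycles through them. First I would observe that if $\sigma$ is a linear extension of $\alpha$, then $\sigma\circ\varphi$ is a linear extension of the orientation obtained from $\alpha$ by a single positive flip at the source $\sigma(1)$; hence $\varphi^*$ preserves every toric class $[\alpha]_\sim$ and, on the double flip classes inside it, corresponds to adding one positive flip.

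The central step is to exhibit a bijection $\Phi:[\alpha]_\sim/\!\approx\,\to\,\Z/\nu\Z$. For any flip path from $\alpha$ to $\beta\in[\alpha]_\sim$, let $a_+^{(i)}$ and $a_-^{(i)}$ count the positive and negative flips performed at vertices of the $i$-th connected component $C_i$ of $\o{Y}$, and set $\Phi(\beta)=\sum_{i=1}^r(a_+^{(i)}-a_-^{(i)})\pmod\nu$. Well-definedness reduces to showing that any loop whose flips all lie in a single component $C_i$ has $a_+^{(i)}-a_-^{(i)}\in n_i\Z$: one containment is witnessed by the canonical loop of $n_i$ consecutive positive flips cycling through a linear extension of $\alpha|_{C_i}$, and the other follows from a height-function argument that tracks cumulative flips on linear extensions modulo $n_i$. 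A double flip contributes either $0$ (source and sink in the same $C_i$) or $+e_i-e_j$ with $i\neq j$ to the tuple $(a_+^{(i)}-a_-^{(i)})_i$, so it leaves $\Phi$ unchanged; conversely Corollary~\cref{cor:doubleflips} implies that two orientations in $[\alpha]_\sim$ with equal $\Phi$-values are double flip equivalent. Surjectivity is immediate since $k$ applications of $\varphi^*$ shift $\Phi$ by $+k$.

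Finally, inside each toric class $[\alpha]_\sim$ the $\nu$ components $(\varphi^*)^k(J_{[\alpha]_\sim})$ for $k=0,\ldots,\nu-1$ realize the $\nu$ distinct $\Phi$-values, so they are pairwise distinct and exhaust every component lying over $[\alpha]_\sim$; they are mutually isomorphic because $\varphi^*$ is an automorphism of $\FS(\C_n,Y)$ by Proposition~2.3 of \cite{DK}. Summing over all toric classes then yields the direct-sum decomposition. The main obstacle will be the well-definedness of $\Phi$, which boils down to identifying the loop-value group on a single component $C_i$ as exactly $n_i\Z$; the easy containment is provided by the explicit canonical loop, while the other requires the height-function bookkeeping to rule out any loop with smaller excess.
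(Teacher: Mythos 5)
This statement is imported by the paper from \cite{DK} (it is Theorem 4.7 there) and is given without proof, so there is no in-paper argument to compare against; what you have written is essentially a self-contained reconstruction in the spirit of the original Defant--Kravitz proof, and the strategy is sound. Your route --- components of $\FS(\C_n,Y)$ are double flip classes by Theorem 4.1 of \cite{DK}, and within a fixed toric class the double flip classes are separated by a net-flip invariant $\Phi$ valued in $\Z/\nu\Z$, shifted by one under $\varphi^*$ --- is correct, and the key technical point (that the value group of closed flip loops supported on a single component $C_i$ is exactly $n_i\Z$) is correctly identified and provable exactly as you indicate: for a closed loop, tracking the orientation of a single edge $uv$ shows the net flip counts of adjacent vertices agree, hence by connectivity all vertices of $C_i$ have a common net count $t$ and the loop contributes $tn_i$; the reverse containment is your canonical rotation. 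Three spots deserve to be written out rather than asserted: (i) well-definedness of $\Phi$ for a general loop reduces to single components because a flip at a vertex of $C_i$ only reverses edges inside $C_i$, so the restriction of a loop to each $C_i$ is itself a loop; (ii) the injectivity step (``equal $\Phi$-values imply double flip equivalent'') is not a one-line consequence of Corollary~\cref{cor:doubleflips}: you need B\'ezout to write the difference of net counts as $\sum_i\lambda_i n_i$ and then insert $|\lambda_i|$ full rotations of $C_i$ of the appropriate sign (negative rotations exist since every acyclic orientation has a sink) to equalize the net counts, possibly adding further full rotations to make them nonnegative, before invoking the corollary; (iii) distinctness and exhaustiveness of $(\varphi^*)^k(J_{[\alpha]_\sim})$ use that both the toric class and $\Phi$ are invariants of a connected component, which follows from Theorem 4.1 together with the fact that $\Phi$ is constant on double flip classes. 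With these details filled in, your argument proves the proposition; it is also consistent with how the present paper later handles the multiplicity analogue, where the corresponding divisibility statement (Proposition~\cref{prop:divisibility}) is proved by a trajectory argument rather than your height-function bookkeeping.
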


\noindent
We obtain the following generalization of this result.

\begin{theorem}\label{thm:multiplicitycycles}
Let $X$ be a multiplicity graph and suppose $\o{X'}$ has connected components $Z_1, \dots, Z_r$. An orientation $\alphasimeq\in \Acyc(\o{X'})/\!\simeq$ induces orientations $[\alpha^{(1)}]_\simeq, \dots, [\alpha^{(r)}]_\simeq$ on $Z_1, \dots, Z_r$. Let $\pi_{1}, \dots, \pi_{r}$ be the periods of the induced orientations, and let $\delta_\alpha=\gcd(\pi_{1}, \dots, \pi_{r})$. For each toric acyclic orientation $\alphasimeq\in \Acyc(\o{X'})/\!\simeq$, choose a linear extension $\sigma_{[\alpha]_\simeq}:[n]\to V(X')$, and let $H_{[\alpha]_\simeq}$ be the connected component of $\FSm(\C_n, X)$ containing $[\sigma_{[\alpha]_\simeq}]_\equiv$. Then \[H_{[\alpha]_\simeq}, \dots, (\varphi^*)^{\delta_\alpha-1}(H_{[\alpha]_\simeq}),\]
are pairwise distinct, isomorphic connected components of $\FSm(\C_n, X)$.
Moreover,
\[\FSm(\C_n, X)=\bigoplus_{\alphasimeq\in \Acyc(\o{X'})/\!\simeq} \bigoplus_{k=0}^{\delta_\alpha-1}(\varphi^*)^k(H_{[\alpha]_\simeq}).\]
\end{theorem}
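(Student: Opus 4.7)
The plan is to mirror the proof of Proposition~\cref{prop:simplecycles} (Theorem 4.7 of \cite{DK}), with the periods $\pi_i$ replacing the component sizes $n_i$. By Theorem~\cref{thm:multiplicitycyclesdoubleflip}, the connected components of $\FSm(\C_n, X)$ are in bijection with the equivalence classes $\Acyc(\o{X'})/\!\approxeq$, so the task reduces to analyzing the $\varphi^*$-orbits on these classes: within each $\simeq$-class $[\alpha]_\simeq$, I want to show that the $\approxeq$-sub-classes form a single $\varphi^*$-orbit of size exactly $\delta_\alpha$. Since $\varphi^*$ is an automorphism of $\FSm(\C_n, X)$, each $(\varphi^*)^k(H_{[\alpha]_\simeq})$ is a connected component, and the desired direct-sum decomposition will follow once the orbit structure on $\Acyc(\o{X'})/\!\approxeq$ is pinned down.

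First I would establish the action of $\varphi^*$ at the level of orientations. For a linear extension $\sigma\in \L(\alpha)$ the vertex $u=\sigma(1)$ is a source of $\alpha$, and $\alpha_{\o{X'}}(\varphi^*\sigma)$ is obtained from $\alpha$ by one positive flip at $u$. In particular $\alpha_{\o{X'}}(\varphi^*\sigma)\sim \alpha$, so $\varphi^*$ preserves each $\simeq$-class. Iterating, $(\varphi^*)^k\sigma$ corresponds to performing $k$ consecutive positive flips at $\sigma(1),\ldots,\sigma(k)$; because $\o{X'}$ is the disjoint union of $Z_1,\ldots,Z_r$, these flips split componentwise into $k_i$ positive flips on $Z_i$ with $k_1+\cdots+k_r=k$.

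The key technical step is to translate the condition $(\varphi^*)^k[\alpha]_\approxeq=[\alpha]_\approxeq$ into a condition on $(k_1,\ldots,k_r)$. Applying Corollary~\cref{cor:doubleflips} to each component separately (flips and double flips on $\o{X'}$ decompose across components, and permutations in $\Sym_X$ can be understood componentwise), the resulting $\approxeq$-class on $Z_i$ is determined by the residue $k_i \bmod \pi_i$. Hence $(\varphi^*)^k$ fixes $[\alpha]_\approxeq$ exactly when one can realize a tuple $(k_1,\ldots,k_r)$ with $k_1+\cdots+k_r=k$ and $\pi_i\mid k_i$ for every $i$. Using Lemma~\cref{lemma:periodicsources} to choose linear extensions with any prescribed source, and Proposition~\cref{prop:structureperiodic} to understand how the $k_i$ vary as one shifts the extension, I would argue that the set of admissible $k$ is exactly the subgroup $\pi_1\Z+\cdots+\pi_r\Z=\delta_\alpha \Z$ of $\Z$. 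Combined with $(\varphi^*)^n=\id$, this gives an orbit of size $n/(n/\delta_\alpha)=\delta_\alpha$, and distinctness of $H_{[\alpha]_\simeq},\ldots,(\varphi^*)^{\delta_\alpha-1}(H_{[\alpha]_\simeq})$.

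Finally, to obtain the direct-sum decomposition I would check that the $\varphi^*$-orbit exhausts all $\approxeq$-classes inside $[\alpha]_\simeq$. Any $\beta\simeq \alpha$ is obtained from $\alpha$ by an alternating sequence of flips and permutations in $\Sym_X$; applying Corollary~\cref{cor:doubleflips} componentwise converts the flip portion into a pure sequence of positive flips on each $Z_i$, which by the preceding analysis is realized by some $(\varphi^*)^k$ up to $\approxeq$. Summing over all $\simeq$-classes then produces the claimed decomposition. The main obstacle will be the middle step: carefully tracking how $\equiv$ interacts with the component decomposition, since a clique $S_v\subseteq V(X')$ may split across several components of $\o{X'}$, so restrictions of elements of $\Sym_X$ to a single $Z_i$ are not automatically in a group of the same type. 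Verifying that the componentwise group-theoretic picture collapses to the single-variable quotient $\Z/\delta_\alpha \Z$, and in particular that ``admissibility'' of $(k_1,\ldots,k_r)$ really saturates the subgroup $\sum_i \pi_i\Z$, is the analogue of the orbit--stabilizer computation in \cite{DK} and will require the bulk of the work.
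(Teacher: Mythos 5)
Your outer strategy matches the paper's: reduce to the $\varphi^*$-orbit structure of classes inside a fixed $[\alpha]_\simeq$, get the inclusion $\delta_\alpha\Z\subseteq\{\text{admissible }k\}$ from componentwise flips and Bezout, and exhaust all classes in $[\alpha]_\simeq$ by combining flips with $\Sym_X$-moves. That part of your sketch is sound (the paper runs it through Corollary~\cref{cor:quotientcomponents}, Proposition~\cref{prop:simplecycles} and Lemma~\cref{lemma:divisibility1}, but the content is the same).

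The genuine gap is the reverse inclusion, i.e.\ the claim that ``the resulting $\approxeq$-class on $Z_i$ is determined by the residue $k_i\bmod\pi_i$.'' Stated differently, you must rule out that some sequence of flips and $\Sym_X$-moves returns $\alpha^{(i)}$ to a permutation-equivalent orientation after a net number of positive flips that is \emph{not} a multiple of $\pi_i$; only then is the stabilizer of $[\alpha]_\approxeq$ exactly $\delta_\alpha\Z$ and the orbit of size exactly $\delta_\alpha$, which is what ``pairwise distinct'' requires. The tools you cite do not give this: Corollary~\cref{cor:doubleflips} only converts mixed flip sequences into pure positive ones, and Lemma~\cref{lemma:periodicsources} and Proposition~\cref{prop:structureperiodic} describe periodic linear extensions, but the period is defined as a \emph{minimum over cyclic shifts of linear extensions}, and an arbitrary flip sequence need not arise from such a shift. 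This ``only if'' direction is precisely the paper's Proposition~\cref{prop:divisibility} (used in Lemma~\cref{lemma:divisibility2}): if a sequence of $(\C_q,\o{Z_t})$-friendly swaps takes $\sigma$ to $\rho\circ\sigma\circ\varphi_q^{a_t}$ with $\rho\in\Sym_X$, then $\pi_\alpha\mid a_t$. Its proof is the bulk of the section, requiring the weight/trajectory analysis on the cycle (equal-size orbits of $f_\sigma$, zero weight sums per orbit, and eliminating intersecting trajectories by friendly swaps); the paper even notes in a footnote that the much weaker fact $\pi_\alpha\mid\pi_\sigma$ already seems to need this machinery. Your proposal identifies roughly where the difficulty lies but offers no argument in place of Proposition~\cref{prop:divisibility}, so the central step of the theorem remains unproved. (Your side worry about cliques $S_v$ splitting across components of $\o{X'}$ is comparatively minor and does not substitute for this missing divisibility result.)
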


\noindent
Before setting up the proof of the Theorem~\cref{thm:multiplicitycycles}, we first present Proposition~\cref{prop:divisibility}. The proof of Proposition~\cref{prop:divisibility} introduces the new element that distinguishes the proof of Theorem~\cref{thm:multiplicitycycles} from the proofs of Theorems~\cref{thm:multiplicitypaths} and \cref{thm:multiplicitycyclesdoubleflip}, and the proofs from \cite{DK}.

\begin{proposition}\label{prop:divisibility}
Let $Z_t$ be one of the connected components of $\o{X'}$ and suppose $Z_t$ has $q$ vertices. Let $\sigma:[q]\to V(Z_t)$ be an arrangement and suppose there exists a sequence $\Sigma$ of $(\C_q, \o{Z_t})$-friendly swaps transforming $\sigma$ into $\rho\circ \sigma\circ \varphit^{a_t}$, for some $\rho\in \Sym_X$ and $a_t\in \Z_{>0}$. If $\alpha=\alpha_{Z_t}(\sigma)$ is the orientation induced by $\sigma$, we have $\pi_{\alpha}|a_t$.\footnote{Proposition~\cref{prop:divisibility} implies that $\pi_\alpha|\pi_\sigma$ for all $\sigma\in \L(\alpha)$, since $\sigma$ is permutation equivalent to $\sigma\circ \varphit^{\pi_\sigma}$ by definition and $\Sigma$ can be taken to be empty. Although this fact is not directly related to the notion of friends-and-strangers graphs, we do not see an easy way to prove it without referencing Proposition~\cref{prop:divisibility}.}
\end{proposition}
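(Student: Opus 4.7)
The plan is to compute the orientation $\alpha_{Z_t}(\tau)$ where $\tau := \rho \circ \sigma \circ \varphi_q^{a_t}$ in two different ways --- one coming from the existence of $\Sigma$ and one coming from the algebraic definition of $\tau$ --- and to reconcile them to deduce $\pi_\alpha \mid a_t$. The proof will be organized into three stages.

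Stage~1 translates $\Sigma$ into a double-flip equivalence. Since $\Sigma$ exhibits $\sigma$ and $\tau$ as vertices of the same connected component of $\FS(\C_q, \overline{Z_t})$, applying Theorem~4.1 of \cite{DK} to this friends-and-strangers graph yields $\alpha := \alpha_{Z_t}(\sigma) \approx \alpha_{Z_t}(\tau)$ in $\Acyc(Z_t)$.

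Stage~2 computes $\alpha_{Z_t}(\tau)$ algebraically. Because $\rho \in \Sym_X$ acts by clique-preserving permutations, $\alpha_{Z_t}(\tau) \equiv \alpha_{Z_t}(\sigma \circ \varphi_q^{a_t})$. A direct comparison shows that $\alpha_{Z_t}(\sigma \circ \varphi_q)$ arises from $\alpha$ by a single positive flip at $\sigma(1)$: as $\sigma(1)$ sits at position $1$ of $\sigma$, it is a source of $\alpha$, and after the shift it sits at position $q$ of $\sigma \circ \varphi_q$, where it becomes a sink. Iterating $a_t$ times, $\alpha^{(a_t)} := \alpha_{Z_t}(\sigma \circ \varphi_q^{a_t})$ is obtained from $\alpha$ by $a_t$ consecutive positive flips. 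Combining Stages~1 and~2 yields $\alpha \approx \alpha_{Z_t}(\tau) \equiv \alpha^{(a_t)}$, so $\alpha \approxeq \alpha^{(a_t)}$.

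Stage~3 extracts $\pi_\alpha \mid a_t$ from this hybrid equivalence. The plan is to produce a linear extension $\tilde\sigma \in \L(\alpha)$ whose clique assignment $\pi_Y \circ \tilde\sigma : [q] \to V(X)$ is invariant under cyclic shift by $a_t$; this forces $\tilde\sigma \circ \varphi_q^{a_t} \equiv \tilde\sigma$, hence $\pi_\alpha \le \pi_{\tilde\sigma} \mid a_t$. To construct $\tilde\sigma$, I will track the evolving clique assignments $\pi_Y \circ \sigma_j$ along the intermediate arrangements $\sigma = \sigma_0, \sigma_1, \dots, \sigma_N = \tau$ of $\Sigma$: the terminal assignment equals $\pi_Y \circ \sigma$ cyclically shifted by $a_t$, and the noncyclic versus cyclic swaps of $\Sigma$ produce only controlled, local changes. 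Reconciling this swap-theoretic picture with the double-flip picture of Stage~1 using Corollary~\cref{cor:doubleflips} (to balance the net positive and negative flip counts) and Proposition~\cref{prop:structureperiodic} (to realize the desired periodic clique assignment as an honest linear extension of $\alpha$) then gives $\tilde\sigma$.

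The main obstacle is Stage~3: Stages~1 and~2 follow cleanly from previously established results, but converting the hybrid equivalence $\alpha \approxeq \alpha^{(a_t)}$, together with the concrete swap structure of $\Sigma$, into an explicit linear extension of $\alpha$ with $a_t$-periodic clique assignment requires careful coordination between the double-flip moves on $\Acyc(Z_t)$ and the clique structure encoded by $\Sym_X$, especially when the permutation $\rho$ acts nontrivially on $V(Z_t)$.
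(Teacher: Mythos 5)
Your Stages 1 and 2 are correct, but they only re-encode the hypothesis in orientation language (the paper performs exactly this translation, in the opposite direction, inside the proof of Lemma~\cref{lemma:divisibility2} before invoking the present proposition), so all of the content has to be carried by Stage 3, and as sketched Stage 3 has two genuine gaps. The first is the concluding inference: from a linear extension $\t{\sigma}\in \L(\alpha)$ with $\t{\sigma}\circ\varphit^{a_t}\equiv\t{\sigma}$ you obtain $\pi_{\t{\sigma}}\mid a_t$ and $\pi_\alpha\leq \pi_{\t{\sigma}}$, but since $\pi_\alpha$ is defined as a \emph{minimum} over linear extensions, these two facts only give $\pi_\alpha\leq a_t$, not $\pi_\alpha\mid a_t$. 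You would need $\pi_\alpha\mid\pi_{\t{\sigma}}$, and the footnote attached to this very proposition points out that divisibility among periods of linear extensions of $\alpha$ is only known as a \emph{consequence} of the proposition, so assuming it is circular. The paper closes this loop differently: it first reduces $a_t$ modulo $\pi_\alpha$ (the hypothesis is stable under replacing $\sigma$ by any arrangement in its component and under changing $a_t$ by multiples of $\pi_\alpha$, using a minimal-period extension of $\alpha$), and then gets a contradiction from $1\leq a_t<\pi_\alpha$, $\pi_{\t{\sigma}}\mid a_t$, and $\pi_{\t{\sigma}}\geq\pi_\alpha$ (the last via the invariance of the period under $\simeq$). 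Your plan needs this reduction step or an equivalent.

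The second gap is that no mechanism is given for actually producing $\t{\sigma}$, and this is where essentially the whole difficulty lies. ``Tracking clique assignments along $\Sigma$'' does not yield controlled local changes: the paper's proof devotes all of its machinery to precisely this point, introducing the weights $w(p)$, the permutation $f_\sigma=\sigmai\circ\rhoi\circ\sigma$ and its orbits, proving that all orbits have equal size and zero total weight, and showing that intersecting trajectories can be eliminated by further $(\C_q,\o{Z_t})$-friendly swaps (Lemmas~\cref{lemma:equicardinalorbits}--\cref{lemma:gettingnointersections}) before a shift-periodic arrangement is obtained in Lemma~\cref{lemma:nointersectionsimpliesdivisibility}. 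Moreover, even that arrangement is in general only a linear extension of an orientation double-flip equivalent to $\alpha$, not of $\alpha$ itself, which is exactly why the paper needs the final reduction-and-contradiction argument described above; your stronger target of an honest extension of $\alpha$ with $a_t$-periodic clique assignment is not obviously attainable from Corollary~\cref{cor:doubleflips} and Proposition~\cref{prop:structureperiodic} alone, and the case where $\rho$ acts nontrivially within cliques (the one you flag as delicate) is precisely the case the trajectory/orbit analysis is designed to handle.
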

\begin{proof}
Since $Z_t$ are connected components of $\o{X'}$, they form a coarser partition of $V(X')$ than the cliques $S_v$ corresponding to vertices $v\in V(X)$. Hence, we may say that $V(Z_t)$ is a union of several cliques $S_v$, for $v$ in some set $X_t\subseteq V(X)$. Hence, we will think of $Z_t$ as the complement of the lift of $X_t$.

The main idea of the proof is to find an orientation $\t{\sigma}$, which differs from $\sigma$ by a sequence of $(\C_q, \o{Z_t})$-friendly swaps, and whose period $\pi_{
\t{\sigma}}$ divides $a_t$. Once this is established, a simple argument will complete the proof. Before explaining how exactly we find such $\t{\sigma}$, let us say a few words about the combinatorial interpretation of the sequence of swaps transforming $\sigma$ into $\rho\circ \sigma\circ \varphit^{a_t}$.

Recall that the arrangement $\sigma$ is a bijection $\sigma:[q]\to V(Z_t)$. Hence, we can think of $\sigma$ as labeling the graph $\C_q$, such that every position $p\in [q]$ on the cycle receives a label $u=\sigma(p)\in Z_t$. Since the condition imposed on $\Sigma$ is invariant under permuting the labels in the same clique $S_v$, we may assume that none of the swaps includes two elements of the same clique. In other words, we may assume that the cyclic order of elements of the same clique $S_v$ is unchanged.

For an arbitrary position $p$, we define the weight of $p$, denoted by $w(p)$, as the number of times the label $\sigma(p)$ moves counterclockwise in a swap minus the number of times it moves clockwise in $\Sigma$. This notion was originally introduced by Defant and Kravitz in the proof of Proposition 4.4 of \cite{DK}. Since every swap involves one label moving clockwise and one label moving counterclockwise, we must have $\sum_{p\in [q]}w(p)=0$. The notion of weight turns out to be very useful, because of its relation to adjacent pairs of vertices in $E(Z_t)$. 

More precisely, suppose that $p_1, p_2\in [q]$ satisfy $w(p_1)\geq w(p_2)+d(p_1, p_2)$, where $d(p_1, p_2)$ denotes the counterclockwise distance from $p_1$ to $p_2$ on $\C_q$. It is not hard to see that the labels $\sigma(p_1), \sigma(p_2)$ must have participated in the same $(\C_q, \o{Z_t})$-friendly swap, implying that $\sigma(p_1)\sigma(p_2)\notin E(Z_t)$. It is natural to say that the positions $p_1$ and $p_2$ have \dfn{intersecting trajectories} if either $w(p_1)\geq w(p_2)+d(p_1, p_2)$ or $w(p_2)\geq w(p_1)+d(p_2, p_1)$ holds.

As stated earlier, the strategy of the proof is to find $\t{\sigma}$ by performing a sequence of $(\C_q, \o{Z_t})$-friendly swaps on $\sigma$. First of all, let us note that $\t\sigma$ can still be transformed into $\t \rho\circ \t\sigma\circ \varphit^{a_t}$, for some $\t\rho\in \Sym_X$, using a sequence $\t \Sigma$ of $(\C_q, \o{Z_t})$-friendly swaps. If $\t\sigma=\sigma\circ \tr{p}{p+1}$, then it is not hard to see that the sequence $\t \Sigma=\tr{p}{p+1} \Sigma \tr{p+a_t}{p+a_t+1}$ satisfies the requirements. Hence, the notion of intersecting trajectories carries over to any $\t\sigma$ obtained from $\sigma$ through a sequence of $(\C_q, \o{Z_t})$-friendly swaps.

Using the notion of the intersecting trajectories, we can finally state precisely and formally our strategy for finding $\t{\sigma}$. Namely, the first step of the proof will be showing that one can transform $\sigma$ into $\t{\sigma}$ using a sequence of $(\C_q, \o{Z_t})$-friendly swaps such that $\t{\sigma}$ satisfies the same conditions as $\sigma$ and such that no two positions have intersecting trajectories in $\t{\sigma}$. Then, we show that for every $\t{\sigma}$ is chosen in this way we have $\pi_{\t{\sigma}}|a_t$.

Before we present the details of how these two steps are accomplished, let us introduce the permutation $f_\sigma:[q]\to [q]$ given by $f_\sigma=\sigmai\circ \rhoi\circ \sigma$. Intuitively, $f_\sigma(p)+a_t$ indicates the position at which the label $\sigma(p)$ ends up at after the sequence of swaps $\Sigma$. When $\sigma$ is clear from the context, or when we are working with only one arrangement, we will write $f$ instead of $f_\sigma$. The permutation $f$ splits $[q]$ into orbits, or cycles, and all elements of an orbit are in the same clique of $\o{Z_t}$. Let us denote the orbit of $p$ by $O_p=\{p, f(p), f^2(p), \dots\}$. Next, we show that all orbits have the same size.

\begin{lemma}\label{lemma:equicardinalorbits}
For any two positions $p_1, p_2\in [q]$, the orbits $O_{p_1}$ and $O_{p_2}$ have the same size.
\end{lemma}
\begin{proof}
Since $Z_t$ is connected, it suffices to show the statement is true when $\sigma(p_1)$ and $\sigma(p_2)$ are adjacent in $Z_t$. Suppose this is indeed the case and let $O_{p_1}=\{p_1, f(p_1), \dots\}$ and $O_{p_2}=\{p_2, f(p_2), \dots\}$ be their orbits. Without loss of generality, we may choose $p_1$ and $p_2$ from their orbits such that no elements of $O_{p_1}$ or $O_{p_2}$ lie in $(p_1, p_2)$, where $(p_1, p_2)$ denotes the open counterclockwise from $p_1$ to $p_2$ on $\C_q$. 

This choice also implies $(O_{p_1}\cup O_{p_2})\cap (f(p_1), f(p_2))=\emptyset$. To see why, suppose there is another element of $O_{p_1}$, say $f^i(p_1)$, in $(f(p_1), f(p_2))$. Since $\sigma(p_1)$ and $\sigma(f^{i-1}(p_1))$ are in the same clique of $\o{Z_t}$, $\sigma(f^{i-1}(p_1))$ is adjacent to $\sigma(p_2)$ in $Z_t$, meaning that the trajectories of $p_2$ and $f^{i-1}(p_1)$ do not intersect. Similarly, by assumption that the labels of the same clique do not change the cyclic order, the trajectories of $p_1$ and $f^{i-1}(p_1)$ do not intersect. Since $f^i(p_1)$ is in the interval $(f(p_1), f(p_2))$, $f^{i-1}(p_1)$ must also be in the interval $(p_1, p_2)$. As this is assumed not the be the case, we conclude $(O_{p_1}\cup O_{p_2})\cap (f(p_1), f(p_2))=\emptyset$. 

This argument inductively extends to show that $(O_{p_1}\cup O_{p_2})\cap (f^i(p_1), f^i(p_2))=\emptyset$, for all $i\geq 0$. In particular, plugging in $i=|O_{p_1}|$ lets us conclude that $f^{|O_{p_1}|}(p_2)=p_2$, meaning $|O_{p_2}|$ divides $|O_{p_1}|$. By symmetry, we also have that $|O_{p_1}|$ divides $|O_{p_2}|$, implying $|O_{p_1}|=|O_{p_2}|$.
\end{proof}
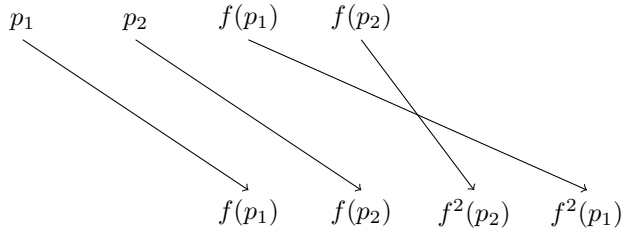
\begin{figure}[h]
    \centering
\begin{tikzpicture}
\coordinate (p1) at (-4.5, 1);
\coordinate (p2) at (-3, 1);
\coordinate (fp1g) at (-1.5, 1);
\coordinate (fp2g) at (0, 1);
\coordinate (fp1d) at (-1.5, -1);
\coordinate (fp2d) at (0, -1);
\coordinate (f2p1d) at (3, -1);
\coordinate (f2p2d) at (1.5, -1);

\draw (p1) node[above] {$p_1$};
\draw (p2) node[above] {$p_2$};
\draw (fp1g) node[above] {$f(p_1)$};
\draw (fp2g) node[above] {$f(p_2)$};
\draw (fp1d) node[below] {$f(p_1)$};
\draw (fp2d) node[below] {$f(p_2)$};
\draw (f2p1d) node[below] {$f^2(p_1)$};
\draw (f2p2d) node[below] {$f^2(p_2)$};

\draw[->] (p1) -- (fp1d);
\draw[->] (p2) -- (fp2d);
\draw[->] (fp1g) -- (f2p1d);
\draw[->] (fp2g) -- (f2p2d);

\end{tikzpicture}
\caption{Graphical representation of the orbits of $f$, along with the trajectories of $p_1, p_2, f(p_1), f(p_2)$. The positions in the top row represent the arrangement $\sigma$ before performing the sequence of $(\C_q, \o{Z_t})$-friendly swaps, while the bottom row represents the positions in the permutation after this sequence is performed. The arrows in the diagram represent schematically how to labels move throughout this process. Note that in this picture, the trajectories of $p_1$ and $p_2$ do not intersect, while the trajectories of $f(p_1)$ and $f(p_2)$ intersect.}
\end{figure}

\begin{lemma}
For any $p\in [q]$ we have $\sum_{k=0}^{|O_p|-1}w(f^k(p))=0$.
\end{lemma}
\begin{proof}
In fact, we will show that for arbitrary $p_1, p_2\in [q]$ with $\sigma(p_1)\sigma(p_2)\in E(Z_t)$ we have 
\begin{equation}\label{eqn:sumofweights}
    \sum_{k=0}^{|O_{p_1}|-1}w(f^k(p))=\sum_{k=0}^{|O_{p_1}|-1}w(f^k(q)).
\end{equation}

\noindent
Since $Z_t$ is connected, relation \cref{eqn:sumofweights} suffices to show that the sums of weights in all orbits are the same. Because the sum of all weights is $0$, the sum of weights in each orbit must also be $0$.

To show \cref{eqn:sumofweights}, we note that since the trajectories of $f^k(p_1)$ and $f^k(p_2)$ do not intersect, we can write $w(f^k(p_1))=w(f^k(p_2))+d(f^k(p_1), f^k(p_2))-d(f^{k+1}(p_1), f^{k+1}(p_2))$. Summing over all $k\in \{0, \dots, |O_{p_1}|-1\}$, the summands $d(f^k(p_1), f^k(p_2))$ telescope and we obtain \cref{eqn:sumofweights}.
\end{proof}

\begin{lemma}\label{lemma:furtherintersections}
Suppose that the trajectories of $p_1$ and $p_2$ intersect. Then, there exists some $k\in \{1, \dots, |O_{p_1}|-1\}$ for which the trajectories of $f^k(p_1)$ and $f^k(p_2)$ also intersect. 
\end{lemma}
\begin{proof}
Since the trajectories of $p_1$ and $p_2$ intersect, without loss of generality we may suppose that $w(p_1)\geq w(p_2)+d(p_1, p_2)$. Hence, equation \cref{eqn:sumofweights} gives 
\[w(f(p_1))+\dots+w(f^{|O_{p_1}|-1}(p_1))+d(p_1, p_2)\leq w(f(p_2))+\dots+w(f^{|O_{p_1}|-1}(p_2)).\]

\noindent
Assuming the trajectories of $f^k(p_1)$ and $f^k(p_2)$ do not intersect for $k\in \{1, \dots, |O_{p_1}|-1\}$, we deduce that $w(f^k(p_1))=w(f^k(p_2))+d(f^k(p_1), f^k(p_2))-d(f^{k+1}(p_1), f^{k+1}(p_2))$. Summing over $k\in \{1, \dots, |O_{p_1}|-1\}$ gives
\[w(f(p_1))+\dots+w(f^{|O_{p_1}|-1}(p_1))= w(f(p_2))+\dots+w(f^{|O_{p_1}|-1}(p_2))+d(f(p_1), f(p_2))-d(p, q).\]

\noindent
Combined with the previous inequality, this implies $d(f(p), f(q))\leq 0$, which is absurd. The conclusion is that there exists another index $k\in \{1, \dots, |O_p|-1\}$ for which the trajectories of $f^k(p)$ and $f^k(q)$ intersect.
\end{proof}

\begin{lemma}\label{lemma:gettingnointersections}
There exists an arrangement $\t\sigma$, differing from $\sigma$ by a sequence of $(\C_q, \o{Z_t})$-friendly swaps, which has no intersecting trajectories.
\end{lemma}
\begin{proof}
Suppose that $\t\sigma$ is the arrangement with the minimal number of intersecting trajectories among all arrangements obtainable from $\sigma$ through a sequence of $(\C_q, \o{Z_t})$-friendly swaps. Under the assumption that $\t\sigma$ has a pair of intersecting trajectories, we will show how to reduce the number of pairs having intersecting trajectories. 

Before presenting the algorithm which accomplishes this, let us briefly discuss the effects of a $(\C_q, \o{Z_t})$-friendly swap on the orbit structure of $f_\sigma$ and intersecting trajectories. Suppose that $\sigma_1=\sigma\circ \tr{p}{p+1}$ is a $(\C_q, \o{Z_t})$-friendly swap, and that the trajectories of $p, p+1$ were intersecting for $\sigma$. Then, these trajectories are not intersecting in $\sigma_1$. Moreover, if the trajectories of $f_{\sigma}^{-1}(p), f_\sigma^{-1}(p+1)$ are intersecting for $\sigma$, they are not intersecting for $\sigma_1$ and vice versa. This shows that the number of intersecting pairs does not increase after performing a $(\C_q, \o{Z_t})$-friendly swap on a pair with intersecting trajectories. Moreover, the number of intersecting pairs decreases if the pair $f_\sigma^{-1}(p), f_\sigma^{-1}(p+1)$ is also intersecting in $\sigma$. These changes are visually presented in Figure~\cref{fig:friendlyswap}.
\begin{figure}[h]
    \centering
\begin{tabular}{c c}
\begin{tikzpicture}
\coordinate (fip1) at (-5, 1);
\coordinate (fip2) at (-3, 1);
\coordinate (p1g) at (-1.5, 1);
\coordinate (p2g) at (-0.5, 1);
\coordinate (p1d) at (-1.5, -1);
\coordinate (p2d) at (-0.5, -1);
\coordinate (fp1d) at (2.5, -1);
\coordinate (fp2d) at (1, -1);

\draw (fip1) node[above] {\substak{u'}{f^{-1}_\sigma(p)}};
\draw (fip2) node[above] {\substak{v'}{f^{-1}_\sigma(p+1)}};
\draw (p1g) node[above] {\substak{u}{p}};
\draw (p2g) node[above] {\substak{v}{p+1}};
\draw (p1d) node[below] {\substak{u'}{p}};
\draw (p2d) node[below] {\substak{v'}{p+1}};
\draw (fp1d) node[below] {\substak{u}{f_\sigma(p)}};
\draw (fp2d) node[below] {\substak{v}{f_\sigma(p+1)}};

\draw[->] (fip1) -- (p1d);
\draw[->] (fip2) -- (p2d);
\draw[->] (p1g) -- (fp1d);
\draw[->] (p2g) -- (fp2d);

\end{tikzpicture}
&
\hspace{-1.5cm}
\begin{tikzpicture}
\coordinate (fip1) at (-5, 1);
\coordinate (fip2) at (-3, 1);
\coordinate (p1g) at (-1.5, 1);
\coordinate (p2g) at (-0.5, 1);
\coordinate (p1d) at (-1.5, -1);
\coordinate (p2d) at (-0.5, -1);
\coordinate (fp1d) at (2.5, -1);
\coordinate (fp2d) at (1, -1);

\draw (fip1) node[above] {\substak{u'}{f^{-1}_\sigma(p)}};
\draw (fip2) node[above] {\substak{v'}{f^{-1}_\sigma(p+1)}};
\draw (p1g) node[above] {\substak{v}{p}};
\draw (p2g) node[above] {\substak{u}{p+1}};
\draw (p1d) node[below] {\substak{v'}{p}};
\draw (p2d) node[below] {\substak{u'}{p+1}};
\draw (fp1d) node[below] {\substak{u}{f_\sigma(p)}};
\draw (fp2d) node[below] {\substak{v}{f_\sigma(p+1)}};

\draw[->] (fip1) -- (p2d);
\draw[->] (fip2) -- (p1d);
\draw[->] (p1g) -- (fp2d);
\draw[->] (p2g) -- (fp1d);

\end{tikzpicture}
\end{tabular}
    
\caption{For a given arrangement $\sigma$, the left diagram represents the trajectories of $\sigma$, while the right one represents the trajectories of $\sigma_1=\sigma \circ \tr{p}{p+1}$. The letters $u, v, u', v'$ represent labels assigned to relevant positions before and after the sequence of swaps is performed, under the convention that $u=\sigma(p), v=\sigma(p+1), u'=\sigma(f^{-1}_\sigma(p)), v'=\sigma(f^{-1}_\sigma(p+1))$.}
\label{fig:friendlyswap}
\end{figure}
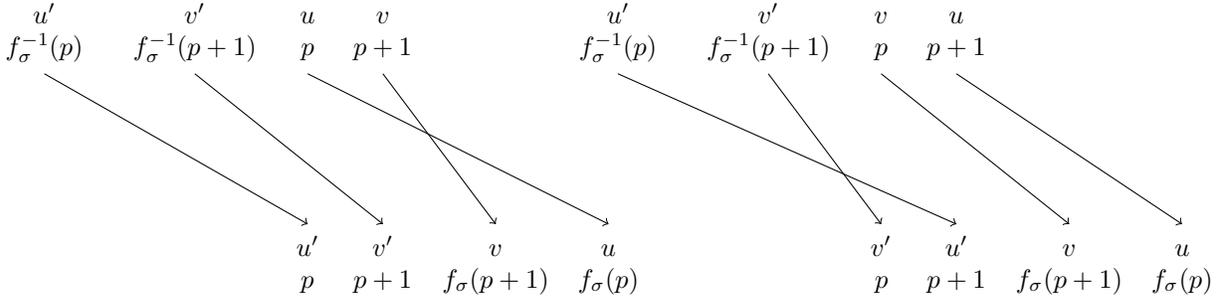

\noindent
Suppose now that $\t \sigma$ contain an intersecting pair of positions, say $p_1, p_2$. By Lemma~\cref{lemma:furtherintersections} the trajectories of $f^k(p)$ and $f^k(q)$ intersect for some $k\in \{1, \dots, |O_p|-1\}$. Let us consider the smallest such $k$. 

The main goal of the sequence of swaps we will construct is to obtain an arrangement $\sigma'$ in which $f^k(p_1)$ and $f^k(p_2)$ are adjacent, and then perform a $(\C_q, \o{Z_t})$-friendly swap on these two positions. In case $k=1$, by the above discussion, this will reduce the intersection number, which is exactly what we want. For $k>1$, this will reduce to the case when the trajectories of $f^{k-1}(p_1)$ and $f^{k-1}(p_2)$ intersect, which can be dealt with by repeating the above procedure.

Let us now explain how we make $f^k(p_1), f^k(p_2)$ adjacent. It is simple to see that if the trajectories of $a$ and $b$ intersect, then for every $c\in (a, b)$ the trajectory of $c$ intersects that of either $a$ or $b$. Hence, for every $x\in (f^k(p_2), f^k(p_1))$, we either have that the trajectory of $w$ intersects the trajectory of $f^k(p_1)$ or the trajectory of $f^k(p_2)$. For $i\in \{1, 2\}$, let $S_i$ be the set of those $x$ whose trajectories intersect the trajectory of $f^k(p_i)$. By applying the above argument again, we see that the trajectory of $s_1\in S_1$ intersects the trajectory of every $s_2\in S_2\cap (s_1, f^k(p_1))$. Hence, every such pair $(s_1, s_2)$ has labels which are adjacent in $\o{Z_t}$. Therefore one may perform a sequence of $(\C_q, \o{Z_t})$-friendly swaps to obtain an arrangement in which all positions which have intersecting trajectories with $f^k(p_2)$ come before those positions which have intersecting trajectories with $f^k(p_1)$, in the interval $(f^k(p_2), f^k(p_1))$. Then, we may perform a sequence of $(\C_q, \o{Z_t})$-friendly swaps which makes $f^k(p_2), f^k(p_1)$ adjacent. As explained above, this suffices to complete the proof.
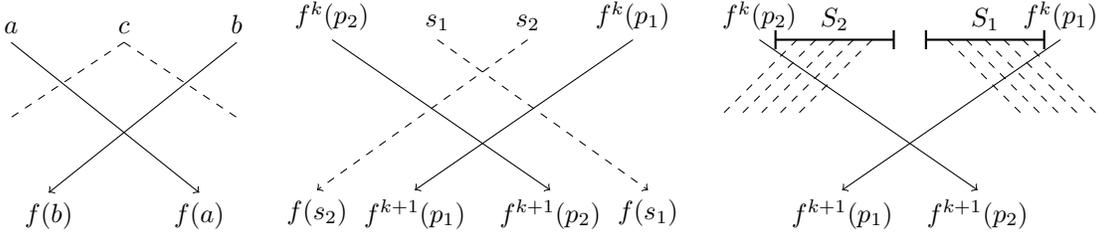
\begin{figure}[h]
    \centering
\begin{tabular}{c c c c}
\begin{tikzpicture}
\coordinate (a) at (-1.5, 1);
\coordinate (b) at (1.5, 1);
\coordinate (fa) at (1, -1);
\coordinate (fb) at (-1, -1);
\coordinate (c) at (0, 1);

\draw (a) node[above] {$a$};
\draw (b) node[above] {$b$};
\draw (fa) node[below] {$f(a)$};
\draw (fb) node[below] {$f(b)$};
\draw (c) node[above] {$c$};

\draw[->] (a) -- (fa);
\draw[->] (b) -- (fb);
\draw[dashed] (c) -- (-1.5, 0);
\draw[dashed] (c) -- (1.5, 0);
\end{tikzpicture}
&
\begin{tikzpicture}
\coordinate (fkp1) at (2, 1);
\coordinate (fkp2) at (-2, 1);
\coordinate (fk+1p1) at (-0.9, -1);
\coordinate (fk+1p2) at (0.9, -1);
\coordinate (s1) at (-0.6, 1);
\coordinate (s2) at (0.6, 1);
\coordinate (fs1) at (2.2, -1);
\coordinate (fs2) at (-2.2, -1);

\draw (fkp1) node[above] {$f^k(p_1)$};
\draw (fkp2) node[above] {$f^k(p_2)$};
\draw (fk+1p1) node[below] {$f^{k+1}(p_1)$};
\draw (fk+1p2) node[below] {$f^{k+1}(p_2)$};
\draw (s1) node[above] {$s_1$};
\draw (s2) node[above] {$s_2$};
\draw (fs1) node[below] {$f(s_1)$};
\draw (fs2) node[below] {$f(s_2)$};

\draw[->] (fkp1) -- (fk+1p1);
\draw[->] (fkp2) -- (fk+1p2);
\draw[dashed, ->] (s1) -- (fs1);
\draw[dashed, ->] (s2) -- (fs2);
\end{tikzpicture}
&
\begin{tikzpicture}
\coordinate (fkp1) at (2, 1);
\coordinate (fkp2) at (-2, 1);
\coordinate (fk+1p1) at (-0.9, -1);
\coordinate (fk+1p2) at (0.9, -1);
\coordinate (s1) at (1, 1);
\coordinate (s2) at (-1, 1);

\draw (fkp1) node[above] {$f^k(p_1)$};
\draw (fkp2) node[above] {$f^k(p_2)$};
\draw (fk+1p1) node[below] {$f^{k+1}(p_1)$};
\draw (fk+1p2) node[below] {$f^{k+1}(p_2)$};
\draw (s1) node[above] {$S_1$};
\draw (s2) node[above] {$S_2$};

\draw[->] (fkp1) -- (fk+1p1);
\draw[->] (fkp2) -- (fk+1p2);
\draw[thick, |-|] (-1.8, 1) -- (-0.2,1);
\draw[thick, |-|] (0.2,1) -- (1.8, 1);
\draw[dashed] (1.5, 1) -- (2.5, 0);
\draw[dashed] (1.25, 1) -- (2.25, 0);
\draw[dashed] (1, 1) -- (2, 0);
\draw[dashed] (0.75, 1) -- (1.75, 0);
\draw[dashed] (0.5, 1) -- (1.5, 0);
\draw[dashed] (-1.5, 1) -- (-2.5, 0);
\draw[dashed] (-1.25, 1) -- (-2.25, 0);
\draw[dashed] (-1, 1) -- (-2, 0);
\draw[dashed] (-0.75, 1) -- (-1.75, 0);
\draw[dashed] (-0.5, 1) -- (-1.5, 0);
\end{tikzpicture}
\end{tabular}
\caption{Schematic representation of the proof of Lemma~\cref{lemma:gettingnointersections}. The left diagram illustrates the fact that if the trajectories of $a$ and $b$ intersect and $c\in (a, b)$, then the trajectory of $c$ intersects the trajectory of either $a$ or $b$. The middle and the right diagram represent steps in transforming $\t\sigma$ into an arrangement in which $f^k(p_1)$ and $f^k(p_2)$ are adjacent.}
\end{figure}
\end{proof}

\begin{lemma}\label{lemma:nointersectionsimpliesdivisibility}
If $\t\sigma$ is an arrangement in which no pair of positions has intersecting trajectories, then $\pi_{\t\sigma}|a_t$.
\end{lemma}
\begin{proof}
We begin by showing that $w(p)$ is the same for all $p\in[q]$. Let $p$ and $p+1$ be two adjacent vertices of $\C_q$. Since their trajectories do not intersect, we have $w(p)\leq w(p+1)$ and $w(p+1)\leq w(p)+(q-1)$. In case $w(p)\neq w(p+1)$, we must have $w(p+1)\geq w(p)+1$. It is not hard to see that, in this case, the trajectory of $f^{-1}(f(p)+1)$ must intersect either the trajectory of $p$ or the trajectory of $p+1$. As $\t\sigma$ has no pairs with intersecting trajectories, this is impossible and therefore $w(p)=w(p+1)$. As this holds for all positions $p\in [q]$, we conclude that $w(p)$ is the same for all $p\in [q]$. Since $\sum_{p\in [q]}w(p)=0$, we have $w(p)=0$ for all $p\in [q]$. 

\begin{figure}[h]
    \centering
\begin{tabular}{c c c c}
\begin{tikzpicture}
\coordinate (p) at (-1, 1);
\coordinate (p+1) at (0, 1);
\coordinate (fp) at (-1, -1);
\coordinate (fp+1) at (2.5, -1);
\coordinate (fp+1') at (0.5, -1);

\draw (p) node[above] {$p$};
\draw (p+1) node[above] {$p+1$};
\draw (fp) node[below] {$f(p)$};
\draw (fp+1) node[below] {$f(p+1)$};
\draw (fp+1') node[below] {$f(p)+1$};

\draw[->] (p) -- (fp);
\draw[->] (p+1) -- (fp+1);
\draw[dashed] (fp+1') -- (-1.5, 0);
\draw[dashed] (fp+1') -- (2.5, 0);
\end{tikzpicture}

\end{tabular}
\caption{Graphical representation of the fact that the trajectory of $f^{-1}(f(p)+1)$ intersects either the trajectory of $p$ or the trajectory of $p+1$ in the proof of Lemma~\cref{lemma:nointersectionsimpliesdivisibility}.}
\end{figure}
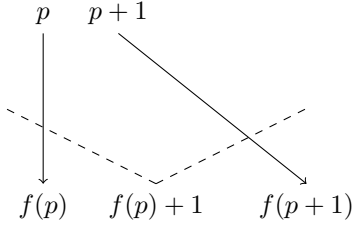

\noindent
This means that $f(p)=p$ for all $p\in [q]$, and hence the empty sequence of $(\C_q, \o{Z_t})$-friendly swap transforms $\t\sigma$ into $\rho\circ\t \sigma\circ \varphit^{a_t}$. In other words, $\t\sigma\equiv \t\sigma\circ \varphi^{a_t}$, implying $\pi_{\t\sigma}|a_t$, which completes the proof of Lemma~\cref{lemma:nointersectionsimpliesdivisibility}.
\end{proof}

\noindent
Combining Lemmas~\cref{lemma:gettingnointersections} and \cref{lemma:nointersectionsimpliesdivisibility} shows that there exists an arrangement $\t \sigma$, differing from $\sigma$ by a sequence of $(C_q, \o{Z_t})$-friendly swaps, and having $\pi_{\t \sigma}|a_t$. Let us now argue why this suffices to conclude $\pi_{\alpha}|a_t$, and assume, towards contradiction, that this is not the case.

We have argued previously that replacing $\sigma$ with any arrangement $\tau$ differing from it by a sequence of $(C_q, \o{Z_t})$-friendly swaps does not change the property that $\tau$ can be transformed into $\rho'\circ \tau\circ \varphit^{a_t}$ through a sequence of $(C_q, \o{Z_t})$-friendly swaps, for some $\rho'\in \Sym_X$. Let $\tau\in \L(\alpha)$ be the arrangement with the minimal period $\pi_\tau=\pi_\alpha$. Since $\tau\circ \varphit^{a_t}=\tau\circ \varphi_q^{a_t-m\pi_\alpha}$ for any $m\geq 0$, we may assume that $1\leq a_t< \pi_\alpha$ and that $\tau$ can be transformed into $\rho'\circ \tau\circ \varphit^{a_t}$ through a sequence of $(C_q, \o{Z_t})$-friendly swaps. However, since $\t \sigma$ differs from $\tau$ by a sequence of $(C_q, \o{Z_t})$-friendly swaps, we conclude that $\t\sigma$ also satisfies the same property as $\tau$ with $a_t<\pi_\alpha$. This presents a contradiction, since $\pi_{\t\sigma}|a_t$ and $\pi_{\t\sigma}\geq \pi_\alpha$ by definition of $\pi_\alpha$. This contradiction completes the proof of Proposition~\cref{prop:divisibility}.
\end{proof}

\noindent
Equipped with Proposition~\cref{prop:divisibility}, we are ready to prove Theorem~\cref{thm:multiplicitycycles}.

\begin{proof}[Proof of Theorem~\cref{thm:multiplicitycycles}.]

We structure the proof in a similar way to the proofs of Theorems~\cref{thm:multiplicitypaths} and \cref{thm:multiplicitycyclesdoubleflip}. However, the analysis of the arising graph $G$ is more involved than earlier and uses Proposition~\cref{prop:divisibility} in an important way.

Let $G$ be the graph arising from $\FS(\C_n, X')$ after adding the set of edges $E_0=\{\sigma\tau:\sigma\equiv\tau\}$. Corollary~\cref{cor:quotientcomponents} states that the connected components of the graph $\FSm(\C_n, X)$ are the images of the connected components of $G$ under the projection $\sigma\mapsto [\sigma]_\equiv$.  Hence, the main focus of the proof is to describe the connected components of $G$. More precisely, if $L_{\alphasimeq}=G|_{\{\sigma:[\sigma]_\equiv\in V(H_{\alphasimeq})\}}$ is the preimage of $H_{\alphasimeq}$ under the discussed projection, we aim to show that
\begin{equation}\label{eqn:Gdecomposition}
    G=\bigoplus_{\alpha\in \Acyc(\o{X'})/\!\simeq} \bigoplus_{i=0}^{\delta_\alpha-1} (\varphi^*)^i L_{\alphasimeq}.
\end{equation}

\noindent
Since the projection operation commutes with the automorphism $\varphi^*$, which is to say that the projection of $(\varphi^*)^iL_{\alphasimeq}$ is $(\varphi^*)^i H_\alphasimeq$, showing \cref{eqn:Gdecomposition} would suffice to prove the second part of the theorem. The fact that $H_{\alphasimeq}, \dots, (\varphi^*)^{\delta_\alpha-1} H_{\alphasimeq}$ are distinct isomorphic connected components follows directly, since $\varphi^*$ is an automorphism of the graph $\FSm(\C_n, X)$. 

Let us begin the proof of \cref{eqn:Gdecomposition} by noting that there are no edges of $G$ out of the set $\L({\alphasimeq})$. To see why, we pick an arbitrary edge $\sigma\tau\in E(G)$ and show $\alpha_{\o{X'}}(\sigma)\simeq \alpha_{\o{X'}}(\tau)$. If $\sigma\tau\in E_0$, we must have $\sigma\equiv \tau$, and hence $\alpha_{\o{X'}}(\sigma)\equiv \alpha_{\o{X'}}(\tau)$. On the other hand, if $\sigma\tau\in E(\FS(\C_n, X'))$, we have $\alpha_{\o{X'}}(\sigma)\approx \alpha_{\o{X'}}(\tau)$, and therefore also $\alpha_{\o{X'}}(\sigma)\sim \alpha_{\o{X'}}(\tau)$. Hence, all edges of $G$ must be fully contained within one of the sets $\L(\alphasimeq)$, which gives the following decomposition of $G$:

\begin{equation}\label{eqn:Gdecomposition2}
    G=\bigoplus_{\alpha\in \Acyc(\o{X'})/\!\simeq} G|_{\L(\alphasimeq)}.
\end{equation}

\noindent
In order to infer \cref{eqn:Gdecomposition} from \cref{eqn:Gdecomposition2}, we need to show that 

\begin{equation}\label{eqn:Gdecomposition3}
G|_{\L(\alphasimeq)}=\bigoplus_{i=0}^{\delta_\alpha-1}(\varphi^*)^i L_{\alphasimeq}.
\end{equation}

\noindent
Since the equivalence relation $\simeq$ is coarser than $\sim$, the equivalence class $\alphasimeq$ can be written as a disjoint union of equivalence classes $[\alpha_1]_\sim, \dots, [\alpha_k]_\sim$. Proposition~\cref{prop:simplecycles} implies that 
\begin{equation}\label{eqn:standarddecomp}
    \FS(\C_n, X')|_{\L(\alphasimeq)}=\bigoplus_{j=1}^k \bigoplus_{i=0}^{\nu-1} (\varphi^*)^i J_{[\alpha_k]_\sim},
\end{equation}
\noindent
where $J_{[\alpha_k]_\sim}$ is the connected component of $\FS(\C_n, X')$ containing an arbitrarily chosen vertex $\sigma_{[\alpha_k]_\sim}\in \L([\alpha_k]_\sim)$. Since $\alpha_1, \dots, \alpha_k$ are all equivalent under $\simeq$, we may choose $\sigma_{[\alpha_1]_\sim}, \dots, \sigma_{[\alpha_k]_\sim}$ to be equivalent under $\equiv$. We now present two lemmas, which show that there exists an edge of $E_0$ between components $(\varphi^*)^i J_{[\alpha_l]_\sim}$ and $(\varphi^*)^j J_{[\alpha_m]_\sim}$ if and only if $\delta_\alpha|i-j$.

\begin{lemma}\label{lemma:divisibility1}
If $\delta_\alpha$ divides $i-j$, then there exists an edge of $E_0$ between components $(\varphi^*)^i J_{[\alpha_l]_\sim}$ and $(\varphi^*)^j J_{[\alpha_m]_\sim}$. 
\end{lemma}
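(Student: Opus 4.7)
The plan is to construct the required $E_0$-edge by producing a permutation $\rho\in\Sym_X$ whose action on the orientation $\alpha_l$ matches the $m$-fold $\varphi^*$-shift up to double-flip equivalence, and then exhibiting the pair $(\sigma,\rho\circ\sigma)$ for an arbitrary $\sigma\in\L(\alpha_l)$. First I would exploit the commutativity $\varphi^*(\rho\circ\sigma)=\rho\circ\varphi^*(\sigma)$ to conclude that the relation $\equiv$ is $\varphi^*$-invariant, so $E_0$-edges are carried to $E_0$-edges by any $(\varphi^*)^k$. Combined with the chosen compatibility $\sigma_{[\alpha_l]_\sim}\equiv\sigma_{[\alpha_m]_\sim}$ and applying $(\varphi^*)^j$, this reduces the lemma to producing an $E_0$-edge between $J_{[\alpha_l]_\sim}$ and $(\varphi^*)^mJ_{[\alpha_l]_\sim}$ for $m:=i-j$ a multiple of $\delta_\alpha$.

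The key claim is that, for each component $Z_t$ of $\o{X'}$, there exists $\rho_t\in\Sym_X$ supported only on cliques inside $V(Z_t)$ (and identity elsewhere) such that $\rho_t\cdot\alpha_l\approx(\varphi^*)^{\pi_t}\alpha_l$. To verify this, I would use the definition of $\pi_t$ to furnish a minimum-period linear extension $\sigma_t:[q_t]\to V(Z_t)$ of $\alpha_l^{(t)}$ together with $\rho_t\in\Sym_X$ satisfying $\rho_t\circ\sigma_t=\sigma_t\circ\varphi_{q_t}^{\pi_t}$. Because $Z_t$ is a connected component of $\o{X'}$, every source of $\alpha_l^{(t)}$ is also a source of $\alpha_l$ globally, so the cyclic shift on $\C_{q_t}$ translates to a valid sequence of $\pi_t$ successive positive flips of $\alpha_l$, all supported in $Z_t$; the resulting orientation agrees with $\alpha_l$ on every $Z_s$ for $s\neq t$ and equals $(\varphi_{q_t}^*)^{\pi_t}\alpha_l^{(t)}$ on $Z_t$, which is precisely $\rho_t\cdot\alpha_l$. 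Since $(\varphi^*)^{\pi_t}\alpha_l$ is obtained by a (different) sequence of $\pi_t$ positive flips of $\alpha_l$, Corollary~\cref{cor:doubleflips} then yields $\rho_t\cdot\alpha_l\approx(\varphi^*)^{\pi_t}\alpha_l$.

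Finally, using B\'ezout's identity I would write $m=\sum_tb_t\pi_t$ and set $\rho:=\rho_1^{b_1}\cdots\rho_r^{b_r}\in\Sym_X$. Both the $\Sym_X$-action and the $(\varphi^*)^k$-action descend to $\Acyc(\o{X'})/\!\approx$ (the former because each $\rho_t$ permutes only within cliques, hence acts as an automorphism of $\o{X'}$ that preserves friendly swaps; the latter because $\varphi^*$ is an automorphism of $\FS(\C_n,X')$), and they commute up to $\approx$ since any two total orderings of a fixed multiset of positive flips produce $\approx$-equivalent outcomes by Corollary~\cref{cor:doubleflips}. Iterating the key claim therefore gives $\rho\cdot\alpha_l\approx(\varphi^*)^m\alpha_l$. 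Choosing any $\sigma\in\L(\alpha_l)\subseteq J_{[\alpha_l]_\sim}$, the identity $\alpha(\rho\circ\sigma)=\rho\cdot\alpha_l\approx(\varphi^*)^m\alpha_l$ places $\rho\circ\sigma$ in $(\varphi^*)^mJ_{[\alpha_l]_\sim}$, and the relation $\sigma\equiv\rho\circ\sigma$ is by definition the required $E_0$-edge. I expect the main obstacle to be verifying the key claim in the second paragraph: one must translate the local cyclic period relation on $\C_{q_t}$ into a genuine sequence of positive flips on $\C_n$ that stays inside $Z_t$, and then identify the resulting orientation with $\rho_t\cdot\alpha_l$ before invoking Corollary~\cref{cor:doubleflips} to pass to the global shift $(\varphi^*)^{\pi_t}\alpha_l$.
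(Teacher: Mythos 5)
Your proposal is correct and takes essentially the same route as the paper: realize the period $\pi_t$ of each component $Z_t$ by $\pi_t$ positive flips supported in $Z_t$ yielding a permutation-equivalent orientation, compare with the global shift by $\pi_t$ via Corollary~\cref{cor:doubleflips} and Theorem 4.1 of \cite{DK}, and combine the components using B\'ezout together with transitivity of $\equiv$ to handle $l\neq m$. The only difference is packaging --- you compose the per-component permutations into a single $\rho\in\Sym_X$ acting on orientations, while the paper iterates an ``every vertex of $(\varphi^*)^{i}J_{[\alpha_l]_\sim}$ has an $E_0$-edge into $(\varphi^*)^{i+\pi_t}J_{[\alpha_l]_\sim}$'' statement --- which is not a substantive divergence.
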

\begin{proof}
Let us first consider the case $l=m$, as the case $l\neq m$ can be easily deduced from it.  The main idea is to show that, for every arrangement $\sigma\in (\varphi^*)^i J_{[\alpha_l]_\sim}$, there exists an edge of $E_0$ between $\sigma$ and a vertex of $(\varphi^*)^{i+\pi_t}J_{[\alpha_l]_\sim}$, where $\pi_t$ is the period of an induced orientation $\alpha^{(t)}$ on $Z_t$.

Let $\beta$ be the orientation induced by $\sigma$ on $\o{X'}$, and let $\beta^{(t)}$ be its restriction to $Z_t$. Because $\sigma\in  \L(\alphasimeq)$, and consequently $\beta^{(t)}\simeq \alpha^{(t)}$, the period of $\beta^{(t)}$ must be $\pi_t$. 

Since the period of $\beta^{(t)}$ is $\pi_t$, there exists a sequence of $\pi_t$ positive single flips that transforms $\beta^{(t)}$ into ${\beta^{(t)}_1}$ which is permutation equivalent to $\beta^{(t)}$. Since there are no edges of $\o{X'}$ leaving $Z_t$, the same sequence of $\pi_t$ flips can be performed on $\beta$ to get a orientation $\beta_1$ with $\beta_1\equiv \beta$. In particular, this means that there exists a linear extension $\sigma_1\in \L(\beta_1)$ for which $\sigma\equiv\sigma_1$. The goal is now to show $\sigma_1\in (\varphi^*)^{i+\pi_t} J_{[\alpha_l]_\sim}$.

Let $\beta_2$ be the orientation induced on $\o{X'}$ by $ \sigma\circ\varphi^{\pi_t}$. Note that $\beta_2$ can also be obtained from $\beta$ through a sequence of $\pi_t$ positive single flips. Using Corollary~\cref{cor:doubleflips}, this implies $\beta_1\approx \beta_2$.

Since $\sigma\circ\varphi^{\pi_t}\in (\varphi^*)^{\pi_t+i} J_{[\alpha_l]_\sim}$, Theorem 4.1 of \cite{DK} shows that the vertex $\sigma_1$ also lies in $(\varphi^*)^{\pi_t+i} J_{[\alpha_l]_\sim}$. Since $\sigma_1\equiv\sigma$, we see there exists an edge of $E_0$ between $\sigma$ and a vertex of $(\varphi^*)^{\pi_t+i} J_{[\alpha_l]_\sim}$.

By repeating the above argument and using the transitivity of the relation $\equiv$, we conclude that there exists an edge of $E_0$ between $\sigma$ and a vertex of $(\varphi^*)^{i+\sum_{t=1}^r \lambda_t \pi_t} J_{[\alpha_l]_\sim}$, where $\lambda_1, \dots, \lambda_r$ are arbitrary integer coefficients. Using Bezout's theorem, we conclude there exist coefficients $\lambda_1, \dots, \lambda_t$ for which $\sum_{t=1}^r \lambda_t\pi_t=u\gcd(\pi_1, \dots, \pi_r)=u\delta_\alpha$, where $u$ is an arbitrary integer. Hence, if $\delta_\alpha$ divides $i-j$, we have an edge between components every vertex of $(\varphi^*)^{i} J_{[\alpha_l]_\sim}$ and $(\varphi^*)^{j} J_{[\alpha_l]_\sim}$. 
Since $\sigma_{[\alpha_1]_\sim}, \dots, \sigma_{[\alpha_k]_\sim}$ are all permutation equivalent, we have an edge of $E_0$ between $(\varphi^*)^{i} J_{[\alpha_l]_\sim}$ and $(\varphi^*)^{i} J_{[\alpha_m]_\sim}$, for all $l, m\in [k]$. As before, since every vertex of $(\varphi^*)^{i} J_{[\alpha_m]_\sim}$ has an edge of $E_0$ to $(\varphi^*)^{j} J_{[\alpha_m]_\sim}$, when $\delta_\alpha|i-j$, by transitivity of $\equiv$ we conclude that there exists an edge of $E_0$ between $(\varphi^*)^{i} J_{[\alpha_l]_\sim}$ and $(\varphi^*)^{j} J_{[\alpha_m]_\sim}$ whenever $\delta_\alpha$ divides $i-j$. This completes the proof of Lemma~\cref{lemma:divisibility1}. 
\end{proof}

\begin{lemma}\label{lemma:divisibility2}
If there exists an edge of $E_0$ between components $(\varphi^*)^i J_{[\alpha_l]_\sim}$ and $(\varphi^*)^j J_{[\alpha_m]_\sim}$, then $\delta_\alpha$ divides $i-j$. 
\end{lemma}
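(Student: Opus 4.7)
The overall plan is to encode the existence of an edge of $E_0$ between the two components as a single sequence of $(\C_n, X')$-friendly swaps from one distinguished arrangement to a shifted, permuted version of itself, and then to project this sequence onto each connected component $Z_t$ of $\o{X'}$ in order to apply Proposition~\cref{prop:divisibility} separately on each $Z_t$.

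For the reduction, fix $\sigma \in (\varphi^*)^i J_{[\alpha_l]_\sim}$ and $\tau \in (\varphi^*)^j J_{[\alpha_m]_\sim}$ with $\tau = \rho\circ \sigma$ for some $\rho \in \Sym_X$. Any $\rho \in \Sym_X$ is an automorphism of $X'$ since it preserves each clique $S_v$, so left composition by $\rho$ permutes the connected components of $\FS(\C_n, X')$. The two components $\rho\cdot J_{[\alpha_l]_\sim}$ and $(\varphi^*)^{j-i}J_{[\alpha_m]_\sim}$ share the arrangement $\rho\circ \sigma\circ \varphi^{-i} = \tau\circ \varphi^{-i}$ and therefore coincide. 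Evaluating this equality at $\sigma_{[\alpha_l]_\sim}$ and using the fact (from the proof of Lemma~\cref{lemma:divisibility1}) that one may choose $\sigma_{[\alpha_m]_\sim} = \rho'\circ \sigma_{[\alpha_l]_\sim}$ for some $\rho' \in \Sym_X$, we will obtain a sequence of $(\C_n, X')$-friendly swaps carrying $\sigma^* := \sigma_{[\alpha_l]_\sim}$ to $\rho''\circ \sigma^*\circ \varphi^{k}$, where $\rho'' := (\rho')^{-1}\rho \in \Sym_X$ and $k := i-j$.

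Next, for any arrangement $\eta:V(\C_n)\to V(X')$ and each component $Z_t$ of size $q_t$, define the \emph{cyclic trace} $W_t(\eta):[q_t]\to V(Z_t)$ by reading the labels lying in $V(Z_t)$ in cyclic order along $\C_n$. Two stability properties will be verified: (i) any $(\C_n, X')$-friendly swap between a $V(Z_t)$-label and a $V(Z_s)$-label with $t\neq s$ leaves $W_t$ unchanged, because the $V(Z_t)$-label only shifts by one position on $\C_n$ without altering the cyclic sequence of $V(Z_t)$-labels; and (ii) any $(\C_n, X')$-friendly swap between two $V(Z_t)$-labels corresponds to a $(\C_{q_t}, \o{Z_t})$-friendly swap on $W_t$, since the two labels sit at consecutive $V(Z_t)$-positions and adjacency in $X'$ coincides with adjacency in $\o{Z_t} = X'|_{V(Z_t)}$. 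Consequently, the sequence from the previous paragraph induces a $(\C_{q_t}, \o{Z_t})$-friendly-swap sequence carrying $W_t(\sigma^*)$ to $W_t(\rho''\circ \sigma^*\circ \varphi^k) = \rho''|_{V(Z_t)}\circ W_t(\sigma^*)\circ \varphi_{q_t}^{a_t}$, where $a_t := |(\sigma^*)^{-1}(V(Z_t))\cap \{1,\dots,k\}|$ measures the cyclic shift felt by the trace on $\C_{q_t}$ when $\C_n$ is shifted by $k$.

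Finally, Proposition~\cref{prop:divisibility} applied inside each $Z_t$ gives $\pi_t\mid a_t$, since the orientation induced by $W_t(\sigma^*)$ on $Z_t$ coincides with $\alpha^{(t)}$. Because every position of $\C_n$ lies in exactly one preimage $(\sigma^*)^{-1}(V(Z_t))$, the partition identity $\sum_{t=1}^r a_t = k$ holds (with the analogous identity for $k<0$ following by symmetry, and $k=0$ being immediate). Hence $\delta_\alpha = \gcd(\pi_1,\dots,\pi_r)$ divides each $a_t$ and therefore divides $k = i-j$, as claimed. The main obstacle in carrying out this plan will be the careful verification of property (ii) above, namely showing that within-$Z_t$ friendly swaps on $\C_n$ translate faithfully to friendly swaps on the shorter cycle $\C_{q_t}$, and that the shift $a_t$ really is insensitive to how many cross-component swaps are interspersed in the sequence. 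Once this positional bookkeeping is settled, the conclusion follows immediately from Proposition~\cref{prop:divisibility} combined with the partition identity for the $a_t$.
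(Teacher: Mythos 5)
Your reduction to a single swap sequence carrying $\sigma^*=\sigma_{[\alpha_l]_\sim}$ to $\rho''\circ\sigma^*\circ\varphi^{k}$ is sound, and your route genuinely differs from the paper's: the paper passes to acyclic orientations, splits the witnessing sequence of single and double flips among the components $Z_t$ (cutting each cross-component double flip into one positive and one negative single flip), and only then invokes Proposition~\cref{prop:divisibility}, whereas you project the swap sequence itself onto each $Z_t$ via the cyclic trace. The projection idea can be made to work, but there is a genuine gap exactly at the point you flag. Property (i) is false as stated: $W_t$ is a bijection $[q_t]\to V(Z_t)$ only after fixing a cut of $\C_n$ from which to read, and a cross-component swap across that cut (positions $n$ and $1$) does change $W_t$ --- it replaces $W_t$ by $W_t\circ\varphi_{q_t}^{\pm1}$, a rotation, which is neither the identity nor a friendly swap. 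After pushing all such rotations to the end of the word (rotations of $\C_{q_t}$ conjugate friendly swaps to friendly swaps), what the projection actually yields is a $(\C_{q_t},\o{Z_t})$-friendly-swap sequence from $W_t(\sigma^*)$ to $\rho''|_{V(Z_t)}\circ W_t(\sigma^*)\circ\varphi_{q_t}^{\,a_t-c_t}$, where $c_t$ is the net signed number of cross-component cut-crossings by $Z_t$-labels during the sequence. Your target with exponent equal to the window count $a_t$ need not be reachable at all: if both exponents were reachable, Proposition~\cref{prop:divisibility} would force $\pi_t\mid c_t$, for which there is no reason.

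The argument is completed by making this correction rather than avoiding it: Proposition~\cref{prop:divisibility} gives $\pi_t\mid a_t-c_t$, and since every cut-crossing cross-component swap moves one label of some $Z_t$ forward and one label of some $Z_s$ backward across the cut, we have $\sum_t c_t=0$; combined with your partition identity $\sum_t a_t=k$ this yields $\sum_t(a_t-c_t)=k$, hence $\delta_\alpha\mid k$. This cancellation is the exact analogue of the paper's observation that each split double flip contributes one positive and one negative single flip, so the per-component net flip counts still sum to $a=j-i$. Two smaller points: the orientation induced by $W_t(\sigma^*)$ is not literally $\alpha^{(t)}$ but only lies in its $\simeq$-class (restriction to $Z_t$ commutes with flips and with permutation equivalence), which suffices because the period is a $\simeq$-invariant; and Proposition~\cref{prop:divisibility} is stated for positive shifts, so zero or negative corrected exponents should be handled by adding $q_t$ (harmless, since $\pi_t\mid q_t$) or by reversing the sequence.
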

\begin{proof}[Proof of Lemma~\cref{lemma:divisibility2}.] 
As in the proof of Lemma~\cref{lemma:divisibility1}, we begin by focusing on the case $l=m$, and then reduce the general case to this one. Our argument has two main steps -- in the first step, we separate the problem into smaller subproblems, by restricting it to one of the connected components $Z_t$. Then, we apply Proposition~\cref{prop:divisibility} to solve each of the subproblems.

Suppose there exists an edge of $E_0$ between $\sigma\in \varphi^{i}J_{[\alpha_l]_\sim}$ and $\rho\circ\sigma\in \varphi^{j}J_{[\alpha_l]_\sim}$, for some $\rho\in \Sym_X$. In other words, there exists a sequence of $a=j-i$ positive single flips and $b$ double flips that transforms $\beta=\alpha_{\o{X'}}(\sigma)$ into a permutation equivalent orientation $\rho(\beta)=\alpha_{\o{X'}}(\rho\circ \sigma)$.

The first step is to associate every flip to the component $Z_t$ in which it is performed. More precisely, if the sequence of flips $\Sigma$ transforming $\sigma$ into $\rho\circ \sigma$ consists of single flips at vertices $u_1, \dots, u_a$, and double flips at pairs $(v_1, w_1), \dots, (v_b, w_b)$, we consider three different types of flips: single flips, double flips with both $v_i$ and $w_i$ in the same connected component $Z_t$, and double flips with $v_i$ and $w_i$ in different connected components. We split the flips of $\Sigma$ into $r$ subsequences of flips, denoted  $\Sigma_1, \dots, \Sigma_r$, so that $\Sigma_t$ contains only flips at vertices of $Z_t$. Hence, the first two types of flips are simply associated to the component they are performed in. For each flip of the third type $(v_i, w_i)$, we split it into a positive single flip at $v_i$ and a negative single flip at $w_i$, which are then placed into subsequences corresponding to their respective components. The result of this process is splitting $\Sigma$ into $\Sigma_1, \dots, \Sigma_r$, where $\Sigma_t$ is a legal sequence of single and double flips performed on the restriction $\beta|_{Z_t}$. Moreover, the result of applying $\Sigma_t$ to $\betat=\beta|_{Z_t}$ is precisely the orientation $\rho(\beta)|_{Z_t}$. 

Let $a_{t, +}$ ($a_{t, -}$, resp.) denote the number of positive (negative, resp.) single flips in $\Sigma_t$, and let $a_t=a_{t, +}-a_{t, -}$ denote their difference. Because the number of positive flips in $\Sigma$ is $a$, and since splitting every double flip amounts to adding one positive and one negative single flip, we conclude that $a_1+\dots+a_r=a$. The main goal now is to show that $\pi_t|a_t$, which will immediately imply $\delta_\alpha|a$. 

Let us now rephrase the problem in the setting of a single connected component $Z_t$. As discussed earlier, by Corollary 4.2 of \cite{DK}, $\Sigma_t$ is equivalent to a sequence of $a_t$ positive flips (or $-a_t$ negative flips, if $a_t<0$). The problem now simplifies to showing that, if $\betat, \rho(\betat)\in \Acyc(Z_t)$ are permutation equivalent orientations and, and $\rho(\betat)$ is obtained from $\betat$ after a sequence of $a_t$ positive single flips and some double flips, we must have $\pi_t|a_t$. This is precisely what Proposition~\cref{prop:divisibility} states, albeit in slightly different terminology.

If we denote $|V(Z_t)|$ by $q$, Corollary 4.2 of \cite{DK} shows that $\rho(\betat)$ must be double flip equivalent to the acyclic orientation induced by $\sigmat\circ {\varphit}^{a_t}$, where we think of the arrangement $\sigmat$ as mapping $\C_q$ to $Z_t$. In other words, the orientations induced by $\sigmat$ and $\rhoi\circ \sigmat\circ \varphit^{a_t}$ are double flip equivalent. Proposition~\cref{prop:simplecycles} implies that these two arrangement differ by a sequence of $(\C_q, \o{Z_t})$-friendly swaps. In this context, Proposition~\cref{prop:divisibility} shows that $\pi_{t}|a_t$, which is exactly what we need to complete the proof in the case $l=m$.

Suppose now that there exists an edge of $E_0$ between vertices $\sigma\in (\varphi^*)^i J_{[\alpha_l]_\sim}$ and $\tau\in (\varphi^*)^j J_{[\alpha_m]_\sim}$. Since $\sigma_{[\alpha_l]_\sim}\equiv \sigma_{[\alpha_m]_\sim}$, we know that there exists an edge from every vertex of $\rho\circ \tau\in (\varphi^*)^j J_{[\alpha_l]_\sim}$ to a vertex of $\tau \in (\varphi^*)^j J_{[\alpha_m]_\sim}$. Hence, by transitivity of $\equiv$, we conclude there exists an edge of $E_0$ between $\sigma\in (\varphi^*)^i J_{[\alpha_l]_\sim}$ and $\rho\circ \in (\varphi^*)^j J_{[\alpha_l]_\sim}$, which is possible only if $\delta_\alpha|j-i$, as desired.
\end{proof}

\noindent
Combining Lemmas~\cref{lemma:divisibility1} and \cref{lemma:divisibility2} with \cref{eqn:standarddecomp}, we conclude that $G|_{\L(\alphasimeq)}$ contains exactly $\delta_\alpha$ connected components, with the vertex sets corresponding to $\bigcup_{j=1}^k\bigcup_{i\equiv i_0\mod \delta_\alpha} V\left((\varphi^*)^iJ_{[\alpha_k]_\sim}\right)$. Moreover, these $\delta_\alpha$ connected components differ only by an application of $\varphi^*$. 

Since $L_{\alphasimeq}$ is one of these $\delta_\alpha$ connected components, decomposition \cref{eqn:Gdecomposition3} now becomes obvious. As discussed above, this suffices to infer \cref{eqn:Gdecomposition}, and to complete the proof of Theorem~\cref{thm:multiplicitycycles}.
\end{proof}

\begin{corollary}
The graph $\FSm(\C_n, X)$ is connected if and only if the complement of the lift of $X$ is a forest of trees of coprime sizes.
\end{corollary}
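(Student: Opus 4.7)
The plan is to apply Theorem~\cref{thm:multiplicitycycles} to reduce the connectedness of $\FSm(\C_n, X)$ to the conditions $|\Acyc(\o{X'})/\!\simeq| = 1$ and $\delta_\alpha = 1$, and then to compare with Corollary~4.14 of \cite{DK}, which says that $\FS(\C_n, X')$ is connected if and only if $\o{X'}$ is a forest of trees of coprime sizes.

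For the backward direction ($\Leftarrow$), if $\o{X'}$ is a forest of trees of coprime sizes, then $\FS(\C_n, X')$ is connected by Corollary~4.14 of \cite{DK}, and Corollary~\cref{cor:lifting} immediately yields that $\FSm(\C_n, X)$ is also connected.

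For the forward direction ($\Rightarrow$), assume $\FSm(\C_n, X)$ is connected, so that Theorem~\cref{thm:multiplicitycycles} gives $|\Acyc(\o{X'})/\!\simeq| = 1$ and $\delta_\alpha = 1$. I would first show $\o{X'}$ is a forest by contradiction. A short girth analysis reveals that if $\o{X'}$ contains a cycle, then at least one of the following holds: (a) some edge $pq \in E(\o X)$ satisfies $c_p, c_q \geq 2$, producing a $K_{c_p, c_q}$ in $\o{X'}$; (b) some vertex $v$ has $c_v \geq 2$ and at least two neighbors $u, w$ in $\o X$, producing a $4$-cycle $u_1$-$v_1$-$w_1$-$v_2$-$u_1$ in $\o{X'}$; or (c) $\o X$ itself contains a cycle whose vertices all have multiplicity $1$. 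For each case, I would construct a $\simeq$-invariant on $\Acyc(\o{X'})$ taking more than one value. In (a), let $W_{pq}(\alpha) = |\{(u, v) \in S_p \times S_q : u \to_\alpha v\}|$; source/sink flips at a vertex of $S_p$ (respectively, $S_q$) change $W_{pq}$ by $\pm c_q$ (respectively, $\pm c_p$), while $\Sym_X$ preserves $W_{pq}$ since it acts within cliques, so $W_{pq} \bmod \gcd(c_p, c_q)$ is a $\simeq$-invariant realizing all of $\Z/\gcd(c_p, c_q)\Z$. In (b), take $w_C(\alpha)(4 - w_C(\alpha))$ around the described $4$-cycle $C$, where $w_C$ is the winding number modulo $4$: the transposition of $v_1, v_2$ in $\Sym_{S_v}$ reverses $C$ and sends $w_C \mapsto 4 - w_C$, so this quantity is $\Sym_X$-invariant; on acyclic orientations, it takes the distinct values $3$ and $4$. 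In (c), the cycle of $\o X$ admits a unique lift to $\o{X'}$, and its winding number modulo $k$ is manifestly $\Sym_X$-invariant and realizes $k - 1 \geq 2$ values. Any of these invariants contradicts $|\Acyc(\o{X'})/\!\simeq| = 1$.

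Having established the forest property, I would deduce the coprime-sizes condition. By Proposition~\cref{prop:structureperiodic}, for each connected component $Z_t$ of $\o{X'}$ the quantity $|V(Z_t)|/\pi_t$ divides $\gcd_{v \in V_t} c_v$, where $V_t = \{v \in V(X) : S_v \cap V(Z_t) \ne \emptyset\}$. For a multi-vertex component, $V_t$ is connected in $\o X$ and contains an edge $pq$; the forest constraint forces $\min(c_p, c_q) = 1$, so $\gcd_{v \in V_t} c_v = 1$ and therefore $\pi_t = |V(Z_t)|$. For singleton components, $\pi_t = 1 = |V(Z_t)|$. Thus $\delta_\alpha = \gcd_t |V(Z_t)|$, which equals $1$ precisely when the components of $\o{X'}$ have coprime sizes. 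The main obstacle is the invariant construction in the first step: each of cases (a)-(c) requires a slightly different verification of $\Sym_X$-invariance, $\sim$-invariance, and multi-valuedness, with case (b) the most delicate since the $\Sym_X$ action nontrivially permutes the cycle $C$.
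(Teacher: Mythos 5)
Your overall architecture is sound: the backward direction via Corollary~\cref{cor:lifting}, the reduction of the forward direction to $|\Acyc(\o{X'})/\!\simeq|=1$ and $\delta_\alpha=1$ through Theorem~\cref{thm:multiplicitycycles}, and the final deduction of coprime component sizes from $\delta_\alpha=1$ (via a component version of Proposition~\cref{prop:structureperiodic} and the observation that a multi-vertex component must contain an edge with one endpoint of multiplicity $1$) all work; that last step is in fact a legitimate alternative to the paper's argument, which instead roots each tree and counts the flips needed to return to the original orientation. The genuine gap is in case (a) of your forest step. The invariant $W_{pq}\bmod \gcd(c_p,c_q)$ is vacuous whenever $c_p,c_q\geq 2$ are coprime. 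Concretely, let $X$ consist of two nonadjacent vertices $p,q$ with $c_p=2$, $c_q=3$, so $\o{X'}=K_{2,3}$: no vertex of multiplicity $\geq 2$ has two neighbours in $\o X$ and $\o X$ has no cycle, so your trichotomy lands only in case (a), and $W_{pq}\bmod 1$ distinguishes nothing; your argument then produces no contradiction with $|\Acyc(\o{X'})/\!\simeq|=1$ even though $\o{X'}$ is not a forest. (The fact you want --- non-forest implies at least two $\simeq$-classes --- is still true here, but your invariant cannot see it.) The paper's invariant is finer and handles your cases (a) and (b) at once: for a vertex $v$ with $c_v\geq 2$ whose blown-up set $S_v$ has at least two neighbours $u_1,u_2$ in $\o{X'}$, track $p(u,\alpha)$, the number of edges oriented from $u$ into $S_v$; a flip inside $S_v$ shifts every $p(u)$ by the same $\pm 1$, a flip at $u_i$ changes $p(u_i)$ by $\pm c_v\equiv 0$, and flips elsewhere change nothing, so the difference $p(u_1,\alpha)-p(u_2,\alpha)$ is invariant modulo $c_v$ (not modulo a gcd); comparing the orientation with all edges leaving $S_v$ to the one with a single reversed edge then yields two $\simeq$-classes.

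A secondary, fixable issue is your case (b): you verify $\Sym_X$-invariance of $w_C(\alpha)\bigl(4-w_C(\alpha)\bigr)$ only for the transposition of $v_1$ and $v_2$. When $c_v\geq 3$, or when $c_u\geq 2$ or $c_w\geq 2$, a permutation in $\Sym_X$ carries the fixed cycle $C$ to a different $4$-cycle of the same type, and the fixed-cycle quantity need not be preserved (one has $w_C(\rho(\alpha))=w_{\rho^{-1}(C)}(\alpha)$). You can repair this by aggregating, e.g.\ taking the multiset of $w_C$ over the whole $\Sym_X$-orbit of such cycles, since flips preserve each individual $w_C$ and permutations merely permute the family --- but as written the claimed invariance does not hold, and in any case the paper's single difference invariant above makes the (a)/(b) split unnecessary. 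Case (c) is fine and is essentially the paper's forward-edge-count argument for cycles of $\o X$.
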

\begin{proof}
Note that if $X'$ is the complement of a forest of trees of coprime sizes, by the Corollary 4.14 of \cite{DK} we have that $\FS(\C_n, X')$ is connected. Hence, Corollary \cref{cor:lifting} implies that $\FSm(\C_n, X)$ must be connected. Establishing the other direction is somewhat trickier, and requires the machinery developed in this section. 

By Theorem~\cref{thm:multiplicitycycles}, if the graph $\FSm(\C_n, X)$ has only one connected component, we must have $|\Acyc(\o{X'})|=1$ and $\delta_\alpha=1$ for the unique element $\alphasimeq\in \Acyc(\o{X'})/\! \simeq$. Unlike the proof of Corollary 4.14 in \cite{DK}, which uses a systematic way to count the number of toric acyclic orientations of a graph using its Tutte polynomial, we present a direct and straightforward argument characterizing all graphs with a single orientation up to $\simeq$ equivalence.

Let us begin by showing the complement of $X$ has no cycles. If there was a cycle in $\o{X}$, consisting of vertices $v_1, \dots, v_k$ in this order, consider the orientation $\alpha\in \Acyc(\o{X'})$ which directs all edges from $u\in S_{v_i}$ to $w\in S_{v_j}$ whenever $i<j$. If we imagine $S_{v_1}, \dots, S_{v_k}$ are placed in the plane in the counterclockwise order, it is not hard to see that every cycle of $\o{X'}$, having one vertex in each of the cliques $S_{v_i}$, has exactly one edge oriented clockwise. For each fixed cycle of this type, it is not hard to see that even after several flips are performed, there is still exactly one edge oriented clockwise. We have thus shown that this property is maintained for every orientation $\alpha'$ obtained from $\alpha$ through a sequence of flips. However, there exist orientations of $\o{X'}$ which do not have this property, meaning that $\Acyc(\o{X'})/\!\simeq$ has more than one element, which is impossible. An example of such orientations is an orientation in which all edges of $\alpha$ are reversed.

Since $\o{X}$ has no cycles, it is a forest. Suppose $v\in X$ is a vertex of capacity $c\geq 2$. We will show that it can be adjacent to a single vertex of capacity $1$. For every vertex $u$ adjacent to $S_v$, we let $p(u, \alpha)$ be the number of edges directed from $u$ to $S_v$ under $\alpha$. We note that throughout a sequence of flips the quantity $p(u_1, \alpha)-p(u_2, \alpha)$ remains constant modulo $c$, for any vertices $u_1, u_2$ in the neighborhood of $S_v$. If $S_v$ is adjacent to at least two vertices, we consider the orientation $\alpha$ in which all edges incident to $S_v$ are oriented away from $S_v$ and an orientation $\alpha'$ in which exactly one edge incident to $S_v$ is reversed. By the above remark, $\alpha$ and $\alpha'$ do not differ by a sequence of flips as long as there are at least $2$ neighbors of $S_v$ in $\o{X'}$. Again, this is impossible since $\Acyc(\o{X'})/\!\simeq$ is assumed to have only one equivalence class.  

We conclude $\o{X'}$ is a forest, consisting of trees $T_1, \dots, T_m$. Since $\FSm(\C_n, X)$ is connected, every $\alpha\in \Acyc(\o{X'})/\!\simeq$ has $\delta_\alpha=1$. Recall that $\delta_\alpha$ is defined as the greatest common divisor of periods of orientation $\alpha^{(i)}$, where $\alpha^{(i)}$ is the restriction of $\alpha$ to $T_i$. We will show now that the period of $\alpha^{(i)}$ is $|T_i|$. Since, up to toric equivalence, there is only one acyclic orientation of $\o{X'}$, we may choose a root $u_i$ in every tree $T_i$ and assume that $\alpha^{(i)}$ directs the edges of $T_i$ away from the root. For this specific orientation, it is clear that at least $|T_i|$ positive flips are needed in order to transform it back into $\alpha$. Hence, the period of $\alpha^{(i)}$ is $|T_i|$. Since $\delta_\alpha=1$, this implies $\gcd(|T_1|, \dots, |T_m|)=1$, which completes the proof.
\end{proof}

\section{Further research directions}\label{sec:furtherresearch}

\noindent
As mentioned in the introduction, Theorem~\cref{thm:randombipartiteconnectivity} determines the correct dependence, up to a factor of $n^{o(1)}$, of the threshold probability $p_{\rm bip}=p_{\rm bip}(n)$ for which $\FS(X, Y)$ has exactly two connected components, when $X, Y\sim \G(K_{n, n}, p)$. It would be interesting to close the gap, in both bipartite and non-bipartite settings, and determine the exact threshold.

If one believes that the main obstruction to the connectivity of randomly generated friends-and-strangers graphs are isolated vertices (as in the case of the usual random graphs), then one would conjecture that the threshold probabilities $p_{\rm gen}$ and $p_{\rm bip}$ for the connectivity of friends-and-strangers graphs would be the same as for containing isolated vertices, i.e. $p_{\rm gen}, p_{\rm bip}=\left(\frac{\log n}{n}\right)^{1/2}$ (cf. Remark~\cref{rmk:thresholdisolatedvertices}). Hence, we make the following conjecture, which would determine the threshold probabilities precisely.

\begin{conjecture}
For $p=\omega\left(\frac{\log^{1/2} n}{n^{1/2}}\right)$ and large $n$ we have the following two statements:
\begin{itemize}
    \item if $X, Y\sim \G(n, p)$, then $\FS(X, Y)$ is connected with high probability.
    \item if $X, Y\sim \G(K_{n, n}, p)$, then $\FS(X, Y)$ has exactly two connected components with high probability.
\end{itemize}
\end{conjecture}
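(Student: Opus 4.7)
The plan is to combine the exchangeability framework of Section~\cref{sec:random} with a two-round sprinkling argument to push the threshold from $n^{-1/2+o(1)}$ down to the isolated-vertex threshold $(\log n/n)^{1/2}$. Write $p = p_1 + p_2 - p_1p_2$ with $p_1 = p(1-\eta)$ and $p_2 = p\eta$ for a sequence $\eta = \eta(n) \to 0$ slowly, say $\eta = 1/\log\log n$, so that $X = X_1 \cup X_2$ and $Y = Y_1 \cup Y_2$ with $X_i, Y_i$ independent and distributed as $\G(n, p_i)$ (respectively $\G(K_{n,n}, p_i)$). The overall strategy is to show, using $X_1$ and $Y_1$, that exchangeability holds from every sufficiently ``regular'' arrangement, and then to use the sprinkle $X_2, Y_2$ to connect every remaining arrangement to a regular one.

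First I would revisit the proof of Proposition~\cref{prop:bipartiteexchangeability} and trace where the factor $\exp(10(\log n)^{4/5})$ enters. This exponent arises from choosing $m \asymp (\log n)^{4/5}$ in the gadget $(\GR, \HR)$ in order to satisfy Lemma~\cref{lemma:technical} simultaneously with property (P3). Call an arrangement $\sigma$ \emph{regular} if every vertex of $X, Y$ has $X$- or $Y$-degree in $[p_1 n / 2, 2p_1 n]$, and for every pair $u_0, v_0$ as in Proposition~\cref{prop:bipartiteexchangeability} the relevant neighborhoods $\sigma(N_X(u'_0))\cap A_Y$, etc., are of size at least $p_1 n/4$. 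A careful first-moment analysis of equation~\cref{eqn:embcondition} should show that, for \emph{regular} $\sigma$, the embedding lemma can be applied with the smaller gadget $m = O(1)$, giving exchangeability from $\sigma$ whenever $p_1 \geq C(\log n/n)^{1/2}$ for sufficiently large $C$. Combined with Proposition~\cref{prop:balancing} and Proposition~\cref{prop:exchangeabilityfromdifferentarrangements}, this shows that the $(X_1, Y_1)$-connected component of every regular arrangement is ``essentially everything'': it contains every other regular arrangement of matching parity (for the bipartite case) or every other regular arrangement (non-bipartite case).

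Second, I would handle the irregular arrangements with the sprinkle $X_2, Y_2$. The key point is that irregularity of $\sigma$ is localized: it is caused by a bounded set $B_\sigma \subset V(X)\cup V(Y)$ of vertices where $\sigma$ maps a low-$X_1$-degree vertex onto a low-$Y_1$-degree vertex, or where the neighborhood intersection is too small. A routine union bound shows $|B_\sigma|$ is small (say $n^{1-\Omega(1)}$) with very high probability. Using the sprinkled edges of $X_2$ and $Y_2$ incident to $B_\sigma$, I would construct a short sequence of $(X, Y)$-friendly swaps transforming $\sigma$ into a regular arrangement $\sigma^\star$. In the bipartite case parity is preserved by every swap, so the two parity classes yield exactly two components; in the non-bipartite case one obtains a single connected component.

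The main obstacle is the repair step. Ensuring that every irregular $\sigma$ admits such a repair requires showing that the sprinkled graphs $X_2, Y_2$ — which have edge density only $p_2 = p/\log\log n$ — nonetheless provide enough adjacency structure around every offending vertex to enable a short friendly-swap path from $\sigma$ to a regular $\sigma^\star$. Known friends-and-strangers repair arguments (e.g.\ the flip/slide moves of Sections~\cref{sec:wilson} and \cref{sec:pathscycles}) demand connected subgraphs of specific shape on both $X$ and $Y$, and it is not obvious that a single sprinkle of this density suffices. A plausible remedy is an iterated sprinkling scheme with $O(\log\log n)$ rounds, each shrinking the set of irregular arrangements, coupled with a martingale estimate that the number of irregular $\sigma$ decreases geometrically; but closing the induction — in particular avoiding accumulating a $(\log n)^{o(1)}$ overhead at each stage — is where I expect the genuine difficulty to lie, and likely requires a new structural insight beyond what Proposition~\cref{prop:embedabilityimpliesexchangeability} provides.
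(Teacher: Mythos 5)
This statement is Conjecture 6.1 of the paper: it is left open, so there is no proof of the paper's to compare yours against, and your proposal should be judged on its own terms. As written it has a genuine gap, and it occurs earlier than where you locate it. Your first step asserts that, for ``regular'' $\sigma$, the embedding machinery of Lemma~\cref{lemma:technical} can be run with a bounded-size gadget $m=O(1)$ once $p\geq C(\log n/n)^{1/2}$. But the exponent $\exp(10(\log n)^{4/5})$ in \cref{eqn:pbound} does not come from irregular arrangements; it comes from balancing, in condition \cref{eqn:embcondition} with $J=[m]$, the edge excess of any gadget pair $(G,H)$ for which $u,v$ are exchangeable against the slack $\log\bigl(p^2n/2m\bigr)$ available per unconstrained vertex. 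At $p=\Theta\bigl((\log n/n)^{1/2}\bigr)$ that slack is only $\Theta(\log\log n)$ (and already forces $m\ll\log n$), while each gadget edge costs $\Theta(\log n)$ and each vertex of $\Gamma$ recovers only half of that; so \cref{eqn:embcondition} for $J=[m]$ requires roughly $|E(G'|_J)|+|E(H'|_J)|\leq 2|J|-\gamma(J)-2+o(1)$. No bounded-size pair with $u,v$ exchangeable is known to meet this: exchangeability constructions (here and in \cite{ADK}) need both gadgets essentially connected with $u,v$ having several neighbours, which gives $\beta(J)+\gamma(J)>2|J|$ and kills the condition. Crucially, regularity of $\sigma$ never enters this computation, and restricting to regular arrangements does not shrink the $n!$-type union bound that makes the $(2n)^{-Q}$ failure probability necessary, so ``a careful first-moment analysis'' cannot simply delete the $(\log n)^{4/5}$ overhead; a genuinely different gadget or a different exchangeability criterion is needed, which is precisely why the statement is a conjecture.

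The second step also misidentifies the bottleneck. For $p=\omega\bigl((\log n/n)^{1/2}\bigr)\gg \log n/n$, with high probability every vertex of $X$ and $Y$ has degree $(1+o(1))pn$ and the neighbourhood statistics you use to define regularity are uniformly concentrated, so essentially every arrangement is already ``regular'' in your sense and the sprinkle $X_2,Y_2$ has nothing to repair; conversely, if some arrangement-dependent property did fail, certifying a repair for each of the exponentially many offending $\sigma$ using only the randomness of a sprinkle of density $p/\log\log n$ requires exactly the kind of union bound whose difficulty you are trying to avoid, and the ``short friendly-swap path'' you invoke presupposes exchangeability-type structure at a density where its existence is the open question. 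You correctly flag the repair step as uncertain, but the decisive gap is the claimed constant-size gadget in step one; without it the sprinkling scheme has no base case to iterate from.
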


\noindent
In the context of multiplicity graphs, there exists a further generalization of this concept which allows for both graphs $X$ and $Y$ to have multiplicities. More precisely, if $X$, $Y$ are multiplicity graphs with lists of multiplicities $c^{(X)}, c^{(Y)}$, having the same total capacity, then we can define the double multiplicity friends-and-strangers graph $\FSmm(X, Y)$ whose vertices are arrangements of labels represented by the vertices of $X$ which are being placed onto the vertices of $Y$, such that the label $u\in V(X)$ appears exactly $c^{(X)}_{u}$ times and such that exactly $c^{(Y)}_{v}$ labels are placed onto every vertex of $v\in V(Y)$. Many of the questions addressed in this paper can be posed for double-multiplicity friends-and-strangers graphs. For example, since this paper exactly characterizes all multiplicity lists for which $\FSm(X, \S_n)$ and $\FSm(\S_n, X)$ are connected, it is natural to ask for an characterizations of all multiplicity lists $c^{(X)}, c^{(\S_n)}$ which make $\FSmm(X, \S_n)$ connected. In particular, we believe that the following characterization can be derived.

\begin{conjecture}
Let $X$ be a connected simple graph, and let $c^{(X)}, c^{(\S_n)}$ be two multiplicity lists, in which the center of $\S_n$ receives multiplicity $k=c^{(\S_n)}_{\es}$. Then $\FSmm(X, \S_n)$ is connected if and only if $X$ contains no $k$-bridge in which all vertices have multiplicity $1$.
\end{conjecture}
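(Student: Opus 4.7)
The plan is to prove the conjecture by establishing necessity and sufficiency separately. For the necessity direction, suppose $X$ contains a $k$-bridge $a_1, \dots, a_k$ with $c^{(X)}_{a_i} = 1$ for all $i$. The approach is to adapt the invariant argument from Proposition~\cref{prop:containskbridge} to the double-multiplicity framework. Arrangements are now multi-labelings in which each $v \in V(X)$ hosts a multiset of $c^{(X)}_v$ labels from $V(\S_n)$, but since the bridge vertices each host exactly one label, the tracking of a chosen leaf label $l$ with sufficiently small multiplicity proceeds through essentially the same four cases. Defining $x(\tau)$ as the total count of $\es$-labels placed on $A$ and $I(\tau)$ as the indices of the first $x(\tau)$ non-$\es$ positions on the bridge, the invariant that all copies of $l$ lie in $A \cup \{a_i : i \in I(\tau)\}$ is preserved by friendly swaps. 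Swaps involving high-multiplicity vertices outside the bridge only exchange individual labels within multisets without altering the bridge structure, so the case analysis of the original proof carries through with only notational modifications.

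For the sufficiency direction, I would employ a lifting strategy. Let $X'$ denote the lift of $X$ with respect to $c^{(X)}$, and first establish the analogue of Corollary~\cref{cor:lifting} on the $X$-side: if $\FSm(X', \S_n)$ is connected, then $\FSmm(X, \S_n)$ is connected. This follows from a quotient construction identifying arrangements of $\FSm(X', \S_n)$ that differ by permutations within the cliques $S_v \subseteq V(X')$, mirroring Proposition~\cref{prop:quotientisomorphism}. The key combinatorial claim is then that if $X$ has no $k$-bridge with all unit multiplicities, then $X'$ has no $k$-bridge at all. Given a hypothetical $k$-bridge $b_1, \dots, b_k$ in $X'$ with $b_i \in S_{v_i}$, the constraint that each intermediate $b_i$ has exactly two neighbors in $X'$ forces $|S_{v_i}| = 1$ and restricts the $X$-neighbors of $v_i$ to lie in unit-multiplicity cliques; propagating this along the bridge yields $c^{(X)}_{v_j} = 1$ for all $j$ and shows that $v_1, \dots, v_k$ form a $k$-bridge in $X$, contradicting the hypothesis. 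Combining this with Theorem~\cref{thm:WilsonmultiplicityS} applied to the simple graph $X'$ then completes the sufficiency argument.

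The main obstacle I foresee is the cycle case: if $X$ is a cycle with unit multiplicities, there are no $k$-bridges, yet Theorem~\cref{thm:WilsonmultiplicityS} shows $\FSm(X, \S_n)$ is typically disconnected unless $m = 3$ and one leaf has multiplicity $1$. This indicates that the conjecture as stated requires refinement to treat cycles as a separate exceptional family, analogously to how cycles are singled out in Theorem~\cref{thm:WilsonmultiplicityS}. A secondary subtlety is the propagation argument above when multiple bridge vertices of $X'$ happen to lie in the same clique $S_v$: the analysis must rule out such configurations by showing they either contradict the degree conditions imposed by the $k$-bridge definition or force $b_1 = b_k$. Settling these edge cases with complete care, and identifying the correct refined conjecture that accounts for the cycle obstruction in the same spirit as Theorem~\cref{thm:WilsonmultiplicityS}, would be the crux of the proof.
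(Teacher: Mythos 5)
The statement you are trying to prove is posed in the paper as an open conjecture; the paper gives no proof, so there is nothing to compare your proposal against, and you are in effect doing original research. Your most valuable observation is that the conjecture as written appears to be \emph{false}: taking $X=\C_n$ ($n\geq 4$) with all multiplicities equal to $1$, the graph $X$ contains no $k$-bridge for any $k$ (removing up to $k-2$ consecutive interior vertices of a cycle never separates the two endpoints into distinct components), yet $\FSmm(X,\S_m)=\FSm(\C_n,\S_m)$ is disconnected whenever $m\geq 4$ by Proposition~\cref{prop:cycle}, while the conjecture would predict it connected. The biconditional therefore fails, and the statement needs a cycle exception analogous to the one built into Theorem~\cref{thm:WilsonmultiplicityS}. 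Recognizing this is a genuine finding, not merely a cautionary aside.

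Modulo that correction, your two-pronged plan is the natural route, but the sufficiency half has a real gap: the propagation argument in the lift is more delicate than indicated. If $b_1,\dots,b_k$ is a $k$-bridge in $X'$ with $b_i\in S_{v_i}$, the degree-two condition on an interior $b_i$ does \emph{not} immediately force $|S_{v_i}|=1$; the case $|S_{v_i}|=2$ with $v_i$ a degree-one vertex of $X$ adjacent to a singleton clique is also admissible at a single vertex. Ruling it out requires noticing that any two bridge vertices lying in the same clique $S_v$ share their out-of-clique neighborhood, which (combined with the degree-two and disconnection constraints) forces index collapses such as $b_{i-2}=b_{i+1}$ or isolates an endpoint, yielding a contradiction; similar arguments are needed to bar $b_1$ or $b_k$ from sharing a clique with a middle vertex. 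These verifications are the crux of the sufficiency direction and should be proved rather than deferred. By contrast, the necessity direction via the invariant from Proposition~\cref{prop:containskbridge} looks sound, since all bridge vertices have unit multiplicity and the invariant's case analysis is entirely local to the bridge.
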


\noindent
Finally, this paper mainly investigates connectivity of various models of friends-and-strangers graphs. However, one can still ask questions about higher connectivity of $\FS(X, Y)$ and $\FSm(X, Y)$. For example, one can pose the following question in case $X, Y$ are random graphs. 

\begin{question}
Let $p(n)=n^{-1/2+o(1)}$ and let $X, Y$ be random graphs from $\G(n, p)$. For which values of $k$ is the graph $\FS(X, Y)$ is $k$-connected with high probability?
\end{question}

\section*{Acknowledgements}
\noindent
This research was conducted at the University of Minnesota Duluth REU, with generous support from Jane Street Capital, the NSA (grant number H98230-22-1-0015), and Princeton University. I would like to thank Joe Gallian for organizing this REU program, Noah Kravitz and Colin Defant for introducing me to this problem, proposing many new research directions, and providing useful feedback on early drafts of this paper. Further, I would like to thank Kiril Bangachev for carefully reading the paper and pointing out several mistakes in its early drafts, as well as Maya Sankar and Mitchell Lee for helpful discussions about the problem, reading drafts of this paper, and generously sharing their ideas with me. Finally, I would like to thank an anonymous reviewer for his helpful comments and pointing out that reference \cite{BJS} may lead to the proof of Theorem~\cref{thm:lowerboundstrengthening}.

\newpage

\end{document}